\documentclass[11pt,reqno]{amsart}

\usepackage[T1]{fontenc}
\usepackage[boldsans]{ccfonts}
\usepackage{eulervm}
\renewcommand{\mathbf}{\mathbold}
\usepackage{bbm}

\usepackage{amsmath, amsthm, amsfonts, amssymb, %xy,  
mathrsfs, graphicx, paralist} %, ulem
\usepackage[usenames, dvipsnames]{color}
\usepackage[margin=1in]{geometry} 
\usepackage[hidelinks]{hyperref}
\usepackage{comment}
\usepackage[shortlabels]{enumitem}

\usepackage{mathabx} %This is for \sqbullet
\usepackage{tikz-cd}
\usetikzlibrary{chains,decorations.pathreplacing,backgrounds,decorations.markings,babel,positioning,patterns,shapes}
\usepackage{tikz}
\usepackage{nicematrix}

\usepackage{ytableau}

\numberwithin{equation}{section}
\newtheorem{Theorem}[equation]{Theorem}
\newtheorem{Proposition}[equation]{Proposition}
\newtheorem{Lemma}[equation]{Lemma}
\newtheorem{Corollary}[equation]{Corollary}

\theoremstyle{definition}
\newtheorem{Remark}[equation]{Remark}

\newtheorem{eg}[equation]{Example}

\newtheorem{Definition}[equation]{Definition}

\newcommand{\cA}{\mathcal{A}}

\newcommand{\bF}{\mathbf{F}}

\newcommand{\bG}{\mathbf{G}}

\newcommand{\fJ}{\mathfrak{J}}

\newcommand{\bN}{\mathbf{N}}
\newcommand{\cN}{\mathcal{N}}

\newcommand{\cO}{\mathcal{O}}

\newcommand{\bP}{\mathbf{P}}
\newcommand{\cP}{\mathcal{P}}

\newcommand{\cR}{\mathcal{R}}

\newcommand{\cS}{\mathcal{S}}

\newcommand{\bT}{\mathbf{T}}

\newcommand{\fb}{\mathfrak{b}}

\newcommand{\bc}{\mathbf{c}}

\newcommand{\fg}{\mathfrak{g}}

\newcommand{\fh}{\mathfrak{h}}

\newcommand{\bk}{\mathbf{k}}

\newcommand{\bm}{\mathbf{m}}

\newcommand{\bn}{\mathbf{n}}

\newcommand{\bu}{\mathbf{u}}
\newcommand{\fu}{\mathfrak{u}}

\newcommand{\bv}{\mathbf{v}}

\newcommand{\bz}{\mathbf{z}}

\newcommand{\CC}{\mathbb{C}}

\newcommand{\GG}{\mathbb{G}}

\newcommand{\NN}{\mathbb{N}}

\newcommand{\PP}{\mathbb{P}}
\newcommand{\QQ}{\mathbb{Q}}

\newcommand{\ZZ}{\mathbb{Z}}

\renewcommand{\phi}{\varphi}
\renewcommand{\emptyset}{\varnothing}
\newcommand{\eps}{\varepsilon}

\renewcommand{\tilde}[1]{\widetilde{#1}}

\newcommand{\leftexp}[2]{\vphantom{#2}^{#1} #2}

\makeatletter
\def\Ddots{\mathinner{\mkern1mu\raise\p@
\vbox{\kern7\p@\hbox{.}}\mkern2mu
\raise4\p@\hbox{.}\mkern2mu\raise7\p@\hbox{.}\mkern1mu}}
\makeatother

\newcommand{\twoheadlongrightarrow}{\relbar\joinrel\twoheadrightarrow}

\DeclareMathOperator{\supp}{Supp}

\DeclareMathOperator{\Hom}{Hom}

\DeclareMathOperator{\gr}{gr}

\newcommand{\GL}{\mathbf{GL}}

\newcommand{\Gr}{\mathrm{Gr}}

\newcommand{\fsl}{\mathfrak{sl}}

\newcommand{\suchthat}{:}

\newcommand{\textand}{\text{ }\mathrm{and}\text{ }}

\newcommand{\pt}{\mathrm{pt}}

\newcommand{\kk}{\Bbbk}

\renewcommand{\bv}{\mathbf{v}} % Notation for the dimension vectors of gauge nodes
\newcommand{\su}{\mathsf{u}} % Notation for the shift operators
\newcommand{\sv}{\mathsf{v}} 
 
\newcommand{\sm}{\mathsf{m}} 
\newcommand{\sn}{\mathsf{n}}
\renewcommand{\sc}{\mathsf{c}}
\newcommand{\loc}{\mathrm{loc}} % Notation for localization of a ring
\newcommand{\M}{\mathsf{M}} 
\newcommand{\sfu}{\mathsf{u}}
\newcommand{\sfr}{{\mathsf{r}}}

\newcommand{\source}{\mathsf{s}}
\newcommand{\target}{\mathsf{t}}
\newcommand{\quiver}{Q}
\newcommand{\vertexset}{I}
\newcommand{\arrowset}{E}

\newcommand{\GT}{\Lambda}
\newcommand{\tGT}{\widetilde{\Lambda}}
\newcommand{\base}{R}

\newcommand{\Partitions}{\cP}
\newcommand{\la}{\lambda}
\newcommand{\bla}{{\boldsymbol{\lambda}}}
\newcommand{\bmu}{{\boldsymbol{\mu}}}

\newcommand{\Partition}{\mathtt{Partition}}

\newcommand{\qzast}{\cA}
\newcommand{\qzastplus}{\cA^{+}}
\newcommand{\res}{\mathrm{res}}

\newcommand{\GKLO}[1]{\tilde{A}(#1)_{loc}}

\newcommand{\shu}{\cS}
\newcommand{\preshu}{\leftexp{\mathrm{big}}{\cS}}
\newcommand{\extshu}{\leftexp{\mathrm{ext}}{\cS}}
\newcommand{\shuzero}{\cS^0}

\newcommand{\Qvar}{Q}

\begin{document}

\title[Coulomb branches, quantized zastavas, Kac polynomials, and shuffle algebras]{Coulomb branches, quantized zastavas, Kac polynomials, and shuffle algebras}
\author{Dinakar Muthiah}
\address{D.~Muthiah:
School of Mathematics and Statistics,
University of Glasgow,
University Place,
Glasgow G12 8QQ,
Scotland
}
\email{dinakar.muthiah@glasgow.ac.uk}
\author{Alex Weekes}
\address{A.~Weekes: Département de mathématiques, Université de Sherbrooke, Sherbrooke J1K 2R1, Canada}
\email{alex.weekes@usherbrooke.ca}

\maketitle

\begin{abstract}
We previously constructed closed embeddings of Kac-Moody affine Grassmannian slices using fundamental monopole operators. These spaces are defined via the Braverman-Finkelberg-Nakajima construction of Coulomb branches for quiver gauge theories, and the embeddings do not quantize in general. However, there is variant of the BFN construction that produces \emph{zastava spaces}, and we show that the closed embeddings do quantize in that case. This allows us to take a limit and construct the \emph{limit quantized zastava} $\mathcal{A}$ for an arbitrary quiver. This algebra plays the role of the Borel Yangian. We also construct a positive part $\mathcal{A}^+$, which plays the role of the unipotent Yangian. By taking the limit of the monopole formula, we show that both $\mathcal{A}$ and $\mathcal{A}^+$ have Hilbert series given by a version for Hua's formula for Kac polynomials.  We also show that $\mathcal{A}^+$ is isomorphic to a certain shuffle algebra. Finally, using these results we obtain a proof of Negu{\c t}'s conjecture on the spherical generation of localized shuffle algebras via the second author's work on generators of Coulomb branches.
\end{abstract}

%%%%%%%%%%%%%%%%%%%%%%%%%%%%%%%%%%%%%%%%%%%%%%%%%%%%%%%%%%%%%%%%%

\section{Introduction}

This paper is a natural continuation of our previous work \cite{MW2}, where we studied Coulomb branches of $3d$ $\cN=4$ quiver gauge theories, as defined mathematically by Braverman, Finkelberg, and Nakajima. In the case of quivers without edge loops, these Coulomb branches are precisely \emph{Kac-Moody affine Grassmannian slices}. In \cite{MW2}, we showed that these Kac-Moody affine Grassmannian slices admit natural embeddings into one another. However, the construction of these embeddings is not directly geometric, and instead, we work indirectly using the \emph{fundamental monopole operators} to define these closed embeddings on the level of coordinate rings. We also explain that such a subtle approach is necessary because these embeddings do not quantize in general.

There is a variant of the BFN construction, which is built from the case of a quiver gauge theory with no framing: the case of \emph{zastava spaces}. In \cite{MW2}, we built closed embeddings for these zastava spaces, and as we will see in this paper, these closed embeddings \emph{do} {quantize}. Additionally, unlike \cite{MW2}, we will explicitly allow edge loops.

Our present work is about studying these closed embeddings of quantized zastava spaces. Our main results are the following:
\begin{enumerate}
 \item We show that the closed embedding of zastava spaces quantize, and we construct a limit quantized zastava, which one may think of as the  Borel Yangian associated to an arbitrary quiver.
 \item We show that the limit monopole formula for the limit quantized zastava is equal to a version of Hua's formula for Kac polynomials.
 \item We show that the positive part of the limit quantized zastava is equal to a certain shuffle algebra, and that the entire limit quantized zastava is equal to an extended shuffle algebra.
 \item We prove a conjecture of Negut's that the shuffle algebra is spherically generated after suitable localization, by building on the second author's work on spherical generation for Coulomb branches.
\end{enumerate}

\subsection{Zastava spaces and the data of an unframed quiver gauge theory}
Let us elaborate on the notion of zastava spaces. Following \cite[\S 3(ii)]{BFN2}, for any quiver gauge theory one may use a variation on the BFN construction to define a related algebraic variety $Z(\bv)$ called a \emph{zastava space}.  The BFN construction  also provides a deformation quantization $\qzast(\bv)$ of $Z(\bv)$ over $\kk[\hbar]$ called a \emph{quantized zastava space}, which is closely related to affine quantum groups. We note that framing is not important in this definition, as explained in \cite[Remark 3.15]{BFN2}.

In particular, we work with arbitrary unframed quiver, and we allow loops and multiple edges, for example:
\begin{equation*}
  {\begin{tikzcd}[sep=normal,cells={nodes={draw=black, ellipse,anchor=center,minimum height=2em}}]
          & \,\,\, \ar[d, bend right =20] \ar[d, bend left = 20] & & \\
         \arrow[out=220,in=140,loop,distance = 4em, start anchor={south west}, end anchor = {north west}] \,\,\, & \,\,\,  \ar[l] \ar[r] & \,\,\,  \ar[ul] \ar[r]& \,\,\, \ar[l,bend left = 20] \ar[l, bend right = 20] 
    \end{tikzcd}}
\end{equation*}
\vspace{-0.9cm}

By \cite[Corollary 3.4]{BFN2}, for finite ADE quivers the spaces $Z(\bv)$ are isomorphic to the original definition of zastava spaces going back to Drinfeld \cite{Finkelberg-Mirkovic}: spaces of based quasi-maps $\PP^1 \rightarrow G/B$ to flag varieties.   This is the reason for calling the spaces $Z(\bv)$ obtained from more general quivers ``zastava spaces''. Note that $Z(\bv)$ recovers zastava spaces in certain symmetric  Kac-Moody types \cite[Theorem 3.22]{BFN2}, and agrees with Mirkovi\'c's construction of zastava spaces via local spaces \cite{Dong}.

There are a number of benefits to working with the BFN construction of (quantized) zastava spaces.  For one, the $\qzast(\bv)$ are free modules over natural subrings, including  $\kk$.  For another, there are natural elements $\M_\bm(f) \in \qzast(\bv)$ called \emph{fundamental monopole operators} (FMOs), which generate the algebra \cite{Weekes,FT2}.

%%%%%%%%%%%%%%%%%%%%%%%%%%%%%%%%%%%%%%%%%%
\subsection{Main results}

 Throughout, we will fix an arbitrary quiver $\quiver = (\vertexset, \arrowset)$.
\subsubsection{Limit quantized zastava} (See \S \ref{sec:quantized_limit_zastava}.)
In finite type (i.e. an ADE Dynkin quiver), there is a surjective map 
\begin{equation}
  \label{eq:59}
    Y_\hbar(\mathfrak{b}) \twoheadlongrightarrow \qzast(\bv)
\end{equation}
 from the Borel Yangian\footnote{Precisely, we mean the Drinfeld-Gavarini integral form of $Y_\hbar(\mathfrak{b})$}
$Y_\hbar(\mathfrak{b}) \subset Y_\hbar(\fg)$ associated to the Lie algebra $\fg = \fg_\quiver$ of the quiver \cite[Corollary B.28]{BFN2}. This built upon the previous works \cite{GKLO,FR,KWWY}, but was ultimately proven using the Coulomb branch machinery.

In particular, the BFN construction produces natural quotients $\qzast(\bv)$ of the Borel Yangian, but it does not obviously produce the Borel Yangian itself. Our first result is about a construction of an algebra $\cA$ associated to an arbitrary quiver, which plays the role of the Borel Yangian. This is the topic of \S \ref{sec:quantized_limit_zastava}. 

First, as we mentioned before, the closed embeddings of Kac-Moody affine Grassmannian slices do not quantize in general \cite{MW2}. However, we show that they do in the case of zastava spaces: 
\begin{Theorem}
  For each $\bv \in \NN^\vertexset$, the algebras $\qzast(\bv)$ carry natural $\ZZ^\vertexset\times \ZZ$-gradings. For $\bv \geq \bv'$ there is a graded surjection
  \begin{equation}
    \label{eq:55}
   \Phi_{\bv, \bv'}:\qzast(\bv) \twoheadlongrightarrow \qzast(\bv') 
  \end{equation}
         which respect FMOs. 
\end{Theorem}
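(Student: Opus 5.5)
The plan is to work throughout in the embedding of $\qzast(\bv)$ into the localized difference-operator algebra provided by the abelianization (GKLO-type) homomorphism. Concretely, $\qzast(\bv)$ embeds into $\GKLO{\bv}$, the algebra generated over $\kk[\hbar]$ by the equivariant parameters $w_{i,r}$ ($i\in\vertexset$, $1\le r\le \bv_i$), by the shift operators $\su_{i,r}^{\pm1}$ with $\su_{i,r} w_{j,s} = (w_{j,s}+\delta_{ij}\delta_{rs}\hbar)\su_{i,r}$, and with the differences $w_{i,r}-w_{i,s}+k\hbar$ inverted.

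\textbf{Gradings.} The $\ZZ^\vertexset$-grading comes from the $\pi_1$ of the gauge group, i.e. the connected components of the affine Grassmannian: $\su_{i,r}$ has degree $e_i$ and $w_{i,r}$ has degree $0$. The $\ZZ$-grading is the loop-rotation/homological grading: $w$ and $\hbar$ have degree $2$, and the shift operators are shifted by the monopole dimension formula. Because there is no framing, this formula is the "unframed" one, in which only the edges and the vertex roots contribute. Both gradings are preserved by convolution, so they restrict to $\qzast(\bv)$.

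\textbf{Construction of $\Phi_{\bv,\bv'}$.} It suffices to treat $\bv'=\bv-e_i$ and then compose. Define a map on a subalgebra of $\GKLO{\bv}$ by specializing the last variable $w_{i,\bv_i}$ to a formal limit. The cleanest choice is to send $w_{i,\bv_i}\mapsto\infty$, using the homogeneous structure: expand in $w_{i,\bv_i}^{-1}$ and take leading terms. An alternative is to kill the shift $\su_{i,\bv_i}$ by sending it to $0$. Test this on the FMOs. Write $\M_\bm(f)$ with $\bm=\sum_j m_j\varpi$-type minuscule coweights, so that $\M_\bm(f)$ is a sum over subsets of the form $\sum f(w_S)\,\prod(\text{rational factors})\,\su_S$.

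Because the gauge theory is unframed, the rational factors are homogeneous of degree $0$ in the ``edge'' part paired against the ``vertex'' part. Terms whose subset $S$ contains the index $(i,\bv_i)$ vanish under the specialization $\su_{i,\bv_i}\mapsto0$. The remaining terms have factors involving $w_{i,\bv_i}$ that tend to $1$ as $w_{i,\bv_i}\to\infty$. This ratio property is exactly where framing would spoil things, and it is what yields $\M_\bm(f)\mapsto\M_{\bm}(f)$, or $0$ when $\bm$ uses more than $\bv'$ boxes. Surjectivity then follows because FMOs generate $\qzast(\bv')$ by \cite{Weekes,FT2}.

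\textbf{Main obstacle.} The hard part is to show that the map defined this way is well defined, i.e. an algebra homomorphism on $\qzast(\bv)$, rather than merely a linear map on generators. I would handle it by realizing the map as a composite of honest homomorphisms on a suitable subalgebra of $\GKLO{\bv}$ containing $\qzast(\bv)$. Take the subalgebra $B$ of elements whose expansion in $\su_{i,\bv_i}$ involves only nonnegative powers and whose coefficients are regular at $w_{i,\bv_i}=\infty$. On $B$, the map ``$\su_{i,\bv_i}^{>0}\mapsto0$, then evaluate $w_{i,\bv_i}=\infty$'' is a homomorphism: the first step is the quotient by a two-sided ideal, and the second is a homomorphism because conjugation by $\su_{i,\bv_i}$ only shifts $w_{i,\bv_i}$, which is invisible at $\infty$.

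It then remains to check that $\qzast(\bv)\subset B$. This can be verified on the FMO generators, or geometrically by using the dimension estimate that gives the grading. Finally, the $\ZZ^\vertexset\times\ZZ$-grading is respected, since all the leading-term operations are homogeneous.
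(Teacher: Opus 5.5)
Your construction of $\Phi_{\bv,\bv'}$ has a genuine gap: the claim that, for $\Gamma$ with $\sv_i\notin\Gamma_i$, the factors involving $w_{i,\sv_i}$ tend to $1$ is false. By \eqref{eq: FMOs under GKLO} these factors are
\[
\frac{\prod_{e:\target(e)=i}\ \prod_{r\in\Gamma_{\source(e)}}\big(w_{i,\sv_i}-w_{\source(e),r}+\theta_e-\tfrac{1}{2}\hbar\big)}{\prod_{r\in\Gamma_i}\big(w_{i,r}-w_{i,\sv_i}\big)},
\]
with $\sum_{e:\target(e)=i}\sm_{\source(e)}$ factors upstairs and $\sm_i$ downstairs. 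Being unframed does nothing to balance these. For a single vertex with no arrows, the coefficient of $\sfu_r$ ($r<\sv$) in $\M_1(1)$ contains $(w_r-w_\sv)^{-1}\to 0$. For the quiver $1\to 2$ with $i=2$, every coefficient of $\M_{\alpha_1}(1)$ grows linearly in $w_{2,\sv_2}$.

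Moreover $\qzast(\bv)\not\subset B$: already $\GT(\bv)\subset\qzast(\bv)$ contains $\sum_r w_{i,r}$, and the dressings $f$ are polynomial in $w_{i,\sv_i}$. The required map restricts to $\GT(\bv)\twoheadrightarrow\GT(\bv')$, $w_{i,\sv_i}\mapsto 0$, and sends $\M_\bm(f)\mapsto\M_\bm(\tilde f)$ with $\tilde f=f|_{w_{i,\sv_i}=0}$. No evaluation or leading term at $w_{i,\sv_i}=\infty$ produces this.

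Your alternative $\sfu_{i,\sv_i}\mapsto 0$ is part of the right answer. The missing idea is to \emph{rescale the surviving shift operators} so that the offending factors cancel exactly. The paper (Lemma~\ref{lem:rationalembedding}) uses the homomorphism of localized difference algebras that fixes all $w$'s, kills $\sfu_{j,r}$ for $r>\sv'_j$, and sends
\[
\sfu_{j,r}\ \longmapsto\ \frac{\prod_{s>\sv'_j}(w_{j,r}-w_{j,s})}{\prod_{a:\source(a)=j}\prod_{s>\sv'_{\target(a)}}\big(w_{\target(a),s}-w_{j,r}+\theta_a-\tfrac{1}{2}\hbar\big)}\,\sfu_{j,r}\qquad (r\le \sv'_j).
\]
This respects the defining relations, because each prefactor involves only $w_{j,r}$ and removed variables, which commute with every other surviving $\sfu_{k,t}$. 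It sends $\M_\bm(f)\mapsto\sum\M_\bm(f^{(1)})\otimes f^{(2)}$ (or $0$ if $\bm\not\le\bv'$). Since FMOs generate, it restricts to an adding-defect homomorphism $\qzast(\bv)\to\qzast(\bv')\otimes_\base\GT(\bv'')$, in which the removed variables survive as parameters. $\Phi_{\bv,\bv'}$ is then obtained by specializing them to $0$ via $\GT(\bv'')\twoheadrightarrow\base$.

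The grading also needs correcting. The grading given by the monopole dimension formula is the $\Delta$-grading, and even without framing it depends on $\bv$ through the zero entries of the coweight: $2\Delta(\bla)=(\bla,\bla)-(|\bla|,\bv)$. So $\M_\bm(f)\in\qzast(\bv)$ and $\M_\bm(\tilde f)\in\qzast(\bv')$ have $\Delta$-degrees differing by $(\bm,\bv-\bv')$, and no FMO-preserving map is graded for it.

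The paper adds the linear function $(|\bla|,\bv)$ of the connected component. This yields the $\bv$-independent degree $\deg\M_\bla(f)=\big(|\bla|,\deg f+\tfrac{1}{2}(\bla,\bla)\big)$ of Theorem~\ref{thm: grading}, and gradedness of $\Phi_{\bv,\bv'}$ is then immediate from its effect on FMOs. On the GKLO side this makes the $\ZZ$-degree of $\sfu_{j,r}$ equal to $\sv_j-\sum_{a:\source(a)=j}\sv_{\target(a)}$, which depends on $\bv$. So a map fixing the surviving $\sfu_{j,r}$, as yours does, cannot be graded; the rescaling prefactor above has exactly the compensating degree.

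The rest of your outline is fine: reducing to $\bv'=\bv-\alpha_i$, and getting surjectivity from FMO generation.
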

In the classical limit, these surjections correspond to closed embeddings $Z(\bv') \hookrightarrow Z(\bv)$ of zastava spaces. Recall that in finite ADE type $Z(\bv), Z(\bv')$ are spaces of based quasimaps $\PP^1 \rightarrow G/B$. We showed previously \cite[Theorem 2.40]{MW2} that the above closed embedding is natural from this point of view: it simply ``adds defect'' to quasimaps at $0\in \PP^1$. For general quivers, we should recover the growth structure in the local space definition of zastava spaces, see \cite[\S 1.3]{Dong}.

From the above theorem the $\qzast(\bv)$ form a directed system, and we can form their limit
\begin{equation}
  \label{eq:56}
        \qzast = \displaystyle{\lim_{\longleftarrow}}~ \cA(\bv)
\end{equation}
in the category of $\ZZ^I \times \ZZ$-graded rings. Furthermore, we prove the following.
\begin{Theorem}
  The algebra $\qzast$ is generated by FMOs, and for each $\bv \in \NN^I$, the kernel of the natural surjection
  \begin{equation}
   \qzast \twoheadlongrightarrow \qzast(\bv) 
  \end{equation}
  is  generated as an ideal by FMOs (i.e.~those FMOs lying in the kernel of the map).
\end{Theorem}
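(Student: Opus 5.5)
We will argue one graded piece at a time, using that each $\qzast(\bv)$ has finite-dimensional graded pieces. The grading is the $\ZZ^\vertexset$-grading by monopole charge together with the $\ZZ$-grading by (shifted) homological degree, with $\hbar$ in positive degree. Because $\qzast(\bv)$ is a free $\kk$-module with a monopole-formula Hilbert series, each graded piece $\qzast(\bv)_{(\gamma,d)}$ is finite dimensional. The limit $\qzast$ is taken in graded rings, so it is computed degreewise:
\[
\qzast_{(\gamma,d)} = \lim_{\longleftarrow} \qzast(\bv)_{(\gamma,d)}.
\]
The first step is a stabilization statement. For fixed $(\gamma,d)$, the maps $\Phi_{\bv,\bv'}$ of the preceding theorem restrict to isomorphisms $\qzast(\bv)_{(\gamma,d)}\to\qzast(\bv')_{(\gamma,d)}$ once $\bv'$ is large compared to $(\gamma,d)$. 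To show this, we compare the monopole formulas. Since the $\Phi$ are surjective and graded, it suffices to show that the dimensions of the graded pieces stabilize. Write the Hilbert series as a sum over coweights $\bn$ of $t^{2\Delta(\bn)}\prod_i P_{G_{\bn}}(t)$. For large $\bv'$, only coweights in a bounded region contribute to a fixed degree $(\gamma,d)$, and their contributions agree for all larger $\bv$. This is the same computation that later yields the limit monopole formula, so we can treat it as essentially available. Consequently each $\qzast_{(\gamma,d)}$ is finite dimensional, and $\qzast\to\qzast(\bv)$ is surjective.

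Generation by FMOs then follows degreewise. Fix $(\gamma,d)$ and choose $\bv$ in the stable range for it. By \cite{Weekes,FT2}, $\qzast(\bv)_{(\gamma,d)}$ is spanned by products of FMOs $\M_\bm(f)$. These FMOs are compatible under the $\Phi$'s, because the maps respect FMOs. So each $\M_\bm(f)$ defines a compatible family, i.e.\ an element of $\qzast$, which we still call an FMO. A product of FMOs in $\qzast$ then maps onto the corresponding product in $\qzast(\bv)$. Since the map is an isomorphism in degree $(\gamma,d)$, these products span $\qzast_{(\gamma,d)}$. One subtlety must be handled here: the factors in a spanning product of degree $(\gamma,d)$ may have degrees outside the stable range. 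This is harmless, because their images in $\qzast(\bv)$ are still the specified FMOs.

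For the kernel, fix $\bv$, let $J$ be the ideal of $\qzast$ generated by the FMOs lying in $\ker(\qzast\to\qzast(\bv))$, and fix a degree $(\gamma,d)$. Choose $\bv''\ge\bv$ stable for $(\gamma,d)$. It then suffices to show that $\ker\Phi_{\bv'',\bv}$ is generated by images of FMOs that die in $\qzast(\bv)$. By induction along a chain increasing one coordinate at a time, we reduce to $\bv''=\bv+e_i$, provided the FMOs killed at each step are also killed in $\qzast(\bv)$, which holds by compatibility. For $\bv+e_i\to\bv$, we use the explicit description from \cite{MW2}, where $\Phi$ is defined on generators via FMOs. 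We expect the kernel to be generated by the FMOs $\M_\bm(f)$ with $m_i=v_i+1$, i.e.\ minuscule charges that use the full rank at node $i$ and have no counterpart in rank $v_i$. We would prove this by a dimension count. Take the quotient of $\qzast(\bv+e_i)$ by the ideal these FMOs generate, and compare its graded dimension, computed via a filtration by coweights as in the monopole formula, with that of $\qzast(\bv)$. This comparison is the main obstacle. The first thing to try is passing to the associated graded of the BFN filtration by $\bn$, so that the statement reduces to one about equivariant cohomology rings of the strata: there the kernel is the span of strata where node $i$ uses all $v_i+1$ coordinates, and this should visibly be generated by the classes of the minuscule FMO strata.
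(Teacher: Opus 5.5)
The generation half of your argument works. Lifting FMOs along $\GT^\bmu\twoheadrightarrow\GT^\bmu(\bv)$ and using degreewise stabilization is a valid alternative to the paper's route through $\gr\qzast\cong\bigoplus_\bla\GT^\bla\sfr_\bla$. One caveat: the $\ZZ$-grading must be the one of Theorem~\ref{thm: grading}, built from $\tfrac12(\bla,\bla)$. The $\Delta$-grading depends on $\bv$ and is not preserved by the maps $\Phi_{\bv,\bv'}$.

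\textbf{The gap is in the kernel.} Your candidate generators for $\ker\Phi_{\bv+\alpha_i,\bv}$, namely the FMOs $\M_\bm(f)$ with $\sm_i=\sv_i+1$, do not generate it, so the proposed dimension count cannot succeed.
\begin{itemize}
\item On $\GT(\bv+\alpha_i)$ the map $\Phi_{\bv+\alpha_i,\bv}$ is the specialization $w_{i,\sv_i+1}\mapsto 0$. Hence the nonzero element $w_{i,1}\cdots w_{i,\sv_i+1}=\M_{\mathbf 0}(w_{i,1}\cdots w_{i,\sv_i+1})$ lies in the kernel.
\item This element has $\ZZ^\vertexset$-degree $\mathbf 0$. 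Since $\qzast(\bv+\alpha_i)$ is $\NN^\vertexset$-graded, the two-sided ideal generated by your FMOs lives only in degrees with $i$-th coordinate $\geq\sv_i+1$, so it cannot contain this element.
\item Your description of the associated graded kernel has the same defect. It only accounts for strata $\bla\notin\Partitions^\vertexset_{\leq\bv}$. For every $\bla\in\Partitions^\vertexset_{\leq\bv}$ the kernel also contains $K^\bla\sfr_\bla$, where $K^\bla=\ker\big(\GT^\bla(\bv+\alpha_i)\twoheadrightarrow\GT^\bla(\bv)\big)\neq 0$ comes from the dressings.
\end{itemize}

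\textbf{What is missing.} You need a second family of generators: FMOs $\M_\bm(f)$ with $\bm\leq\bv'$ and $f\in\ker\big(\GT^\bmu(\bv)\twoheadrightarrow\GT^\bmu(\bv')\big)$ (in the limit, $f\in\ker(\GT^\bmu\twoheadrightarrow\GT^\bmu(\bv))$). You also need an actual proof; your proposal leaves this step as the ``main obstacle''. The paper's argument handles any $\bv\geq\bv'$ at once, so no reduction to single steps is needed.
\begin{enumerate}
\item $\Phi_{\bv,\bv'}$ is \emph{strictly} filtered (Lemma~\ref{lem:surjectionsarefilt}). This holds because each $\qzast(\bv)^{\leq\bla}$ is spanned by products of FMOs with column sum $\leq\bla$, and FMOs map to FMOs or to $0$.
\item Strictness is essential. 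Without it you only get $\gr K\subseteq\ker\gr\Phi$, and passing to $\gr$ would not determine $K$. With it, $\gr K=\ker\gr\Phi=\bigoplus_\bla K^\bla\sfr_\bla$.
\item Write $\sfr_\bla=\sfr_{\bmu_1}\cdots\sfr_{\bmu_k}$ via the column decomposition, and use the shifted multiplication $\bigotimes_j\GT^{\bmu_j}(\bv)\twoheadrightarrow\GT^\bla(\bv)$. One shows each $K^\bla\sfr_\bla$ is spanned by products in which some factor $f_j\sfr_{\bmu_j}$ is of one of the two types.
\item If $\bla\notin\Partitions^\vertexset_{\leq\bv'}$, the first column already has $\bm_1\not\leq\bv'$. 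Otherwise the claim reduces to one about kernels of specialization maps of partially symmetric polynomials.
\item The FMO lifts of these one-column symbols then generate $K$. Your degreewise stabilization argument transfers this to the limit.
\end{enumerate}
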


Furthermore, for each $\bv$ we define a strictly positive part $\qzastplus(\bv) \subseteq \qzast(\bv)$ which is the subalgebra generated by FMOs that have \emph{restricted dressing}, see \S \ref{ssec: restricted dressing and positive part}. We show that the maps in \eqref{eq:55} respect these subalgebras. In particular, we define:
\begin{equation}
  \label{eq:57}
        \qzastplus = \displaystyle{\lim_{\longleftarrow}}~ \qzastplus(\bv)
\end{equation}
The algebra $\qzast$ plays the role of the Borel Yangian $Y_\hbar(\fb)$ and the algebra $\qzastplus$ plays the role of the unipotent Yangian $Y_{\hbar}(\fu)$. In particular, we have a tensor product decomposition
\begin{equation}
  \label{eq:58}
\qzast = \Lambda \otimes_{\kk} \qzastplus 
\end{equation}
where $\Lambda$ is the limit of the integrable system subalgebras of $\cA(\bv)$. It plays the role of the Cartan part $Y_{\hbar}(\fh)$ of $Y_\hbar(\fb)$. In finite-type ADE, the maps \eqref{eq:59} indeed induce an isomorphism $Y_\hbar(\fb) \overset{\sim}{\rightarrow} \qzast$ identifying the subalgebras $\qzastplus$ and $\GT$ with $Y_{\hbar}(\fu)$ and $Y_{\hbar}(\fh)$. See Remark \ref{rmk:finiteADE} for more details. 

Beyond finite type, one can naively try to define the Borel Yangian by generalizing the definitions in terms of Chevalley and Cartan generators. If one works over $\kk[\hbar^{\pm 1}]$ or specializes $\hbar \in \kk^\times$, then for simple quivers (no edge loops or multiple edges) one can show that this naive Borel Yangian surjects onto $\qzast$ \cite[Theorem A and Remark 3.12]{Weekes}, though it is unclear if this map is injective. However, if one does not invert $\hbar$, then the naive Borel Yangian does \emph{not} surject onto  $\qzast$ for a general quiver. This was a major motivation for the more intrinsic definition of $\qzast$.

We establish a variety of other structural properties of these limit algebras, such as the existence of natural filtrations, and freeness over the base ring $\kk$.  See \S \ref{sec:quantized_limit_zastava} for more details.

\subsubsection{Limit monopole formula and Hua's identity for Kac polynomials}
(See \S \ref{sec:monopolesandkac}.)
The Hilbert series of (quantized) Coulomb branches are given by the celebrated \emph{monopole formula} introduced by Cremonesi-Hanany-Zafaroni \cite{CHZ2}, which is given by counting monopole operators. This formula holds for the BFN construction, see \cite[\S 2(iii)]{BFN1}, and is readily inherited by quantized zastava spaces. 

In particular, the monopole formula for quantized zastava spaces limits well and we have our second main result.

\begin{Theorem}
  The Hilbert series for the limit quantized zastava $\qzast$ is given by the following limit monopole formula:
  \begin{equation}
    \label{eq:61}
        \fJ(q,z) =  \frac{1}{(1-q)^{|\arrowset|+1}}  \frac{1}{(q)_{\boldsymbol{\infty}}}\sum_{\bla \in \Partitions^\vertexset} \frac{q^{\frac{1}{2}(\bla, \bla)}}{\prod_{i\in \vertexset} \prod_{k\geq 1} (q)_{\bm_k} }z^{|\bla|}  
  \end{equation}
\end{Theorem}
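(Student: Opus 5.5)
The plan is to compute the Hilbert series of each $\qzast(\bv)$ by the monopole formula, pass to the limit $\bv\to\infty$ coefficientwise, and reorganize the limiting sum as a sum over $I$-tuples of partitions.

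\textbf{Step 1: the finite-$\bv$ formula.} By the monopole formula (\cite[\S 2(iii)]{BFN1}, inherited by quantized zastava), $\qzast(\bv)$ is free over $\kk$ with basis indexed by dominant coweights $\la=(\la^{(i)})_{i\in I}$ of $\prod_i GL(\bv_i)$, each dressed by a basis of $H^*_{\mathrm{Stab}(\la)}(\pt)$. This gives
\[
H_{\qzast(\bv)}(q,z)=\sum_{\la} q^{\Delta(\la)}\,z^{\deg(\la)}\,P_{\la}(q),
\]
where $\Delta(\la)$ is the monopole dimension (vector-multiplet contribution from roots, hypermultiplet contribution from the arrows of $\quiver$), $\deg(\la)$ is the $\ZZ^\vertexset$-grading, and $P_\la(q)=\prod_i\prod_k (q)_{m_k(\la^{(i)})}^{-1}$ is the Poincaré series of the stabilizer. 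We use the grading under which the maps $\Phi_{\bv,\bv'}$ are graded. For zastava (no framing), \cite[\S3(ii)]{BFN2} restricts to a cone of coweights in which only one sign occurs, up to a shift making $\deg$ and $\Delta$ nonnegative. Coweights then correspond to $I$-tuples of partitions $\bla$ with at most $\bv_i$ parts, together with the multiplicity $m_0=\bv_i-\ell(\la^{(i)})$ of zero entries.

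\textbf{Step 2: stabilization.} Since each $\Phi_{\bv,\bv'}$ is surjective and graded, and the limit is taken in graded rings, each graded piece of $\qzast$ is the inverse limit of the corresponding graded pieces. This limit stabilizes for $\bv$ large, by the freeness and filtration properties established in \S\ref{sec:quantized_limit_zastava}. Hence $\fJ=\lim_{\bv}H_{\qzast(\bv)}$ coefficientwise. In this limit:
\begin{itemize}
\item the zero-multiplicity factor $1/(q)_{m_0}$ tends to $1/(q)_\infty$ at each vertex, producing $1/(q)_{\boldsymbol\infty}$;
\item the quadratic exponent $\Delta$, after subtracting the part depending only on $\bv$ (normalized away by the grading), becomes $\tfrac12(\bla,\bla)$ for the Euler/Cartan-type form of $\quiver$;
\item $\deg$ becomes $|\bla|$.
\end{itemize}

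\textbf{Step 3: the prefactor $(1-q)^{-(|\arrowset|+1)}$.} This is the step I expect to be the main obstacle. Besides the $\bv$-dependent constant, the terms of $\Delta$ linear in the entries of $\la$ carry coefficients $\bv_j$ (coming from arrows and from the $\rho$-shift). For $\Delta$ to converge, these must be absorbed by regrading, for instance using the $\ZZ$ in $\ZZ^\vertexset\times\ZZ$ together with a shift by $\bv$. Only a global $\GG_m$ and one scalar per arrow survive, and these give free polynomial directions in the limit, each contributing a factor $1/(1-q)$. I would first do the Jordan quiver and $A_1$ carefully to pin down the exact normalization, then carry out the general bookkeeping arrow by arrow. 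The main check is that the exponents match $\tfrac12(\bla,\bla)$ exactly, with no residual $\bv$-dependence, so that the limit exists termwise.
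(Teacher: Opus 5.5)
Your route is the paper's second argument: the finite-$\bv$ monopole formula, coefficientwise stabilization of graded pieces, and $1/(q)_{\bm_0}\to 1/(q)_{\boldsymbol{\infty}}$. However, Step~3 rests on a mechanism that cannot work, and it is compensating for an omission in Step~1.

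Since $\qzast(\bv)$ is $\bG_\cO\rtimes\CC^\times\times\bF$-equivariant Borel--Moore homology, the dressing ring attached to $\bla$ is not $H^\bullet_{\mathrm{Stab}(\bla)}(\pt)$. It is $\GT^\bla(\bv)=H^\bullet_{Z_\bG(\bla)\times\CC^\times\times\bF}(\pt,\kk)=\tGT(\bv)^{S_\bla}$, a polynomial ring over $\base=\kk[\tfrac12\hbar,\theta_e : e\in\arrowset]$. Its Hilbert series is $(1-q)^{-|\arrowset|-1}\prod_{i}\prod_{k\geq 0}(q)_{\sm_{i,k}}^{-1}$. The prefactor is just the Hilbert series of $\base$, whose $|\arrowset|+1$ generators have degree one, and it is present at every finite $\bv$; for instance $\qzast(\mathbf 0)=\base$.

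Your explanation in Step~3 is that after regrading ``a global $\GG_m$ and one scalar per arrow survive and give free polynomial directions in the limit''. This cannot work for two reasons. First, shifting the $\ZZ$-degree by a function of the $\ZZ^\vertexset$-degree only relabels graded pieces; it never changes the number of basis elements, so it cannot create factors $1/(1-q)$. Second, the limit creates no new generators, since each graded piece $\qzast_{(\bk,d)}$ is isomorphic to $\qzast(\bv)_{(\bk,d)}$ for $\bv$ large. The objects you name (loop rotation and the flavour torus) are the right ones, but they enter through the equivariant parameters, not through the limit.

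The regrading you describe is real, but it is a separate issue handled at finite level by the choice of grading. The $\Delta$-grading and the grading with $\deg\M_\bla(f)=(|\bla|,\deg f+\tfrac12(\bla,\bla))$ differ by $(\bla,\bla)-2\Delta(\bla)=(|\bla|,\bv)$. This correction is linear in the $\ZZ^\vertexset$-degree $|\bla|$, not a function of $\bv$ alone as your Step~2 suggests, so it can be absorbed into a regrading. With this grading each $\fJ_\bv$ already has exponent $\tfrac12(\bla,\bla)$ with no residual $\bv$-dependence, and the maps $\Phi_{\bv,\bv'}$ are graded.

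Once you include $H^\bullet_{\CC^\times\times\bF}(\pt)$ in the dressing rings in Step~1, your finite formula is exactly the paper's. Passing to the limit then only replaces $(q)_{\bm_0}$ by $(q)_{\boldsymbol\infty}$, and Step~3 is no longer needed.
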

Here the sum is over $\cP^\vertexset$, the set of $I$-colored multipartitions, and the quantity $(\bla,\bla)$ comes from a pairing defined on the set of multipartitions that depends on the edges of the quiver, see \eqref{eq: pairing on tuples of partitions}.

This limit monopole formula is particularly striking because the sum which appears within is also the subject of a well-known generating function identity due to Hua \cite[Theorem 4.9]{Hua}, which expresses the Kac polynomials of the quiver $\quiver$.  Applying Hua's identity gives the following consequence, see \S \ref{ssec:hilb-for-limit-zastava} for more details:
\begin{Corollary}
    The Hilbert series of $\qzast$ is given by:
    \begin{align*}
        \fJ(q,z) &=  \frac{1}{(1-q)^{|\arrowset|+1}}  \frac{1}{(q)_{\boldsymbol{\infty}}} \operatorname{Exp}\Bigg( \frac{q}{1-q}\sum_{\alpha\in \Delta_\quiver^+} A_\alpha(q^{-1}) \bz^\alpha \Bigg) \\
        & = \frac{1}{(1-q)^{|\arrowset|+1}}  \frac{1}{(q)_{\boldsymbol{\infty}}} \prod_{\alpha \in \Delta_Q^+} \prod_{r\geq 1} \prod_{s=0}^{1 - \frac{1}{2} (\alpha,\alpha)} \frac{1}{( 1 -  q^{r-s} z^\alpha)^{t_s^\alpha}}
    \end{align*}
    where $\operatorname{Exp}$ denotes the Plethystic exponential, $\Delta_\quiver^+$ is the set of positive roots for $\quiver$ (see \S\ref{ssec: Kac and Hua}), and $A_\alpha(q) = \sum t^\alpha_s q^s$ is the Kac-polynomial corresponding to $\alpha$.
\end{Corollary}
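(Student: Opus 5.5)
The plan is to start from the limit monopole formula of the preceding Theorem. We will then replace the sum over $I$-colored multipartitions by the plethystic expression given by Hua's identity \cite[Theorem 4.9]{Hua}. After that, we expand the plethystic exponential into the stated infinite product.

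\textbf{Step 1: match the sum with Hua's identity.} Hua's formula states that
\[
\sum_{\bla\in\Partitions^\vertexset} \frac{q^{\langle\bla,\bla\rangle}}{\prod_{i}\prod_{k\ge1}\prod_{j=1}^{m_k(\lambda^i)}(1-q^{-j})}\,\bz^{|\bla|}
= \operatorname{Exp}\Big(\frac{1}{q-1}\sum_{\alpha\in\Delta^+_\quiver}A_\alpha(q)\bz^\alpha\Big),
\]
where the left side is written in terms of the Euler-form pairing of the conjugate partitions and $q$-Pochhammers in $q^{-1}$. The first task is bookkeeping: check that the pairing $(\bla,\bla)$ of \eqref{eq: pairing on tuples of partitions} is exactly the quadratic form appearing in Hua's formula. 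Then substitute $q\mapsto q^{-1}$ and use
\[
\prod_{j=1}^m(1-q^{j}) = (-1)^m q^{m(m+1)/2}\prod_{j=1}^m(1-q^{-j}).
\]
This turns Hua's summand into $q^{\frac12(\bla,\bla)}/\prod_{i,k}(q)_{\bm_k}$. We need the sign and power-of-$q$ corrections to cancel, and since $\sum_k m_k(m_k+1)/2$ and $|\la|$ are expressed through $\sum_k(\la^{\prime}_k)^2$, that cancellation is precisely what the normalization $\frac12(\bla,\bla)$ should absorb. On the right-hand side the factor $1/(q^{-1}-1)$ becomes $q/(1-q)$, which gives the first displayed equality.

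\textbf{Step 2: expand the plethystic exponential.} Write
\[
\frac{q}{1-q}A_\alpha(q^{-1}) = \sum_{r\ge1}\sum_{s} t^\alpha_s\, q^{r-s}.
\]
The degree of $A_\alpha$ is $1-\frac12(\alpha,\alpha)$, a standard fact about Kac polynomials, so $s$ ranges over $0\le s\le 1-\frac12(\alpha,\alpha)$. The coefficients $t_s^\alpha$ are nonnegative integers (Hausel–Letellier–Rodriguez-Villegas), although integrality alone is enough here. Apply $\operatorname{Exp}(\sum c\, q^a z^\alpha)=\prod (1-q^az^\alpha)^{-c}$ to obtain the product.

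\textbf{Main obstacle.} The delicate point is convergence and well-definedness. The exponents $r-s$ can be zero or negative, so the expression must be interpreted as a formal series in $z$ whose coefficients are Laurent series in $q$. For fixed $\alpha$ only finitely many $s$ occur, and for fixed $z$-degree only finitely many $\alpha$ contribute, so each $z^\alpha$-coefficient is a well-defined element of $\ZZ((q))$. The same reasoning shows that the Step 1 sum has $z^{\bv}$-coefficients that are rational in $q$, so the two sides can be compared coefficientwise.
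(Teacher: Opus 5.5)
Your overall route is the paper's: identify the sum in the limit monopole formula with Hua's generating function after $q\mapsto q^{-1}$. Then expand $\frac{1}{q^{-1}-1}=\frac{q}{1-q}=\sum_{r\geq 1}q^r$ in positive powers of $q$. This is the opposite direction to the powers of $q^{-1}$ in which Hua's product is stated in the paper's normalization.

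However, Step 1 fails as written, and it is the only real content here beyond Hua's theorem. You have the sign of the exponent in Hua's formula reversed. In Hua's normalization the summand is
\[
\frac{\prod_{e\in\arrowset}q^{\langle\la_{\source(e)},\la_{\target(e)}\rangle}}{\prod_{i\in\vertexset}q^{\langle\la_i,\la_i\rangle}\prod_{k\geq1}\prod_{j=1}^{\sm_{i,k}}(1-q^{-j})}
=\frac{q^{-\frac12(\bla,\bla)}}{\prod_{i,k}\prod_{j=1}^{\sm_{i,k}}(1-q^{-j})}.
\]
You can check this for one vertex and $\bla=(1)$: the $z$-coefficient of $\operatorname{Exp}(z/(q-1))$ is $\frac{1}{q-1}=\frac{q^{-1}}{1-q^{-1}}$, not $\frac{q}{1-q^{-1}}$. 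With the correct formula, the substitution $q\mapsto q^{-1}$ alone gives exactly $q^{\frac12(\bla,\bla)}/\prod_{k}(q)_{\bm_k}$. This is how the paper quotes Hua, and no Pochhammer inversion enters.

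Conversely, the cancellation you appeal to does not exist. The inversion identity produces the factor $(-1)^{\sum_{i,k}\sm_{i,k}}q^{\pm\sum_{i,k}\sm_{i,k}(\sm_{i,k}+1)/2}$. Its sign $(-1)^{\sum_i\ell(\la_i)}$ is not a power of $q$. Moreover, $\sum_k\sm_k(\sm_k+1)/2$ is not determined by $|\la|$ and $\sum_r(\la'_r)^2$: the partitions $(3,1,1,1)$ and $(2,2,2)$ both have size $6$ and $\sum_r(\la'_r)^2=18$, but give $7$ and $6$. So your matching of the two sums rests on a false computation. It must be replaced by the correctly normalized statement of Hua's identity.

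The rest of your argument is fine and agrees with the paper; two points would make your last paragraph precise. First, the $z^{\bv}$-coefficient of $\operatorname{Exp}\big(\sum_\alpha f_\alpha(q)\bz^\alpha\big)$ is a polynomial over $\QQ$ in the finitely many $f_\alpha(q^n)$ with $n\alpha\leq\bv$. Hence Hua's identity is an identity in $\QQ(q)$ in each $z$-degree, and may legitimately be re-expanded in $\ZZ(\!(q)\!)$. Second, the product over $r\geq1$ converges $q$-adically because $q^{r-s}\to0$. Your remark about finitely many $s$ and $\alpha$ does not cover this convergence.
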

We mention that there is a related formula for $\qzastplus$, see Corollary \ref{cor:HilbandKac}. 

%%%%%%%%%%%%%%%%%%%%%%%%%%%%%%%%%%%%%%%%%%
\subsubsection{Shuffle algebras}
(See \S \ref{sec: shuffle}.)
One of the most important tools in the study of affine quantum groups (especially for general quivers) are shuffle algebras, first introduced in the work of Feigin-Odesskii. There  is an extensive literature on this topic, see e.g.~\cite{FO,Negut1,Negut2,FT2,Tsymbaliuk2}. We will use the terminology \emph{big shuffle algebra} to refer to these rings with no wheel conditions imposed. Thus a big shuffle algebra $\preshu$ is a direct sum of (partially) symmetric polynomial rings, equipped with a shuffle product but with no wheel conditions. See \S \ref{sec: shuffle} for more details.

Using the localization theorem in equivariant Borel-Moore homology, BFN construct an embedding $\qzast(\bv) \hookrightarrow \GKLO{\bv}$ into a localized ring of difference operators, see \cite[Remark 5.23]{BFN1} and \cite[Appendix A]{BFN2}. This embedding is usually called the Gerasimov, Kharchev, Lebedev and Oblezin (GKLO) homomorphism in reference to \cite{GKLO} where related maps were first defined. 

The work of Finkelberg, Frassek and Tsymbaliuk \cite{FT2,Frassek-Tsymbaliuk-2022,Tsymbaliuk2} provides an explicit map, which we call the \emph{FFT homomorphism}:
\begin{equation}
  \label{eq:60}
 \Phi_{\bv}^+: \preshu \rightarrow \GKLO{\bv} 
\end{equation}
where $\preshu$ is a big shuffle algebra. The \emph{shuffle algebra} is a subalgebra $\shu \subseteq \preshu$ defined by imposing certain \emph{refined wheel conditions} (see \S \ref{ssec:wheel-conditions}). We define these conditions specifically so that  we have:
\begin{equation}
  \label{eq:63}
 \Phi_{\bv}^+(\shu) \subseteq \qzast(\bv)
\end{equation}
for all $\bv \in \NN^\vertexset$. Our result is the following:
\begin{Theorem}
For all $\bv \in \NN^I$, the FFT homomorphism induces a surjective map of $\ZZ^\vertexset \times \ZZ$-graded rings
  \begin{equation}
    \label{eq:11}
   \Phi^+_\bv: \shu \twoheadrightarrow \qzastplus(\bv).
  \end{equation}
 Moreover, for all $\bv', \bv \in \NN^I$ with $\bv' \leq \bv$, the following diagram commutes: 
\begin{center}
\begin{tikzcd}%[row sep=tiny]
& \qzastplus(\bv) \arrow[dd,"{\Phi^+_{\bv,\bv'}}"] \\
\cS \arrow[ur,"\Phi^+_{\bv}"] \arrow[dr,"\Phi^+_{\bv'}"'] & \\
& \qzastplus(\bv')
\end{tikzcd}
\end{center}
In particular, we obtain a natural map to the limit. This map to the limit is an isomorphism:
\begin{equation}
  \label{eq:030}
  \Phi^+ : \cS \overset{\sim}{\longrightarrow} \qzastplus 
\end{equation}
\end{Theorem}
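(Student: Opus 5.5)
We split the proof into three parts: surjectivity onto $\qzastplus(\bv)$, compatibility with the maps $\Phi^+_{\bv,\bv'}$, and the isomorphism onto the limit.

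\textbf{Surjectivity.} The refined wheel conditions were designed so that $\Phi^+_\bv(\shu) \subseteq \qzast(\bv)$, and since the FFT homomorphism is a map of graded rings into $\GKLO{\bv}$, $\Phi^+_\bv$ is a graded algebra homomorphism $\shu \to \qzast(\bv)$. For each FMO $\M_\bm(f)$ with restricted dressing we will exhibit an explicit preimage. We expect it to be a single-colour, single-degree element of $\preshu$: roughly $f$ times the Vandermonde-type factor coming from the shuffle kernel. One checks that this element satisfies the wheel conditions, so it lies in $\shu$, and that its FFT image matches the GKLO formula for $\M_\bm(f)$.

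This gives $\qzastplus(\bv) \subseteq \Phi^+_\bv(\shu)$. For the reverse inclusion we must show that the image lies in $\qzastplus(\bv)$ and not merely in $\qzast(\bv)$. The plan is to prove that $\shu$ is spanned, or at least generated, by these FMO-preimages together with elements whose images are forced into $\qzastplus(\bv)$ by a dressing and degree argument. The alternative is to prove directly that $\Phi^+_\bv(\shu) \cap \qzast(\bv) \subseteq \qzastplus(\bv)$, using the decomposition $\qzast(\bv) = \GT(\bv)\otimes \qzastplus(\bv)$ and the fact that shuffle elements have no Cartan component.

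\textbf{Compatibility.} We check that $\Phi^+_{\bv,\bv'}\circ\Phi^+_\bv$ and $\Phi^+_{\bv'}$ agree on generators. On FMO-preimages this holds because $\Phi_{\bv,\bv'}$ respects FMOs. For general elements we argue on the localized GKLO side: the formula for $\Phi^+_\bv$ is uniform in $\bv$, and the reduction map specializes variables in a way compatible with that formula. This yields the map $\Phi^+$ to the limit.

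\textbf{Isomorphism.} Surjectivity onto the limit follows gradewise. In each fixed $\ZZ^I\times\ZZ$-degree the inverse system stabilizes once $\bv$ is large, and each $\Phi^+_\bv$ is surjective.

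Injectivity is the main obstacle. Take $F \in \shu$ of degree $\bn$. We want $\Phi^+_\bv(F)\neq 0$ for $\bv \gg \bn$. The plan is to compute the leading term of $\Phi^+_\bv(F)$ as a difference operator, extracting the coefficient of a minimal shift. This coefficient is essentially a symmetrization of $F$ evaluated at the $\bv$ equivariant variables, times nonvanishing factors. When every $v_i\geq n_i$, symmetrizing a nonzero polynomial over these variables is nonzero. Hence $\bigcap_\bv \ker \Phi^+_\bv = 0$, so $\Phi^+$ is injective.

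As a fallback, we compare Hilbert series. Using Corollary \ref{cor:HilbandKac}, we would show that the graded dimensions of $\shu$ are bounded above by those of $\qzastplus$, for instance via a Kac-polynomial bound on wheel-condition spaces. Combined with surjectivity, this would give the isomorphism.
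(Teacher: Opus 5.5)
There is a genuine gap in the surjectivity step. You take $\Phi^+_\bv(\shu)\subseteq\qzast(\bv)$ as given (``the refined wheel conditions were designed so that\ldots''). That inclusion is not an available input: together with the sharper claim that the image lies in $\qzastplus(\bv)$, it is exactly what has to be proved. Exhibiting preimages of the restricted FMOs only gives $\qzastplus(\bv)\subseteq\Phi^+_\bv(\shu)$.

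Neither of your plans for the reverse inclusion closes this.
\begin{itemize}
\item Your first plan is generation of $\shu$ by the FMO-preimages. This is true, but in the paper it is a corollary of this theorem, and proving it directly needs the same leading-term analysis you have not supplied.
\item Your alternative rests on a decomposition that fails at finite level. The multiplication map $\GT(\bv)\otimes_\base\qzastplus(\bv)\to\qzast(\bv)$ is surjective but not injective (Lemma \ref{lem: truncated triangular decomp}). So ``having no Cartan component'' is not a well-defined test for membership in $\qzastplus(\bv)$.
\end{itemize}

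What is missing is the leading-term induction on $\overline{\supp}(\Phi^+_\bv(h))$ for $h\in\shu_\bk$. Take $\bla$ maximal in this set.
\begin{itemize}
\item An $\fsl_2$-string argument (Lemma \ref{lem:sl-2-divisibility}) uses the vanishing of $h$ at the weights $\bla\pm q\alpha$ lying outside the support. It shows that $h|_\bla$ is divisible by the linear forms $w_{i,r}-w_{i,s}+p\hbar$.
\item By Lemma \ref{lem:pochammer-n-m-p-identity}, these forms reduce the denominators $\prod_{r\neq s}(w_{i,r}-w_{i,s}-\bla_i(s)\hbar)^{\overline{\bla_i(r)}}$ of \eqref{eq:FFT-homomorphism} to $\prod_{r<s}(w_{i,r}-w_{i,s})^{\overline{\bla_i(r)-\bla_i(s)}}$.
\item The refined wheel conditions, together with Lemma \ref{lem:doublePochhdiv}, then cancel the edge denominators. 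What remains is exactly the edge numerator in the $\sfu^\bla$-coefficient of a product of FMOs (Corollary \ref{cor: filtration generated by FMOs}).
\item Since $h|_\bla$ involves only the $w_{i,r}$ with $r\le\ell(\la_i)$, the leftover dressing lies in $\GT^\bla_\res(\bv)$. It therefore splits as a sum of shifted products of restricted column dressings.
\item Subtract a sum $g$ of products of the elements of Theorem \ref{thm:lowest-part-of-filtration-is-fmo} with the same $\sfu^\bla$-coefficient. Then $h-g$ has strictly smaller support.
\end{itemize}
This induction gives $\Phi^+_\bv(h)\in\qzastplus(\bv)$. Because everything is reduced to FMOs, it also gives compatibility with $\Phi^+_{\bv,\bv'}$.

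Once this is in place, the rest of your outline is fine. The limit step, via part (8) of Theorem \ref{thm:limzastproperties} plus injectivity on $\shu_\bk$ for $\bv\geq\bk$, is what the paper does. Injectivity is cleanest through a single subset $A$, whose $\sfu^A$-coefficient is $F|_A$ times a nonzero factor; no symmetrization is needed.

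Your direct route to compatibility can also be made rigorous, but not by specializing variables inside $\GKLO{\bv}$, which is undefined on the localization. Use instead the adding-defect map $\varphi$ of Lemma \ref{lem:rationalembedding}. One checks that $\varphi\circ\Phi^+_\bv=\Phi^+_{\bv'}$ exactly: the rational factors in $\varphi(\sfu_{i,r})$ cancel the factors of \eqref{eq:FFT-homomorphism} involving indices beyond $\bv'$. Even so, this only applies once regularity is known, since $\Phi_{\bv,\bv'}$ is defined only on $\qzast(\bv)$.
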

We note that the FFT homomorphism involves reciprocals of integers, so we assume that $\QQ \subseteq \kk$ for the above results involving shuffle algebras. That said, we discuss an integral variation for $\kk = \ZZ$ in \S \ref{ssec:integralform}.

Observe that the isomorphism \eqref{eq:030} does not see the integrable system subalgebra of $\cA$, i.e. the Cartan part $\GT$. To remedy this we define an \emph{extended shuffle algebra} $\extshu$ that incorporates a Cartan part into the shuffle algebra. For this extended shuffle algebra, we prove (Theorem \ref{thm:isoextshu}) that the isomorphism \eqref{eq:030} extends to an isomorphism:
\begin{equation}
  \label{eq:64}
\Phi: \extshu \overset{\sim}{\longrightarrow} \qzast
\end{equation}

\subsubsection{Spherical generation}
(See \S \ref{sec:spherical}.)
Suppose that $\QQ \subseteq \kk$.  By analyzing the geometry of the BFN construction, the second author showed that the algebras $\qzast(\bv)$ are spherically generated after base change to $\kk[\hbar^{\pm 1}]$ \cite[\S 3]{Weekes}, i.e.~they are generated by their homogeneous components $\qzast(\bv)_0$ and $\qzast(\bv)_{\alpha_i}$ for $i \in \vertexset$. Here $0\in\ZZ^\vertexset$ is the zero element, and $\{\alpha_i \}_{i \in \vertexset}$ is the standard basis for $ \ZZ^\vertexset$. 

That said, the argument in \emph{loc.cit.}~is for simple quivers, and does not incorporate flavour symmetry deformations along the edges of the quiver. It thus does not immediately apply to the algebras $\qzast(\bv)$ in the generality considered in the current paper.  However, by using similar techniques, we establish an appropriate generalization of this result  in Theorem \ref{thm:sphericalgenv} below. 

Passing to the limit, it follows that $\qzast$ is also spherically generated after appropriate localization. Let us denote this localization by $\qzast_\loc$; see Theorem \ref{thm:sphericalgen} for the precise statement.  Using the triangular decomposition $\cA = \Lambda \otimes \qzastplus$, we prove that spherical generation passes to  $\qzastplus$:

\begin{Theorem}
Suppose that $\QQ \subseteq \kk$. Then the algebra $\qzastplus_\loc$ is spherically generated.  In particular, the same is true of the shuffle algebra $\shu \cong \qzastplus$.
\end{Theorem}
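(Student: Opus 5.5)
The plan is to deduce spherical generation of $\qzastplus_\loc$ from spherical generation of $\qzast_\loc$ (Theorem \ref{thm:sphericalgen}) using the triangular decomposition $\qzast = \GT \otimes_\kk \qzastplus$ and the $\ZZ^\vertexset$-grading. Here $\qzastplus_\loc$ should be the localization with respect to the same central, degree-zero elements used for $\qzast_\loc$; presumably this is inversion of $\hbar$, or of elements of the base ring $\kk[\hbar]$ together with the flavour parameters. Since these elements are central and of degree zero, the decomposition survives localization as $\qzast_\loc = \GT_\loc \otimes \qzastplus_\loc$.

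\textbf{Step 1.} Record that $\GT$ sits in $\ZZ^\vertexset$-degree $0$. Then show that the degree-$0$ part of $\qzast$ is exactly $\GT \otimes (\qzastplus)_0$, and that $(\qzastplus)_0$ is just the base ring. Positive FMOs with restricted dressing have degree in $\NN^\vertexset \setminus 0$, so this gives $\qzast_0 = \GT$. More generally, $\qzast_\alpha = \GT \otimes \qzastplus_\alpha$ for $\alpha \in \NN^\vertexset$.

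\textbf{Step 2.} Use the commutation relations between $\GT$ and $\qzastplus$. In the GKLO realization, elements of $\GT$ are symmetric functions in the variables $w_{i,r}$, and the FMOs in $\qzastplus$ act by shifts. Conjugating a positive element by an element of $\GT$ therefore produces a $\GT$-linear combination of positive elements. Concretely, I would prove that for $x \in \qzastplus_{\alpha_i}$ and $g \in \GT$,
\[
g\,x \in \sum x' \,\GT,
\]
with $x'$ ranging over $\qzastplus_{\alpha_i}$. In fact $\GT\qzastplus_{\alpha_i} = \qzastplus_{\alpha_i}\GT = \qzast_{\alpha_i}$ as bimodules.

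\textbf{Step 3.} Let $B \subseteq \qzastplus_\loc$ be the subalgebra generated by the components $\qzastplus_{\loc,\alpha_i}$. By Theorem \ref{thm:sphericalgen}, any $y \in \qzastplus_{\loc,\beta}$ is a sum of words $g_0 x_1 g_1 x_2 \cdots x_n g_n$ with $g_j \in \GT_\loc$ and $x_j \in \qzast_{\loc,\alpha_{i_j}}$. Using Step 2, move all the $\GT$-factors to the left. This rewrites $y$ as an element of $\GT_\loc \cdot B$, say $y = \sum_k h_k b_k$ with $h_k$ running over a $\kk$-basis of $\GT_\loc$.

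\textbf{Step 4.} Since $y$ already lies in $1 \otimes \qzastplus_\loc$ and $\GT_\loc \otimes \qzastplus_\loc \to \qzast_\loc$ is bijective, compare components in a basis of $\GT_\loc$ containing $1$. If $h_0 = 1$, this forces $y = b_0 \in B$. This needs $B$ itself to lie in $\qzastplus_\loc$, which holds by construction. Finally, transport the result along $\Phi^+ : \cS \overset{\sim}{\to} \qzastplus$ to get the statement for $\shu$ (suitably localized).

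The main obstacle is Step 2: showing that commuting $\GT$ past degree-$\alpha_i$ elements stays inside $\GT\cdot\qzastplus_{\alpha_i}$ in the limit, uniformly in $\bv$. I would check this on FMOs of degree $\alpha_i$ via the explicit GKLO formulas, where multiplication by a symmetric function $g(w)$ turns into $g(w+\hbar\,\cdot)$. I would then pass to the limit using the compatibility of the maps $\Phi_{\bv,\bv'}$ with the decomposition.
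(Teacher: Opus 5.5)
Your argument is correct and has the same skeleton as the paper's proof. You use spherical generation of $\qzast_{\loc}$ (Theorem \ref{thm:sphericalgen}), push all Cartan factors to the left, and then use $\qzast = \GT \otimes_\base \qzastplus$ with a basis of $\GT$ containing $1$ to read off the positive component.

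The only difference is how you commute the Cartan part past the degree-$\alpha_i$ elements. The paper uses the explicit relation of Lemma \ref{lem:QFrels1} between the series $Q_i(u)$ and $F_j(v)$. You instead use the triangular decomposition degree by degree. In your version, Step 2 is not an obstacle at all and needs neither a GKLO computation nor a separate limit argument. Since $\GT$ has $\ZZ^\vertexset$-degree $\mathbf 0$ and multiplication $\GT\otimes_\base\qzastplus\to\qzast$ is a graded isomorphism (part (5) of Theorem \ref{thm:limzastproperties}), you get $\qzastplus_\beta\,\GT \subseteq \qzast_\beta = \GT\,\qzastplus_\beta$ for every $\beta$. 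That inclusion is exactly what Step 3 needs. This makes your route slightly more formal than the paper's. The cost is that it does not directly name the generators $F_i^{(r)}$, although these span $\qzastplus_{\alpha_i}$ over $\base$ anyway.

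One correction is needed in Step 4. The decomposition is over $\base$, not over $\kk$. The map $\GT\otimes_\kk\qzastplus\to\qzast$ is not injective, because $\base$ sits in both factors. So you must expand in a $\base_{\loc}$-basis of $\GT_{\loc}$ containing $1$, for instance monomials in the $Q_i^{(r)}$. With a $\kk$-basis the components $b_k$ are not well defined, and the comparison that forces $y=b_0$ breaks down.
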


This spherical generation for shuffle algebras was conjectured by Negut in \cite[Conjecture 2.12]{Negut-2025}, and our work gives a proof of this which ultimately makes use of the geometry of the BFN construction. 

We note that, in general,  spherical generation is \emph{false} without sufficient localization. 

%%%%%%%%%%%%%%%%%%%%%%%%%%%%%%%%%%%%%%%%%%
\subsection{Possible extensions}
There is a candidate definition of Coulomb branches for quivers with symmetrizers \cite{NW}, and in particular a definition of quantized zastava spaces $\qzast(\bv)$.  We expect that the present work generalizes to this setting, but there is an obstacle to applying our current techniques: although the resulting algebras $\qzast(\bv)$ do contain FMOs $\M_\bm(f)$, they \emph{do not} generate the ring in general.\footnote{Consider the generalized Cartan matrix $\begin{pmatrix} 2 & -1 \\ -m & 2 \end{pmatrix}$. In  \cite[\S 3(ii)]{NW}, the coordinate ring of the corresponding zastava space $Z(\bv)$ with $\bv = (1,1)$ is computed. In the notation used there, the FMOs are the elements of the form $f, f\mathsf{y}_{0,1}, f\mathsf{y}_{1,0}$ and $ f \mathsf{y}_{1,1}$, for $f \in \kk[w_1,w_2]$. One can see that they do not generate the coordinate ring when $m >1$.}
Thus a different approach seems necessary.

Another natural generalization of the present work would be to the setting of \emph{(quantized) trigonometric zastava spaces}, defined via the K-theoretic version of the BFN construction. (These spaces/algebras are thus related to Coulomb branches of $4d $ $\cN=2$ theories in a generic complex structure.) Many of our results have very natural  analogs in this setting, with largely the same proof techniques.  We hope to return to this question in future work.

%%%%%%%%%%%%%%%%%%%%%%%%%%%%%%%%%%%%%%%%%%
\subsection{Relation to other work}

The Yangian $Y_\hbar(\fg)$ associated to a simple Lie algebra $\fg$ was first introduced by Drinfeld \cite{Drinfeld1}. Several geometric constructions of Yangians associated to general quivers have subsequently emerged, notably including the stable envelope construction of Maulik-Okounkov \cite{Maulik-Okounkov}, and the construction of preprojective cohomological Hall algebras (CoHAs) \cite{KS,SV1,SV2,SV3,YangZhao}. These two constructions have more recently been shown to be directed related \cite{BD,SV4}.

In particular, the CoHA construction admits numerous variations using critical cohomology,  dimensional reductions, and imposing various nilpotency conditions. The resulting algebras generally admit Hilbert series related to  Kac polynomials and Hua's identity, see e.g.~\cite{BSV,SV2,Davison}.  Comparing with the  properties of the quantized limit zastava $\qzast$, it is natural to expect that  $\qzast$ may be related to some variant of a CoHA.

While this paper was in preparation, we learned of independent on-going work of Jindal-Negu{\c t} which relates to this topic \cite{JN26_2}. They define the loop-nilpotent CoHA $\mathcal{A}^{T, \omega-\text{nilp}}_{\tilde{Q}, \tilde{W}}$ and study its relation to quantized Coulomb branches for quiver gauge theories, building upon their recent results in K-theory \cite{JN26}. Taken together, our results prove that there is an isomorphism
\begin{equation}
    \mathcal{A}^{T, \omega-\text{nilp}}_{\tilde{Q}, \tilde{W}} \xrightarrow{\sim} \qzastplus
\end{equation}
Similarly, we expect that the loop-nilpotent K-HA from \cite{JN26} would be related to the K-theoretic generalization of the present work, cf.~the previous subsection.  Jindal-Negu{\c t} also prove spherical generation of their algebras after appropriate generalization, by different techniques, and define the integral form of the loop-nilpotent CoHA via explicit integral wheel conditions \cite[Definition 2.3, Theorem 2.4]{JN26_2}.

We have also learned about on-going work of Botta-Tamagni \cite{BT}, who study geometric constructions of maps between CoHAs and quantized Coulomb branches, as well as the geometric construction of modules via spaces of quasimaps.

\subsection*{Acknowledgements}
We would like to thank Shivang Jindal, Andrei Negu{\c t}, Alexander Tsymbaliuk, and  Curtis Wendlandt for helpful conversations at key points of this project. We are also very grateful to Shivang Jindal and Andrei Negu{\c t} for sharing a preliminary version of their work with us. D.M. was supported by the Engineering and Physical Sciences Research Council grant UKRI167 ``Geometry of Double Loop Groups''. A.W.~was supported by an NSERC Discovery Grant.

%%%%%%%%%%%%%%%%%%%%%%%%%%%%%%%%%%%%%%%%%%%%%%%%%%%%%%%%%%%%%%%%%
\section{Preliminaries}
\label{sec: prelim}
Fix a commutative ring $\kk$. Throughout this paper, we will work with algebras and schemes over $\kk$. For a free $\kk$--module $V $ we will write $\dim_\kk V$ for the rank of $V$ over $\kk$. From \S \ref{sec: shuffle} onwards, we will require $\QQ \subseteq \kk$.
We write $\NN$ for the set of non-negative integers. In particular, $0 \in \NN$.

\subsection{Combinatorial preliminaries}
\subsubsection{Partitions}
\label{sssec:Partitions}

We follow the conventions of \cite[I.1]{Macdonald} regarding partitions.  Recall that a partition $\lambda = (\lambda_1, \lambda_2, \ldots)$ is a  non-increasing sequence $\lambda_1\geq \lambda_2 \geq \ldots \geq 0$ of integers such that $\lambda_r=0$ for sufficiently large $r$.  The \emph{size}  of $\lambda$ is the integer $|\lambda| = \sum_r \lambda_r$, and the \emph{length} of $\lambda$ is the number $\ell(\lambda)$ of non-zero parts.  If $|\lambda| = n$, we also write $\lambda \vdash n$. Every partition can be written in exponential notation $\lambda = (1^{\sm_1} 2^{\sm_2} \cdots)$, where $\sm_k = \#\{ r : \lambda_r = k\}$.  With this notation we have $|\lambda| = \sum_k k \sm_k$ and $\ell(\lambda) = \sum_k \sm_k$.
Occasionally we will depict partitions via Young diagrams in English notation, for example:
$$
    \ytableausetup{boxsize=1em,centertableaux}
    \lambda = (5,2,2) = (2^2 5^1) = \ydiagram{5,2,2}
$$
We denote the set of all partitions by $\Partitions$, and the set of partitions of length at most $\sv$ by $\Partitions_{\leq \sv}$. We write $\leq$ for the {dominance order} on partitions.  The sum of partitions $\la, \mu$ is defined by adding them componentwise: $\la+\mu = (\la_1+\mu_1, \la_2 +\mu_2, \ldots)$.

Following Hua \cite[Lemma 3.1]{Hua}, we define a pairing on partitions by the formula:
\begin{equation}
    \label{eq: Hua pairing}
    \langle \lambda, \mu \rangle  = \sum_r \lambda_r' \mu_r ' = \sum_{k, \ell \geq 1} \min\{k, \ell\} \sm_k \sn_\ell
    \end{equation}   
Here $\lambda' = (\lambda_1',\lambda_2',\ldots)$ and $ \mu' = (\mu_1',\mu_2',\ldots)$ denote the transposes of $\lambda$ and $\mu$, and $\lambda = (1^{\sm_1} 2^{\sm_2}\cdots) $ and $\mu = (1^{\sn_1} 2^{\sn_2}\cdots)$ in exponential notation.

%%%%%%%%%%%%%%%%%%%%%%%%%%%%%%%%%%%%%%%%%%%%%%%%%%%%%%%%%%%%%%%%%

\subsubsection{Quivers}
\label{sec: quivers}
Let $Q = (\vertexset, \arrowset)$ be a finite quiver, with $\vertexset$ being its set of vertices and $\arrowset$ its set of edges. For an edge $e \in E$, we write $\source(e)$ and $\target(e)$ for the source and target of $e$. Given an edge $e \in E$, we will sometimes write $e : i \rightarrow j$ to mean that $\source(e) = i$ and $\target(e) = j$.

For any $i,j \in \vertexset$, let $a_{ij} = \#\{ e \in \arrowset \ : \source(e) = i, \target(e) = j\}$ denote the number of edges from $i$ to $j$, and set
\begin{equation}
c_{ij} = 2\delta_{ij} - a_{ij} - a_{ji}
\end{equation}
If $Q$ contains no edge loops (i.e.~$a_{ii} = 0$ for all $i\in\vertexset$), then $(c_{ij})_{i,j \in \vertexset}$ is the Cartan matrix of a symmetric Kac-Moody algebra $\fg_Q$, whose Dynkin diagram is precisely the unoriented graph underlying $Q$. 

Let $\bv' = (\sv'_i)_{i \in \vertexset}$ and $ \bv = (\sv_i)_{i \in \vertexset}$ be elements of $\ZZ^\vertexset$. We consider the partial order on $\ZZ^\vertexset$ defined by declaring $\bv' \leq \bv$ if $\sv'_i \leq \sv_i $ for all $ i \in \vertexset$.  The symmetrized Euler form associated to $Q$ is the integral bilinear form on $\ZZ^\vertexset$ defined by:
\begin{equation}
\label{eq: Euler form}
(\bv', \bv) = \sum_{i,j \in \vertexset} c_{ij} \sv'_i \sv_j
\end{equation}
The corresponding integral quadratic form, called the Tits form of $Q$, satisfies:
$$
\tfrac{1}{2}(\bv, \bv)   = \tfrac{1}{2} \sum_{i,j \in \vertexset} c_{ij} \sv_i \sv_j  =  \sum_{i \in \vertexset} \sv^2_i  - \sum_{a \in \arrowset} \sv_{\source(a)} \sv_{\target(a)}
$$

\subsubsection{Multipartitions}

We write $\Partitions^{\vertexset}$ for the set of $I$-colored multipartitions. Let $ \bla = (\la_i)_{i \in \vertexset} \in \Partitions^{\vertexset}$ be a multipartition.  The multipartition $\bla$ determines a sequence $\bm_k = (\sm_{i, k})_{i \in \vertexset} \in \NN^\vertexset$ for $k \in \NN$ such that for each $i \in \vertexset$:
\begin{equation}
  \label{eq:10}
 \la_i = (1^{\sm_{i,1}} 2^{\sm_{i,2}} \cdots) 
\end{equation}
With this in mind, we adopt the following multipartition version of exponential notation and write: 
\begin{equation}
\label{eq: product exponential notation}
\bla = (1^{\bm_1} 2^{\bm_2} \cdots) 
\end{equation}
Given $\bv = (\sv_i)_{i \in \vertexset} \in \NN^\vertexset$, we will write $\Partitions^\vertexset_{\leq \bv}$ for the set of multipartitions $\bla = (\la_i)_{i \in \vertexset}$ such that $\ell(\la_i)\leq \sv_i$ for all $i\in \vertexset$.  We also write $|\bla| = (|\la_i|)_{i\in \vertexset} \in \NN^\vertexset$ for the tuple of sizes of $\bla$, and if $|\bla| = \bn$ then we write $\bla \vdash \bn$. We will extend the dominance order to $\Partitions^\vertexset$ by defining
\begin{equation}
\label{eq: product dominance order}
\bmu \leq \bla \ \ \Longleftrightarrow \mu_i \leq \la_i \text{ for all }i \in \vertexset.
\end{equation}

Given $\bla, \bmu \in \Partitions^\vertexset$ we define the sum $\bla + \bmu = (\lambda_i + \mu_i)_{i\in I}$ where $\lambda_i + \mu_i$ is the usual sum of partitions obtained by adding the parts of $\lambda_i$ and $\mu_i$ . Explicitly, if $\lambda_i = (\lambda_{i,1},\lambda_{i,2},\ldots)$ and $\mu_i = (\mu_{i,1},\mu_{i,2},\ldots)$, then $\lambda_i + \mu_i = (\lambda_{i,1}+ \mu_{i,1}, \lambda_{i,2} + \mu_{i,2}, \ldots)$.

Finally, for $\bla, \bmu \in \Partitions^\vertexset$ we use Hua's pairing (\ref{eq: Hua pairing}) on partitions to define a variation on the symmetrized Euler form: 
\begin{equation}
\label{eq: pairing on tuples of partitions}
( \bla, \bmu)  =  \sum_{i,j \in \vertexset} c_{ij} \langle \la_i, \mu_j \rangle   = \sum_{i,j \in \vertexset}c_{ij} \sum_{r \geq 1}  \la'_{i,r} \mu'_{j,r}  = \sum_{i,j \in \vertexset} c_{i,j}  \sum_{k,\ell \geq 1}\min\{ k, \ell\} \sm_{i,k} \sn_{i,\ell}
\end{equation}
Here $\la'_i = (\la'_{i,1}, \la'_{i,2},\ldots)$ denotes the conjugate partition to $\la_i$, and similarly for $\mu'_j$, and we have used exponential notation $\bla = (1^{\bm_1} 2^{\bm_2}\cdots)$ and $\bmu = (1^{\bn_1} 2^{\bn_2}\cdots)$ 

\begin{Remark}
\label{rmk: one-column}
We say that $\mu$ is a \emph{one-column} multipartition if it has the form $\bmu = (1^{\bc})$ for  $\bc \in \NN^\vertexset$.
 These are the minimal elements of $\Partitions^\vertexset$ under the  dominance order (\ref{eq: product dominance order}), and satisfy $ ( \bmu, \bmu) = (\bc, \bc)$.  Similarly, $\bla$ is a  \emph{one-row} multipartition if $\bla = (\bn^1)$ for some $\bn \in \NN^\vertexset$.
\end{Remark}

\subsubsection{Multisets (a.k.a. multicompositions) }
\label{subsubsec:multisets}
Fix a dimension vector $\bv \in \NN^I$. We consider tuples of multisets of the form $A = (A_i)_{i \in I}$, where for each $i \in I$, $A_i$ is a multiset supported on the set $[\bv_i]$. Consider tuples of mutisets of the form $A = (A_i)_{i \in I}$ where $A_i$ is a multisubset of $[\bv_i]$. We will use the same notation for multisets and their multiplicity functions, writing $A_i : [\bv_i] \rightarrow \NN$ for the multiplicity function of $A_i$. We will write $|A|$ for the dimension vector $(|A_i|)_{i \in I}$.
We write $A! = \prod_{i\in I} \prod_{r \in [\bv_i]} A_i(r)!$. We call $A$ as above an $I$-colored multiset supported on $[\bv]$, but we will abuse terminology and refer to $A$ as a multisubset of $[\bv]$ or simply a multiset.

Given two multisets $A$ and $B$ supported on $[\bv]$, we can form their sum $A + B$, which is also a multiset supported on $[\bv]$ characterized by requiring $(A+B)_i(r) = A_i(r) + B_i(r)$ for all $i\in I$, and $r \in [\bv_i]$.

Every multiset $A$ gives rise to an associated multipartition $\bla = (\lambda_i)_{i \in I}$ where $\lambda_i$ is the partition obtained by sorting $(A_i(1),\ldots, A_i(\sv_i))$ in decreasing order. We write $\bla = \Partition(A)$ to denote that $\bla$ is the multipartition associated to $A$. Conversely, each multipartition $\bla = (\lambda_i)_{i \in I}$ can be viewed as a multiset by defining
\begin{equation}
  \label{eq:17}
  \bla_i(r) = \lambda_{i,r}
\end{equation}
for all $i \in I$ and $r \in [\sv_i]$.

Observe that that the multisets described here are exactly the multipartition analogue of compositions. For that reason, we will sometimes refer to multisets as \emph{multicompositions}. The two terms are synonymous, but depending on the context one of the two terms will be more descriptive.

\subsection{BFN construction of quantized zastava}
We will briefly recall the Braverman-Finkelberg-Nakajima (BFN) construction of Coulomb branches of quiver gauge theories. We refer the reader to the original papers \cite{BFN1,BFN2} and the overview from our previous paper \cite[\S 3]{MW2}.

\subsubsection{The data of an unframed quiver gauge theory}

Fix a dimension vector $\bv = (\sv_i)_{i \in \vertexset} \in \NN^\vertexset$, and for all $i \in \vertexset$ write $V_i = \CC^{\sv_i}$. Define:
\begin{equation}
\label{eq:01}
\bG = \prod_{i \in \vertexset} \GL(V_i), \qquad \bN = \bigoplus_{e \in \arrowset}  \Hom_\CC( V_{\source(e)}, V_{\target(e)} ), 
\end{equation}
Then $\bG$ acts naturally on $\bN$, and we can apply the BFN construction \cite{BFN1} to the pair $(\bG,\bN)$.

We will also incorporate \emph{flavour symmetry} \cite[\S 3(viii)]{BFN1}, and to this end consider the following group, which depends only on our choice of quiver $Q$:
\begin{equation}
\bF = \prod_{e \in \arrowset} \CC^\times
\end{equation}
The group $\bF$ acts on $\bN$, commuting with the action of $\bG$: each factor of $\CC^\times$ in $\bF$ acts by rescaling the summand  $ 
\Hom(V_{\source(e)}, V_{\target(e)}) \subseteq \bN$ with weight 1, and acts trivially on all other summands.

\subsubsection{The quantized Coulomb branch of an unframed quiver gauge theory}
Associated to this data, one can form the space of triples $\cR_{\bG,\bN}$ (see \cite[\S 2(i)]{BFN1} and also \cite[\S 3.1]{MW2} for the definition). The key facts we need are the following. There is a map $\cR_{\bG,\bN} \rightarrow \Gr_{\bG}$ from the space of triples to the affine Grassmannian of $\bG$. The space $\cR_{\bG,\bN}$ carries an action of $\bG_{\cO} \rtimes \CC^\times \times \bF$, were $\bG_{\cO}$ is the positive loop group of $\bG$, and the middle $\CC^\times$ factor is the so-called loop rotation.  The \emph{quantized Coulomb branch} (over base $\kk$) is the vector space
\begin{equation}
  \label{eq:50}
  H^{\bG_\cO \rtimes \CC^\times \times \bF}_{\bullet}(\cR_{\bG,\bN},\kk)
\end{equation}
which carries an associative algebra structure. 

\begin{Remark}
    Following the conventions of \cite[\S 2(i)]{BFN1}, we let the loop rotation $\CC^\times$ act on $\bN$ with weight $\frac{1}{2}$. This explains the appearance of $\frac{1}{2}\hbar$ in the definition of $\base$ in \eqref{eq: base ring} below. (Technically we should thus replace $\CC^\times$ by its double-cover, but we will ignore this.) 
\end{Remark}

\subsubsection{Quantized zastava}

Because $\bG$ is a product of general linear groups, there is a notion of the positive part $\Gr^+_{\bG} \subseteq \Gr_{\bG}$ of the affine Grassmannian of $\bG$. Correspondingly there is a positive part $\cR^+_{\bG,\bN} \subseteq \cR_{\bG,\bN}$ defined as the preimage $\Gr_{\bG}^+$, see \cite[\S 3(ii)]{BFN2}. Moreover,
\begin{equation}
  \label{eq:51}
  H^{\bG_\cO \rtimes \CC^\times \times \bF}_{\bullet}(\cR^+_{\bG,\bN}, \kk)
\end{equation}
is a subalgebra of $H^{\bG_\cO \times \CC^\times \times \bF}_{\bullet}(\cR_{\bG,\bN},\kk)$ called the \emph{quantized zastava space} of degree $\bv$. We write
\begin{equation}
  \label{eq:52}
  \qzast(\bv) = H^{\bG_\cO \rtimes \CC^\times \times \bF}_{\bullet}(\cR^+_{\bG,\bN},\kk)
\end{equation}
Occasionally, we will instead write $\qzast_\kk(\bv)$ to explicitly indicate the choice of base ring $\kk$.

\begin{Remark}
If we specialize the equivariant parameters for $\CC^\times \times \bF$ to zero, we obtain a commutative algebra, which is the coordinate ring of the zastava space $Z(\bv)$ (see \cite[\S 3(ii)]{BFN2}).
\end{Remark}

\subsubsection{Integrable system}
\label{sec: integrable system}
To fix notations, we first consider the ring
\begin{equation}
\label{eq: base ring}
\base = H_{\CC^\times\times \bF}^\bullet(\pt, \kk)  = \kk[\tfrac{1}{2} \hbar, \theta_e : e \in \arrowset]
\end{equation}
It will serve as a base ring for the algebras we consider below. Note that $\base$ depends only on the quiver $Q$, and not on any choice of dimension vector.

Given a dimension vector $\bv$,  consider the group $\bG = \prod_i \GL(V_i)$ as in (\ref{eq:01}).  We  fix a maximal torus $\bT = \prod_{i \in \vertexset} \bT_i$ of $\bG$, i.e.~the product of diagonal matrices with respect to some chosen basis.  Then we may identify the following equivariant cohomology ring with a polynomial ring:
\begin{equation}
\label{eq: GT with no Weyl}
\tGT(\bv) = H_{\bT\times \CC^\times\times \bF}^\bullet(\pt, \kk) = \kk[w_{i,r}, \tfrac{1}{2}\hbar, \theta_e \ : \ i \in \vertexset, 1 \leq r \leq \sv_i, e \in \arrowset]
\end{equation}
The Weyl group of $\bG$ is the product $S_{\bv} = \prod_{i \in \vertexset} S_{\sv_i}$ of symmetric groups $S_{\sv_i}$, which acts on $\tGT(\bv)$ with $S_{\sv_i}$ permuting the variables $w_{i,r}$ for $1 \leq r \leq \sv_i$.   Replacing $\bT$ with $\bG$ in the equivariant cohomology above corresponds to taking Weyl invariants, giving a subring:
\begin{equation}
\label{eq: GT subalgebra}
\GT(\bv) = H_{\bG\times \CC^\times\times \bF}^\bullet(\pt, \kk)  =\tGT(\bv)^{S_\bv}
\end{equation}
This is a tensor product of symmetric polynomial rings (over the base $\base$).

By \cite[\S 3(vi)]{BFN1}, there are natural inclusions of rings:
\begin{equation}
\label{eq: integrable system}
\base \subseteq \ \GT(\bv) \ \subseteq \ \qzast(\bv)
\end{equation}
The subring $\base$ is central in $\qzast(\bv)$, while the subring $\GT(\bv)$ is commutative but not central.  The left action of $\GT(\bv)$ on $\qzast(\bv)$ is by cap product, and $\qzast(\bv)$ is free as a left (or right) module.

\begin{Remark}
    By the results of \cite[\S 2(iii)]{BFN1}, viewed as a left module over $\GT(\bv)$ the ring $\qzast(\bv)$ is isomorphic to an infinite direct sum of cohomology rings $H_{\bG \times \CC^\times \times \bF}^\bullet(\bG / \bP, \kk)$ over various parabolic subgroups $\bP \subseteq \bG$. See also \S \ref{sec: filtration} below. For quiver gauge theories the group $\bG$ is a product of factors $\GL(V_i)$, so these cohomology rings are all free modules over $\GT(\bv)$, for any base ring $\kk$. They are also free as modules over $\kk$ itself.   It follows that these rings behave well under base change.  In particular, we have 
    \begin{equation}
        \qzast_\kk(\bv) = \qzast_\ZZ(\bv) \otimes_\ZZ \kk
    \end{equation}
    and a natural inclusion $\qzast_\ZZ(\bv) \subset \qzast_\QQ(\bv)$.
\end{Remark}

\subsubsection{GKLO embedding}
\label{ssec: GKLO embedding}
Consider an $\base$--algebra $\widetilde{\cA}(\bv)$ of difference operators, generated by variables $w_{i,r}$ and $\sfu_{i,r}$ for each $i \in \vertexset$ and $1 \leq r\leq \sv_i$, with relations
$$
\sfu_{i,r} w_{j,s} = (w_{j,s} + \hbar \delta_{i,j} \delta_{r,s}) \sfu_{i,r}, \qquad [\sfu_{i,r}, \sfu_{j,s}] = [w_{i,r}, w_{j,s}] = 0
$$
We also consider its Ore localization $\GKLO{\bv}$ at the multiplicative set generated by
$$
\left\{ w_{i,r} - w_{i,s} + n \hbar \ : \ i \in \vertexset, 1 \leq r \neq s \leq \sv_i, n \in \ZZ \right\}
$$
There is a natural action of the group $S_{\bv} =\prod_{i \in \vertexset} S_{\sv_i}$ on $\GKLO{\bv}$ given by permuting the variables $w_{i,r}$ and $\sfu_{i,r}$.
By \cite[\S 5]{BFN1}, there is an embedding of rings (denoted $\bz^\ast \iota_\ast^{-1}$ in \emph{loc. cit.}) 
\begin{equation}
\label{eq: GKLO}
\qzast(\bv) \ \hookrightarrow \ \GKLO{\bv}^{S_{\bv}}
\end{equation}
which is the identity on the subring $\GT(\bv)$ from (\ref{eq: GT subalgebra}). This map is often called the \textbf{GKLO embedding}, since it generalizes the work of Gerasimov, Kharchev, Lebedev and Oblezin \cite{GKLO}.

\subsubsection{Grading by $\ZZ^\vertexset$}
\label{ssec: gradings 0}

The ring $\qzast(\bv)$ has a $\ZZ^\vertexset$-grading coming from the connected components of $\cR^+_{\bG,\bN}$, defined via the general procedure outlined in \cite[\S 3(v)]{BFN1} and  \cite[Remark 3.2]{BFN2}.  

In more detail, the connected components $\Gr_{\GL(V_i)}^{+,k_i} \subset \Gr_{\GL(V_i)}^+$ are labelled by integers $k_i \in \NN$, with $\Gr^{\leq \lambda_i}_{\GL(V_i)}$ lying in component $k_i = |\lambda_i|$.  Taking products over $i \in \vertexset$, the components $\Gr_{\bG}^{+, \bk} \subset \Gr_\bG^+$ are labelled by tuples $\bk \in \ZZ^{\vertexset}_{\geq 0}$. Their preimages $\cR^{+,\bk}_{\bG,\bN}$ under $\cR^+_{\bG,\bN} \rightarrow \Gr^+_{\bG}$ provide the connected components of $\cR^+_{\bG,\bN}$ (which is homotopy equivalent to $\Gr_\bG$).  They induce  a $\ZZ^\vertexset$-graded ring structure:
\begin{equation}
    \label{eq: Z^I grading}
    \qzast(\bv) = \bigoplus_{\bk \in \ZZ^\vertexset} \qzast(\bv)_\bk
\end{equation}
with homogeneous components $\qzast(\bv)_\bk = H_\bullet^{(\bG_\cO \rtimes \CC^\times)\times \bF} (\cR^{+,\bk}_{\bG,\bN})$.

\begin{Remark}
    We think of $\ZZ^\vertexset$ as a root lattice (associated to $\fg_\quiver$ in the case where $\quiver$ has no loops), and will sometimes denote its standard basis by $\{ \alpha_i\}_{i \in \vertexset}$.
\end{Remark}

\subsubsection{Filtration by multipartitions}
\label{sec: filtration}
Recall from \S\ref{sec: quivers} that $\Partitions_{\leq \bv}^\vertexset$ denotes the set of  tuples of partitions $\bla = (\la_i)_{i \in \vertexset}$ of length at most $\bv \in \NN^\vertexset$.  We may equip this set with the product dominance order $\bmu \leq \bla$ from (\ref{eq: product dominance order}).

The set $\Partitions_{\leq \sv_i}$ of partitions of length $\leq \sv_i$ may be identified with the set of \emph{non-negative} dominant coweights of $\GL(V_i)$, by identifying each partition $\lambda_i  \in \Partitions_{\leq \sv_i}$ with a tuple of integers, some of which may be zero:
\begin{equation}
\la_i \mapsto (\la_{i,1},\ldots, \la_{i,\sv_i}) \in \ZZ^\vertexset_{\geq 0}
\end{equation}
The partitions $\Partitions_{\leq \sv_i}$ thus label the $\GL(V_i)_{\cO}$--orbits $\Gr_{\GL(V_i)}^{\la_i} $ inside $\Gr_{\GL(V_i)}^+$, and by taking products the $\bG_\cO$--orbits $\Gr_\bG^{\bla}$ in $\Gr_\bG^+$ are labelled by multipartitions $\bla \in \Partitions_{\leq \bv}^\vertexset$.  Their Zariski closures $\Gr_\bG^{\leq \bla} = \bigsqcup_{\boldsymbol{\mu} \leq \bla} \Gr_\bG^{\boldsymbol{\mu}}$ are spherical Schubert varieties. Define $\cR^{\bla}_{\bG,\bN}$ and $\cR^{\leq \bla}_{\bG,\bN}$ as their preimages under the map $\cR^+_{\bG,\bN} \rightarrow \Gr_\bG^+$, and put
\begin{equation}
\qzast(\bv)^{\bla}  = H_\bullet^{(\bG_\cO \rtimes \CC^\times) \times \bF}( \cR^{\bla}_{\bG,\bN}, \kk), \qquad \qzast(\bv)^{\leq \bla}  =   H_\bullet^{(\bG_\cO \rtimes \CC^\times) \times \bF}( \cR^{\leq \bla}_{\bG,\bN}, \kk)
\end{equation}
Then by construction $\bmu \leq \bla$ implies $\qzast(\bv)^{\leq \bmu} \subseteq \qzast(\bv)^{\leq \bla}$.  

\begin{Proposition}[\mbox{\cite[Proposition 6.1]{BFN1}}]
\label{prop: filtration defined}
The subspaces $\qzast(\bv)^{\leq \bla}$ for $\bla \in \Partitions_{\leq \bv}^\vertexset$ make $\qzast(\bv)$  into a filtered graded ring, filtered by the poset $(\Partitions_{\leq \bv}^\vertexset, \leq)$.
That is:
\begin{enumerate}
    \item We have $1 \in \qzast(\bv)^{\leq \mathbf{0}}$ and  $\qzast(\bv)^{\leq \bmu} \cdot \qzast(\bv)^{\leq \bla} \ \subseteq \ \qzast(\bv)^{\leq \bmu + \bla}$  for all $\bla,\bmu \in \Partitions_{\leq \bv}^\vertexset$.
    
    \item For each $\bk \in \NN^\vertexset$, we have a decomposition of the homogeneous component
    $$
    \qzast(\bv)_\bk = \bigcup_{\substack{\bla \in \Partitions^\vertexset_{\leq \bv},~ |\bla| = \bk}} \qzast(\bv)^{\leq \bla}
    $$
    In particular $\qzast(\bv)_{\bk} = \qzast(\bv)^{\leq \bk}$, where on the right side we interpret $\bk$ as a tuple of one-row partitions (i.e.~maximal for the dominance order).
\end{enumerate}
\end{Proposition}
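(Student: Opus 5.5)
The plan is to transport the argument of \cite[Proposition 6.1]{BFN1} to the positive part $\cR^+_{\bG,\bN}$, which is a union of $\bG_\cO$-orbits in the space of triples, using that all constructions are compatible with restricting to $\Gr^+_\bG$.

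For the first claim, $1 \in \qzast(\bv)^{\leq \mathbf{0}}$ is immediate. The unit is the fundamental class of the fiber over the base point $\Gr^{\mathbf 0}_\bG = \Gr^{\leq \mathbf 0}_\bG$, since $\mathbf{0}$ is the minimal element of $\Partitions^\vertexset_{\leq\bv}$.

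For multiplicativity I will recall that the convolution product is defined through the diagram $\cR\times\cR \leftarrow p^{-1}(\cR\times\cR)\rightarrow q(p^{-1}(\cR\times\cR))\rightarrow \cR$, where the middle space lies over the convolution Grassmannian $\Gr_\bG\tilde\times\Gr_\bG$ and the last map covers the multiplication $m:\Gr_\bG\tilde\times \Gr_\bG\to\Gr_\bG$. Classes supported on $\cR^{\leq\bmu}$ and $\cR^{\leq\bla}$ therefore multiply to a class supported on the preimage of $m(\Gr^{\leq\bmu}_\bG\tilde\times\Gr^{\leq\bla}_\bG)$. The standard fact $m(\Gr^{\leq\bmu}\tilde\times\Gr^{\leq\bla}) = \Gr^{\leq \bmu+\bla}$ for dominant coweights of $\GL$ then gives the containment. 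This fact holds factorwise, with sums of partitions corresponding to sums of dominant coweights. The key input is that pushforward along the proper map preserves support and that $\cR^{\leq\bmu+\bla}\subseteq \cR^+$ is closed, so the product lands in $H_\bullet(\cR^{\leq\bmu+\bla})\to H_\bullet(\cR^+)$. I also need these maps $H_\bullet(\cR^{\leq\bla})\to\qzast(\bv)$ to be injective, so that the subspaces make sense. I would get injectivity from the parity/freeness argument of \cite[\S 2(iii), \S 6]{BFN1}: stratifying by orbits, each stratum contributes a free module $H^\bullet_{\bG\times\CC^\times\times\bF}(\bG/\bP)$ in even degrees, so the long exact sequences split. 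Over an arbitrary $\kk$ this works because these modules are free, as noted in the Remark above.

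For the second claim, $\cR^{+,\bk}$ is the preimage of $\Gr^{+,\bk}_\bG$. That component is the finite union of the orbits $\Gr^\bla$ with $|\bla|=\bk$ and $\ell(\la_i)\le \sv_i$. Among these, the one-row multipartition $\bk$ (interpreted with $\la_{i}=(k_i)$) dominates all others, so $\Gr^{+,\bk}=\Gr^{\leq\bk}$, and hence $\cR^{+,\bk}=\cR^{\leq\bk}$. This gives $\qzast(\bv)_\bk=\qzast(\bv)^{\leq\bk}$ directly, and the union statement follows.

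The main obstacle is the injectivity and support bookkeeping over an arbitrary base ring $\kk$ in the infinite-dimensional setting. I would handle it exactly as BFN do, by truncating to finite-dimensional approximations over the closed finite unions $\Gr^{\leq\bla}$.
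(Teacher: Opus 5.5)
Your proposal is correct and is essentially the paper's approach. The paper gives no independent proof and simply cites \cite[Proposition 6.1]{BFN1}. You reproduce exactly that argument, restricted to the closed union of orbits $\cR^+_{\bG,\bN}$: the convolution product is supported over $m(\Gr^{\leq\bmu}\widetilde{\times}\Gr^{\leq\bla})=\Gr^{\leq\bmu+\bla}$, the map $H_\bullet(\cR^{\leq\bla})\to H_\bullet(\cR^+)$ is injective by the even, free stratification, and the one-row multipartition is the maximum in each component. The only detail worth adding is the degenerate case $k_i>0=\sv_i$, where $\Gr^{+,\bk}_\bG=\emptyset$ and both sides of (2) are zero, consistent with Remark \ref{rmk: filtration by all partitions}.
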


We warn the reader that a \emph{filtered graded ring} is \emph{not} a filtered ring in the usual sense. Instead, we mean rather that the homogeneous components of the graded ring  $\qzast(\bv)$ are filtered  as in part (2) above, compatibly with the ring structure as in part (1). Let us  denote:
\begin{equation}
    \qzast(\bv)^{<\bla} = \sum_{\substack{ \bmu < \bla}} \qzast(\bv)^{\leq \bmu}
\end{equation}
Then the associated graded ring can be explicitly described \cite[Proposition 6.2]{BFN1}. 
\begin{equation}
\label{eq: gr ring}
\gr \qzast(\bv) = \bigoplus_{\bla \in \Partitions_{\leq \bv}^{\vertexset}} \raisebox{.1em}{$\qzast(\bv)^{\leq \bla}$} \big/ \raisebox{-0.1em}{$\qzast(\bv)^{<\bla}$} \cong \bigoplus_{\bla \in \Partitions_{\leq \bv}^\vertexset} \qzast(\bv)^{\bla}  = \bigoplus_{\bla \in \Partitions_{\leq \bv}^{\vertexset}} \GT^{\bla}(\bv) \cdot \sfr_{\bla}
\end{equation}
On the right hand side $\sfr_{\bla} = [\cR_{\bG,\bN}^\bla] \in \qzast(\bv)^\bla$ denotes the fundamental class, which is acted on (on the left, via cap product) by elements of the equivariant cohomology ring
\begin{equation}
\label{eq: GT for bla}
\GT^{\bla}(\bv) = H_{\operatorname{Z}_\bG(\bla) \times \CC^\times \times \bF}^\bullet(\pt, \kk) \cong  \tGT(\bv)^{S_{\bla}}
\end{equation}
Here $Z_\bG(\lambda)$ denotes the centralizer of $\lambda$ in $\bG$, which is a Levi subgroup having Weyl group $S_{\bla} \subseteq S_\bv$ the stablizer of $\bla$.  The relations in this associated graded ring are given explicitly \cite[\S 4(ii)]{BFN1}:
\begin{equation}
\label{eq: rels in gr}
f(w) \sfr_\bla \cdot g(w) \sfr_\bmu = a_{\bla, \bmu} f(w) g(w+\hbar \bla) \sfr_{\bla+\bmu}
\end{equation}
Here $a_{\bla, \bmu} \in \GT^{\bla+\bmu}(\bv)$ is a certain explicit product \cite[(4.7)]{BFN1}:
\begin{equation}
    a_{\bla, \bmu} = \prod_{e \in \arrowset} \prod_{r \in [\sv_i], s\in [\sv_j]}a_{\bla,\bmu}(e;r.s)
\end{equation}
Here, to  define $a_{\bla,\bmu}(e; r,s)$ on the right side, fix $e\in E$ and $r\in [\sv_i], s\in [\sv_j]$. To simplify notation let us write $e: i \rightarrow j$, and define integers $\ell = \lambda_{j,s} - \lambda_{i,r}$ and $m = \mu_{j,s}-\mu_{i,r}$. Then:
\begin{equation}
     a_{\bla,\bmu}(e;r,s) = \left\{ 
     \begin{array}{cl} 
     \displaystyle{\prod_{p=1}^{\min\{|\ell|,|m|\}}} \big(w_{j,s}-w_{i,r}+\theta_e + (\ell-p+\tfrac{1}{2})\hbar\big), & \text{if } \ell \geq 0 \geq m, \\ 
     \displaystyle{\prod_{p=1}^{\min\{|\ell|,|m|\}}} \big(w_{j,s}-w_{i,r}+\theta_e + (\ell+p-\tfrac{1}{2})\hbar\big), & \text{if } \ell \leq 0 \leq m, \\ 
     1, & \text{otherwise.}
    \end{array} \right.
\end{equation}

\begin{Remark}
    \label{rmk:aproperties}
    For most purposes, we will not need the precise expression above, but it will be important that:
    \begin{enumerate}
        \item $a_{\bla,\bmu} = 1$ if and only if $\la_{j,s} - \la_{i,r}$ and $\mu_{j,s} - \mu_{i,r}$ have the same sign in the weak sense (i.e.~are both $\geq 0$ or are both $\leq 0$) for all arrows $e:i\rightarrow j$ and all $1\leq r \leq \sv_i$, $1 \leq s\leq \sv_j$.

        \item $a_{\bla,\bmu}$ only involves variables $w_{i,r}$ for $i\in \vertexset$ and $1\leq r \leq \min\{\ell(\lambda_i), \ell(\lambda_j)\}$. In particular it is independent of $\bv$.
    \end{enumerate}
\end{Remark}

\begin{Remark}
\label{rmk: filtration by all partitions}
We  may also consider $\qzast(\bv)$ to be a filtered graded ring for the larger poset $(\Partitions^{\vertexset}, \leq)$.  Indeed, for any $\bla \in \Partitions^{\vertexset}$ define
$$
\qzast(\bv)^{\leq \bla} = \left\{ \begin{array}{cl} \qzast(\bv)^{\leq \bla},& \text{if } \bla \in \Partitions_{\leq \bv}^{\vertexset}, \\ 0, & \text{otherwise.} \end{array} \right.
$$
This is well-defined because the subset $\Partitions_{\leq \bv}^\vertexset \subset \Partitions^{\vertexset}$ has the following basic property:
\begin{equation}
\label{eq: partitions length decreases under dominance}
\text{For any } \bla, \bmu \in \Partitions^\vertexset \text{ with } \bmu \leq \bla, \text{ if } \bla \notin \Partitions_{\leq \bv}^\vertexset \text{ then } \bmu \notin \Partitions_{\leq \bv}^{\vertexset}
\end{equation}
Indeed, if a partition $\la_i$ has length $\ell(\lambda_i)> \sv_i$, then any partition $\mu_i \leq \la_i$ also has $\ell(\mu_i)>\sv_i$.
\end{Remark}

\subsubsection{Defining the filtration via GKLO}

There is a another way to characterize the filtration algebraically via the GKLO embedding, and this is how we will work with the filtration in practice.
First, we introduce the following notation. For $A$ an $I$-colored multiset supported on $[\bv]$, we denote: 
\begin{align}
  \label{eq:14}
  \sfu^A = \prod_{i \in I} \prod_{r \in [\bv_i]} u_{i,r}^{A_i(r)} \in \GKLO{\bv}
\end{align}
In particular, for $\bla \in \Partitions^\vertexset_{\bv}$, we can form the element $\sfu^\bla$.
For each $\bk \in \NN^I$, every element of $\left(\GKLO{\bv}\right)_{\bk}$ is of the form
    $\sum_{A: |A| = \bk} f_A u_A$
where the sum is over multicompositions of $\bk$ supported on $[\bv]$.

We define a filtration of $\GKLO{\bv}^{S_{\bv}}$ as follows. For $\bla \in \Partitions^\vertexset_{\bv}$, we define
\begin{equation}
   \left(\GKLO{\bv}^{S_{\bv}}\right)^{\leq \bla}
\end{equation}
to be the set of elements 
\begin{equation}
    \sum_{A} f_A u_A \in \left(\GKLO{\bv}^{S_{\bv}}\right)_{|\bla|}
\end{equation}
satisfying $f_{\bmu} = 0$ for $\bmu \nleq \bla$. Note that by $S_{\bv}$ symmetry, this also implies $f_A = 0$ for any multiset with associated partition $\bmu$ having $\bmu \nleq \bla$. The construction of the GKLO embedding is via torus localization at fixed points, and the condition $\bmu \nleq \bla$ exactly excludes the fixed points lying outside of $\cR^\bla_{\bG,\bN}$, so the following is immediate.
\begin{Proposition}
For all $\bla \in \cP^\vertexset_{\bv}$, we have:
\begin{equation}
    \qzast(\bv)^{\leq \bla} = \qzast(\bv) \cap  \left(\GKLO{\bv}^{S_{\bv}}\right)^{\leq \bla}
\end{equation}
\end{Proposition}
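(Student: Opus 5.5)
The plan is to prove the two inclusions separately, using that the GKLO embedding (\ref{eq: GKLO}) is built from the localization theorem in equivariant Borel--Moore homology. First we make the embedding explicit on a homogeneous component. For a class $x \in \qzast(\bv)_{\bk}$, localization to the $\bT$-fixed points of $\Gr^{+,\bk}_\bG$ expresses $\bz^\ast\iota_\ast^{-1}(x)$ as $\sum_A f_A \sfu^A$. Here $A$ runs over multicompositions with $|A| = \bk$, identified with the fixed points $z^A \in \Gr^+_\bG$. The coefficient $f_A$ is, up to the invertible Euler-class factors that are inverted in $\GKLO{\bv}$, the restriction of $x$ to the fixed point $z^A$. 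The fixed point $z^A$ lies in the orbit $\Gr^{\bmu}_\bG$ with $\bmu = \Partition(A)$.

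For the inclusion $\subseteq$, take $x \in \qzast(\bv)^{\leq \bla}$. Then $x$ is pushed forward from $\cR^{\leq\bla}_{\bG,\bN}$, whose image in $\Gr_\bG$ is $\Gr_\bG^{\leq\bla} = \bigsqcup_{\bmu\leq\bla}\Gr^\bmu_\bG$. Restriction to a fixed point outside this closed subvariety kills a class supported on it. So $f_A = 0$ whenever $\Partition(A) \nleq \bla$, and in particular $f_\bmu = 0$ for $\bmu\nleq\bla$. Together with $|\bla| = \bk$, this places $x$ in $\left(\GKLO{\bv}^{S_{\bv}}\right)^{\leq \bla}$.

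For the inclusion $\supseteq$, let $x \in \qzast(\bv)_\bk$ have $f_\bmu = 0$ for all $\bmu \nleq \bla$. By Proposition \ref{prop: filtration defined}(2), $x \in \qzast(\bv)^{\leq \bk}$. The finite set $\{\bmu : |\bmu| = \bk\}$ carries the dominance order, and we argue by downward induction on it. The key input is the long exact sequence (or, for a general closed union, the decomposition (\ref{eq: gr ring})) of equivariant Borel--Moore homology along the stratification by the $\cR^\bmu_{\bG,\bN}$. From it we want the following claim: if $x \in \qzast(\bv)^{\leq \bnu}$ has vanishing fixed-point restriction at $z^\bnu$, then $x \in \qzast(\bv)^{<\bnu}$.

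To prove the claim, note that the image of $x$ in $\qzast(\bv)^{\bnu} = \GT^\bnu(\bv)\,\sfr_\bnu$ is $g\,\sfr_\bnu$ for some $g \in \GT^\bnu(\bv)$. Its restriction to $z^\bnu$ equals $g$ times the equivariant Euler class of the normal data at $z^\bnu$. That Euler class is a nonzero divisor, because the homology of $\cR^\bnu$ is free over $\GT^\bnu(\bv)$ and is computed by localization on the orbit. Hence the vanishing restriction forces $g = 0$. Repeatedly removing every maximal $\bnu \nleq \bla$ then moves $x$ into $\sum_{\bmu\leq\bla}\qzast(\bv)^{\leq\bmu} = \qzast(\bv)^{\leq\bla}$, since $\bmu \leq \bla$ already gives $\qzast(\bv)^{\leq \bmu}\subseteq \qzast(\bv)^{\leq \bla}$.

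The main obstacle is this claim, i.e.\ injectivity of restriction to the open stratum's fixed points, made cleanly without inverting extra elements. I would first try to use that $\cR^\bnu_{\bG,\bN}$ is a vector bundle over $\Gr^\bnu_\bG$, which in turn is an affine bundle over the partial flag variety $\bG/\bP_\bnu$. Its $\bT$-equivariant homology is free and injects under localization to its fixed points. We also need that the restriction maps out of the closed union commute with the quotient by $\qzast(\bv)^{<\bnu}$, which follows because open restriction is compatible with fixed-point localization.
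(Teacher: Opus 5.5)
Your proposal is correct and is essentially the paper's argument. The paper simply declares the statement immediate because the GKLO embedding is built by torus localization, so the condition $\bmu \nleq \bla$ picks out exactly the fixed points off the closed stratum. Your support argument gives $\subseteq$, and your downward induction over strata, using injectivity of restriction to $z^{\boldsymbol{\nu}}$ on the open stratum $\cR^{\boldsymbol{\nu}}_{\bG,\bN}$, makes explicit the reverse inclusion that the paper leaves implicit.

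One harmless imprecision: the matter Euler factors such as $w_{\target(e),s}-w_{\source(e),r}+\theta_e-\tfrac{1}{2}\hbar$ are not inverted in $\GKLO{\bv}$; they appear in the numerators of the GKLO formulas. They are nonzero elements of a polynomial ring, however, so $f_A=0$ is still equivalent to the vanishing of the fixed-point component of $\iota_*^{-1}(x)$ at $z^A$, and nothing in your argument breaks.
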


\subsection{Fundamental monopole operators}
As in Remark \ref{rmk: one-column}, we consider {one-column multipartitions}  $\bmu = (1^{\bm})$ for $\bm = (\sm_i)_{i \in \vertexset} \in \NN^\vertexset$. Then $\bmu \in \Partitions^{\vertexset}_{\leq \bv}$ if and only if $\bm \leq \bv$. Viewed as coweights for $\bG = \prod_{i} \GL(V_i)$,  these elements $\mu$ correspond to tuples of fundamental coweights, and the corresponding $\bG_\cO$--orbits $\Gr_\bG^{\leq \bmu} = \Gr_\bG^{\bmu}$ are closed. Thus by \cite[\S 6(ii)]{BFN1}, for any 
$$
f \in \GT^{\bmu}(\bv) =  \tGT(\bv)^{S_{\bm} \times S_{\bv-\bm}},
$$
we may consider a corresponding element $f \cdot [ \cR_{\bG,\bN}^{\leq \bmu}] \in \qzast(\bv)^{\leq \bmu}$, and every element of $\qzast(\bv)^{\leq \bmu}$ has this form.  Following the notation of \cite{MW2}, we denote this element by 
\begin{equation}
\M_\bm(f) \ \in \ \qzast(\bv)
\end{equation}
and refer to these elements as \textbf{(dressed) fundamental monopole operators}, or \textbf{FMOs} for short. Their images under the GKLO embedding (\ref{eq: GKLO}) are given explicitly, similarly to \cite[\S A(ii)]{BFN2}:
\begin{equation}
\label{eq: FMOs under GKLO}
\M_\bm(f) \ \mapsto \ \sum_{\substack{\Gamma = (\Gamma_i)_{i \in I} \\ \Gamma_i \subseteq [\sv_i],  \# \Gamma_i = \sm_i}} f\vert_\Gamma \cdot  \frac{\prod_{ e \in \arrowset} \prod_{r\in \Gamma_{\source(e)}, s\notin \Gamma_{\target(e)}} (w_{\target(e),s} - w_{\source(e),r}+\theta_e - \tfrac{1}{2} \hbar)}{\prod_{i \in I} \prod_{r \in \Gamma_i, s\notin \Gamma_i}(w_{i,r} - w_{i,s})}  \sfu_{\Gamma} 
\end{equation}
Here we borrow the notations $f|_\Gamma$ and $u_\Gamma$ from  \cite{MW2}: for each tuple $\Gamma = (\Gamma_i)_{i \in I}$ above, let $\sigma = (\sigma_i) \in S_\bv$ be an element such that $\sigma_i([\sv_i]) = \Gamma_i$. Then we define $f|_\Gamma = \sigma(f)$, an element of $\tGT(\bv)$.  Finally we define $\sfu_\Gamma = \prod_{i \in I} \prod_{r \in \Gamma_i} \sfu_{i,r}$.
\begin{Remark}
When $\bm = \mathbf{0}$ we interpret $\M_\mathbf{0}(f) = f \in \GT(\bv)$. In general there is a linearity property over the subring $\GT(\bv) \subset \qzast(\bv)$: for any $\mathbf{0} \leq \bm \leq \bv$ with corresponding $\bmu = (1^{\bm})$,
$$
\M_\bm(fg ) = f \cdot \M_\bm(g), \quad \text{for all } f\in \GT(\bv), g \in \GT^{\bmu}(\bv)
$$
We note however that $\M_\bm(gf) \neq \M_\bm(g) \cdot f$ in general.
\end{Remark}

The FMOs $\M_\bm(f)$ are particularly interesting elements of $\qzast(\bv)$. Not only do we know their explicit images \eqref{eq: FMOs under GKLO} under the GKLO homomorphism, but they also generate the ring $\qzast(\bv)$: 
\begin{Proposition}[\mbox{\cite[Proposition 3.1]{Weekes}}]
\label{generation by FMOs}
\mbox{}

\begin{enumerate}[label=(\alph*),topsep=3pt,itemsep = 5pt]
\item The associated graded ring $\gr \qzast(\bv)$ from \S \ref{sec: filtration} is generated by the elements $f \sfr_{\bmu}$ with $\bmu = (1^{\bc})$, taken over all $\mathbf{0} \leq \bc\leq \bv$ and all $f \in \GT^{\bmu}(\bv)$.

\item $\qzast(\bv)$ is generated by the FMOs $\M_\bc(f)$, taken over all $\mathbf{0} \leq \bc\leq \bv$ and all $f \in \GT^{\bmu}(\bv)$  where $\bmu = (1^{\bc})$.
\end{enumerate}
\end{Proposition}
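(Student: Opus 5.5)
Part (a) carries the main content, and (b) follows from it by a standard lifting argument, so I will prove (a) first.

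For (a), I work with the explicit description \eqref{eq: gr ring} of $\gr \qzast(\bv)$ as $\bigoplus_{\bla} \GT^{\bla}(\bv)\,\sfr_{\bla}$, with multiplication given by \eqref{eq: rels in gr}. Fix $\bla \in \Partitions^\vertexset_{\leq \bv}$. I decompose it into columns: writing $\bla' $ for the column data, $\bla = \sum_{k\geq 1} (1^{\bc_k})$, where $\bc_k = (\la'_{i,k})_{i\in\vertexset}$ is the vector of $k$-th column lengths. The sequence $\bc_1 \geq \bc_2 \geq \cdots$ is decreasing, and each $(1^{\bc_k})$ lies in $\Partitions^\vertexset_{\leq \bv}$. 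The columns are nested, so the supports are nested. Concretely, for every arrow $e:i\to j$ and every $r,s$, the differences $\la_{j,s}-\la_{i,r}$ restricted to the partial sums $\bmu = \sum_{k<K}(1^{\bc_k})$ and to the next column $(1^{\bc_K})$ have the same weak sign. This is because the column $(1^{\bc_K})$ is $1$ exactly on the first $c_{i,K}$ coordinates, and those coordinates are exactly where $\bmu$ takes its maximal value $K-1$. I will check this sign condition carefully. By Remark~\ref{rmk:aproperties}(1) it gives $a_{\bmu,(1^{\bc_K})}=1$, hence
$$\sfr_{(1^{\bc_1})}\cdots \sfr_{(1^{\bc_n})} = \sfr_{\bla}$$
in $\gr\qzast(\bv)$, by induction on the number of columns.

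It remains to produce all of $\GT^{\bla}(\bv)\,\sfr_{\bla}$, not only the fundamental class. By \eqref{eq: rels in gr}, a product $f_1\sfr_{(1^{\bc_1})}\cdots f_n \sfr_{(1^{\bc_n})}$ equals $f_1\, f_2(w+\hbar\bmu_1)\cdots f_n(w+\hbar\bmu_{n-1})\,\sfr_\bla$, where $\bmu_k$ denotes the partial sums. Since $\GT(\bv) \subseteq \GT^{(1^{\bc})}(\bv)$, I can already multiply on the left by $\GT(\bv)$. So I need the $\GT(\bv)$-span of the products of the shifted $f_k$ to be all of $\tGT(\bv)^{S_\bla}$. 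The stabilizer $S_\bla$ is the intersection $\bigcap_k S_{\bc_k}\times S_{\bv-\bc_k}$. The ring $\tGT(\bv)^{S_\bla}$ is generated by elementary symmetric functions in the blocks $\{r : \la_{i,r} = k\}$. Such a block is a difference of two nested column sets, so its elementary symmetric functions are polynomials in the elementary symmetric functions of the first $c_{i,k}$ variables for the various $k$. Each of those is available as $f_k$ at the appropriate column. The shifts by $\hbar\bmu$ are constant along each block and hence triangular: I absorb them by induction on degree, using that $\sfr$-coefficients may be multiplied freely and shifted back. This block-generation and shift bookkeeping is the step I expect to be the most delicate. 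I will handle it by induction on $n$, placing the generator for column $K$ in the first factor, which is unshifted, whenever possible.

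For (b), I use the standard lifting argument. Let $B \subseteq \qzast(\bv)$ be the subalgebra generated by FMOs. The symbol of $\M_\bc(f)$ is $f\sfr_{(1^\bc)}$, so by (a) the symbols of elements of $B \cap \qzast(\bv)^{\leq\bla}$ exhaust $\qzast(\bv)^{\leq\bla}/\qzast(\bv)^{<\bla}$ for every $\bla$; this uses Proposition~\ref{prop: filtration defined}(1). I then induct on $\bla$ in the dominance order within a fixed $|\bla| = \bk$. This poset is finite, with minimal elements the one-column multipartitions and with $\qzast(\bv)^{<\bla}$ generated by the lower pieces. Each element of $\qzast(\bv)^{\leq\bla}$ differs from an element of $B$ by an element of $\qzast(\bv)^{<\bla}$, which lies in $B$ by induction. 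Finally, Proposition~\ref{prop: filtration defined}(2) gives $\qzast(\bv)_\bk = \qzast(\bv)^{\leq \bk}$, so $B = \qzast(\bv)$.
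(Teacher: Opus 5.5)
Your proposal is correct and follows essentially the same route as the paper. You decompose $\bla$ into columns, check $a=1$ via the weak-sign criterion of Remark \ref{rmk:aproperties}(1), prove that the shifted multiplication map onto $\GT^\bla(\bv)$ is surjective, and then lift along the filtration. Where the paper peels off the first column and inducts on the remainder, you multiply the partial sum of the earlier columns by the next column, which is the mirror image. Where the paper calls the surjectivity straightforward, you spell it out via block elementary symmetric functions and the triangularity of the shifts.

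One small correction: the coordinates where the $K$-th column equals $1$ are contained in the set where $\sum_{k<K}(1^{\bc_k})$ attains its maximum $K-1$, but they are not in general equal to it; equality fails when $\bc_K \neq \bc_{K-1}$. Containment is all your sign check actually uses, so the argument stands.
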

We include a proof here which is more constructive than the one from \cite{Weekes}:
\begin{proof}
Part (b) follows from part (a), since $\M_\bc(f) \in \qzast(\bv)$ is a lift of $f \sfr_{\bmu} \in \gr \qzast(\bv)$.  So let us prove part (a):

Given a tuple $\bla \in \Partitions_{\leq \bv}^\vertexset$ of partitions, we extract $\bmu = (\mu_i)_{i \in  \vertexset}$ and $\boldsymbol{\kappa} = (\kappa_i)_{i \in  \vertexset}$ as follows: define $\mu_i$ to be the one-column partition given by the first (largest) column of the partition $\lambda_i$, and define $\kappa_i$ by removing the first column from $\lambda_i$. Then $\lambda_i = \mu_i + \kappa_i$. For example:
$$
\ytableausetup{boxsize=1em,centertableaux}
\text{if }\  \lambda_i = \ydiagram{4,3,1}, \qquad \text{ then }\ \mu_i = \ydiagram{1,1,1} \  \ \text{ and } \ \kappa_i = \ydiagram{3,2}
$$
Note that $\bmu = (1^{\bc})$ where $\mathbf{0} \leq \bc \leq \bv$ encodes the lengths of the first columns of $\bla$.  

To prove part (a) it suffices, by induction on the number of columns of $\bla$, to show that every element of $\GT^\bla(\bv)\cdot \sfr_\bla$ can be written as a sum of elements of the form $f(w) \sfr_\bmu \cdot g(w) \sfr_{\boldsymbol{\kappa}}$.  Because of the formula \eqref{eq: rels in gr} for this product, this amounts to the following two claims:

First, we claim that $\sfr_{\bmu} \sfr_{\boldsymbol{\kappa}}  = \sfr_{\bla}$. In other words, we claim that $a_{\bmu, \boldsymbol{\kappa}} = 1$ in the formula (\ref{eq: rels in gr}).  To prove this we must verify that the integers
$$
\mu_{\target(e),s} - \mu_{\source(e), r} \ \ \ \text{ and } \ \ \ \kappa_{\target(e), s} - \kappa_{\source(e), r}
$$
are either both $\geq 0$ or are both $\leq 0$, for all $e \in \arrowset$ and all $1 \leq r \leq \sv_{\source(a)}$, $1 \leq s \leq \sv_{\target(a)}$. Since each $\mu_i = (1^{\sc_i})$ is a one-column partition of length $\sc_i$, we find:
$$
\mu_{\target(e),s} - \mu_{\source(e), r} = \left\{ \begin{array}{rl} 1, & s \leq \sc_{\target(e)} \text{ and } r > \sc_{\source(e)}, \\ -1, & s > \sc_{\target(e)} \text{ and } r \leq \sc_{\source(e)}, \\ 0, & \text{else}. \end{array} \right.
$$
Note that only the first two cases are relevant.  In the first case, since $r > \sc_{\source(e)}$  and $\kappa_{\source(e)}$ has length at most $\sc_{\source(e)}$, we have $\kappa_{\source(e),r} = 0$. Therefore $\kappa_{\target(e), s} - \kappa_{\source(e), r} = \kappa_{\target(e), s} \geq 0$ has the same sign as $\mu_{\target(e),s} - \mu_{\source(e), r}$.  The second case is similar.

Second, we claim that the following map of partially symmetric polynomial rings is surjective:
\begin{equation}
\label{eq: psym polys}
\GT^{\bmu}(\bv) \otimes \GT^{\boldsymbol{\kappa}}(\bv)  \twoheadlongrightarrow \GT^{\bla}(\bv),  \ \ \  f(w) \otimes g(w) \longmapsto f(w) g(w+\hbar \bmu)
\end{equation}
By taking tensor products, it is enough to prove this in the case where our quiver $Q$ has a single vertex, which is straightforward.  
\end{proof}
Iterating the procedure from the proof of the theorem, we obtain a recipe to construct lifts of the elements $f \sfr_\bla$: we separate $\bla$ into its constituent tuples of columns lengths $\bc_1,\bc_2,\ldots, \bc_k$, with corresponding one-column multipartitions $\bmu_s = (1^{\bc_s})$ Then every element $f \sfr_\bla$ can be written as a sum of terms of the form
\begin{equation}
    \M_{\bc_1}(f_1) \M_{\bc_2}(f_2) \cdots \M_{\bc_k}(f_k)    
\end{equation}
Indeed, consider the generalization of \eqref{eq: psym polys}:
\begin{equation}
     \bigotimes_{s=1}^k \GT^{\bmu_s}(\bv)  \twoheadlongrightarrow \GT^\bla(\bv), \quad f_1(w) \otimes \cdots \otimes f_k(w) \longmapsto  \prod_{s=1}^k f_s\big( w + \hbar(\bmu_1+\ldots+\bmu_{s-1}) \big)
\end{equation}
Given $f \in \GT^\bla(\bv)$,  choose some preimage $\sum f_1(w)\otimes \cdots \otimes f_k(w)$ under this map.  (Here we mimic Sweedler notation, for brevity; note that a preimage will generally not be a simple tensor.)  Then we may define a corresponding  dressed monopole operator:
\begin{equation}
    \label{eq: general MOs}
    \M_\bla(f) := \sum \M_{\bc_1}(f_1) \M_{\bc_2}(f_2) \cdots \M_{\bc_k}(f_k)    
\end{equation}
This element defines a lift of $f \sfr_\bla$, but is non-canonical, as it depends on the choice of lift of $f$ above. If we fix such an element for each $\bla$ and $f$, then we will obtain a \emph{basis} $\{ \M_\bla(f)\}$ for $\qzast(\bv)$ over $\kk$.

\begin{Corollary}
\label{cor: filtration generated by FMOs}
For each $\bla \in \Partitions_{\leq \bv}^\vertexset$, the filtered piece of $\qzast(\bv)^{\leq \bla}$ is spanned by all products 
\begin{equation}
\label{eq: prod of monopoles}
\M_{\bc_1}(f_1) \M_{\bc_2}(f_2) \cdots \M_{\bc_k}(f_k)
\end{equation}
where  $\mathbf{0} \leq \bc_j \leq \bv$ for each $j \in I$, the corresponding multipartitions $\bmu_j = (1^{\bc_j})$ satisfy
$$
\bmu_1 + \ldots + \bmu_k \leq \bla,
$$
and where $f_j \in \GT^{\bmu_j}(\bv)$.

Furthermore, the coefficient of $u^\bla$ in (\ref{eq: prod of monopoles}) under the GKLO embedding is:
\begin{equation} 
  \label{eq:20}
  f \cdot \frac{\prod_{e \in E} \prod_{r \in [\bv_{s(e)}],s \in [\bv_{t(e)}], \bla_{s(e)}(r) > \bla_{t(e)}(s)} (w_{t(e),s} - w_{s(e),r} + \theta_e - \tfrac{1}{2} \hbar)^{\underline{\bla_{s(e)}(r) - \bla_{t(e)}(s)}} }{\prod_{i \in I} \prod_{r,s \in [\bv_i], r < s} (w_{i,r} - w_{i,s} )^{\overline{\bla_i(r) - \bla_i(s)} }}
\end{equation}
where
\begin{equation}
  \label{eq:21}
  f = f_1(w) f_2(w + \hbar \bmu_1) \cdots f_k(w + \hbar (\bmu_1 + \cdots + \bmu_{k-1}) ) 
\end{equation}

\end{Corollary}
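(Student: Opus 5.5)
The statement has two parts: a spanning statement for the filtered piece $\qzast(\bv)^{\leq \bla}$, and a formula for the leading $\sfu^\bla$-coefficient. I will prove them in that order.

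\textbf{Spanning.} First I check that every product \eqref{eq: prod of monopoles} with $\bmu_1+\cdots+\bmu_k \leq \bla$ lies in $\qzast(\bv)^{\leq \bla}$. Each $\M_{\bc_j}(f_j)$ lies in $\qzast(\bv)^{\leq \bmu_j}$. By Proposition \ref{prop: filtration defined}(1), the product lies in $\qzast(\bv)^{\leq \bmu_1+\cdots+\bmu_k}$, which is contained in $\qzast(\bv)^{\leq\bla}$.

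For the converse I argue by induction on the finite poset $\{\bnu \in \Partitions^\vertexset_{\leq\bv} : \bnu \leq \bla\}$. Take $x \in \qzast(\bv)^{\leq \bla}$ and look at its image in $\qzast(\bv)^{\leq\bla}/\qzast(\bv)^{<\bla} \cong \GT^\bla(\bv)\sfr_\bla$, say $f\sfr_\bla$. The discussion after Proposition \ref{generation by FMOs} splits $\bla$ into its column multipartitions $\bmu_s = (1^{\bc_s})$ with $\sum_s \bmu_s = \bla$. It also gives the surjection $\bigotimes_s \GT^{\bmu_s}(\bv) \twoheadrightarrow \GT^\bla(\bv)$. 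Hence the element $\M_\bla(f)$ of \eqref{eq: general MOs} is a sum of products of the required form with the same image $f\sfr_\bla$ in $\gr$. Therefore
\[
x - \M_\bla(f) \in \qzast(\bv)^{<\bla} = \sum_{\bnu<\bla} \qzast(\bv)^{\leq\bnu}.
\]
By induction each $\qzast(\bv)^{\leq\bnu}$ is spanned by products with column sum $\leq \bnu < \bla$. The base case is the minimal elements, the one-column multipartitions, where $\qzast(\bv)^{\leq \bmu}$ consists of FMOs by definition. Finally, the partitions $\bnu$ outside $\Partitions_{\leq\bv}^\vertexset$ contribute zero by Remark \ref{rmk: filtration by all partitions}.

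\textbf{Leading coefficient.} Here I compute inside $\GKLO{\bv}$ using \eqref{eq: FMOs under GKLO}. Expanding the product gives a sum over tuples $(\Gamma^{(1)},\ldots,\Gamma^{(k)})$, with monomial $\sfu_{\Gamma^{(1)}}\cdots\sfu_{\Gamma^{(k)}}$ times coefficients; moving each $\sfu$ past later coefficients shifts $w$ by $\hbar$. For the product to contribute to $\sfu^\bla$, the multiset $\sum_s \Gamma^{(s)}$ must equal $\bla$. Since the column sum is $\leq \bla$ and $\bla$ has the same size, this forces the column sum to equal $\bla$ (up to the dominance inequality being an equality in the relevant term). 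The key combinatorial claim is then that the only choice with $\sum_s \Gamma^{(s)} = \bla$ (as the multiset $\bla_i(r)=\la_{i,r}$) is the nested one, $\Gamma^{(s)}_i = [\,\text{length of the $s$-th column of } \la_i\,]$. This uses that $\bla$ is sorted and the column lengths are weakly decreasing. If the column sum is strictly below $\bla$, the $\sfu^\bla$ coefficient vanishes by the filtration description in GKLO terms, so I interpret the formula for the case $\sum_s\bmu_s=\bla$.

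With the nested choice fixed, the dressing factors multiply to $f_1(w) f_2(w+\hbar\bmu_1)\cdots$, which is \eqref{eq:21}. The edge factors
\[
\bigl(w_{t,s}-w_{s,r}+\theta_e-\tfrac12\hbar\bigr)
\]
from step $p$, after shifting, contribute exactly when $r$ lies in the $p$-th column and $s$ does not. For fixed $(r,s)$ this happens $\bla_{s(e)}(r)-\bla_{t(e)}(s)$ times, with shifts $0,-\hbar,\ldots$, giving a falling factorial. The denominators $(w_{i,r}-w_{i,s})$ for $r<s$ similarly produce rising factorials of length $\bla_i(r)-\bla_i(s)$.

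\textbf{Main obstacle.} I expect the hardest step to be this sign and shift bookkeeping in the falling and rising factorial exponents, which has to reproduce \eqref{eq:20} exactly.
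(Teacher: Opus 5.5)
Your proposal is correct in substance and follows the argument the paper leaves implicit after \eqref{eq: general MOs}. The spanning statement comes from lifting $f\sfr_\bla$ by the column products $\M_\bla(f)$ and inducting down the filtration. One small inaccuracy: the minimal elements of $\{\boldsymbol{\nu}\in\Partitions^\vertexset_{\leq\bv}:\boldsymbol{\nu}\leq\bla\}$ need not be one-column (e.g.\ a single vertex with $\bv=(1)$, $\bla=(2)$). Your inductive step already covers them, since $\qzast(\bv)^{<\boldsymbol{\nu}}=0$ there, so nothing breaks.

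The coefficient formula is a direct expansion of \eqref{eq: FMOs under GKLO}, and your shift bookkeeping checks out. For $e:i\to j$, the factor indexed by $(r,s)$ occurs at the steps $\la_{j,s}<p\leq\la_{i,r}$, with shift $-(p-1-\la_{j,s})\hbar$; this gives the falling factorial in \eqref{eq:20}. The vertex denominators similarly give the rising factorials.

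One point must be corrected: the hypothesis under which the coefficient formula holds.
\begin{itemize}
\item Your sentence that $\bmu_1+\cdots+\bmu_k\leq\bla$ with equal sizes ``forces'' equality is false.
\item Your fallback, assuming $\bmu_1+\cdots+\bmu_k=\bla$, is still not sufficient. Take the quiver $i\to j$ with $\bv=(1,1)$ and the product $\M_{\alpha_j}(f_1)\M_{\alpha_i}(f_2)$. Here $\bmu_1+\bmu_2=\bla=((1),(1))$, but the $\sfu_{i,1}\sfu_{j,1}$-coefficient is $f_1(w)f_2(w+\hbar\bmu_1)(w_{j,1}-w_{i,1}+\theta_e+\tfrac12\hbar)$. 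Formula \eqref{eq:20} instead predicts $f_1(w)f_2(w+\hbar\bmu_1)$. The extra factor is the structure constant $a_{\bmu_1,\bmu_2}\neq 1$ from \eqref{eq: rels in gr}.
\end{itemize}

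The formula is right when $\bc_1,\ldots,\bc_k$ are the column lengths of $\bla$. This is also how it is used in the proof of Theorem \ref{thm:regularity-of-phi-bv}. Your computation tacitly assumes this when you write $\Gamma^{(s)}_i=[\text{length of the $s$-th column of }\la_i]$, so state it explicitly as the hypothesis.

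Under that hypothesis, your uniqueness claim follows from a short induction. The set $\Gamma^{(1)}_i$ has $\#\{r:\la_{i,r}\geq 1\}$ elements and can only contain such $r$, so it is exactly that set. Then strip off the first column and repeat.
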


In the above Corollary, we make use of ($\hbar$-versions) of Knuth's rising and falling notations for Pochhammer symbols (see \S Appendix \ref{appendix-sec:rising-and-falling-notation-for-pochhammer-symbols} for precise details).

\subsection{Restricted dressing and $\qzastplus(\bv)$}
\label{ssec: restricted dressing and positive part}

Our goal in this section is to define an important subalgebra $\qzastplus(\bv) \subset \qzast(\bv)$.  Recall from \eqref{eq: GT for bla}  that for any $\bv \in \NN^\vertexset$ and multipartition $\bla  = (\la_i)\in \Partitions_{\leq \bv}^\vertexset$, we define:
$$
\GT^\bla(\bv) = \tGT(\bv)^{S_\bla}
$$
where $S_\bla \subset S_\bv$ is the stabilizer of $\bla$, viewed as a coweight of $\bG = \prod_{i\in \vertexset} \GL(V_i)$.  Note that $S_\bla$ thus implicitly depends on $\bv$.  Explicitly, write $\bla =  (1^{\bm_1} 2^{\bm_2} \cdots p^{\bm_p})$ in the exponential notation \eqref{eq: product exponential notation}. As in the proof of Theorem \ref{thm: grading}, we define $\bm_0 = (m_{i,0}) \in \NN^\vertexset$ by $m_{i,0} = \sv_i - |\la_i|$. Then we have:
\begin{equation}
    \label{eq:Sbladefined}
    S_\bla = S_{\bm_p} \times S_{\bm_{p-1}} \times \cdots \times S_{\bm_2} \times S_{\bm_1} \times S_{\bm_0}
\end{equation}
Now, consider the subgroup defined by omitting the final factor:
\begin{equation}
    S_{\bla, \res} = S_{\bm_p} \times S_{\bm_{p-1}} \times \cdots \times S_{\bm_2} \times S_{\bm_1}
\end{equation}
Then  $S_{\bla, \res}$ preserves the subalgebra $\tGT(|\bla|) \subseteq \tGT(\bv)$ generated by the variables $w_{i,r}$ for all $i \in \vertexset$ and  $1 \leq r \leq |\la_i|$, and we will denote the corresponding fixed subalgebra by 
\begin{equation}
    \GT_{\res}^\bla(\bv) = \tGT(|\bla|)^{S_{\bla,\res}} \subseteq \GT^\bla(\bv)
\end{equation}
In particular, we can identify  $\GT_{\res}^\bla(\bv) = \GT^\bla(|\bla|)$. If $\bv \geq \bv'$ and $\bla \in \Partitions_{\leq \bv'}^\vertexset$, then the surjection $\GT^\bla(\bv) \twoheadrightarrow \GT^\bla(\bv')$ from \eqref{eq: directed system of symmetric polynomials} restricts to an \emph{isomorphism} 
\begin{equation}
    \label{eq: directed system of restricted symmetric polynomials}
    \GT^\bla(\bv)_\res \stackrel{\sim}{\longrightarrow} \GT^\bla(\bv')_\res
\end{equation}
\begin{Remark}
If $\bmu = (1^\bm)$ is a one-column multipartition, so $\mathbf{0} \leq \bm \leq \bv$, then 
$$
    \GT_{\res}^\bmu(\bv) = \tGT(\bm)^{S_\bm} \subseteq \tGT(\bv)^{S_\bm \times S_{\bv-\bm}} = \GT^\bmu(\bv)
$$
In particular for $\bm = \mathbf{0}$ we simply recover the base ring $\GT_{\res}^\bmu(\bv) = \base$ from \eqref{eq: base ring}.
\end{Remark}

\begin{Definition}
    The \emph{positive subalgebra} $\qzastplus(\bv) \subset \qzast(\bv)$ is defined to be the subalgebra generated by the FMOs $\M_{\bm}(f)$ for all $\mathbf 0 \leq \bm \leq \bv$ and $f \in \GT_{\res}^\bmu(\bv)$, where $\bmu = (1^\bm)$.

    We will sometimes write $\qzastplus_\kk(\bv)$ to make the choice of base ring more explicit.

\end{Definition}

Consider the filtration  on $\qzastplus(\bv)$ by the poset $(\Partitions_{\leq \bv}^\vertexset, \leq)$, induced from the filtration on $\qzast(\bv)$:
\begin{equation}
    \qzastplus(\bv)^{\leq \bla} = \qzastplus(\bv) \cap \qzast(\bv)^{\leq \bla}    
\end{equation}
Since this is the induced filtration from $\qzast(\bv)$, there is a natural injection $\gr \qzastplus(\bv) \subseteq \gr \qzast(\bv)$.
\begin{Theorem}
    \label{thm:grpositivepart}
    Under the identification $\gr \qzast(\bv) = \bigoplus_{\bla \in \Partitions_{\leq \bv}^\vertexset} \GT^\bla(\bv) \cdot \sfr_\bla$ from \eqref{eq: gr ring}, we have 
    $$
    \gr \qzastplus(\bv) = \bigoplus_{\bla \in \Partitions_{\leq \bv}^\vertexset} \GT_{\res}^\bla(\bv) \cdot \sfr_\bla
    $$
    In particular, $\qzastplus(\bv)$ is a free module over $\base$ as well as  over $\kk$, and has good behaviour under base change: we have $\qzastplus_\kk(\bv) = \qzastplus_\ZZ(\bv) \otimes_\ZZ \kk$ and a natural inclusion $\qzastplus_\ZZ (\bv) \subset \qzastplus_\QQ(\bv)$.
\end{Theorem}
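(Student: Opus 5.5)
We prove the two inclusions $\supseteq$ and $\subseteq$ between $\gr \qzastplus(\bv)$ and $\bigoplus_\bla \GT^\bla_\res(\bv)\cdot \sfr_\bla$ inside $\gr\qzast(\bv)$, and then deduce freeness and base change.

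\emph{The inclusion $\supseteq$.} We adapt the constructive proof of Proposition \ref{generation by FMOs}. Given $\bla = (1^{\bm_1}\cdots p^{\bm_p})$, split it into its columns, with column-length vectors $\bc_1\geq \bc_2 \geq\cdots\geq \bc_k$ and $\bmu_s=(1^{\bc_s})$. That proof shows $\sfr_{\bmu_1}\cdots\sfr_{\bmu_k}=\sfr_\bla$, so all the $a$-factors are $1$. It therefore suffices to show that the map
$$\bigotimes_s \GT^{\bmu_s}_\res(\bv)\to \GT^\bla(\bv),\qquad f_1\otimes\cdots\otimes f_k\mapsto \prod_s f_s\big(w+\hbar(\bmu_1+\cdots+\bmu_{s-1})\big)$$
has image exactly $\GT^\bla_\res(\bv)$. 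This reduces to a single vertex. There the variables $w_1,\dots,w_{|\la|}$ are used, where the $s$-th column involves $w_1,\dots,w_{c_s}$ symmetrically. The shifts by $\hbar\bmu$ are constant on each block of $S_{\bla,\res}$, so the image is invariant under $S_{\bla,\res}$ and involves only the variables $w_r$ with $r\leq \ell(\la)$. Surjectivity onto $\kk[w_1,\dots,w_\ell,\hbar,\theta]^{S_{\bla,\res}}$ follows from the standard argument used for \eqref{eq: psym polys}: a product of symmetric polynomials in nested initial segments generates the partially symmetric polynomials for the block decomposition given by the differences of column lengths. The shifts are harmless by a triangularity argument in $\hbar$-degree. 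Lifting these products gives elements $\M_{\bc_1}(f_1)\cdots\M_{\bc_k}(f_k)\in\qzastplus(\bv)^{\leq\bla}$ whose symbols fill $\GT^\bla_\res(\bv)\sfr_\bla$.

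\emph{The inclusion $\subseteq$.} This is the main obstacle. The right-hand side $B:=\bigoplus_\bla \GT^\bla_\res(\bv)\sfr_\bla$ contains the symbols of the generators, and it is closed under multiplication in $\gr\qzast(\bv)$. Closure holds because the product $f\sfr_\bla\cdot g\sfr_\bmu = a_{\bla,\bmu} f(w) g(w+\hbar\bla)\sfr_{\bla+\bmu}$ has the following properties, checked via Remark \ref{rmk:aproperties}(2) and the explicit form of $a$:
\begin{itemize}
\item $a_{\bla,\bmu}$ only involves the variables $w_{i,r}$ with $r\leq \ell$, and these lie within $|\bla+\bmu|$;
\item the product is $S_{\bla+\bmu,\res}$-invariant, being the restriction of an $S_{\bla+\bmu}$-invariant element.
\end{itemize}
However, this does not formally imply $\gr\qzastplus\subseteq B$, because a product of generators can drop in filtration, and its leading symbol in a lower $\bmu$ is not obviously a product of symbols. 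To handle this, we work in the GKLO embedding and use the Proposition characterising $\qzast(\bv)^{\leq \bla}$. Define $\widetilde B^{\leq\bla}$ to be the set of elements of $\qzast(\bv)^{\leq\bla}$ whose coefficient of $\sfu^\bmu$, for each $\bmu\leq\bla$, has a specific form: a fixed rational factor as in \eqref{eq:20} times a polynomial in the restricted variables. One checks from \eqref{eq: FMOs under GKLO} that generators with restricted dressing lie in this set, and that the set is closed under products. The leading coefficient of any element then lies in $\GT_\res$, which gives the inclusion.

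If this coefficient-level control proves too delicate, the fallback is a rank count. By the first inclusion, the basis elements $\M_\bla(f)$ with $f$ restricted lie in $\qzastplus(\bv)$. Show that their $\kk$-span is already closed under multiplication, using the closure of $B$ together with an induction on the dominance order. It is then equal to $\qzastplus(\bv)$, so the associated graded is exactly $B$.

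\emph{Consequences.} Freeness over $\base$ and $\kk$ follows because each $\GT^\bla_\res(\bv)$ is a free module of partially symmetric polynomials. The base change statements follow since the lifts $\M_\bla(f)$ of a $\ZZ$-basis give a basis compatible with $\otimes_\ZZ\kk$, mirroring the Remark for $\qzast(\bv)$.
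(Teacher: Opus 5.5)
Your proof of $\supseteq$ and your deduction of freeness and base change agree with the paper. The gap is in $\subseteq$.

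\textbf{The set $\widetilde B^{\leq\bla}$ is not closed under products.} Lower GKLO coefficients of elements of $\qzastplus(\bv)$ are generally not of the form (factor \eqref{eq:20}) $\times$ (polynomial). Take one vertex, no arrows, and $\bv=(2)$. Then $\M_1(1)=\frac{1}{w_1-w_2}\sfu_1+\frac{1}{w_2-w_1}\sfu_2$ is a generator of $\qzastplus(\bv)$. The coefficient of $\sfu_1\sfu_2=\sfu^{(1,1)}$ in $\M_1(1)^2\in\qzastplus(\bv)^{\leq(2)}$ is
\[
\frac{1}{(w_1-w_2)(w_2-w_1-\hbar)}+\frac{1}{(w_2-w_1)(w_1-w_2-\hbar)}=\frac{-2}{(w_1-w_2)^2-\hbar^2},
\]
whereas the factor \eqref{eq:20} for $(1,1)$ is $1$. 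So the step ``one checks \ldots\ that the set is closed under products'' is false as stated, and it is the step that carries all the content.

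\textbf{The fallback is circular.} Closure of $B$ in $\gr\qzast(\bv)$ only controls the top symbol of $\M_\bla(f)\M_\bmu(g)$. The correction term lies in $\qzastplus(\bv)\cap\qzast(\bv)^{<\bla+\bmu}$. Showing it lies in the span of restricted $\M$'s requires its symbol to be restricted, which is the theorem itself.

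\textbf{What is missing, and a repair.} The right invariant concerns which variables the coefficients depend on, not polynomiality. For a multicomposition $C$, let $R_C$ be the $S_\bv$-translate of \eqref{eq:20}. Consider elements whose $\sfu^C$-coefficient is $R_C$ times a \emph{rational function} of $\hbar,\theta_e$ and the $w_{i,r}$ with $C_i(r)>0$.
\begin{itemize}
\item Restricted FMOs have this property.
\item The property survives products. The $\sfu^C$-coefficient of $xy$ is $\sum_{A+B=C}x_A\,y_B(w+\hbar A)$. For each $w_{i,s}$ with $C_i(s)=0$, the factors of $R_A\cdot R_B(w+\hbar A)$ involving $w_{i,s}$ telescope via $y^{\overline{a}}(y+a\hbar)^{\overline{b}}=y^{\overline{a+b}}$ and $y^{\underline{a}}(y-a\hbar)^{\underline{b}}=y^{\underline{a+b}}$. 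They become exactly the corresponding factors of $R_C$.
\item The leading coefficient $g\in\GT^\bla(\bv)$ is then a polynomial depending only on the support variables, so $g\in\GT^\bla_\res(\bv)$.
\end{itemize}

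\textbf{How the paper gets the same control.} It reduces to $\kk=\QQ$ and uses the surjection $\Phi^+_\bv:\shu\twoheadrightarrow\qzastplus(\bv)$ to write each element as $\Phi^+_\bv(h)$. The $\sfu^A$-coefficients of $\Phi^+_\bv(h)$ are $h|_A$ times universal factors, and $h|_A$ visibly involves only support variables. The divisibility analysis in the proof of Theorem~\ref{thm:regularity-of-phi-bv} then identifies the leading symbol as $f\sfr_\bla$ with $f\in\GT^\bla_\res(\bv)$. The repaired GKLO argument avoids shuffle algebras and wheel conditions. The paper's route obtains the statement as a by-product of machinery it needs anyway.
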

\begin{proof}
    First note that the right hand side in the statement is indeed a subring, because the structure constants $a_{\bla,\bmu}$ from \eqref{eq: rels in gr} satisfy $a_{\bla,\bmu} \in \GT_{\res}^\bla(\bv)$  by part (2) of Remark \ref{rmk:aproperties}.
    
    Using the  argument from the proof of Proposition \ref{generation by FMOs}, we will show that there is an inclusion $\gr \qzastplus(\bv) \supseteq \bigoplus_{\bla \in \Partitions_{\leq \bv}^\vertexset} \GT_{\res}^\bla(\bv) \cdot \sfr_\bla$.  First note that by definition $\gr \qzastplus(\bv)$ contains the elements $f \sfr_\bmu$ for any one-column multipartition $\bmu = (1^\bm)$ and any $f \in \GT_{\res}^\bmu(\bv)$.  Now take  any $\bla \in \Partitions_{\leq \bv}^\vertexset$, and decompose $\sfr_\bla = \sfr_\bmu \sfr_{\boldsymbol{\kappa}}$ as in the proof of Proposition \ref{generation by FMOs}.  It is not hard to see that the restricted version of \eqref{eq: psym polys} is also surjective:
    $$
    \GT_{\res}^\bmu(\bv) \otimes \GT^{\boldsymbol{\kappa}}_{\res}(\bv) \twoheadlongrightarrow \GT_{\res}^\bla(\bv)
    $$
    Inducting on the number of columns of $\bla$, it follows that every element of $\GT^\bla(\bv)_\res \cdot \sfr_\bla$ can be expressed in terms of elements $ f \sfr_\bmu \in \gr \qzastplus(\bv)$.

    We will defer our proof of the opposite inclusion $\gr \qzastplus(\bv) \subseteq \bigoplus_{\bla \in \Partitions_{\leq \bv}^\vertexset} \GT_{\res}^\bla(\bv) \cdot \sfr_\bla$ until \S \ref{ssec:proofgrAplus}, where we will use the machinery of shuffle algebras.
\end{proof}

For each $\bv$, there is a multiplication map  (of subalgebras)
\begin{equation}
    \GT(\bv) \otimes_\base \GT_\res^\bla(\bv) \longrightarrow \GT^\bla(\bv)
\end{equation}
which is surjective (but not an isomorphism). This leads to the following basic result, which will ultimately yield a triangular decomposition in the limit as $\bv \rightarrow \infty$, see Section \ref{sec:quantized_limit_zastava}.

\begin{Lemma}
    \label{lem: truncated triangular decomp}
    The following multiplication maps are both surjective (but \emph{not} isomorphisms): 
    $$
    \GT(\bv) \otimes_\base \qzastplus(\bv) \twoheadlongrightarrow \qzast(\bv), \qquad \qquad \qzastplus(\bv)\otimes_\base \GT(\bv) \twoheadlongrightarrow \qzast(\bv)
    $$
\end{Lemma}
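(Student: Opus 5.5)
The plan is to prove surjectivity at the level of associated graded rings and then lift, using the filtration by multipartitions from \S\ref{sec: filtration}. Both multiplication maps have images that are left (resp.\ right) $\GT(\bv)$-submodules of $\qzast(\bv)$ containing $\qzastplus(\bv)$. Since $\qzast(\bv)_\bk = \qzast(\bv)^{\leq \bk}$ is filtered by the finite poset $\{\bla \in \Partitions^\vertexset_{\leq \bv} : |\bla| = \bk\}$, it suffices to show that for each $\bla$ the image surjects onto $\qzast(\bv)^{\leq\bla}/\qzast(\bv)^{<\bla} \cong \GT^\bla(\bv)\cdot \sfr_\bla$. The full statement then follows by induction along the poset, starting from the minimal elements.

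For the left-hand map, take $f \in \GT(\bv)$ and $x \in \qzastplus(\bv)^{\leq \bla}$ lifting $g\,\sfr_\bla$ with $g \in \GT^\bla_\res(\bv)$. Such lifts exist by the easy inclusion in Theorem \ref{thm:grpositivepart}, which was proved directly via the column decomposition and does not depend on the deferred shuffle argument. Then $f x \in \qzast(\bv)^{\leq\bla}$ has symbol $f g\,\sfr_\bla$, because left multiplication by $\GT(\bv) = \GT^{\mathbf 0}(\bv)$ is the case $\bmu = \mathbf 0$ of \eqref{eq: rels in gr}, where $a_{\mathbf 0,\bla} = 1$. The key input is therefore the surjectivity of $\GT(\bv)\otimes_\base \GT_\res^\bla(\bv) \to \GT^\bla(\bv)$, stated just before the lemma. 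To justify it, reduce to a single vertex. There $\GT^\bla(\bv)$ is the invariants of $S_{\bm_p}\times\cdots\times S_{\bm_1}\times S_{\bm_0}$, and we need it to be generated by full symmetric polynomials in all $\sv$ variables together with $S_{\bm_p}\times\cdots\times S_{\bm_1}$-invariants in the first $|\la|$ variables. Elementary symmetric functions of the last block $\bm_0$ can be recovered from the full elementary symmetric functions and those of the first block, by inverting the relation $E_{\text{full}}(t) = E_{\text{first}}(t)E_{\text{last}}(t)$ of generating polynomials, whose leading coefficients are $1$. This works over any $\kk$.

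For the right-hand map, use $x f$ instead. Here the symbol is $g \cdot f(w + \hbar\bla)\,\sfr_\bla$. Because the shift $w\mapsto w+\hbar\bla$ is an automorphism commuting with the $S_\bla$-action, $\{f(w+\hbar\bla) : f\in\GT(\bv)\}$ together with $\GT^\bla_\res(\bv)$ generates $\GT^\bla(\bv)$ by the same argument. One should check that the shifted full symmetric functions still let us solve for the last-block symmetric functions. They do, since $\bla$ is constant on blocks, so the shifted generating polynomial factors blockwise with the last block unshifted.

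I expect the main (though mild) obstacle to be this polynomial surjectivity, especially in its shifted form and integrally over arbitrary $\kk$. Non-injectivity is the other point to address. One can either note that $\GT(\bv)\otimes_\base \GT^\bla_\res(\bv)$ is strictly larger than $\GT^\bla(\bv)$ in graded rank, or give the explicit example $\bla = \mathbf 0$: then $\GT(\bv)\otimes_\base \base \to \GT(\bv)$ is an isomorphism, but in degree $\alpha_i$ with $\sv_i = 1$ a symmetric function of $w_{i,1}$ can be moved across the tensor sign in two ways. So the kernel is nonzero, for instance $w_{i,1}\otimes \M_{\alpha_i}(1) - 1\otimes \M_{\alpha_i}(w_{i,1})$.
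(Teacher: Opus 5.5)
Your proposal is correct and follows essentially the paper's route. The paper derives the lemma, without further detail, from the surjection $\GT(\bv)\otimes_\base\GT^\bla_\res(\bv)\twoheadrightarrow\GT^\bla(\bv)$ on associated graded pieces, using the directly proved inclusion $\GT^\bla_\res(\bv)\cdot\sfr_\bla\subseteq\gr\qzastplus(\bv)$ and induction over the dominance order; your elementary-symmetric-function inversion and your use of the shift $w\mapsto w+\hbar\bla$ for the right-hand map fill in what it leaves implicit.

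One minor caveat concerns the parenthetical ``not isomorphisms''. It fails for $\bv=\mathbf 0$, where all three rings equal $\base$. Your explicit kernel element also needs some $\sv_i=1$, whereas $\sum_{k}(-1)^k e_k(w_{i,\bullet})\otimes\M_{\alpha_i}(w_{i,1}^{\sv_i-k})$, which comes from $\prod_r(w_{i,1}-w_{i,r})=0$, works whenever $\sv_i\geq 1$.
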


\subsection{Grading by $\ZZ^\vertexset\times \ZZ$}
\label{ssec: gradings}
In this section we extend the $\ZZ^\vertexset$-grading from \S \ref{ssec: gradings 0}, by incorporating an additional $\ZZ$-grading following a similar strategy to \cite[Remark 2.8]{BFN1}.

The rings $\base = H_{\CC^\times \times \bF}^\bullet(pt) = \kk[\tfrac{1}{2}\hbar, \theta_e]$ and $\tGT(\bv) = H_{\bT\times \CC^\times\times \bF}^\bullet(\pt) = \kk[\tfrac{1}{2}\hbar, \theta_e, w_{i,r}]$ are $\ZZ$--graded by {half} of the cohomological degree, or in other words by giving their generators $\tfrac{1}{2} \hbar, \nu_a, w_{i,r}$ degree 1.   This restricts to the half cohomological grading on the subrings $\GT^{\bla}(\bv) = H_{Z_{\bG}(\bla)\times \CC^\times \times \bF}^\bullet(pt)$ from \eqref{eq: GT for bla}.  We will write $\deg f$ for the degree of $f\in \GT^{\bla}(\bv)$.  

Recall our notation $|\bla| = (|\la_i|)_{i \in \vertexset} \in \NN^\vertexset$ for the vector of sizes of $\bla = (\la_i)_{i \in\vertexset} \in \Partitions^\vertexset$, and also the pairing $(\bla, \bmu)$ on multipartitions $\bla, \bmu \in \Partitions^{\vertexset}$ from (\ref{eq: pairing on tuples of partitions}). 
\begin{Theorem}
\label{thm: grading}
There is a $\ZZ^{\vertexset} \times \ZZ$--grading on $\qzast(\bv)$, such that the subalgebra $\qzastplus(\bv)$ is graded.  For $\bla \in \Partitions_{\leq \bv}^\vertexset$ and homogeneous $f \in \GT^\bla(\bv)$, we have:
$$
\deg \M_{\bla}(f) = \big( |\bla|, \ \deg f +  \tfrac{1}{2}(\bla,\bla) \big)
$$
In particular, the degrees of FMOs are given by
$$
\deg \M_\bm(f) = \big(\bm, \ \deg f + \tfrac{1}{2}(\bm,\bm) \big)
$$
\end{Theorem}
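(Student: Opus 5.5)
We follow the strategy of \cite[Remark 2.8]{BFN1}: the homological grading on $\qzast(\bv) = H_\bullet^{\bG_\cO\rtimes\CC^\times\times\bF}(\cR^+_{\bG,\bN})$ is shifted by an amount that is constant on each $\bG_\cO$-orbit stratum. The $\ZZ^\vertexset$-component of the grading is the one from \S\ref{ssec: gradings 0}. For the $\ZZ$-component, we define the degree of a homogeneous class supported on $\cR^{\bla}_{\bG,\bN}$ by halving its equivariant (co)homological degree and then shifting by $-\dim$ of the fibre data. More concretely, we work with $\gr\qzast(\bv) = \bigoplus_\bla \GT^\bla(\bv)\sfr_\bla$ and set
\[
\deg(f\sfr_\bla) = \deg f + \tfrac12(\bla,\bla).
\]
This is a legitimate choice of normalization. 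The fundamental class $\sfr_\bla$ of $\cR^\bla$ has homological degree determined by $\dim \Gr^\bla_\bG$ and the rank of the vector bundle $\bN_\cO\cap z^\bla\bN_\cO$. Computing
\[
2\langle\rho,\bla\rangle - \big(\text{codimension of the fibre in }\bN_\cO\big)
\]
for the quiver data recovers $\sum_i\langle\la_i,\la_i\rangle$-type terms minus edge terms. This matches $\tfrac12(\bla,\bla)$ from \eqref{eq: pairing on tuples of partitions} up to an additive normalization linear in $|\bla|$. Any such linear correction can be absorbed because $|\bla|$ is already a grading, so there is freedom in normalizing.

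The key step is to check that this assignment is multiplicative. It suffices to do so on the associated graded, using \eqref{eq: rels in gr}. We must show that
\[
\deg a_{\bla,\bmu} + \tfrac12(\bla,\bla)+\tfrac12(\bmu,\bmu) = \tfrac12(\bla+\bmu,\bla+\bmu),
\]
that is, $\deg a_{\bla,\bmu} = (\bla,\bmu)$ once Hua's pairing is polarized. This is a combinatorial identity, and we verify it edge-by-edge and pair-by-pair $(r,s)$. On the left, the factor $a_{\bla,\bmu}(e;r,s)$ has degree $\min(|\ell|,|m|)$ if $\ell,m$ have opposite signs, and $0$ otherwise. On the right, the $c_{ij}$ terms give vertex contributions $2\sum_r\la'_{i,r}\mu'_{i,r}$ and edge contributions $-\sum\la'_{i,r}\mu'_{j,r}$.

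To handle the mismatch, we use the identity
\[
\min(\ell_+ ,m_-) + \min(\ell_-,m_+) = \tfrac12\big(|\ell|+|m|-|\ell+m|\big),
\]
which holds for opposite signs and gives $0$ for equal signs. This expresses $\deg a$ as a polarization of the quadratic form $\sum_{r,s}|\la_{j,s}-\la_{i,r}|$. We then relate $\sum_{r,s}|x_r-y_s|$ to Hua's pairing via $\min(a,b) = \tfrac12(a+b-|a-b|)$, so that $\sum_{r,s}\min(\la_{i,r},\la_{j,s}) = \langle\la_i,\la_j\rangle$. The vertex part is handled similarly, or it comes from the symmetrizer denominators in \eqref{eq:20}. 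The terms linear in $|\bla|$ from this computation are exactly the linear normalization discussed above. This bookkeeping is the main obstacle, since signs and the $\hbar/2$ shifts make it error-prone, but it is a finite check.

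Finally, we lift from $\gr$ to $\qzast(\bv)$. The filtration $\qzast(\bv)^{\leq\bla}$ is compatible with the homological grading, so the grading on $\gr$ induces a grading on $\qzast(\bv)$ with FMOs $\M_\bm(f)$ homogeneous of degree $(\bm,\deg f+\tfrac12(\bm,\bm))$. The same argument shows that the products $\M_\bla(f)$ in \eqref{eq: general MOs} are homogeneous. Their degrees are computed by additivity together with $a_{\bmu_s,\cdot}=1$ from the proof of Proposition~\ref{generation by FMOs}, which gives $\deg\M_\bla(f) = (|\bla|,\deg f+\tfrac12(\bla,\bla))$. Since $\qzastplus(\bv)$ is generated by homogeneous FMOs with $f\in\GT^\bmu_\res(\bv)$ homogeneous, it is a graded subalgebra.
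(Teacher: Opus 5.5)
There is a genuine gap in your final step. Compatibility of the filtration $\qzast(\bv)^{\leq\bla}$ with the homological grading lets you push the \emph{homological} grading down to $\gr\qzast(\bv)$. It does not let you lift an arbitrary multiplicative grading on $\gr\qzast(\bv)$ back to $\qzast(\bv)$, because elements such as products of FMOs have nonzero components in lower strata $\bmu<\bla$ with $|\bmu|=|\bla|$. Your $\ZZ$-degree differs from the homological one by a shift depending on the stratum $\bla$. Such a twist is a grading on $\qzast(\bv)$ only if the shift is an additive function of the connected component $|\bla|$ alone; this, not ``constant on each $\bG_\cO$-orbit stratum'', is what \cite[Remark 2.8(2)]{BFN1} allows. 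No check on $\gr$ can detect this. Take one vertex, no arrows, $\bv=2$, and write $w_r=w_{i,r}$. Since $\langle\la,\la\rangle=\sum_{r,s}\min(\la_r,\la_s)$ is additive under $\la\mapsto\la+\mu$, the assignment $\deg f+\tfrac12(\bla,\bla)+\langle\la,\la\rangle$ is also multiplicative on $\gr$. But \eqref{eq: FMOs under GKLO} gives
\[
\M_1(w_1)^2-\M_1(w_1^2)\,\M_1(1)-\hbar\,\M_1(w_1)\,\M_1(1)=\M_2(1),
\]
and under that assignment the two sides would have degrees $6$ and $8$. So the whole content of the theorem is the claim you only assert in your second paragraph: that $\tfrac12(\bla,\bla)$ differs from the BFN degree of $\sfr_\bla$ by a term linear in $|\bla|$. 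This is exactly what the paper proves. It writes $2\Delta(\bla)=-\sum_i\langle\langle\la_i,\la_i\rangle\rangle+\sum_e\langle\langle\la_{\source(e)},\la_{\target(e)}\rangle\rangle$ with $\langle\langle\la_i,\la_j\rangle\rangle=\sum_{r,s}|\la_{i,r}-\la_{j,s}|$. It then uses $2\min(a,b)+|a-b|=a+b$ (your own identity) to get $(\bla,\bla)-2\Delta(\bla)=(|\bla|,\bv)$. Adding this linear function of the $\ZZ^\vertexset$-degree to the $\Delta$-grading gives the desired grading on $\qzast(\bv)$ directly, with no $\gr$ verification needed. An alternative valid route: grade $\GKLO{\bv}$ by $\deg\sfu_{i,r}=(\alpha_i,\sv_i-\sum_{e:\source(e)=i}\sv_{\target(e)})$, check from \eqref{eq: FMOs under GKLO} that each $\M_\bm(f)$ is homogeneous of the stated degree, and use that FMOs generate.

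Separately, the $\gr$ computation you set up is itself off. In \eqref{eq: rels in gr} the factor $a_{\bla,\bmu}$ accompanies $\sfr_{\bla+\bmu}$, so multiplicativity requires
\[
\deg a_{\bla,\bmu}+\tfrac12(\bla+\bmu,\bla+\bmu)=\tfrac12(\bla,\bla)+\tfrac12(\bmu,\bmu),
\]
not your version. Yours already fails for a single arrow $i\to j$ with $\bv=(1,1)$, $\la_i=(1)$, $\mu_j=(1)$, where $\deg a_{\bla,\bmu}=1$. Nor is this a bilinear polarization of \eqref{eq: pairing on tuples of partitions}. The sum $\bla+\bmu$ adds rows, so columns are concatenated and $(\la+\mu)'\neq\la'+\mu'$; for one vertex and $\la=\mu=(1)$, $(\bla,\bmu)=2$ while $\deg a_{\bla,\bmu}=0$. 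The correct bookkeeping uses $\langle\la_i,\la_j\rangle=\sum_{r,s}\min(\la_{i,r},\la_{j,s})$. Vertex terms are exactly additive because the parts are co-sorted. Each edge pair $(r,s)$ contributes $-\min(|\ell|,|m|)$ exactly when $\ell,m$ have opposite signs. The denominators in \eqref{eq:20} play no role, and no linear correction arises at this level. Even the corrected identity, however, only gives the statement on $\gr$.
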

Note that since $\qzast(\bv)$ is generated by FMOs, the grading is uniquely determined by the previous equation.

\begin{Remark}
\label{rem: degrees GT}
The ring $\GT(\bv)$ inherits a $\ZZ^\vertexset\times \ZZ$--grading via the above theorem and the embedding $\GT(\bv)\subset \qzast(\bv)$. Explicitly, for $f \in \GT(\bv)$ we  may identify  $f = \M_{\mathbf{0}}(f) \in \qzast(\bv)$, and it has degree:
$$
(\mathbf{0}, \deg f) \in \ZZ^{\vertexset} \times \ZZ
$$
In other words, the $\ZZ$--grading on $\GT(\bv)$ is trivially extended to a $\ZZ^\vertexset\times \ZZ$--grading.
\end{Remark}
\begin{proof}
The $\ZZ^\vertexset$-part of the grading has been defined already in \S \ref{ssec: gradings 0}, so it remains to obtain the $\ZZ$-grading. 

We first consider the $\Delta$--grading on $\qzast(\bv)$ \cite[Remark 2.8]{BFN1}.  This is a $\ZZ$--grading such that
$$
\deg_\Delta \M_{\bla}(f) = 2 \deg f + 2\Delta(\bla),
$$
where
\begin{equation}
\label{eq: Delta grading1}
2\Delta(\bla) = - 2\sum_{\alpha \in \Delta^+_\bG} | \langle \alpha, \bla\rangle| +  \sum_{\chi} |\langle\chi, \bla\rangle| \dim \bN(\chi)
\end{equation}
Here $\Delta^+_\bG$ is the set of positive roots of $\bG$, $\chi$ runs over the weights of the representation $\bN$, and we write $\langle \alpha,\bla\rangle$  (resp.~$\langle\chi, \bla\rangle$) for the pairing of the root $\alpha$ (resp.~weight $\chi$) with the coweight $\bla$.  

Let us compute $\Delta(\bla)$ explicitly, cf.~\cite[\S 5]{Nak7}.  To this end it will be useful to write $\bla = (1^{\bm_1} 2^{\bm_2}\cdots)$ in exponential notation, following (\ref{eq: product exponential notation}). Note that when the partition $\lambda_i$ is viewed as a  coweight of $\GL(V_i)$, the integer $0$ appears $\sm_{i,0}= \sv_i - |\la_i|$ many times.    With this in mind, a direct calculation using (\ref{eq: Delta grading1}) gives:
\begin{align}
2 \Delta(\bla) & =  - \sum_{i \in \vertexset} \sum_{1 \leq r < s \leq \sv_i} 2 (\lambda_{i,r} - \lambda_{i,s}) + \sum_{a \in \arrowset} \sum_{\substack{1 \leq r \leq \sv_{\source(a)}, \\ 1 \leq s \leq \sv_{\target(a)}}} | \lambda_{\target(a), s} - \lambda_{\source(a),r}| \nonumber \\
& = -\sum_{i \in \vertexset} \sum_{0 \leq k < \ell} (\ell - k) 2 \sm_{i,k} \sm_{i,\ell} + \sum_{a \in \arrowset} \sum_{0 \leq k < \ell} (\ell-k) \big( \sm_{\source(a),k} \sm_{\target(a),\ell} + \sm_{\source(a),\ell} \sm_{\target(a), k} \big) \nonumber \\
& = - \sum_{i \in \vertexset}  \langle\langle \lambda_i, \lambda_i \rangle\rangle + \sum_{a \in \arrowset} \langle\langle \lambda_{\source(a)}, \lambda_{\target(a)}\rangle\rangle \label{eq: Delta grading2}
\end{align}
where we have adopted the following notation
$$
\langle\langle \lambda_i, \lambda_j \rangle \rangle = \sum_{0 \leq k < \ell} (\ell-k) (\sm_{i,k} \sm_{j,\ell} + \sm_{i,\ell} \sm_{j,k} \big)
$$
Note that $k= 0$ is included in the summation, and hence this notation implicitly depends on $\bv$ since the integers $\sm_{i,0}$ do. We have the following lemma, which follows by applying equation (\ref{eq: Hua pairing}):
\begin{Lemma}
For any $i, j \in \vertexset$, we have:
$$
2 \langle \lambda_i, \lambda_j \rangle +  \langle \langle \lambda_i, \lambda_j \rangle \rangle = \sv_j |\lambda_i| + \sv_i |\lambda_j| 
$$
\end{Lemma}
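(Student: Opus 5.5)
The plan is to rewrite $\langle\langle \lambda_i, \lambda_j \rangle\rangle$ as a symmetric double sum over all pairs of part sizes, and then split $|\ell-k|$ into a linear part and a $\min$ part. Throughout I use the exponential notation $\lambda_i = (1^{\sm_{i,1}} 2^{\sm_{i,2}}\cdots)$. I take $\sm_{i,0}$ to be the number of zero entries of $\lambda_i$ viewed as a coweight of $\GL(V_i)$, so that $\sum_{k \geq 0} \sm_{i,k} = \sv_i$. This is the normalization actually used in the computation of $2\Delta(\bla)$, where the integer $0$ appears as many times as there are zero entries. The formula $\sm_{i,0} = \sv_i - |\la_i|$ written in the text should be read with $\ell(\la_i)$ in place of $|\la_i|$.

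First, I symmetrize the definition. In the defining sum for $\langle\langle \lambda_i, \lambda_j\rangle\rangle$, each unordered pair $k<\ell$ contributes $(\ell-k)(\sm_{i,k}\sm_{j,\ell} + \sm_{i,\ell}\sm_{j,k})$, and diagonal pairs $k=\ell$ would contribute $0$. Hence
$$
\langle\langle \lambda_i, \lambda_j \rangle\rangle = \sum_{k,\ell \geq 0} |\ell - k|\, \sm_{i,k} \sm_{j,\ell}.
$$

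Next, I use the elementary identity $|\ell - k| = k + \ell - 2\min\{k,\ell\}$ for $k,\ell \geq 0$. This splits the sum into two pieces:
$$
\sum_{k,\ell\geq 0} (k+\ell)\, \sm_{i,k}\sm_{j,\ell} \;-\; 2\sum_{k,\ell \geq 0} \min\{k,\ell\}\, \sm_{i,k}\sm_{j,\ell}.
$$
The first piece factors as
$$
\Big(\sum_k k\, \sm_{i,k}\Big)\Big(\sum_\ell \sm_{j,\ell}\Big) + \Big(\sum_k \sm_{i,k}\Big)\Big(\sum_\ell \ell\, \sm_{j,\ell}\Big) = |\lambda_i|\, \sv_j + \sv_i\, |\lambda_j|,
$$
using $\sum_{k} k\, \sm_{i,k} = |\lambda_i|$ and $\sum_{k\geq 0} \sm_{i,k} = \sv_i$. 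In the second piece every term with $k=0$ or $\ell=0$ vanishes, so by \eqref{eq: Hua pairing} it equals $2\langle \lambda_i, \lambda_j\rangle$. Rearranging gives the lemma.

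There is no real obstacle; the only point needing care is the bookkeeping of the $k=0$ terms. These terms must be included in the linear piece, since that is what produces $\sv_i$ and $\sv_j$. They drop out of the $\min$ piece, which is exactly why Hua's pairing, independent of $\bv$, appears there.
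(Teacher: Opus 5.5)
Your proof is correct, and it is the computation the paper has in mind when it says the lemma follows from the exponential form $\langle \lambda,\mu\rangle = \sum_{k,\ell\geq 1}\min\{k,\ell\}\,\sm_k\sn_\ell$ of Hua's pairing; splitting $|\ell-k| = k+\ell-2\min\{k,\ell\}$ is the natural way to carry this out. You are also right that $\sm_{i,0}$ must be the number of zero entries, $\sv_i - \ell(\la_i)$, rather than the typed $\sv_i - |\la_i|$: with the typed value the identity fails, for example when $\sv_i = 2$ and $\la_i = (2)$.
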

We can now calculate the difference between \emph{two times} our desired $\ZZ$-grading and the $\Delta$--grading, using (\ref{eq: Delta grading2}) and the previous lemma:
\begin{align*}
(\bla, \bla) - 2 \Delta(\bla) &= 2\Big( \sum_{i \in \vertexset}\langle \lambda_i,\lambda_i\rangle - \sum_{a\in \arrowset} \langle \lambda_{\source(a)}, \lambda_{\target(a)}\rangle\Big) - \Big( -\sum_{i \in \vertexset}\langle \langle\lambda_i,\lambda_i\rangle\rangle - \sum_{a\in \arrowset} \langle\langle \lambda_{\source(a)}, \lambda_{\target(a)}\rangle\rangle\Big) \\
& = \sum_{i \in \vertexset}\big(2\langle \lambda_i,\lambda_i\rangle + \langle\langle \lambda_i, \lambda_i \rangle\rangle \big) - \sum_{a \in \arrowset} \big( 2\langle \lambda_{\source(a)}, \lambda_{\target(a)}\rangle +\langle \langle \lambda_{\source(a)}, \lambda_{\target(a)}\rangle\rangle \big) \\
& = \sum_{i \in \vertexset} 2 \sv_i |\lambda_i| - \sum_{a \in \arrowset}\big( |\lambda_{\source(a)}| \sv_{\target(a)} + |\lambda_{\target(a)}| \sv_{\source(a)} \big) \\
& = (|\bla|, \bv)
\end{align*}
We see that this difference depends linearly on the connected component $|\bla| \in \NN^{\vertexset} \cong \pi_0(\cR^+)$.  By the same reasoning as \cite[Remark 2.8(2)]{BFN1}, it follows that there is a $\ZZ$--grading $\deg'$ on $\qzast(\bv)$ defined by $\deg' \M_{\bla}(f) = 2 \deg f + (\bla,\bla)$. Since this grading is even we may safely divide it by two, thus defining our desired $\ZZ$-grading and completing the proof.
\end{proof}

\begin{Remark}
    \label{rmk:gradingunderGKLO}
    Via the GKLO embedding $\qzast(\bv) \hookrightarrow \GKLO{\bv}$, the  above grading on $\qzast(\bv)$ is the restriction of one defined on $\GKLO{\bv}$, with degrees assigned to its generators as follows:
    $$
    \deg \tfrac{1}{2}\hbar = \deg \theta_e = \deg w_{i,r} = (\mathbf{0}, 1), \qquad \deg \sfu_{i,r} = \big( \alpha_i, \sv_i - \sum_{e : i \rightarrow j} \sv_j -\delta_{i,j}) \big)
    $$
\end{Remark}

The $\ZZ^\vertexset$--part of the grading on $\qzast(\bv)$ from the theorem is closely related to the filtration  by the poset $\Partitions_{\leq \bv}^\vertexset$, as explained in  Proposition \ref{prop: filtration defined}. By construction, the $\ZZ^\vertexset\times \ZZ$-grading is also compatible with this filtration. We immediately  obtain the following result.
\begin{Corollary}
There is an induced $\ZZ^\vertexset \times \ZZ$--grading on $\gr \qzast(\bv)$ given by $$\deg (f \sfr_{\bla}) = \big( |\bla|, \ \deg f +  \tfrac{1}{2}(\bla,\bla) \big)$$ for all $\bla \in \Partitions_{\leq \bv}^\vertexset$ and homogeneous $f \in \GT^\bla(\bv)$.
\end{Corollary}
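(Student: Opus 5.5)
The grading on $\gr \qzast(\bv)$ will be induced from the $\ZZ^\vertexset\times\ZZ$-grading on $\qzast(\bv)$ given by Theorem \ref{thm: grading}, and the plan is to check that this grading is compatible with the filtration by $(\Partitions_{\leq\bv}^\vertexset,\leq)$. Concretely, the first step is to show that each filtered piece $\qzast(\bv)^{\leq\bla}$ is a graded subspace, i.e.\ a direct sum of its intersections with the homogeneous components. For this we use Corollary \ref{cor: filtration generated by FMOs}: $\qzast(\bv)^{\leq\bla}$ is spanned by products $\M_{\bc_1}(f_1)\cdots\M_{\bc_k}(f_k)$ with $\sum_s \bmu_s\leq\bla$. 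Decomposing each $f_s$ into homogeneous components, every such product becomes a sum of products of homogeneous FMOs. Theorem \ref{thm: grading} says these FMOs are homogeneous, so each product is homogeneous, and the filtered piece is graded. By the same argument $\qzast(\bv)^{<\bla}=\sum_{\bmu<\bla}\qzast(\bv)^{\leq\bmu}$ is graded. Each quotient $\qzast(\bv)^{\leq\bla}/\qzast(\bv)^{<\bla}$ therefore inherits a grading. Multiplication respects degrees, and by Proposition \ref{prop: filtration defined}(1) it also respects the filtration, so the induced product on $\gr\qzast(\bv)$ is graded.

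The second step is to compute the degree of $f\sfr_\bla$ for homogeneous $f\in\GT^\bla(\bv)$. Under \eqref{eq: gr ring}, $f\sfr_\bla$ is the image of a lift, and we take the lift to be the monopole operator $\M_\bla(f)$ from \eqref{eq: general MOs}. We choose the preimage $\sum f_1\otimes\cdots\otimes f_k$ of $f$ to consist of homogeneous terms of total degree $\deg f$. This is possible because the map to $\GT^\bla(\bv)$ preserves the polynomial grading: the shifts $w\mapsto w+\hbar\bmu$ are degree-preserving, as $\hbar$ has degree 1. Theorem \ref{thm: grading} then gives $\deg\M_\bla(f)=(|\bla|,\deg f+\tfrac12(\bla,\bla))$. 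Since the quotient map to $\qzast(\bv)^{\leq\bla}/\qzast(\bv)^{<\bla}$ is graded, $f\sfr_\bla$ has this same degree.

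The only step needing care is whether homogeneous lifts can be chosen. This reduces to the surjection \eqref{eq: psym polys} and its iterate being maps of graded modules, which holds because every polynomial ring involved is graded by half cohomological degree, with $w$, $\hbar$ and $\theta_e$ all in degree 1. Everything else is formal.
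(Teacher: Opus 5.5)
Your proposal is correct and follows the paper's own argument. The paper simply asserts that the grading of Theorem~\ref{thm: grading} is compatible with the filtration ``by construction'', so that it descends to $\gr \qzast(\bv)$ with $f\sfr_\bla$ acquiring the degree of its lift $\M_\bla(f)$. Where the paper takes that compatibility for granted, you check it explicitly: Corollary~\ref{cor: filtration generated by FMOs} shows each filtered piece is spanned by homogeneous FMO products, and you note that the homogeneous lifts $\M_\bla(f)$ can be chosen homogeneous.
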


\begin{Remark}
The $\ZZ^\vertexset \times \ZZ$--grading from Theorem \ref{thm: grading} corresponds to an action of $\GG_m^{\vertexset}\times \GG_m$ on the ring $\qzast(\bv)$.  Given any tuple $\sigma = (\sigma_i)_{i \in \vertexset} \in \ZZ^{\vertexset}$, we may define a new action of $\GG_m$ on $\qzast(\bv)$ via the embedding $\GG_m \rightarrow \GG_m^{\vertexset}\times \GG_m$ defined by $ s \mapsto (s^{\sigma}, s)$. The proof of the previous theorem shows that the $\Delta$--degree arises in this way, up to doubling degrees, with
$$
\sigma_i = 2 \sv_i - \sum_{a \in \arrowset, s(a) = i} \sv_{\target(a)} - \sum_{a \in \arrowset, \target(a) = i} \sv_{\source(a)}
$$ 
The homological grading on $\qzast(\bv)$ also arises in a similar way, see \cite[Remark 2.8(2)]{BFN1}.
\end{Remark}

%%%%%%%%%%%
\subsection{Adding defect and closed embeddings}
\label{ssec:adce}

Consider $\bv, \bv' \in \NN^\vertexset$ such that $\bv \geq \bv'$.  It will be convenient to denote $\bv'' = \bv - \bv'$.  Note that $\bv'' \in \NN^\vertexset$. Recall the ring $\base = \kk[\tfrac{1}{2}\hbar, \theta_e]$ from  (\ref{eq: GT with no Weyl}). Then there is an isomorphism of $\base$--algebras:
\begin{equation}
\label{eq:polyringstensor}
\tGT(\bv) \stackrel{\sim}{\longrightarrow} \tGT(\bv') \otimes_{\base} \tGT(\bv'')
\end{equation}
by identifying $\tGT(\bv')$ (resp.~$\tGT(\bv'')$) with the subalgebra of $\tGT(\bv)$ in variables $w_{i,r}$ for $1\leq r \leq \sv'_i$ (resp.~for $\sv'_i < r \leq \sv_i$.)  We will denote the effect of this map using Sweedler notation:
\begin{equation}
f \longmapsto \sum f^{(1)} \otimes f^{(2)}
\end{equation}
We will also consider the surjective homomorphism 
\begin{equation}
    \label{eq: directed system of symmetric polynomials}  
    \tGT(\bv) \twoheadrightarrow \tGT(\bv')
\end{equation}
defined by specializing $w_{i,r} = 0$ for $i\in I$ and $\sv'_i < r \leq \sv_i$. For $f \in \tGT(\bv)$, we will denote its image under this map using the notation  $f \longmapsto \widetilde{f}$.

By restricting the above homomorphisms, we obtain maps:
\begin{equation}
\label{eq: restriction to GT}
\GT(\bv) \ \hookrightarrow \ \GT(\bv') \otimes_{\base} \GT(\bv''), \qquad \qquad \GT(\bv) \ \twoheadrightarrow \ \GT(\bv')
\end{equation}
Similarly, given $\bla \in \Partitions_{\leq \bv'}^\vertexset$ we obtain  homomorphisms
\begin{equation}
\label{eq: restriction to GT 2}
\GT^{\bla}(\bv) \ \hookrightarrow \ \GT^{\bla}(\bv') \otimes_{\base} \GT(\bv''), \qquad \qquad \GT^{\bla}(\bv) \ \twoheadrightarrow \ \GT^{\bla}(\bv')
\end{equation}
We continue to denote the effects of these homomorphisms on elements by $f \mapsto \sum f^{(1)} \otimes f^{(2)}$ and $f\mapsto \widetilde{f}$, respectively.

\begin{Theorem}
\label{thm: defect and embeddings}
Fix $\bv, \bv' \in \NN^\vertexset$ with $\bv \geq \bv'$, and write $\bv'' = \bv - \bv'$ as above.
\begin{enumerate}[label=(\alph*),topsep=3pt,itemsep = 5pt]
\item There is a ring homomorphism $\qzast(\bv) \rightarrow \qzast(\bv') \otimes_{\base} \GT(\bv'')$,  determined by:
$$
\M_\bm(f) \ \mapsto \ \left\{ \begin{array}{cl} \sum \M_{\bm}(f^{(1)}) \otimes f^{(2)}, & \text{if } \bm \leq \bv', \\ 0, & \text{otherwise} \end{array} \right.
$$
for all $\mathbf{0} \leq \bm \leq \bv$ and $f \in \GT^{\bmu}(\bv)$.  It restricts to $\GT(\bv) \hookrightarrow \GT(\bv') \otimes_{\base} \GT(\bv'')$ from (\ref{eq: restriction to GT}).

\item There is a {surjective} ring homomorphism $\Phi_{\bv, \bv'}: \qzast(\bv) \twoheadrightarrow \qzast(\bv')$,  determined by:
$$
\M_\bm(f) \ \mapsto \ \left\{ \begin{array}{cl} \M_\bm(\widetilde{f}), &\text{if } \bm \leq \bv', \\ 0, & \text{otherwise} \end{array} \right.
$$
for all $\mathbf{0} \leq \bm \leq \bv$ and $f \in \GT^{\bmu}(\bv)$. This map restricts to $\GT(\bv) \twoheadrightarrow \GT(\bv')$ from (\ref{eq: restriction to GT}), and also restricts to a surjection $\Phi_{\bv, \bv'}^+:  \qzastplus(\bv) \twoheadrightarrow \qzastplus(\bv')$.

\end{enumerate}
\end{Theorem}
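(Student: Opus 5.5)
We will construct both maps at the level of GKLO embeddings, check that they carry FMOs to the stated images, and deduce well-definedness from injectivity of the GKLO embedding together with Proposition \ref{generation by FMOs}.

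\emph{Step 1: a map of difference operator algebras.} Identify the variables of $\GKLO{\bv}$ with those of $\GKLO{\bv'}$ (indices $r\le \sv'_i$) and a second block (indices $\sv'_i<r\le\sv_i$). We would like a ring map $\GKLO{\bv}^{S_\bv}\supseteq B \to \GKLO{\bv'}\otimes_\base \tGT(\bv'')$ that sends $\sfu_{i,r}\mapsto 0$ for $r>\sv'_i$ and fixes everything else. Killing the shift operators of the second block is multiplicative on the subalgebra $B$ spanned by terms $f_A\sfu^A$ whose coefficients $f_A$ have no denominators $w_{i,r}-w_{i,s}+n\hbar$ with $r\le\sv_i'<s$. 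Indeed, the map is then just ``discard all monomials $\sfu^A$ with $A$ supported outside $[\bv']$'', and commuting $\sfu$'s past $w$'s only shifts variables inside a block. The natural alternative, specializing $\sfu_{i,r}\mapsto 1$ on the second block, is not what we want, because the stated formula kills FMOs with $\bm\not\le\bv'$.

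\emph{Step 2: compute on FMOs.} Apply this to \eqref{eq: FMOs under GKLO}. If $\bm\not\le\bv'$, every $\Gamma$ meets the second block, so the image is $0$. If $\bm\le\bv'$, only the $\Gamma\subseteq[\bv']$ survive. In those terms the denominator factors $w_{i,r}-w_{i,s}$ with $s$ in the second block do not vanish, so they must be shown to cancel. The edge factors with $s$ in the second block are not polynomial either. Hence this naive map does not literally give $\sum\M_\bm(f^{(1)})\otimes f^{(2)}$.

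This is the main obstacle, and it is why we pass to the associated graded and the explicit basis. The plan is therefore to define (a) $\bT$-equivariantly using localization: take the torus fixed-point / hyperbolic-restriction along the cocharacter separating the blocks, as in the ``adding defect'' construction of \cite{MW2}. There, sending the second block's equivariant parameters to infinity in a controlled way makes the mixed factors cancel between numerator and denominator up to factors lying in $\GT(\bv'')$. Concretely, I would first try taking the leading term in an auxiliary grading where the second-block variables $w_{i,s}$, $s>\sv_i'$, have very large degree. The ratio of mixed factors then tends to $1$ exactly when edge and vertex factors balance. When they do not, we absorb the leftover by twisting with the grading of Theorem \ref{thm: grading}, which is why the maps are graded.

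\emph{Step 3: part (b) and surjectivity.} Compose (a) with the specialization $\GT(\bv'')\to\base$, $w_{i,s}\mapsto 0$. This is compatible with $f\mapsto\sum f^{(1)}\otimes f^{(2)}\mapsto \widetilde f$, and it yields $\Phi_{\bv,\bv'}$. For surjectivity, Proposition \ref{generation by FMOs} says $\qzast(\bv')$ is generated by the $\M_\bm(g)$ with $g\in\GT^{\bmu}(\bv')$, and $\GT^{\bmu}(\bv)\to\GT^{\bmu}(\bv')$ is surjective, so every generator is hit. For the positive part, restricted dressings $f\in\GT^\bmu_\res(\bv)$ map isomorphically to $\GT^\bmu_\res(\bv')$ by \eqref{eq: directed system of restricted symmetric polynomials}. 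Hence $\qzastplus(\bv)$ is mapped onto $\qzastplus(\bv')$. Well-definedness in every step follows from injectivity of the GKLO embedding on the target side.
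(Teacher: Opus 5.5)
\textbf{There is a genuine gap in part (a).} Your Steps 1--2 correctly identify the obstacle. Your Step 3 matches the paper: part (b) is (a) composed with $\GT(\bv'')\to\base$, surjectivity comes from generation by FMOs, and the positive part is handled through restricted dressings. But the homomorphism in part (a), on which everything rests, is never constructed, and the remedy you sketch would not produce it.

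\emph{Why the sketched remedy fails.} A leading-term or ``parameters to infinity'' procedure yields at best a degeneration, such as an associated-graded map. It does not give an exact homomorphism whose value on $\M_\bm(f)$ is $\sum \M_\bm(f^{(1)})\otimes f^{(2)}$, with $f^{(2)}$ still an honest polynomial in the second-block variables. Moreover, the leftover mixed factor attached to $r\in\Gamma_i$ is
\[
\prod_{a:\source(a)=i}\prod_{s>\sv'_{\target(a)}}(w_{\target(a),s}-w_{i,r}+\theta_a-\tfrac12\hbar)\Big/\prod_{s>\sv'_i}(w_{i,r}-w_{i,s}).
\]
This factor does not tend to $1$: the numbers of factors differ in general, and even when they agree the leading terms involve different variables. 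Since it depends on $w_{i,r}$, and hence on $\Gamma$, no regrading via Theorem \ref{thm: grading} can absorb it.

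\emph{The missing idea.} Do not kill the second-block shifts and leave the rest alone. Instead, also \emph{twist the first-block shift operators} by exactly these factors. Define $\varphi$ by $w_{i,r}\mapsto w_{i,r}$, by $\sfu_{i,r}\mapsto 0$ for $r>\sv'_i$, and by
\[
\sfu_{i,r}\longmapsto \frac{\prod_{s=\sv'_i+1}^{\sv_i}(w_{i,r}-w_{i,s})}{\prod_{a:\source(a)=i}\prod_{s=\sv'_{\target(a)}+1}^{\sv_{\target(a)}}(w_{\target(a),s}-w_{i,r}+\theta_a-\tfrac12\hbar)}\,\sfu_{i,r}\qquad (r\le \sv'_i).
\]
The target is the difference operator algebra for $\bv'$ with coefficients extended by $\tGT(\bv'')$, Ore-localized additionally at the mixed edge factors.

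This is a ring homomorphism for the following reason. Each prefactor involves only $w_{i,r}$ and second-block variables. These are not shifted by any $\sfu_{j,t}$ with $t\le\sv'_j$ and $(j,t)\neq(i,r)$. Hence the twisted generators still commute with one another and satisfy the same relations with the $w$'s.

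On FMOs the calculation goes as follows. If $\bm\not\le\bv'$, every $\Gamma$ meets the second block, so the image is $0$. If $\bm\le\bv'$, only $\Gamma\subseteq[\bv']$ survive. For each such $\Gamma$, the product of the prefactors over $(i,r)\in\Gamma$ cancels exactly the mixed vertex denominators and mixed edge numerators in \eqref{eq: FMOs under GKLO}. What remains is the $\bv'$-formula with $f|_\Gamma=\sum f^{(1)}|_\Gamma\, f^{(2)}$, which is $\sum\M_\bm(f^{(1)})\otimes f^{(2)}$.

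Since FMOs generate $\qzast(\bv)$ and the GKLO map on the target side is injective, $\varphi$ restricts to the map in (a). Your Step 3 then completes the proof, as in the paper.
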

We will mainly be interested in the surjective homomorphisms from part (b) of the theorem.  However, the \textbf{adding defect homomorphisms} from part (a)  are simpler to construct and play an important role in the proof of part (b).

\begin{Remark}
    Specialized at $\tfrac{1}{2}\hbar = \theta_e = 0$ for all $ e\in \arrowset$, we recover the closed embedding and adding defect maps for Zastava spaces from \cite[\S 2.5 and \S 4.4]{MW2}. Note that in the present article we allow our quiver $Q$ to contain loops; the results and proofs from \cite{MW2} extend easily to this generality, if we define zastava spaces via the Coulomb branch construction.
\end{Remark}
\begin{proof}[\mbox{Proof of Theorem \ref{thm: defect and embeddings}}]
We follow the same strategy as \cite[Theorem 2.57]{MW2}. To prove (a), we construct the desired homomorphism as the left hand side of a commutative diagram:
\begin{equation}
\label{eq: adding defect defining commutative diagram}
\begin{tikzcd}
\qzast(\bv) \ar[d,dashed] \ar[r,hook] & \tilde{\cA}_{\hbar}^+(\bv)_{\loc} \ar[d,"\varphi"] \\
\qzast(\bv') \otimes_{\base} \GT(\bv'') \ar[r,hook] & \big(\tilde{\cA}_{\hbar}^+(\bv')\otimes_{\base} \tGT(\bv'')\big)_{\loc}
\end{tikzcd}
\end{equation}
Here $\big(\tilde{\cA}_{\hbar}^+(\bv')\otimes_{\base} \tGT(\bv'')\big)_{\loc}$ denotes the Ore  localization of $\tilde{\cA}_{\hbar}^+(\bv')\otimes_{\base} \tGT(\bv'')$ at the multiplicative set generated by the union
$$
\begin{array}{c} \left\{ w_{i,r} - w_{i,s} + n \hbar : i \in \vertexset, 1 \leq r \neq s \leq \sv_i, n \in \ZZ \right\} \\
\bigcup\left\{ w_{\target(a),s} - w_{\source(a),r} + \nu_a + (n-\tfrac{1}{2})\hbar : a \in \arrowset, 1 \leq r\leq \sv'_{\source(a)}, \sv'_{\target(a)} < s \leq \sv_{\target(a)},  n \in \ZZ \right\}
\end{array}
$$
The horizontal arrows in the above commutative diagram are GKLO embeddings from \S \ref{ssec: GKLO embedding}.  Meanwhile, the arrow $\varphi$ on the right hand side of the diagram is given by:
\begin{Lemma}
\label{lem:rationalembedding}
There is homomorphism of $\base$--algebras
$$
\varphi: \tilde{\cA}_{\hbar}^+(\bv)_{\loc} \longrightarrow \big(\tilde{\cA}_{\hbar}^+(\bv')\otimes_{\base} \tGT(\bv'')\big)_{\loc}
$$
defined by sending $w_{i,r} \mapsto w_{i,r}$, and by sending
$$
\sfu_{i,r} \mapsto \left\{ \begin{array}{cl} \frac{\displaystyle{\prod_{s=\sv_i'+1}^{\sv_i}(w_{i,r}-w_{i,s})}}{\displaystyle{\prod_{a \in \arrowset: \source(a) = i} \prod_{s=\sv'_{\target(a)}+1}^{\sv_{\target(a)}} (w_{\target(a), s} - w_{i, r} + \nu_a - \tfrac{1}{2}\hbar)}} \sfu_{i,r} & \ \ \text{if } 1 \leq r \leq \sv'_i, \\ 0 & \ \ \text{if } \sv'_i < r \leq \sv_i \end{array} \right.
$$
\end{Lemma}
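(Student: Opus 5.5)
The statement to prove is Lemma \ref{lem:rationalembedding}: the assignment on generators extends to a well-defined $\base$-algebra homomorphism $\varphi$. The approach is to check that the images satisfy the defining relations of $\tilde{\cA}_{\hbar}^+(\bv)$. We then extend to the Ore localization by checking that the localized elements map to invertible elements.

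\emph{Polynomial part.} The images of the $w_{i,r}$ commute among themselves, trivially. Write $\varphi(\sfu_{i,r}) = g_{i,r}(w)\,\sfu_{i,r}$ for $r \le \sv'_i$, and $\varphi(\sfu_{i,r}) = 0$ for $r > \sv'_i$. The relation $\sfu_{i,r} w_{j,s} = (w_{j,s}+\hbar\delta_{ij}\delta_{rs})\sfu_{i,r}$ is preserved for two reasons:
\begin{itemize}
\item[(i)] for $r \le \sv'_i$, conjugation by $\sfu_{i,r}$ in the target acts on $w_{j,s}$ exactly as in the source, and $g_{i,r}$ commutes with all $w$'s;
\item[(ii)] for $r > \sv'_i$, both sides map to $0$.
\end{itemize}
In the target, the variables $w_{j,s}$ with $s > \sv'_j$ lie in the commutative factor $\tGT(\bv'')$, on which no $\sfu$ acts. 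This matches the source relation, since for $r\le \sv'_i$ we never have $(j,s)=(i,r)$ with $s>\sv'_j$.

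\emph{Commutativity of the $\sfu$'s.} For $r\le \sv'_i$ and $s\le \sv'_j$ we need
\[
g_{i,r}(w)\, g_{j,s}(w+\hbar\,\delta_{(i,r)}) = g_{j,s}(w)\, g_{i,r}(w+\hbar\,\delta_{(j,s)}).
\]
The factor $g_{i,r}$ involves only $w_{i,r}$ together with variables $w_{k,t}$ with $t > \sv'_k$. None of these variables is shifted by $\sfu_{j,s}$ unless $(j,s)=(i,r)$. Hence both sides equal $g_{i,r}g_{j,s}$ and the relation holds. Any pair involving a zero image is trivial.

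\emph{Localization.} By the universal property of Ore localization, it suffices to check that the images of the generators of the multiplicative set are invertible. These images are:
\begin{itemize}
\item[(i)] $w_{i,r}-w_{i,s}+n\hbar$ with $r,s\le \sv'_i$: already inverted in $\tilde{\cA}^+_\hbar(\bv')_\loc$;
\item[(ii)] the same element with one or both of $r,s > \sv'_i$: inverted in the target by definition, since the first set in the target's localization includes all $1\le r\ne s\le \sv_i$.
\end{itemize}
The denominators of $g_{i,r}$ are of the form $w_{\target(a),s}-w_{i,r}+\nu_a-\tfrac12\hbar$ with $s>\sv'_{\target(a)}$ and $r\le\sv'_i$, and these lie in the second inverted set. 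Finally, these inverted elements must form an Ore set in the target, i.e.\ remain stable under shifts by the $\sfu$'s up to $n\hbar$. This holds because the $\sfu$'s only shift $w$'s with index $\le \sv'$, changing $n$ by an integer.

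The main subtlety is this last compatibility check: that the target localization is a genuine Ore localization containing all required inverses, including those for arrows $a$ with $\source(a)=\target(a)$ (edge loops). I will verify this case separately, noting that loops still fit the index pattern $r\le\sv'$, $s>\sv'$ of the second set.
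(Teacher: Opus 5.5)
Your proposal is correct and follows the paper's argument: you verify the defining relations using the fact that the coefficient of $\sfu_{i,r}$ involves only $w_{i,r}$ and variables $w_{k,t}$ with $t>\sv'_k$, and none of these is shifted by any $\sfu_{j,s}$ with $(j,s)\neq(i,r)$. Your additional check that the images of the localized elements and the denominators (edge loops included) are inverted in the target spells out what the paper leaves implicit.
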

The proof of the lemma appears below.  To complete the proof of part (a) we must show the existence of the left downward arrow in the commutative diagram (\ref{eq: adding defect defining commutative diagram}).  The restriction of $\varphi$ to $\qzast(\bv)$ is certainly a ring homomorphism.  Since the FMOs $\M_\bm(f)$ generate $\qzast(\bv)$, to prove that 
$$
\varphi( \qzast(\bv)) \subseteq \qzast(\bv') \otimes_{\base} \GT(\bv'')
$$
it suffices to show that $\varphi(\M_\bm(f))$ has the claimed form from the statement of the theorem. This follows by direct calculation, similarly to the proof of \cite[Theorem 2.57]{MW2}.

Part (b) follows from part (a), by defining $\Phi_{\bv, \bv'}$ as the composition
$$
\qzast(\bv) \stackrel{\text{(a)}}{\longrightarrow} \qzast(\bv') \otimes_{\base} \GT(\bv'') \ \twoheadrightarrow \qzast(\bv') \otimes_\base \base \cong \qzast(\bv')
$$
where the middle map is defined via the surjection $\GT(\bv'') \twoheadrightarrow \GT(\mathbf 0) = \base$ from \eqref{eq: directed system of symmetric polynomials}.  This composition has the claimed effect on the generators $\M_\bm(f)$ of $\qzast(\bv)$, and is surjective since all generators $\M_\bm(\widetilde{f})$ of $\qzast(\bv')$ are in the image. By restricting our  attention to $f \in \GT^\bmu_\res(\bv)$, by the same reasoning we see that $\Phi_{\bv,\bv'}$  restricts to a surjection $\Phi_{\bv,\bv'}^+ : \qzastplus(\bv) \twoheadrightarrow \qzastplus(\bv')$.
\end{proof}

\begin{proof}[Proof of Lemma \ref{lem:rationalembedding}]
The proposed definition of $\varphi(\sfu_{i,r})$ involves a rational expression in $w_{i,r}$ and variables $w_{j,s}$ such that $\sv'_j < s \leq \sv_j$.  These variables all commute with $\sfu_{k,t}$ for any $k \in \vertexset$ and $1 \leq t \leq \sv'_k$ such that $(k,t) \neq (i,r)$. Using this the defining relations of $\tilde{\cA}_\hbar^+(\bv)$ from \S \ref{ssec: GKLO embedding} are readily verified. 
\end{proof}

A key property of the  surjective homomorphisms from part (b) of the theorem is that they form directed systems:

\begin{Proposition}
\label{prop: surjections respect gradings}
The map $\Phi_{\bv, \bv'}: \qzast(\bv) \twoheadrightarrow \qzast(\bv')$ respects $\ZZ^\vertexset \times \ZZ$--gradings.
We have $\Phi_{\bv, \bv} = \operatorname{Id}$ and  $\Phi_{\bv, \bv''} = \Phi_{\bv',\bv''} \circ \Phi_{\bv,\bv'}$ whenever $\bv \geq \bv' \geq \bv''$.  

By restriction, analogous claims hold for the maps $\Phi_{\bv,\bv'}^+ : \qzastplus(\bv) \twoheadrightarrow \qzastplus(\bv')$.
\end{Proposition}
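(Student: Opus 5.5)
Since $\qzast(\bv)$ is generated by the FMOs $\M_\bm(f)$ (Proposition \ref{generation by FMOs}), and $\Phi_{\bv,\bv'}$ is a ring homomorphism, every claim reduces to checking it on FMOs. For the composition and identity statements, evaluate both sides on a generator $\M_\bm(f)$. For $\Phi_{\bv,\bv}$ the specialization $f\mapsto\widetilde f$ is the identity, and the condition $\bm\le\bv$ always holds, so we get $\M_\bm(f)$. For $\bv\ge\bv'\ge\bv''$, both $\Phi_{\bv,\bv''}(\M_\bm(f))$ and $\Phi_{\bv',\bv''}(\Phi_{\bv,\bv'}(\M_\bm(f)))$ vanish unless $\bm\le\bv''$, since $\bm\le\bv''$ implies $\bm\le\bv'$. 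When $\bm\le\bv''$, both equal $\M_\bm(\widetilde f)$. This is because setting $w_{i,r}=0$ for $r>\sv'_i$ and then for $\sv''_i<r\le\sv'_i$ is the same as setting $w_{i,r}=0$ for all $r>\sv''_i$ at once. Hence the two ring homomorphisms agree on generators and are equal.

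For the grading, note first that the $\ZZ^\vertexset$-degree is $\bm$ on both sides, so it is preserved. For the $\ZZ$-degree, Theorem \ref{thm: grading} gives $\deg\M_\bm(f)=\deg f+\tfrac12(\bm,\bm)$. The key observation is that $\tfrac12(\bm,\bm)$ depends only on $\bm$ and not on the ambient $\bv$. This is exactly why the grading was normalized using $(\bla,\bla)$ rather than the $\Delta$-grading, which does depend on $\bv$. The specialization $\tGT(\bv)\to\tGT(\bv')$ sends variables to variables or $0$, so it is homogeneous of degree $0$ for the half-cohomological grading: if $f$ is homogeneous, then $\widetilde f$ is homogeneous of the same degree, or is zero. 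Therefore $\Phi_{\bv,\bv'}$ sends a homogeneous generator either to $0$ or to a homogeneous element of the same bidegree.

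The one subtle point is upgrading "preserves degrees of homogeneous generators" to "is a graded map". We cannot argue degree-wise on an arbitrary presentation in which we might not know the images are homogeneous. Instead, observe that the homogeneous FMOs span each $\qzast(\bv)^{\leq\bm}$, because $\GT^\bmu(\bv)$ is spanned by homogeneous polynomials and $\M_\bm$ is linear. Products of homogeneous generators are homogeneous, and they span each graded piece: by Corollary \ref{cor: filtration generated by FMOs} these products span the filtered pieces, and the grading is compatible with the filtration. A ring map sending each homogeneous generator to a homogeneous element of the same degree sends each product of such generators to an element of the same degree. Hence it sends each graded piece into the corresponding graded piece.

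An alternative approach is via the GKLO picture of Remark \ref{rmk:gradingunderGKLO}. There the map $\varphi$ of Lemma \ref{lem:rationalembedding} does not obviously preserve degree, since the degree of $\sfu_{i,r}$ depends on $\bv$. One could check that the prefactor in $\varphi(\sfu_{i,r})$ has exactly the compensating degree, but the argument via generators avoids this.

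Finally, by Theorem \ref{thm: defect and embeddings}(b) the maps $\Phi_{\bv,\bv'}$ restrict to $\Phi^+_{\bv,\bv'}$ on $\qzastplus(\bv)$, and $\qzastplus(\bv)$ is a graded subalgebra by Theorem \ref{thm: grading}. So the identity, composition and grading statements for $\Phi^+$ follow immediately from those for $\Phi$.
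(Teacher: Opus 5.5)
Your proposal is correct and follows essentially the same route as the paper: you check everything on the generating FMOs $\M_\bm(f)$, using that $f\mapsto\widetilde f$ is degree-preserving, that $\deg \M_\bm(f) = (\bm, \deg f + \tfrac12(\bm,\bm))$ is independent of $\bv$, and that the specializations $f\mapsto\widetilde f$ form a directed system. Your extra paragraph, which passes from ``homogeneous generators go to homogeneous elements of the same degree'' to ``the map is graded'' by noting that products of homogeneous FMOs span each graded piece, makes explicit a step the paper leaves implicit.
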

\begin{proof}
The homomorphisms $f \mapsto \widetilde{f}$ from (\ref{eq: restriction to GT}) are $\ZZ$--graded, and $\Phi_{\bv,\bv'}(\M_\bm(f))$ is equal to $\M_\bm(\tilde{f})$ or zero. Inspecting the formulas for the $\ZZ^\vertexset\times \ZZ$--grading from Theorem \ref{thm: grading} it is immediate that
$$
\deg \M_\bm(f) = \deg \Phi_{\bv,\bv'} \big( \M_\bm(f) \big)
$$
Since the $\M_\bm(f)$ generate $\qzast(\bv)$, this proves that $\Phi_{\bv, \bv'}$ is graded.  The fact that these maps define a directed system is also immediate from their effect $\M_\bm(f) \mapsto \M_\bm(\widetilde{f})$ on FMOs, since the maps $f\mapsto \widetilde{f}$ form a directed system.
\end{proof}

%%%%%%%%%
\subsection{Defining ideals and stabilization}
Recall from Remark \ref{rmk: filtration by all partitions} that we may interpret each $\qzast(\bv)$ as a filtered graded ring, filtered by the poset $(\Partitions^\vertexset, \leq)$ of multipartitions.

\begin{Lemma}
\label{lem:surjectionsarefilt}
Let $\bv \geq\bv'$. Then the surjection $\Phi_{\bv, \bv'}: \qzast(\bv)  \twoheadrightarrow \qzast(\bv')$ is {strictly} filtered:
$$
\Phi_{\bv, \bv'}\big( \qzast(\bv)^{\leq \bla} \big) \ = \ \qzast(\bv')^{\leq \bla} \quad \text{for any } \bla \in \Partitions^\vertexset.
$$
The associated graded map $\gr \Phi_{\bv,\bv'} : \gr \qzast(\bv) \twoheadrightarrow \gr \qzast(\bv')$ is given explicitly by 
$$
f \sfr_{\bla} \ \longmapsto \ \left\{ \begin{array}{cl} \tilde{f} \sfr_{\bla}, & \text{if } \bla \in \Partitions_{\leq \bv'}^\vertexset, \\ 0, & \text{otherwise,}\end{array} \right.
$$
for any $\bla \in \Partitions_{\leq \bv}^\vertexset$ and $f\in \GT^{\bla}(\bv)$, where as per usual $f\mapsto \tilde{f}$ denotes image under $\GT^{\bla}(\bv) \twoheadrightarrow \GT^{\bla}(\bv')$.
\end{Lemma}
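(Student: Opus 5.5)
The plan is to use Corollary \ref{cor: filtration generated by FMOs}, which describes each filtered piece $\qzast(\bv)^{\leq \bla}$ as the span of products $\M_{\bc_1}(f_1)\cdots \M_{\bc_k}(f_k)$ with $\bmu_1+\cdots+\bmu_k \leq \bla$, and combine it with the explicit effect of $\Phi_{\bv,\bv'}$ on FMOs from Theorem \ref{thm: defect and embeddings}(b).

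\emph{Inclusion $\subseteq$.} If $\bla \notin \Partitions_{\leq \bv}^\vertexset$, both sides are zero by Remark \ref{rmk: filtration by all partitions}, so assume $\bla \in \Partitions_{\leq\bv}^\vertexset$ and apply $\Phi_{\bv,\bv'}$ to a spanning product. Since $\Phi_{\bv,\bv'}$ is a ring homomorphism, the product maps either to $0$ (if some $\bc_s \not\leq \bv'$) or to $\M_{\bc_1}(\tilde f_1)\cdots\M_{\bc_k}(\tilde f_k)$ with all $\bc_s\leq \bv'$. In the latter case we still have $\bmu_1+\cdots+\bmu_k\leq\bla$. If $\bla \in \Partitions^\vertexset_{\leq\bv'}$, the corollary applied at $\bv'$ puts the image in $\qzast(\bv')^{\leq\bla}$. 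If $\bla\notin\Partitions^\vertexset_{\leq \bv'}$, then $\bmu_1+\cdots+\bmu_k \notin \Partitions^\vertexset_{\leq\bv'}$ by \eqref{eq: partitions length decreases under dominance}. But that sum has $i$-th length $\max_s (\bc_s)_i \leq \sv'_i$, a contradiction, so this case gives $0$.

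\emph{Inclusion $\supseteq$.} Take $\bla\in\Partitions^\vertexset_{\leq\bv'}$ and a spanning product $\M_{\bc_1}(g_1)\cdots\M_{\bc_k}(g_k)$ of $\qzast(\bv')^{\leq\bla}$ with $g_s\in\GT^{\bmu_s}(\bv')$. Each $g_s$ lifts to some $f_s\in\GT^{\bmu_s}(\bv)$ under the surjection \eqref{eq: restriction to GT 2}. The product of the lifted FMOs lies in $\qzast(\bv)^{\leq\bla}$ by the corollary at level $\bv$, and it maps to the given product. This gives strictness.

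\emph{Associated graded.} By strictness, $\Phi_{\bv,\bv'}$ induces a map on $\gr$ sending the class of $\M_\bla(f)$ to the class of $\Phi_{\bv,\bv'}(\M_\bla(f))$. Choose $\M_\bla(f)=\sum\M_{\bc_1}(f_1)\cdots\M_{\bc_k}(f_k)$ as in \eqref{eq: general MOs}. Its image is $\sum\M_{\bc_1}(\tilde f_1)\cdots\M_{\bc_k}(\tilde f_k)$ when $\bla\in\Partitions^\vertexset_{\leq\bv'}$, since all column lengths are then $\leq\bv'$. Its leading term is $\tilde f\sfr_\bla$, because specialization $f\mapsto\tilde f$ commutes with the shifts $w\mapsto w+\hbar(\bmu_1+\cdots)$. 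This holds because the shifted variables $w_{i,r}$ with $r>\sv'_i$ are not shifted, as $\bmu_s$ vanishes there. When $\bla\notin\Partitions^\vertexset_{\leq\bv'}$, the target piece is $0$.

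The main subtle point is this last compatibility, together with checking in the $\subseteq$ step that length considerations exclude the bad case. Both are bookkeeping rather than a real obstacle.
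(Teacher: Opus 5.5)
Your proposal is correct and follows essentially the same route as the paper. Both arguments span $\qzast(\bv)^{\leq\bla}$ by products of FMOs via Corollary~\ref{cor: filtration generated by FMOs} and push these through $\Phi_{\bv,\bv'}$ using its effect on FMOs. Both use \eqref{eq: partitions length decreases under dominance} to kill the case $\bla\notin\Partitions^\vertexset_{\leq\bv'}$, and both read off $\gr\Phi_{\bv,\bv'}$ from the lifts $\M_\bla(f)$.

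You are slightly more careful than the paper in two places. In the case $\bla\notin\Partitions^\vertexset_{\leq\bv'}$, you argue via the length $\max_s(\bc_s)_i$ of $\bmu_1+\cdots+\bmu_k$ to conclude that \emph{some} $\bc_s\not\leq\bv'$, whereas the paper asserts this for the first factor. You also explicitly check that $f\mapsto\tilde f$ commutes with the column shifts. Neither point changes the approach.
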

\begin{proof}
Take $\bla \in \Partitions^\vertexset$. If $\bla \notin \Partitions_{\leq \bv}^\vertexset$ then $\qzast(\bv)^{\leq \bla} = \qzast(\bv')^{\leq \bla} =\{ 0\}$, so there is nothing to check.  So suppose that $\bla \in \Partitions_{\leq \bv}^\vertexset$.   By Corollary \ref{cor: filtration generated by FMOs}, $\qzast(\bv)^{\leq \bla}$ is spanned by elements of the form
$$
x \ = \ \M_{\bm_1}(f_1) \M_{\bm_2}(f_2) \cdots \M_{\bm_k}(f_k)
$$
where the corresponding multipartitions $\bmu_j = (1^{\bm_j})$ satisfy $\bmu_1+\ldots + \bmu_k \leq \bla$.  Under $\Phi_{\bv,\bv'}$ each FMO maps to an FMO or to zero.  More precisely, we have two main cases:

First, suppose that  $\bla \in \Partitions_{\leq \bv'}^\vertexset$. If some $\bm_j \not\leq \bv'$, then $\Phi_{\bv,\bv'}( \M_{\bmu_j}(f_j)) = 0$ so $\Phi_{\bv, \bv'}(x) = 0$.  Else if all $\bm_j \leq \bv'$, then 
$$
\Phi_{\bv,\bv'}(x) = \M_{\bm_1}(\widetilde{f_1}) \M_{\bm_2}(\widetilde{f_2}) \cdots \M_{\bm_k}(\widetilde{f_k})
$$
Elements of this form span $\qzast(\bv')^{\leq \bla}$, so we have $\Phi_{\bv, \bv'}\big( \qzast(\bv)^{\leq \bla} \big) =  \qzast(\bv')^{\leq \bla} $.

Next, suppose that $\bla \notin \Partitions_{\leq \bv'}^\vertexset$.  Then by (\ref{eq: partitions length decreases under dominance}), we have $\bm_1 \not\leq \bv'$ and therefore  $\Phi_{\bv,\bv'}(x) = 0$.  So in this case we see that $\Phi_{\bv,\bv'}\big( \qzast(\bv)^{\leq \bla}\big) = 0$, as desired.

Finally, each element $f \sfr_{\bla}$ has a lift $\M_\bla(f) \in \qzast(\bv)$ which is a sum of elements of the above form with $\bmu_1 + \ldots + \bmu_k = \bla$. The formula for $\gr \Phi_{\bv,\bv'}$ follows.
\end{proof}

\begin{Theorem}
Let $\bv' \leq \bv$. The  kernel of the ring homomorphism $\Phi_{\bv,\bv'}: \qzast(\bv) \rightarrow \qzast(\bv)$ is generated by the following FMOs:
\begin{enumerate}[label=(\roman*),topsep=3pt,itemsep = 5pt]
    \item $\M_\bm(f)$ for $ \mathbf{0} \leq \bm \leq \bv'$, such that $f$ is in the kernel of the homomorphism $\GT^{\bmu}(\bv) \twoheadrightarrow \GT^{\bmu}(\bv')$ from (\ref{eq: restriction to GT}).
    \item $\M_\bm(f)$ for all $\mathbf{0} \leq \bm \leq \bv$ such that $\bm \not\leq \bv'$, and all elements $f\in \GT^{\bmu}(\bv)$.
\end{enumerate}
In other words, the kernel of $\Phi_{\bv,\bv'}$ is generated by those FMOs which are in the kernel. 
\end{Theorem}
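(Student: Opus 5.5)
Let $J \subseteq \qzast(\bv)$ denote the two-sided ideal generated by the FMOs of types (i) and (ii). By Theorem \ref{thm: defect and embeddings}(b), every such FMO maps to zero: type (ii) by definition, and type (i) because $\M_\bm(f)\mapsto \M_\bm(\widetilde f) = \M_\bm(0)=0$. Hence $J \subseteq \ker \Phi_{\bv,\bv'}$. For the reverse inclusion, we plan a filtration argument based on Lemma \ref{lem:surjectionsarefilt} and the basis of dressed monopole operators $\M_\bla(f)$ from \eqref{eq: general MOs}.

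The first step is to construct, for each $\bla\in\Partitions_{\leq\bv}^\vertexset$, a spanning set of $\qzast(\bv)^{\leq\bla}$ modulo $J$ that is compatible with the map. Write $\bla$ through its column vectors $\bc_1,\dots,\bc_k$. If $\bla\notin\Partitions_{\leq\bv'}^\vertexset$, then $\bc_1\not\leq\bv'$ by \eqref{eq: partitions length decreases under dominance}. Every product \eqref{eq: prod of monopoles} with $\bmu_1+\dots+\bmu_k\leq\bla$ then has some factor with $\bm_j\not\leq\bv'$, since the sum of the first columns dominates. Therefore all of $\qzast(\bv)^{\leq\bla}$ lies in $J$ by Corollary \ref{cor: filtration generated by FMOs}. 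If instead $\bla\in\Partitions_{\leq\bv'}^\vertexset$, all $\bc_s\leq\bv'$. We then choose a $\kk$-splitting $\GT^{\bmu_s}(\bv)=\ker\oplus C_s$ of each surjection $\GT^{\bmu_s}(\bv)\twoheadrightarrow\GT^{\bmu_s}(\bv')$; since these are free graded modules over $\kk$ with monomial-type bases, such a splitting exists. Every product $\M_{\bc_1}(f_1)\cdots\M_{\bc_k}(f_k)$ is then congruent modulo $J$ to one with each $f_s\in C_s$.

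Next we argue by induction on the poset, degree by degree; the poset is finite within each $\ZZ^\vertexset$-degree. Take $x\in\ker\Phi_{\bv,\bv'}\cap\qzast(\bv)^{\leq\bla}$, and consider its symbol in $\GT^\bla(\bv)\sfr_\bla$. If $\bla\notin\Partitions_{\leq\bv'}^\vertexset$, then $x\in J$ by the above. Otherwise, Lemma \ref{lem:surjectionsarefilt} shows that the symbol $f\sfr_\bla$ maps to $\tilde f\sfr_\bla$, and this must vanish in $\gr\qzast(\bv')$ (after checking $x$ is not in a lower piece on the target side), so $\tilde f=0$.

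The key claim is then: \emph{the kernel of $\GT^\bla(\bv)\to\GT^\bla(\bv')$ is the image under the multi-column map $\bigotimes_s\GT^{\bmu_s}(\bv)\to\GT^\bla(\bv)$ of $\sum_s(\cdots\otimes\ker_s\otimes\cdots)$.} Granting this, there is a lift $y\in J$ of $f\sfr_\bla$, built as a sum of products of FMOs with at least one factor of type (i). Then $x-y\in\ker\Phi_{\bv,\bv'}\cap\qzast(\bv)^{<\bla}$, and induction finishes the argument.

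The main obstacle is the key claim about kernels of partially symmetric polynomial rings. It reduces to a single vertex by tensor products, since the kernel of a tensor product of surjections of free modules is the sum of the kernels tensored with everything else. For one vertex, the kernel of $\kk[w_1..w_\sv]^{S_\bla}\to\kk[w_1..w_{\sv'}]^{S_\bla}$ is generated by elementary symmetric functions in the trailing block $S_{\sm_0}$ of variables of degree exceeding $\sm_0-(\sv-\sv')$. We would try to show that these elements arise from the first column factor $\GT^{\bmu_1}(\bv)$, whose trailing block is exactly that of $\bla$ (the zero-parts block). A subtlety is that the shift $w\mapsto w+\hbar\bmu$ does not affect those trailing variables. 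Surjectivity, modulo these ideals, of the map restricted to complements $C_s$ then follows from surjectivity for $\bv'$, as in the proof of Proposition \ref{generation by FMOs}. A dimension count degree by degree, using freeness over $\kk$, is a fallback if direct generator-chasing fails.
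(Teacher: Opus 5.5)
Your proposal is correct and follows the paper's proof: strictness of $\Phi_{\bv,\bv'}$ (Lemma~\ref{lem:surjectionsarefilt}) reduces the problem to the symbols $K^\bla\sfr_\bla$ of kernel elements, the case $\bla\notin\Partitions^\vertexset_{\leq\bv'}$ is handled by a column violating $\bv'$ (type (ii)), and the case $\bla\in\Partitions^\vertexset_{\leq\bv'}$ is handled by showing that $K^\bla$ is spanned by column products with a type (i) factor. One small repair is needed in your induction: since $\qzast(\bv)^{<\bla}$ is a sum of several pieces $\qzast(\bv)^{\leq\bmu}$, you should run it over downward-closed sets of multipartitions (or use the bases $\M_\bmu(f)$), so that the inductive hypothesis genuinely applies to $x-y$.

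The paper leaves the partially symmetric polynomial claim as an exercise, and your sketch supplies it correctly. The kernel of $\GT^\bla(\bv)\to\GT^\bla(\bv')$ is the ideal generated by the elementary symmetric polynomials of degree $>\sv'_i-\ell(\la_i)$ in the trailing zero-part variables at each vertex $i$. These already lie in the kernel for the first-column factor $\GT^{\bmu_1}(\bv)$, and that factor is unshifted, so multiplying it by such an element multiplies the image under the surjective multi-column map by the same element.
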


\begin{proof}
Denote $K = \operatorname{Ker} \Phi_{\bv,\bv'}$, and consider the induced filtration
$K^{\leq \bla} = K \cap \qzast(\bv)^{\leq \bla}$. By the previous lemma the map $\Phi_{\bv,\bv'}$ is strictly filtered, and we see that
$$
\gr K = \operatorname{Ker}( \gr \Phi_{\bv, \bv'} ) = \bigoplus_{\bla \in \Partitions_{\leq \bv}^\vertexset} K^\bla \sfr_\bla,
$$
where $K^\bla \subset \GT^{\bla}(\bv)$ is the subset (in fact, ideal) defined by
\begin{equation}
K^\bla \ = \ \left\{ \begin{array}{cl} \operatorname{Ker}\big( \GT^{\bla}(\bv) \twoheadrightarrow \GT^{\bla}(\bv')\big), & \text{ for } \bla \in \Partitions_{\leq \bv'}^\vertexset \\  \GT^{\bla}(\bv), & \text{ otherwise } \end{array} \right.
\end{equation}
We claim that $\gr K$ is generated by the elements $f \sfr_{\bmu}$, where $\bmu = (1^\bm)$ and either:
\begin{enumerate}[label=(\roman*)]
\item $\bm \leq \bv'$ and $f \in \GT^{\bmu}(\bv)$ is in the kernel of $ \GT^{\bmu}(\bv) \twoheadrightarrow \GT^{\bmu}(\bv')$, or
\item  $\bm \not\leq \bv'$ and $f\in \GT^{\bmu}(\bv)$ is any element.
\end{enumerate}
Assuming this claim for the moment, the theorem follows: the FMOs in the statement of the theorem are simply lifts to $K$ of the above generators of $\gr K$, which therefore generate $K$ itself.

To prove the claim: first notice that the elements of the form (i) or (ii) are indeed inside $\gr K$.  Next, recall from the proof of Theorem \ref{generation by FMOs} that for $\bla \in \Partitions_{\leq \bv}^\vertexset$, with column lengths encoded by $\bmu_1,\ldots, \bmu_k$ where $\bmu_j = (1^{\bm_j})$, we have
\begin{equation}
\label{eq: iterated product shift}
f_1\sfr_{\bmu_1}  f_2 \sfr_{\bmu_2} \cdots f_k\sfr_{\bmu_k}  \ = \ f_1(w)  f_2(w + \bmu_1 \hbar) \cdots f_k(w + (\bmu_1+\ldots + \bmu_{k-1})\hbar) \sfr_\bla
\end{equation}
for any $f_j \in \GT^{\bmu_j}(\bv)$.  It suffices to consider such products where at least one of the $f_j$ is of the form (i) or (ii) above, and show that they span the ideal $K^\bla$.

Assume first that $\bla \notin \Partitions_{\leq \bv'}^\vertexset$.  Then its tuple of first column lengths $\bm_1 \not\leq \bv'$, and so we are in case (ii): we have $f_1 \sfr_{\bmu_1} \in \gr K$ for \emph{any} element $f_1 \in \GT^{\bmu_1}(\bv)$. As in the proof of Theorem \ref{generation by FMOs}, the elements (\ref{eq: iterated product shift}) span all of $\GT^{\bla} \sfr_\bla = K^\bla$.

Next assume that $\bla \in \Partitions_{\leq \bv'}^\vertexset$.  We consider elements (\ref{eq: iterated product shift}) where some $f_j$ is of type (i), i.e.~$f_j$ is in the kernel of $\GT^{\bmu_j}(\bv) \twoheadrightarrow \GT^{\bmu_j}(\bv')$.  It is an exercise in partially symmetric polynomials to show that these elements span $K^\bla = \operatorname{Ker}\big( \GT^{\bla}(\bv) \twoheadrightarrow \GT^{\bla}(\bv')\big)$, which we leave to the reader. 
\end{proof}

\begin{Remark}
    Similar claims hold for the restricted maps $\Phi_{\bv,\bv'}^+ : \qzastplus(\bv) \twoheadrightarrow \qzastplus(\bv')$. In this case, the kernel is generated entirely by elements of type (ii) from the previous theorem: the elements $\M_\bm(f)$ for all $\mathbf 0 \leq \bm \leq \bv$ such that $\bm \not\leq \bv'$, and for all $f \in \GT_{\res}^\bmu(\bv)$.  Indeed, no elements of type (i) are needed since $\GT^\bmu(\bv)_\res \stackrel{\sim}{\rightarrow} \GT^\bmu(\bv')_\res$ is an isomorphism if $\bm \leq \bv'$.
\end{Remark}

We end this section by showing that the maps $\Phi_{\bv,\bv'}$ stabilize with respect to the $\ZZ^\vertexset\times \ZZ$-gradings, in the following sense:

\begin{Lemma}
    \label{lem:stabilization}
    Fix $(\bk, d) \in \ZZ^\vertexset\times \ZZ$. Let  $\bv, \bv' \in \NN^\vertexset$ with $\bv \geq \bv'$, and suppose that the following conditions are satisfied:
    \begin{enumerate}
        \item $\bv' \geq \bk$
        \item For each $i\in \vertexset$, we have $\bv'_i \geq d - \displaystyle{\min_{\substack{\bla \vdash \bk}} \left\{ \tfrac{1}{2}(\bla, \bla) - \ell(\la_i) \right\}}$
    \end{enumerate}
    Then the restriction of $\Phi_{\bv,\bv'} : \qzast(\bv) \twoheadrightarrow \qzast(\bv')$  defines an isomorphism $\qzast(\bv)_{(\bk, d)} \xrightarrow{\sim } \qzast(\bv')_{(\bk,d)}
    $.
\end{Lemma}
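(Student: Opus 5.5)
Since $\Phi_{\bv,\bv'}$ is graded (Proposition \ref{prop: surjections respect gradings}) and strictly filtered (Lemma \ref{lem:surjectionsarefilt}), the plan is to pass to associated graded pieces. Both $\qzast(\bv)_{(\bk,d)}$ and $\qzast(\bv')_{(\bk,d)}$ are filtered by multipartitions $\bla \vdash \bk$; this is a finite poset, and the filtration is exhaustive by Proposition \ref{prop: filtration defined}(2). The filtration respects the $\ZZ$-grading, and by the Corollary following Theorem \ref{thm: grading} the associated graded pieces in degree $(\bk,d)$ are
$$\bigoplus_{\bla \vdash \bk,\ \bla\in\Partitions^\vertexset_{\leq \bv}} \GT^\bla(\bv)_{d-\frac12(\bla,\bla)}\cdot \sfr_\bla .$$
Here the subscript denotes the homogeneous part of $\GT^\bla(\bv)$ of that polynomial degree. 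A strictly filtered map between finite-length filtered free $\kk$-modules whose associated graded map is bijective is itself bijective, by induction on the poset. So it suffices to show that $\gr\Phi_{\bv,\bv'}$ is bijective in degree $(\bk,d)$.

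By Lemma \ref{lem:surjectionsarefilt}, $\gr\Phi_{\bv,\bv'}$ sends $f\sfr_\bla \mapsto \tilde f\sfr_\bla$ when $\bla\in\Partitions^\vertexset_{\leq\bv'}$, and sends it to zero otherwise. Condition (1) gives $\ell(\la_i)\le |\la_i| = \sk_i \le \sv'_i$ for every $\bla\vdash\bk$, so the index sets for $\bv$ and $\bv'$ coincide. It therefore remains to show that the specialization $\GT^\bla(\bv)\to\GT^\bla(\bv')$, setting $w_{i,r}=0$ for $r>\sv'_i$, is bijective in polynomial degree $e := d-\tfrac12(\bla,\bla)$.

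Write $\GT^\bla(\bv)=\tGT(\bv)^{S_\bla}$, where $S_\bla = S_{\bm_p}\times\cdots\times S_{\bm_1}\times S_{\bm_0}$ as in \eqref{eq:Sbladefined}. This is a tensor product over $\base$ of two pieces: $\GT^\bla_\res$, which involves only the first $|\la_i|=\ell(\la_i)$-many variables in each colour, and the symmetric polynomials in the remaining $\sv_i-\ell(\la_i)$ variables of each colour. Specialization is an isomorphism on the restricted factor by \eqref{eq: directed system of restricted symmetric polynomials}. On the second factor it is the standard map $\kk[x_1,\dots,x_N]^{S_N}\to\kk[x_1,\dots,x_{N'}]^{S_{N'}}$, taken over each $i$, with $N=\sv_i-\ell(\la_i)$ and $N'=\sv'_i-\ell(\la_i)$. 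This map is bijective in degrees at most $N'$, since elementary symmetric polynomials $e_1,\dots,e_{N'}$ correspond and $e_k\mapsto 0$ only for $k>N'$. The components of $\GT^\bla(\bv)$ of degree $e$ only involve symmetric functions of degree $\le e$ in each factor, so bijectivity holds provided $e\le \sv'_i-\ell(\la_i)$ for all $i$. The base variables $\hbar,\theta_e$ have positive degree and only lower the degree needed, so they cause no problem.

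Finally, condition (2) gives exactly $\sv'_i-\ell(\la_i)\ge d-\tfrac12(\bla,\bla)$ for every $\bla\vdash\bk$. This is the main point and also the step to check most carefully: that the degree bound really is uniform over all factors. One also needs the case $e<0$, which is trivial, and must confirm that the filtration pieces in fixed degree are free of finite rank, so that the associated-graded argument applies over an arbitrary base ring $\kk$. Freeness follows from the description \eqref{eq: gr ring}.
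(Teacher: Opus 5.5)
\paragraph{Verdict.} Your reduction is the paper's, but one step needs repair: the claim that a strictly filtered map with bijective associated graded is bijective is false for poset-indexed filtrations.

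\paragraph{What is correct.} Condition (1) makes the index sets $\{\bla\vdash\bk\}$ agree for $\bv$ and $\bv'$. Each $\bla$ contributes $\GT^\bla(\bv)$ shifted by $\tfrac12(\bla,\bla)$. Condition (2) is exactly the statement that $\GT^\bla(\bv)\to\GT^\bla(\bv')$ is bijective in degree $d-\tfrac12(\bla,\bla)$. Your factorization into the restricted part and the $S_{\bm_0}$-symmetric part in the remaining $\sv_i-\ell(\la_i)$ variables correctly fills in what the paper calls ``isomorphisms in sufficiently small degrees''. A small slip: $|\la_i|\neq\ell(\la_i)$ in general. The restricted factor uses the first $\ell(\la_i)$ variables of colour $i$, which is what your count actually uses.

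\paragraph{The gap.} The principle you invoke fails for filtrations indexed by a poset that is not totally ordered, even when all modules and graded pieces are free. Here is a counterexample.
\begin{itemize}
\item Take the poset $\{a,b,c\}$ with $a,b<c$ and $a,b$ incomparable.
\item Let $N=\kk f_1\oplus\kk f_2\oplus\kk f_3$ with $N^{\leq a}=\kk f_1$, $N^{\leq b}=\kk f_2$ and $N^{\leq c}=N$.
\item Let $M=\kk e_1\oplus\kk e_2$ with $M^{\leq a}=M^{\leq b}=\kk e_1$ and $M^{\leq c}=M$.
\item The map $f_1,f_2\mapsto e_1$, $f_3\mapsto e_2$ is strictly filtered and has bijective $\gr$, yet it kills $f_1-f_2$.
\end{itemize}
Your induction over down-sets breaks at one point. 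In the target you need $N^{D}\cap N^{\leq\bla}=N^{<\bla}$ for down-sets $D$ with $\bla\notin D$. That amounts to the target's filtration splitting, so that its rank equals that of its associated graded. Freeness of $\gr$ does not give this.

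\paragraph{The fix.} The needed property does hold here, because the lifts $\M_\bla(f)$ of the elements $f\sfr_\bla$ form a $\kk$-basis of $\qzast(\bv)$ adapted to the filtration. Hence $\qzast(\bv)_{(\bk,d)}$ is free of rank $\sum_{\bla\vdash\bk}\dim_\kk\GT^\bla(\bv)_{d-\frac12(\bla,\bla)}$, and likewise for $\bv'$. Your computation shows these two ranks are equal. Since $\Phi_{\bv,\bv'}$ is surjective, and a surjection between free modules of the same finite rank over a commutative ring is bijective, $\Phi_{\bv,\bv'}$ is an isomorphism in degree $(\bk,d)$.

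The paper reaches the same conclusion more directly. It chooses these splittings compatibly with $\Phi_{\bv,\bv'}$, using that products of FMOs map to products of FMOs. The map then literally becomes the direct sum over $\bla$ of the specializations $\GT^\bla(\bv)\to\GT^\bla(\bv')$, so no associated-graded argument is needed.
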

\begin{proof}

    The filtration $\qzast(\bv)_\bk = \bigcup_{\bla \in \Partitions^\vertexset, ~ \bla \vdash \bk} \qzast(\bv)^{\leq \bla}$ from Proposition \ref{prop: filtration defined} splits, e.g.~using the elements $\M_\bla(f)$ from \eqref{eq: general MOs}. In other words, there is an isomorphism of $\ZZ$-graded graded $\base$-modules
    \begin{equation}
        \label{eq: split filtration}
        \qzast(\bv)_\bk = \bigoplus_{\bla \in \Partitions^\vertexset, ~ \bla \vdash \bk} \GT^\bla(\bv)\{\tfrac{1}{2} (\bla,\bla)\},
    \end{equation}
    where $\{\cdot\}$ denotes a grading shift. 
    Moreover, the results of this subsection show that these decompositions can be chosen compatibly with the surjection $\Phi_{\bv,\bv'}$.

    Now, our assumption $\bv' \geq \bk$ guarantees that any $\bla \in \Partitions^\vertexset$ with $\bla \vdash \bk$ automatically satisfies $\bla \in \Partitions^\vertexset_{\leq \bv'}$.  In other words, the number of summands in \eqref{eq: split filtration} stabilizes.   All that is left is to consider the individual surjections $\GT^\bla(\bv) \twoheadrightarrow \GT^\bla(\bv')$, which are  isomorphisms in sufficiently small degrees. Accounting for grading shifts, one deduces stabilization under the assumption (2).
\end{proof}

\begin{Remark}
    \label{rem:stabilization}
    Fix $\bk \in \NN^\vertexset$. Then the same argument as above, but applied to restricted dressing rings $\GT^\bla_\res(\bv) \xrightarrow{\sim} \GT^\bla_\res(\bv')$, shows that $\Phi_{\bv,\bv'} $ restricts to an isomorphism of $\ZZ^\vertexset$-homogeneous components ${\qzastplus(\bv)_\bk \xrightarrow{\sim} \qzastplus(\bv')_\bk}$ for any  $\bv \geq \bv' \geq \bk$.
\end{Remark}

%%%%%%%%%%%%%%%%%%%%%%%%%%%%%%%%%%%%%%%%%%%%%%%%%%%%%%%%%%%%%%%%%
\section{Quantized limit zastava}
\label{sec:quantized_limit_zastava}

\subsection{Limits of graded rings}
Throughout this section we will work in the category of $\ZZ^\vertexset\times \ZZ$--graded rings. Let  $\big(B(\bv), \varphi_{\bv, \bv'}\big)$ be an inverse system in this category, indexed by the set $\NN^\vertexset$ with its partial order $\geq $ from \S \ref{sssec:Partitions}.  Thus, each $B(\bv) = \bigoplus_{(\bn, d) \in \ZZ^\vertexset\times \ZZ} B(\bv)_{(\bn, d)}$ is graded, and  for any $\bv \geq \bv'$ we have a graded homomorphism $\varphi_{\bv, \bv'} : B(\bv) \rightarrow B(\bv')$ satisfying the usual compatibilities.  In this case, we may consider the {limit}:
\begin{equation}
    B = \lim_{\substack{\longleftarrow}} B(\bv)    
\end{equation}
Recall that, by definition, for any other $\ZZ^\vertexset\times \ZZ$--graded ring $S$ we have
$$
    \operatorname{Hom}( S, B) = \lim_{\substack{\longleftarrow}} \operatorname{Hom}\big(S, B(\bv) \big)
$$
There is a simple description of this ring in terms of its homogeneous components:

\begin{Lemma}
    \label{lem:componentsoflimit}
    There is a natural isomorphism of $\ZZ^\vertexset \times \ZZ$--graded rings:
    $$B \cong \bigoplus_{(\bn, d) \in \ZZ^\vertexset \times \ZZ} \Big( \displaystyle{\lim_{\substack{\longleftarrow}}}~B(\bv)_{(\bn, d)} \Big)$$
\end{Lemma}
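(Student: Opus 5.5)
We construct the candidate directly and verify the universal property. Put
$$
C \;=\; \bigoplus_{(\bn,d)\in\ZZ^\vertexset\times\ZZ} C_{(\bn,d)}, \qquad C_{(\bn,d)} = \lim_{\longleftarrow} B(\bv)_{(\bn,d)},
$$
where each $C_{(\bn,d)}$ is a limit of abelian groups. An element of $C_{(\bn,d)}$ is a compatible family $(x_\bv)_\bv$ with $x_\bv \in B(\bv)_{(\bn,d)}$ and $\varphi_{\bv,\bv'}(x_\bv)=x_{\bv'}$.

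\textbf{Ring structure.} For $(x_\bv)\in C_{(\bn,d)}$ and $(y_\bv)\in C_{(\bn',d')}$ we set $(x_\bv)\cdot(y_\bv) = (x_\bv y_\bv)$. This family lies in $B(\bv)_{(\bn+\bn',d+d')}$ because each $B(\bv)$ is graded. It is compatible because each $\varphi_{\bv,\bv'}$ is a ring homomorphism. We extend the product bilinearly to the direct sum. The unit $(1_\bv)$ lies in degree $(\mathbf 0,0)$. Associativity and distributivity hold componentwise, so $C$ is a $\ZZ^\vertexset\times\ZZ$-graded ring. The projections $p_\bv : C \to B(\bv)$, defined degreewise, are graded ring homomorphisms compatible with the $\varphi_{\bv,\bv'}$.

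\textbf{Universal property.} Let $S$ be a graded ring with a compatible family of graded homomorphisms $\psi_\bv : S\to B(\bv)$. For homogeneous $s\in S_{(\bn,d)}$, the family $(\psi_\bv(s))_\bv$ lies in $B(\bv)_{(\bn,d)}$ and is compatible, so it defines an element $\psi(s)\in C_{(\bn,d)}$. We extend $\psi$ additively using $S=\bigoplus S_{(\bn,d)}$. Then $\psi$ is a graded ring map with $p_\bv\circ\psi=\psi_\bv$, because products of homogeneous elements are computed componentwise. It is unique: a graded map must send $S_{(\bn,d)}$ into $C_{(\bn,d)}$, and an element of $C_{(\bn,d)}$ is determined by its projections. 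Hence $\operatorname{Hom}(S,C)=\lim_{\longleftarrow}\operatorname{Hom}(S,B(\bv))$, and $C\cong B$ canonically.

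\textbf{Main subtlety.} The one point to address is that the limit in graded rings is \emph{not} the limit of the underlying rings. An element of $\prod_\bv B(\bv)$ in the ordinary inverse limit can have infinitely many nonzero homogeneous components with no uniform bound, and such an element is not in $C$. The universal-property check above uses only graded maps out of $S$, which land in finite sums of homogeneous pieces, so the direct sum (not the product) is the correct object. This is why the lemma states the isomorphism "in the category of graded rings." Nothing else is nontrivial.
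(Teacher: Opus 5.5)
Your proof is correct and follows the paper's approach. The paper states this lemma without proof, only noting that the direct sum of the limits $\lim_{\longleftarrow} B(\bv)_{(\bn,d)}$ carries a natural ring structure. Equipping $\bigoplus_{(\bn,d)} \lim_{\longleftarrow} B(\bv)_{(\bn,d)}$ with the componentwise product and checking $\operatorname{Hom}(S,C)=\lim_{\longleftarrow}\operatorname{Hom}(S,B(\bv))$, as you do, is the routine verification the paper leaves implicit, and your remark about the ungraded limit matches the paper's subsequent remark identifying $B$ with the finite vectors in $\widehat{B}$.
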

Note that the limits on the right side of this statement are of abelian groups, and that their direct sum carries a natural ring structure.

\begin{Remark}
    Let $\widehat{B}$ denote the limit of our inverse system, but taken instead in the  category of (ordinary, ungraded) rings. Under our assumptions $\widehat{B}$ naturally carries an action of the group $\GG_m^\vertexset\times \GG_m$, and we may identify $B \subset \widehat{B}$ with the subset of finite vectors, i.e.~elements which are finite sum of weight vectors.
\end{Remark}

The rings $\GT(\bv)$ from (\ref{eq: GT subalgebra}) form a  directed system as above:
they are $\ZZ^\vertexset\times \ZZ$--graded by Remark \ref{rem: degrees GT}, and the surjective maps $\GT(\bv) \twoheadrightarrow \GT(\bv')$ from (\ref{eq: restriction to GT}) are graded and clearly form a directed system. We may thus consider their limit:
\begin{equation}
    \label{eq: GT limit 1}
    \GT = \lim_{\longleftarrow} \GT(\bv)
\end{equation}
More generally, for any tuple of partitions  $\bla \in \Partitions_{\leq \bv}^\vertexset$ we have the ring $\GT^\bla(\bv)$ from \eqref{eq: GT for bla} which is $\ZZ^\vertexset\times \ZZ$--graded analogously to the above.  Using the maps $\GT^\bla(\bv) \twoheadrightarrow \GT^{\bla}(\bv')$ from \eqref{eq: restriction to GT 2}, we define
\begin{equation}
    \label{eq: GT limit 2}
    \GT^\bla = \lim_{\longleftarrow} \GT^\bla(\bv)
\end{equation}
Finally, we may repeat this construction for the restricted subrings $\GT^\bla_\res(\bv) \subseteq \GT^\bla(\bv)$ from \S \ref{ssec: restricted dressing and positive part}.  Recall from \eqref{eq: directed system of restricted symmetric polynomials} that for $\bv \geq \bv'$ and $\bla \in \Partitions_{\leq \bv'}^\vertexset$ the restricted map $\GT^\bla_\res(\bv) \xrightarrow{\sim} \GT_\res^\bla(\bv')$ is simply an isomorphism. We define
\begin{equation}
    \label{eq: GT limit 3}
    \GT^\bla_\res = \lim_{\longleftarrow} \GT_\res^\bla(\bv)
\end{equation}
Then there is a natural isomorphism $\GT^\bla_\res \cong \GT^\bla_\res(|\bla|)$.

The following result follows from basic properties of rings of partially symmetric polynomials:
\begin{Lemma}
    \label{lem: GT bla free}
    $\GT^\bla$ is a free module over $\GT$, and multiplication defines an isomorphism
    $$
    \GT \otimes_\base \GT^\bla_\res  \stackrel{\sim}{\longrightarrow} \GT^\bla
    $$
\end{Lemma}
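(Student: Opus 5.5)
The plan is to prove the statement first at finite level $\bv$, in a form compatible with the transition maps, and then pass to the limit one graded piece at a time using Lemma \ref{lem:componentsoflimit}. Fix $\bla$ and write $\bla = (1^{\bm_1} 2^{\bm_2}\cdots p^{\bm_p})$, with $\bn = |\bla|$ and $\bv \geq \bn$. Then $\GT^\bla(\bv) = \tGT(\bv)^{S_\bla}$ with $S_\bla = S_{\bm_p}\times\cdots\times S_{\bm_1}\times S_{\bm_0}$. For each color $i$, let $X_{i}$ denote the block of variables $w_{i,r}$ with $r \leq |\la_i|$, and $Y_i$ the block with $|\la_i| < r \leq \sv_i$. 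Then we have
\[
\GT^\bla(\bv) = \GT^\bla_\res(\bv) \otimes_\base \tGT(\bv'')^{S_{\bv''}},
\]
where $\bv'' = \bv - \bn$ and $\tGT(\bv'')^{S_{\bv''}}$ is the ring of symmetric polynomials in the $Y$-variables.

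At finite level, $\GT(\bv)$ is \emph{not} a tensor factor; it is only contained in $\GT^\bla(\bv)$. In the limit, however, symmetric functions in all variables should become ``free'' over symmetric functions in the tail. The key tool is the standard identity for symmetric functions: for the full set of variables $X\cup Y$ we have $\Lambda_{X\cup Y} \cong \Lambda_X\otimes\Lambda_Y$ (via the coproduct), and the power sums $p_k(Y) = p_k(X\cup Y) - p_k(X)$. The first step is therefore to describe the limit $\GT = \lim \GT(\bv)$ concretely: in each fixed $\ZZ$-degree the maps $\GT(\bv)\to\GT(\bv')$ stabilize, so $\GT \cong \base\otimes\bigotimes_{i} \Lambda_i$, where $\Lambda_i$ is the ring of symmetric functions in infinitely many variables of color $i$ (over $\kk$). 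This ring is polynomial in the complete homogeneous functions $h_{i,k}$, which works over any $\kk$, not just over $\QQ$. Similarly, $\GT^\bla = \lim \GT^\bla(\bv) \cong \GT^\bla_\res \otimes_\base \bigotimes_i \Lambda_i(Y_i)$, where $\Lambda_i(Y_i)$ denotes symmetric functions in the infinitely many tail variables. Here I use that $\GT^\bla_\res(\bv)$ is constant along the system (by \eqref{eq: directed system of restricted symmetric polynomials}) and that the tail parts stabilize degreewise. Gradedness ensures these limits are graded tensor products.

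The main step is to show that the multiplication map
\[
\Lambda(X\cup Y)\otimes_\kk \kk[X]^{S}\to \kk[X]^{S}\otimes_\kk\Lambda(Y)
\]
is an isomorphism, where $S = S_{\bla,\res}$ acts on the finite block $X$ of size $\bn$. It suffices to prove this for a single color. First, since $\kk[X]^S \supseteq \kk[X]^{S_{\bn}}$, we have
\[
\Lambda(X\cup Y)\otimes_{\Lambda(X\cup Y)}(\ldots),
\]
and the coproduct gives $\Lambda(X\cup Y)\hookrightarrow \Lambda(X)\otimes\Lambda(Y)$, where $\Lambda(X)=\kk[X]^{S_{\bn}}$ is the finite symmetric ring. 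The isomorphism then reduces to showing that $\Lambda(X)\otimes\Lambda(Y)$ is free over $\Lambda(X\cup Y)$ with basis $1$ after tensoring up, that is,
\[
\Lambda(X\cup Y)\otimes_\kk \Lambda(X)\xrightarrow{\sim}\Lambda(X)\otimes_\kk\Lambda(Y).
\]
To see this, use the generating series $H_{X\cup Y}(t) = H_X(t)H_Y(t)$, so that $H_Y(t) = H_{X\cup Y}(t)\,E_X(-t)$. This expresses each $h_k(Y)$ polynomially in terms of $h_j(X\cup Y)$ and $e_j(X)$, and conversely. Both sides are then polynomial rings over $\Lambda(X)$ in the generators $h_k(X\cup Y)$ and $h_k(Y)$ respectively, related by a unitriangular change of variables. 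Tensoring over $\Lambda(X)$ with $\kk[X]^S$ then gives the claim.

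I expect the main obstacle to be making the limit and grading bookkeeping rigorous: checking that the identifications at finite $\bv$ commute with the transition maps $f\mapsto\tilde f$ (which set tail variables to zero, sending $h_k(X\cup Y)\mapsto h_k(X\cup Y')$ compatibly), and that the graded pieces are finite-rank free $\kk$-modules so that the limits commute with the tensor products. Freeness of $\GT^\bla$ over $\GT$ then follows immediately from the isomorphism, since $\GT^\bla_\res$ is free over $\base$.
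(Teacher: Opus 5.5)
Your proposal is correct, and it is essentially the argument the paper intends. The paper only says the lemma follows from basic properties of partially symmetric polynomials; the Remark after it identifies $\GT$ with symmetric functions and $\GT^\bla$ with a limit in which only $\sm_{i,0}\to\infty$. Your unitriangular change of variables $H_Y(t)=H_{X\cup Y}(t)E_X(-t)$ over $\Lambda(X)$, followed by base change along $\Lambda(X)\subseteq \kk[X]^{S}$, supplies exactly that missing detail, and it works over any $\kk$ because you use the $h_k$ rather than power sums.

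One bookkeeping fix is needed. The fixed block $X_i$ should be the first $\ell(\la_i)$ variables, not the first $|\la_i|$, because $S_{\bm_0}$ permutes the $\sv_i-\ell(\la_i)$ zero entries of the coweight $\la_i$. The paper's formula $\sm_{i,0}=\sv_i-|\la_i|$ is a slip, as the constraint $\bv=\sum_{k\geq 0}\bm_k$ in the monopole formula shows. With this change, your splitting of $\GT^\bla(\bv)$ and the rest of your argument go through unchanged.
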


\begin{Remark}
    Following \cite[\S I.2]{Macdonald}, recall the construction of the ring of symmetric functions (defined here over the base ring $\base$ from \eqref{eq: base ring}) as a limit in the category of $\ZZ$--graded rings:
    $$
    \lim_{\substack{\longleftarrow}} \base[w_1,\ldots,w_\sv]^{S_\sv}
    $$
    Observe that $\GT$ is nothing but a tensor product of $\vertexset$-many copies of this ring, while $\GT^\bla$ is the tensor product over $i \in \vertexset$ of the rings
    $$
    \lim_{\substack{\longleftarrow}} R[w_{i,1}, \ldots, w_{i, \sv_i}]^{S_{\sm_{i,\sv_i}} \times \cdots \times S_{\sm_{i,1}} \times S_{\sm_{i,0}}}
    $$
        Here we have used exponential notation $\la_i = (1^{\sm_{i,1}}2^{\sm_{i,2}}\cdots)$ and written $\sm_{i,0} = \sv_i - |\la_i|$ as in \eqref{eq:Sbladefined}. In particular, in the above limit  $\sm_{i,0} \rightarrow \infty$ while $\sm_{i,1}, \sm_{i,2},\ldots$ remain fixed.
\end{Remark}

\subsection{The quantized limit zastava}

The rings $\qzast(\bv)$ also fit into the framework of the previous subsection, since the maps $\Phi_{\bv, \bv'}: \qzast(\bv) \twoheadrightarrow \qzast(\bv')$ define a directed system of $\ZZ^\vertexset \times \ZZ$--graded rings by Proposition \ref{prop: surjections respect gradings}, and similarly for their restrictions $\Phi_{\bv, \bv'}^+: \qzastplus(\bv) \twoheadrightarrow \qzastplus(\bv')$.
This brings us to our main definitions:
\begin{Definition}
    The \emph{quantized limit zastava} is the algebra
    $$
    \qzast = \lim_{\longleftarrow} \qzast(\bv)
    $$
    Its \emph{positive part} is the algebra
    $$
    \qzastplus = \lim_{\longleftarrow} \qzastplus(\bv).
    $$
    We will sometimes write $\qzast_\kk$ and $\qzastplus_\kk$ to indicate the choice of base ring $\kk$.
\end{Definition}

In the limit, the inclusions $\qzastplus(\bv) \subset \qzast(\bv)$ yield $\qzastplus \subset \qzast$.  Similarly,  the inclusions $\GT(\bv) \subset \qzast(\bv)$ from \eqref{eq: integrable system} give $\GT \subset \qzast$.  Since these  inverse systems consist entirely of surjective maps, there are natural surjections $\GT \twoheadrightarrow \GT(\bv)$, $\qzast \twoheadrightarrow \qzast(\bv)$ and $\qzastplus \twoheadrightarrow \qzastplus(\bv)$ fitting into commutative diagrams
\begin{equation}
    \begin{tikzcd}
        \GT \ar[d,two heads] \ar[r,hook] & \qzast \ar[d,two heads]  & \qzastplus \ar[l,hook'] \ar[d, two heads] \\ \GT(\bv) \ar[r,hook] & \qzast(\bv) & \qzastplus(\bv) \ar[l, hook']
    \end{tikzcd}
\end{equation}
The results of \S \ref{sec: prelim} yield some additional basic properties of  $\qzast$, which we summarize:

\begin{Theorem}
    \label{thm:limzastproperties}    
    \begin{enumerate}
        \item Let $\bm \geq \mathbf 0$ and $\bmu = (1^\bm)$.  For any $f \in \GT^\bmu$ there is a FMO $\M_\bm(f) \in \qzast$, whose defining property is that under each natural map $\qzast \twoheadrightarrow \qzast(\bv)$ we have
        $$
        \M_\bm(f) \ \mapsto \ \left\{ \begin{array}{cl} \M_\bm(\widetilde{f}), &\text{if } \bm \leq \bv, \\ 0, & \text{otherwise} \end{array} \right.
        $$
        where $f \mapsto \widetilde{f}$ denotes  the natural map $\GT^\mu \twoheadrightarrow \GT^\bmu(\bv)$. 

        \item The ring $\qzast$ is generated by the elements $\M_\bm(f)$ for $\bm\geq \mathbf 0$ and $f \in \GT^\bmu$, and its subring $\qzastplus$ is generated by the elements $\M_\bm(f)$ where $f \in \GT^\bla_{\text res}$.

        \item $\qzast$ is a  filtered graded ring, filtered by $(\Partitions^\vertexset, \leq)$, and there is an isomorphism
        $$
        \gr \qzast \cong \bigoplus_{\bla \in \Partitions^\vertexset} \GT^\bla \cdot \sfr_\bla
        $$
        where the right side has multiplication as in \eqref{eq: rels in gr}. This isomorphism identifies the subrings $\gr \qzastplus \cong \bigoplus_{\bla \in \Partitions^\vertexset} \GT^\bla_\res \cdot \sfr_\bla$.
        \item $\qzastplus$ and $\qzast$ are free modules over $\base$, as well as over $\kk$. They behave well under base-change, in particular
        $$
        \qzast_\kk = \qzast_\ZZ \otimes_\ZZ \kk, \qquad \qzastplus_\kk = \qzastplus_\ZZ \otimes_\ZZ \kk
        $$
        and there are natural inclusions $\qzast_\ZZ \subset \qzast_\QQ$ and $\qzastplus_\ZZ \subset \qzastplus_\QQ.$        
        \item $\qzast$ is free as a left (or right) module over $\GT$, and multiplication gives isomorphisms
        $$
        \GT \otimes_\base \qzastplus  \stackrel{\sim}{\longrightarrow} \qzast, \qquad \qzastplus \otimes_\base \GT \stackrel{\sim}{\longrightarrow} \qzast
        $$
                
        \item The kernel of the surjection $\qzast \twoheadrightarrow \qzast(\bv)$ is generated by the following FMOs:
        \begin{enumerate}[label=(\roman*),topsep=3pt,itemsep = 5pt]
            \item $\M_\bm(f)$ for $ \mathbf{0} \leq \bm \leq \bv$, such that $f$ is in the kernel of the homomorphism $\GT^{\bmu} \twoheadrightarrow \GT^{\bmu}(\bv)$.
            \item $\M_\bm(f)$ for all $\bm \not\leq \bv$, and all elements $f\in \GT^{\bmu}$.
        \end{enumerate}

        \item Let $(\bk, d) \in \ZZ^I \times \ZZ$.  If $\bv'$ satisfies the conditions of Lemma \ref{lem:stabilization}, then the map $\qzast_{(\bk,d)} \twoheadrightarrow \qzast(\bv')_{(\bk,d)}$ is an isomorphism.

        \item Let $\bk \in \NN^I$. If $\bv \geq \bk$, then the map on $\ZZ^\vertexset$-homogeneous components $\qzastplus_{\bk} \rightarrow \qzastplus(\bv)_{\bk}  $ is an isomorphism. In particular $\qzastplus_\bk \xrightarrow{\sim} \qzastplus(\bk)_\bk$.
    \end{enumerate}
\end{Theorem}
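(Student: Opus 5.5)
The plan is to reduce every statement to a statement about homogeneous components, via Lemma \ref{lem:componentsoflimit}, and then use stabilization. By Lemma \ref{lem:stabilization} and Remark \ref{rem:stabilization}, for each $(\bk,d)$ the inverse system $\qzast(\bv)_{(\bk,d)}$ is eventually constant along $\bv$, because all transition maps are isomorphisms once $\bv$ is large. So $\qzast_{(\bk,d)} \to \qzast(\bv')_{(\bk,d)}$ is an isomorphism for such $\bv'$. This already gives (7). The same argument applied to restricted dressings, where the $\ZZ$-degree plays no role, gives (8), since $\qzastplus(\bv)_\bk \xrightarrow{\sim}\qzastplus(\bv')_\bk$ for all $\bv\ge\bv'\ge\bk$. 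Surjectivity of $\qzast\to\qzast(\bv)$ follows componentwise from the Mittag-Leffler-type fact that every transition map is surjective.

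For (1), given $f\in\GT^\bmu$ with $\bmu=(1^\bm)$, I would take $f$ homogeneous and define $\M_\bm(f)$ as the compatible family $(\M_\bm(\widetilde f)$ or $0)_\bv$. Compatibility is exactly Theorem \ref{thm: defect and embeddings}(b) together with the directed-system property of $f\mapsto\widetilde f$. The family is homogeneous of a fixed degree by Theorem \ref{thm: grading}, so it lies in the graded limit $\qzast$.

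For (3), define $\qzast^{\le\bla}=\varprojlim \qzast(\bv)^{\le\bla}$. By Lemma \ref{lem:surjectionsarefilt} the transition maps are strictly filtered, so the associated graded of the limit is the limit of the associated gradeds, componentwise. Here I would use the split decomposition \eqref{eq: split filtration}, chosen compatibly with $\Phi$, and the fact that in a fixed degree everything is eventually constant. This yields $\gr\qzast\cong\bigoplus_\bla \GT^\bla\sfr_\bla$, with multiplication \eqref{eq: rels in gr}, since $a_{\bla,\bmu}$ is independent of $\bv$ by Remark \ref{rmk:aproperties}(2). The restricted version follows from Theorem \ref{thm:grpositivepart} and \eqref{eq: directed system of restricted symmetric polynomials}.

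The remaining parts then follow quickly. Part (2) follows from (3) by the argument of Proposition \ref{generation by FMOs} run in the limit: $\gr\qzast$ is generated by the one-column classes $f\sfr_\bmu$, which are the symbols of the limit FMOs from (1). Part (4) follows because $\gr\qzast$ and $\gr\qzastplus$ are free over $\base$ and $\kk$, the $\GT^\bla$ being limits of free modules that stabilize degreewise; a filtration whose pieces split then gives freeness of the ring itself, and base change commutes with the degreewise-stable limits. For (5), I would compare associated gradeds. Multiplication $\GT\otimes_\base\qzastplus\to\qzast$ is filtered, and on $\gr$ it becomes $\GT\otimes_\base\GT^\bla_\res\sfr_\bla\to\GT^\bla\sfr_\bla$, $f\otimes g\sfr_\bla\mapsto fg\sfr_\bla$. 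This is an isomorphism by Lemma \ref{lem: GT bla free}. A filtered map inducing an isomorphism on $\gr$, with a filtration that is exhaustive and degreewise finite, is an isomorphism. For the right-handed version the product on $\gr$ is $g\sfr_\bla\cdot f = g\,f(w+\hbar\bla)\sfr_\bla$, and shifting by $\hbar\bla$ is an automorphism of $\GT^\bla$, so the same lemma applies.

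Part (6) should be the main obstacle. I would copy the proof of the kernel theorem for $\Phi_{\bv,\bv'}$, with $K=\ker(\qzast\to\qzast(\bv))$. Strictness of the limit map, which holds degreewise by the stabilization above, gives $\gr K=\bigoplus K^\bla\sfr_\bla$, where $K^\bla=\ker(\GT^\bla\to\GT^\bla(\bv))$ or all of $\GT^\bla$. Then I would show that products of column generators, with at least one factor of type (i) or (ii), span $K^\bla$. The delicate point is the same partially symmetric polynomial exercise, now for infinitely many variables. I would handle it by working in a fixed degree, where it reduces to the finite statement for some large $\bv$ already used in the finite kernel theorem.
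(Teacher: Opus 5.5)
Your proposal is correct and takes essentially the same route as the paper. Each part is a limit version of the finite-level results, obtained from the strictly filtered transition maps of Lemma \ref{lem:surjectionsarefilt} with their explicit associated graded, Lemma \ref{lem: GT bla free}, and the degreewise stabilization of Lemma \ref{lem:stabilization} and Remark \ref{rem:stabilization}. The paper's proof is only a sketch and does not address part (6) at all, so your degreewise reduction of (6) to the finite kernel theorem at a large $\bv$, and your associated-graded isomorphism argument for (5), usefully fill in details the paper leaves implicit.
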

\begin{proof}
    These properties are straightforward limit versions of the results from the previous section.  Part (1) holds by construction of the limit, and the fact that the surjections $\Phi_{\bv,\bv'}: \qzast(\bv) \twoheadrightarrow \qzast(\bv')$ respect FMOs.  By Lemma \ref{lem:surjectionsarefilt}  the maps $\Phi_{\bv,\bv'}$ are strictly filtered, and the associated graded map $\gr \Phi_{\bv,\bv'}$ is given explicitly on the basese $f\sfr_\bla$.  Part (3) of the theorem is then immediate, which also implies (4). Given (1) and (3), we can repeat the proof of Proposition \ref{generation by FMOs} to prove part (2). We note that part (5) is a consequence of Lemma \ref{lem: truncated triangular decomp}, the isomorphism from Lemma \ref{lem: GT bla free}, and part (3). Finally, part (7) follows from Lemma \ref{lem:stabilization}, and similarly for part (8) using Remark \ref{rem:stabilization}.
\end{proof}

We note that part (8) of the theorem shows that $\qzastplus = \displaystyle{\lim_{\longleftarrow}} \qzastplus(\bv)$ may be equivalently thought of as the limit in the category of $\ZZ^\vertexset$-graded rings, since the $\ZZ$-gradings stabilize. 

\begin{Remark}
    \label{rem: limit monopole operators}
    As for the algebras $\qzast(\bv)$, in the ring $\qzast$ we may define elements $\M_\bla(f)$ for any $\bla \in \Partitions^\vertexset$ and $f \in \GT^\bla$, as sums of products of FMOs. Although the definition of $\M_\bla(f)$ depends on some choices in general, it is unique modulo lower filtered terms, and  lifts the element $f \sfr_\bla \in \gr \qzast$.  In particular, one can construct  bases for $\qzast$ and $\qzastplus$ using such elements.
\end{Remark}

%%%%%%%%%%%%%%%%%%%%%%%%%%%%%%%%%%%%%%%%%%%%%%%%%%%%%%%%%%%%%%%%%
\section{The monopole formula and Kac polynomials}
\label{sec:monopolesandkac}

%%%%%%%%%
\subsection{The monopole formula}
Recall from Theorem \ref{thm: grading} that the ring $\qzast(\bv)$ has a $\ZZ^\vertexset \times \ZZ$--grading. Denoting its homogeneous components by $\qzast(\bv)_{(\bk, d)}$ for $(\bk, d) \in \ZZ^{\vertexset} \times \ZZ$, each component is a free module over $\kk$, and the Hilbert series of $\qzast(\bv)$ is defined as:
\begin{equation}
\fJ_\bv(\bz,q) \ = \ \sum_{(\bk, d) \in \ZZ^\vertexset \times \ZZ} \big( \dim_\kk \qzast(\bv)_{(\bk, d)} \big) \bz^\bk  q^d  \ \ \in \ \ \ZZ(\!(q)\!) [\![ z_i : i \in \vertexset ]\!]
\end{equation}
Here we have used multi-index notation $\bz^\bk = \prod_i z_i^{k_i}$. 

Given a tuple $\bm = (\sm_i)_{i \in \vertexset} \in \NN^\vertexset$ of non-negative integers,  denote
\begin{equation}
(q)_\bm = \prod_{i \in \vertexset} \prod_{r = 1}^{\sm_i} (1-q^r)
\end{equation}
We will also write $(q)_{\boldsymbol{\infty}} = \prod_{i \in \vertexset} \prod_{r \geq 1} (1-q^r)$
\begin{Remark}
    \label{eq: Hilb for base and GT}
    We will also consider Hilbert series of other  $\ZZ^\vertexset\times\ZZ$--graded rings. For example, the Hilbert series of the rings $\base$, $\GT(\bv)$  and $\GT$ are given, respectively, by:
\begin{equation*}
    \frac{1}{(1-q)^{|\arrowset|+1}}, \quad \frac{1}{(1-q)^{|\arrowset|+1}} \frac{1}{(q)_\bv},  \quad \text{and}\quad  \frac{1}{(1-q)^{|\arrowset|+1}} \frac{1}{(q)_{\boldsymbol{\infty}}}
\end{equation*}
\end{Remark}

By considering monopole operators as functions on the Coulomb branch, Cremonesi-Hanany-Zaffaroni \cite{CHZ2} argued that  the Hilbert series of any (quantized) Coulomb branch is given  by the \textbf{monopole formula}.  It is proven in \cite[\S 2(iii)]{BFN1} that the BFN construction of Coulomb branches satisfies this formula. 

These considerations apply for the subring $\qzast(\bv)$ of the quantized Coulomb branch, yielding:

\begin{Theorem}
\label{thm: monopole formula}
The Hilbert series  of $\qzast(\bv)$ and $\qzastplus(\bv)$ are given, respectively, by
\begin{align*}
    \fJ_\bv(\bz,q) &= \frac{1}{(1-q)^{|\arrowset|+1}} \sum_{\bla \in \Partitions^\vertexset_{\leq \bv}} \frac{q^{ \frac{1}{2} (\bla,\bla)}}{ \prod_{k \geq 0}  (q)_{\bm_k}} \bz^{|\bla|}, \\
    \text{and } \quad \fJ^+_\bv(\bz,q) &= \frac{1}{(1-q)^{|\arrowset|+1}} \sum_{\bla \in \Partitions^\vertexset_{\leq \bv}} \frac{q^{ \frac{1}{2} (\bla,\bla)}}{ \prod_{k \geq 1}  (q)_{\bm_k}} \bz^{|\bla|},
\end{align*}
where we write $\bla = (1^{\bm_1} 2^{\bm_2}\cdots)$ in exponential notation and denote $\bm_{0} = \bv - |\bla|$.
\end{Theorem}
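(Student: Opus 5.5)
The plan is to compute both Hilbert series from the associated graded rings. The filtrations by $(\Partitions^\vertexset_{\leq\bv},\leq)$ are compatible with the $\ZZ^\vertexset\times\ZZ$-grading, so $\qzast(\bv)$ and $\gr\qzast(\bv)$ have the same Hilbert series. The same holds for $\qzastplus(\bv)$ with its induced filtration. Concretely, I will use the splitting \eqref{eq: split filtration} from the proof of Lemma \ref{lem:stabilization}: the elements $\M_\bla(f)$ give an isomorphism of $\ZZ$-graded $\base$-modules
$$\qzast(\bv)_\bk\cong\bigoplus_{\bla\vdash\bk,\ \bla\in\Partitions^\vertexset_{\leq\bv}}\GT^\bla(\bv)\{\tfrac12(\bla,\bla)\}.$$
The degree shift comes from Theorem \ref{thm: grading}, which gives $\deg\M_\bla(f)=(|\bla|,\deg f+\tfrac12(\bla,\bla))$. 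The Hilbert series is therefore $\sum_{\bla}\operatorname{Hilb}(\GT^\bla(\bv))\,q^{\frac12(\bla,\bla)}\bz^{|\bla|}$, and the problem reduces to computing the Hilbert series of the dressing rings.

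For the full algebra, $\GT^\bla(\bv)=\tGT(\bv)^{S_\bla}$ with $S_\bla=\prod_{k\geq0}S_{\bm_k}$ as in \eqref{eq:Sbladefined}, where $\bm_0=\bv-|\bla|$ counts the zero parts. A ring of partially symmetric polynomials $\kk[x_1,\dots,x_n]^{S_{n_1}\times\cdots}$ is a tensor product of full symmetric polynomial rings. Each full ring $\kk[x_1,\dots,x_m]^{S_m}$ is free on the elementary symmetric polynomials, which have degrees $1,\dots,m$. Its Hilbert series is therefore $\prod_{r=1}^{m}(1-q^r)^{-1}$, and this holds over any $\kk$. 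Tensoring over $\base$, whose Hilbert series is $(1-q)^{-|\arrowset|-1}$ with generators $\tfrac12\hbar$ and $\theta_e$ in degree $1$, gives
$$\operatorname{Hilb}\GT^\bla(\bv)=\frac{1}{(1-q)^{|\arrowset|+1}}\prod_{k\geq0}\frac{1}{(q)_{\bm_k}}.$$
Summing over $\bla$ yields the first formula.

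For the positive part, I will invoke Theorem \ref{thm:grpositivepart}: $\gr\qzastplus(\bv)=\bigoplus_\bla\GT^\bla_\res(\bv)\sfr_\bla$. Lifts $\M_\bla(f)$ with $f\in\GT^\bla_\res(\bv)$, built from restricted FMOs, split the filtration of $\qzastplus(\bv)$ in the same way. Since $\GT^\bla_\res(\bv)=\tGT(|\bla|)^{S_{\bm_p}\times\cdots\times S_{\bm_1}}$ omits the $S_{\bm_0}$ factor and its variables, the same computation gives the product over $k\geq1$ only.

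The main obstacle is that Theorem \ref{thm:grpositivepart} as stated depends on the deferred inclusion $\gr\qzastplus(\bv)\subseteq\bigoplus\GT^\bla_\res\sfr_\bla$, which is proved later via shuffle algebras. If that inclusion is not yet available at this point, I would state the $\qzastplus$ formula conditionally on it. The alternative is to give a direct argument: products of restricted FMOs have leading coefficients in $\GT^\bla_\res(\bv)$ by the explicit formula \eqref{eq:20}, and lower-order terms stay restricted. A secondary point to check is that the grading shift is exactly $\tfrac12(\bla,\bla)$ independently of $\bv$, which is precisely what Theorem \ref{thm: grading} provides.
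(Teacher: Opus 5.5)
Your proposal is correct and is essentially the paper's own proof. Both arguments count a $\kk$-basis of monopole operators $\M_\bla(f)$, which have degree $\deg f+\tfrac12(\bla,\bla)$ by Theorem~\ref{thm: grading}, and then use the Hilbert series of the partially symmetric rings $\GT^\bla(\bv)$, or of $\GT^\bla_\res(\bv)$ for $\qzastplus(\bv)$ via Theorem~\ref{thm:grpositivepart}.

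Relying on Theorem~\ref{thm:grpositivepart} is not circular: its deferred inclusion is proved in \S\ref{ssec:proofgrAplus} from Theorem~\ref{thm:regularity-of-phi-bv}, which does not use the monopole formula. Your fallback claim that lower-order terms ``stay restricted'' would not be enough on its own, because cancellation of leading terms is exactly the difficulty. Also, $\bm_0$ should be $(\sv_i-\ell(\la_i))_{i\in\vertexset}$, the number of zero parts as you say in words, not $\bv-|\bla|$.
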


\begin{proof}
These formulas counts the degrees of $\kk$-bases of monopole operators $\M_{\bla}(f)$.  Note that the leading factors $(1-q)^{-|\arrowset|-1}$ encode the Hilbert series of the ring $\base$, as in Remark \ref{eq: Hilb for base and GT}.

In more detail, by the arguments from \cite[\S 2(iii)]{BFN1} and \cite[Remark 2.8(2)]{BFN1}, for the grading from Theorem \ref{thm: grading} the Hilbert series of $\qzast(\bv)$ is given by 
\begin{equation}
\label{eq: monopole formula}
\fJ_\bv(q,z) \ = \  \sum_{\bla \in \Partitions^\vertexset_{\leq \bv}} q^{ \frac{1}{2}( \bla, \bla) } P_{\bv, \bla}(q) \bz^{|\bla|},
\end{equation}
where  $P_{\bv,\bla}(q)$ denotes the Hilbert series for the $\ZZ$--graded ring $\GT^{\bla}(\bv)$ from (\ref{eq: GT for bla}).  Recall that  viewed as a dominant coweight of $\GL(V_i)$, we have 
$$
\la_i = (\cdots, \underbrace{2,\ldots,2}_{\sm_{i,2}},\underbrace{1,\ldots,1}_{\sm_{i,1}}, \underbrace{0,\ldots,0}_{\sm_{i,0}})
$$
Thus, the stabilizer of $\la_i$ in the Weyl group $S_{\sv_i}$ is $\prod_{k \geq 0} S_{\sm_{i,k}}$. Using this we see that the partially-symmetric polynomial ring $\GT^\bla(\bv) = \tGT(\bv)^{S_{\bla}}$ has Hilbert series 
$$
P_{\bv, \bla}(q) = \frac{1}{(1-q)^{|\arrowset|+1}} \frac{1}{\prod_{i \in \vertexset} \prod_{k \geq 0} \prod_{a = 1}^{\sm_{i,k}}( 1 - q^a)}.
$$
This gives the Hilbert series $\qzast(\bv)$.  The case of $\qzastplus(\bv)$ is similar, using the rings $\GT_\res^\bla(\bv)$.
\end{proof}

\begin{Remark}
Writing $\bla = (1^{\bm_1} 2^{\bm_2} \cdots)$ in exponential notation and defining $\bm_0 = \bv - |\bla|$, we may think of the monopole formula as a sum over the exponents $\bm_k$.  Using the rightmost formula in (\ref{eq: pairing on tuples of partitions}) for the pairing  $\tfrac{1}{2} (\bla,\bla)$, we obtain:
$$
    \fJ_\bv(\bz, q) = \frac{1}{(1-q)^{|\arrowset|+1}}  \sum_{\substack{\bm_0, \bm_1, \ldots \in \NN^\vertexset, \\  \bv =\sum_{k\geq 0}  \bm_k }} \frac{q^{\tfrac{1}{2} \sum_{i,j\in \vertexset} c_{ij} \sum_{k,\ell \geq 1} \min\{ k,\ell\} \sm_{i,k} \sm_{j,\ell}}}{\prod_{k \geq 0} (q)_{\bm_k}}  \bz^{{\sum_{k\geq 1} k \bm_k}}
$$
This is \emph{exactly} a {fermionic sum} on a semi-infinite interval in the sense of Feigin-Feigin-Jimbo-Miwa-Mukhin \cite[\S 2.2]{FFJMM}.	  Applying \cite[Proposition 2.4]{FFJMM}, we conclude that the Hilbert series $\fJ_\bv(\bz,q)$ satisfy \textbf{fermionic recursion}:
$$
\fJ_\bv(\bz,q) = \sum_{\substack{\bu \in \NN^\vertexset \\ \mathbf{0} \leq \mathbf{u} \leq \bv}} \frac{q^{\frac{1}{2}(\mathbf{u}, \mathbf{u})} \bz^{\mathbf{u}}}{(q)_{\bv - \bu}} \fJ_{\mathbf{u}}(\bz,q)
$$
By induction, this formula uniquely determines the $\fJ_\bv(\bz, q)$ starting from the case $\fJ_{\mathbf{0}}(\bz,q) = (1-q)^{-|\arrowset|-1}$. 
In finite ADE types and affine type A, this recovers a result of Braverman-Finkelberg \cite{BF3}, who proved (by different techniques) that the Hilbert series of zastava spaces satisfy fermionic recursion.  We note that a proof very similar to ours for finite ADE types appears in recent work of Labelle \cite[Thm.~2.1]{Labelle}, which is also based on the monopole formula.
\end{Remark}

\begin{Remark}
    Any tuple of partitions $\bla \in \Partitions_{\leq \bv}^\vertexset$ can also be encoded in terms of the tuples $\bc_1, \bc_2,\ldots \in \NN^\vertexset$ which encode its column lengths.  In other words, writing $\bc_s = (\bc_{i,s})_{i \in \vertexset}$, the conjugate partition to $\la_i$ is precisely $\la'_i = (\bc_{i,1}, \bc_{i,2},\ldots)$.  Note that these column lengths  $\bv \geq \bc_1 \geq \bc_2 \geq \ldots$ are non-increasing and satisfy $ \bc_s = \mathbf{0}$ for $s \gg 0$,    and the set of such sequences is in bijection with  $\Partitions_{\leq \bv}^\vertexset$.   Viewing the monopole formula as a sum over such sequences, we obtain:
    $$
        \fJ_\bv(\bz,q) = \frac{1}{(1-q)^{|\arrowset|+1}} \sum_{\substack{\bc_1,\bc_2,\ldots \in \NN^\vertexset \\ \bv \geq \bc_1 \geq \bc_2 \geq \ldots \\ \bc_s =0 \text{ for } s \gg 0 }} \frac{q^{\frac{1}{2} \sum_{s \geq 1} (\bc_s, \bc_s)}}{(q)_{\bv - \bc_1} \prod_{s \geq 1} (q)_{\bc_s - \bc_{s+1}}} \bz^{\sum_{s\geq 1} \bc_s}
    $$
\end{Remark}

\subsection{Kac polynomials and Hua's formula}
\label{ssec: Kac and Hua}
For any quiver $Q = (\vertexset, \arrowset)$ one can associate a root system $\Delta_Q$, see for example \cite[\S 1.1]{Kac2}.  In particular $\Delta_Q$ can be thought of as subsets of the root lattice $\ZZ^\vertexset$, with the simple roots $\alpha_i$ corresponding to the standard basis. We'll denote the set of positive roots by $\Delta_Q^+ = \Delta_Q \cap \NN^\vertexset$.

Kac \cite[Proposition 1.15]{Kac2} associated a polynomial $A_\alpha( q) \in \ZZ[q]$ to each positive root $\alpha \in \Delta_Q^+$, which counts the number of absolutely decomposable representations of dimension $\alpha$ for $Q$ over finite fields: the celebrated \textbf{Kac polynomials}.   Kac showed that $A_\alpha(q)$ is a monic polynomial of degree $1 - \frac{1}{2}(\alpha,\alpha)$, and  conjectured that the coefficients of $A_Q(\alpha,q)$ are non-negative integers, which was subsequently proven by Hausel-Letellier-Rodriguez-Villegas \cite{HLRV}.
We may thus  write 
\begin{equation}
    \label{eq:KacPolynomials}
    A_Q(\alpha, q) = \sum_{s=0}^{1-\frac{1}{2}(\alpha,\alpha)} t_s^\alpha q^s,
\end{equation}
where all $t_s^\alpha \in \NN$ and $t_{1-\frac{1}{2}(\alpha,\alpha)}^\alpha = 1$.

Hua established the following remarkable formula, which provides a generating function for Kac polynomials:

\begin{Theorem}[\mbox{\cite[Theorem 4.9]{Hua}}]
There is an equality of formal series:
$$
\sum_{\bla \in \Partitions^\vertexset} \frac{q^{\frac{1}{2}(\bla, \bla)}}{\prod_{i\in \vertexset} \prod_{k\geq 1} (q)_{\bm_k} }\bz^{|\bla|} \ = \ \prod_{\alpha \in \Delta_Q^+} \prod_{r\geq 0} \prod_{s=0}^{1 - \frac{1}{2} (\alpha,\alpha)} ( 1 -  q^{-r-s} \bz^\alpha)^{t_s^\alpha} 
$$
where we write each multipartition $\bla = (1^{\bm_1} 2^{\bm_2}\cdots)$ in exponential notation.
\end{Theorem}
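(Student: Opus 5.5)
The plan is to follow Hua's original strategy. We count isomorphism classes of representations of $\quiver$ over a finite field $\bF_q$ in two ways, and compare the two answers as plethystic exponentials in the variables $\bz$. Throughout, fix the dimension vector $\alpha\in\NN^\vertexset$. Write $\mathrm{Rep}_\alpha(\bF_q)=\bigoplus_{e}\Hom(\bF_q^{\alpha_{\source(e)}},\bF_q^{\alpha_{\target(e)}})$ and $\GL_\alpha(\bF_q)=\prod_i\GL_{\alpha_i}(\bF_q)$. Let $M_\alpha(q)$ be the number of isomorphism classes, $I_\alpha(q)$ the number of indecomposable ones, and $A_\alpha(q)$ the number of absolutely indecomposable ones (Kac polynomial).

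\emph{Step 1 (Burnside count).} By Burnside's lemma, $M_\alpha(q)=|\GL_\alpha|^{-1}\sum_{g}|\mathrm{Rep}_\alpha^{g}|$. We group the elements $g=(g_i)$ by their simultaneous conjugacy type. Such a type assigns to each monic irreducible polynomial $f\neq t$ over $\bF_q$ a multipartition $\bla_f\in\Partitions^\vertexset$ recording the Jordan types of the $g_i$ at $f$. The fixed space $\mathrm{Rep}_\alpha^g$ is $\bigoplus_e\Hom_{\bF_q[t]}(V_{\source(e)},V_{\target(e)})$. For modules of types $\la,\mu$ concentrated at $f$ of degree $d$, this Hom space has dimension $d\langle\la,\mu\rangle$, by the formula \eqref{eq: Hua pairing}. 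Similarly, the centralizer of $g$ has order $\prod_f q^{d\sum_i\langle\la_{f,i},\la_{f,i}\rangle}\prod_{i,k}(q^{-d})_{\sm_{f,i,k}}$. Combining the conjugacy class sizes with these fixed point counts, the sum factorizes over $f$. We expect to obtain
$$\sum_\alpha M_\alpha(q)\bz^\alpha=\prod_{d\ge1}\Big(\sum_{\bla\in\Partitions^\vertexset}\frac{q^{-d\frac12(\bla,\bla)}}{\prod_{i,k}(q^{-d})_{\sm_{i,k}}}\bz^{d|\bla|}\Big)^{\phi_d(q)},$$
where $\phi_d(q)$ is the number of monic irreducible polynomials of degree $d$ other than $t$. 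This is exactly the series $P(q^{-1},\bz)$ on the left side of the theorem, taken at $q^{-1}$, $q^{-d}$ and $\bz^{d}$.

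\emph{Step 2 (rewrite as a plethystic exponential).} Take the plethystic logarithm $P(q^{-1},\bz)=\operatorname{Exp}\big(\sum_\alpha H_\alpha(q)\bz^\alpha\big)$. Use the necklace-type identity $\prod_d(1-x^d)^{-\phi_d(q)}=(1-x)/(1-qx)$, which is a Möbius inversion of $\sum_{d\mid n}d\phi_d=q^n-1$. With it, Step 1 becomes $\sum_\alpha M_\alpha\bz^\alpha=\operatorname{Exp}\big((q-1)\sum_\alpha H_\alpha(q)\bz^\alpha\big)$, understood in the $\lambda$-ring where the Adams operations act by $q\mapsto q^d$ and $\bz\mapsto\bz^d$.

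\emph{Step 3 (Krull--Schmidt and Galois descent).} By Krull--Schmidt, $\sum M_\alpha\bz^\alpha=\prod_\alpha(1-\bz^\alpha)^{-I_\alpha(q)}$. By Galois descent, an indecomposable representation over $\bF_q$ becomes, over $\bF_{q^d}$, a sum of $d$ Galois-conjugate absolutely indecomposable summands of dimension $\alpha/d$. This gives $I_\alpha(q)=\sum_{d\mid\alpha}\frac1d\sum_{r\mid d}\mu(r)A_{\alpha/d}(q^{d/r})$. In plethystic language this says exactly that $\sum M_\alpha\bz^\alpha=\operatorname{Exp}\big(\sum_\alpha A_\alpha(q)\bz^\alpha\big)$ in the same $\lambda$-ring. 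Comparing with Step 2 yields $(q-1)\sum H_\alpha\bz^\alpha=\sum A_\alpha(q)\bz^\alpha$. Hence $P(q^{-1},\bz)=\operatorname{Exp}\big(\sum_\alpha\frac{A_\alpha(q)}{q-1}\bz^\alpha\big)$. Finally, substitute $q\mapsto q^{-1}$, expand $\frac{1}{q^{-1}-1}$ as a geometric series in $q$, and expand $\operatorname{Exp}$ using the coefficients $t^\alpha_s$. This produces the product over $r\ge0$ and $0\le s\le 1-\frac12(\alpha,\alpha)$ on the right side of the theorem; the sign and exponent conventions there have to be matched carefully. The identity first holds for all prime powers $q$, hence as an identity of formal series, since both sides have coefficients that are rational functions of $q$.

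I expect the main obstacle to be Step 1. It requires computing the centralizer orders and the fixed-space dimensions uniformly in terms of Hua's pairing. It also requires checking that the factorization over irreducible polynomials produces exactly $(\bla,\bla)=\sum c_{ij}\langle\la_i,\la_j\rangle$, including edge loops and multiple edges, each of which contributes $-a_{ij}$ through $\Hom_{\bF_q[t]}$. My first attempt would be to reduce to modules over $\bF_{q^d}[t]/(t^N)$ by localizing at each $f$, where both counts are classical.
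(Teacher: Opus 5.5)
Your outline is correct and is Hua's original proof: a Burnside count factored over irreducible polynomials, then Krull--Schmidt with Galois descent, then comparison of plethystic logarithms. It ends with Kac's results on $A_\alpha$, which restrict the product to $\alpha\in\Delta_Q^+$ and $0\le s\le 1-\frac12(\alpha,\alpha)$. The paper does not reprove the statement; it only cites Hua's Theorem 4.9 with $q$ replaced by $q^{-1}$.

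Your Step 1 is routine rather than an obstacle. The centralizer order is Macdonald's formula, and $\dim\Hom_{\mathbb{F}_q[t]}(\mathbb{F}_q[t]/(f^a),\mathbb{F}_q[t]/(f^b)) = d\min\{a,b\}$ sums to $d\langle\lambda,\mu\rangle$.

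One correction is needed in the last step, because the stated right-hand side is the expansion in $\ZZ(\!(q^{-1})\!)[\![z_i : i\in\vertexset]\!]$.
\begin{itemize}
\item After substituting $q\mapsto q^{-1}$, you must expand $\frac{1}{q^{-1}-1}=-\sum_{r\geq 0}q^{-r}$ in powers of $q^{-1}$. This gives $\operatorname{Exp}(-t^\alpha_s q^{-r-s}\bz^\alpha)=(1-q^{-r-s}\bz^\alpha)^{t^\alpha_s}$, which is the stated product.
\item Expanding in powers of $q$, as you wrote, gives $\frac{q}{1-q}=\sum_{r\geq 1}q^r$ and hence $\prod_{r\geq1}(1-q^{r-s}\bz^\alpha)^{-t^\alpha_s}$. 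That is the opposite Laurent expansion, recorded in Corollary~\ref{cor: version of Hua}, not this theorem.
\end{itemize}
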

The left hand side of Hua's formula is naturally an element of $\ZZ(q)[\![z_i : i \in \vertexset]\!]$, while the right hand side is its Laurent expansion in $\ZZ(\!(q^{-1})\!)[\![z_i : i \in \vertexset]\!]$. We note that the  formulation given above differs from Hua's original by replacing  $q$ with $q^{-1}$ throughout.

For our purposes, we need the Laurent expansion of Hua's formula in the opposite direction, i.e.~as an element of $\ZZ(\!(q)\!)[\![z_i : i \in \vertexset]\!]$:
\begin{Corollary}
    \label{cor: version of Hua}
    There is an equality of formal series:
    $$
    \sum_{\bla \in \Partitions^\vertexset} \frac{q^{\frac{1}{2}(\bla, \bla)}}{\prod_{i\in \vertexset} \prod_{k\geq 1} (q)_{\bm_k} }\bz^{|\bla|} \ = \ \prod_{\alpha \in \Delta_Q^+} \prod_{r \geq 1} \prod_{s = 0}^{1 - \frac{1}{2}(\alpha,\alpha)}\frac{1}{(1-q^{r-s} \bz^\alpha)^{t^\alpha_s}}
    $$
\end{Corollary}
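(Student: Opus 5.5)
The idea is to derive the corollary from Hua's theorem by an explicit re-expansion of each factor on the right hand side. The left hand side is a genuine element of $\ZZ(q)[\![z_i : i\in\vertexset]\!]$: for each fixed $\bk\in\NN^\vertexset$ the coefficient of $\bz^\bk$ is a finite sum of rational functions in $q$. Hua's theorem says that its image under the Laurent expansion map $\ZZ(q)\to\ZZ(\!(q^{-1})\!)$, applied coefficientwise, equals the stated infinite product. So I will view both sides coefficientwise in $\ZZ(q)$. I will then show that the infinite product, regrouped, is the $q^{-1}$-expansion of a product of rational functions in $q$ (coefficientwise in $\bz$). Its $q$-expansion is then the product given in the corollary.

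The key step is a one-factor identity. Fix $\alpha$ and $s$, and set $x = q^{-s}\bz^\alpha$. In $\ZZ(\!(q^{-1})\!)[\![\bz]\!]$ we have
$$\prod_{r\geq 0}(1-q^{-r}x) = \exp\Big(-\sum_{n\geq1}\frac{x^n}{n}\frac{1}{1-q^{-n}}\Big),$$
where $\frac{1}{1-q^{-n}}$ is expanded in $q^{-1}$. This is the $q^{-1}$-expansion of the rational function $\frac{1}{1-q^{-n}} = -\frac{q^n}{1-q^n}$. Re-expanding that rational function in $q$ gives $-\sum_{r\geq1} q^{rn}$. The exponent therefore becomes $\sum_{n}\frac{x^n}{n}\sum_{r\geq1}q^{rn}$, and the whole expression becomes $\prod_{r\geq1}(1-q^r x)^{-1}$. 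With $x=q^{-s}\bz^\alpha$ this is $\prod_{r\geq1}(1-q^{r-s}\bz^\alpha)^{-1}$. Raising to the power $t^\alpha_s$ and taking the product over $\alpha$ and $s$ gives exactly the right hand side of the corollary.

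The main obstacle is justifying that this coefficientwise re-expansion is legitimate for the full infinite product, and not just for each factor. I would argue using the plethystic exponential. Write the right side of Hua's formula as $\operatorname{Exp}$ of
$$F = \sum_\alpha \sum_s t^\alpha_s\, q^{-s}\bz^\alpha \cdot \frac{1}{1-q^{-1}},$$
using the convention that $\operatorname{Exp}$ applies the Adams operations $q\mapsto q^n$ and $\bz\mapsto\bz^n$. For each fixed $\bz$-degree, the coefficient of $F$ is a rational function in $q$, namely $A_\alpha(q^{-1})\frac{1}{1-q^{-1}}$. Since $\frac{1}{1-q^{-1}} = -\frac{q}{1-q}$, this is $-\frac{q}{1-q}A_\alpha(q^{-1})$. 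Only finitely many $\alpha\leq\bk$ contribute to a given $\bz^\bk$ coefficient of $\operatorname{Exp}(F)$. Each such coefficient is therefore a polynomial expression in finitely many rational functions of $q$ and their Adams images. Hence the rational-function identity holds in $\ZZ(q)$ coefficientwise, and the $q$-expansion can be taken independently of the $q^{-1}$-expansion.

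The sign flip from $\frac{1}{1-q^{-1}}$ to $-\frac{q}{1-q}$ turns $\operatorname{Exp}(-G)$ into $\operatorname{Exp}(G)^{-1}$-type products. This yields the stated product $\prod_{r\geq1}(1-q^{r-s}\bz^\alpha)^{-t^\alpha_s}$. It also matches the form $\operatorname{Exp}\big(\frac{q}{1-q}\sum_\alpha A_\alpha(q^{-1})\bz^\alpha\big)$ quoted in the introduction's corollary, which serves as a consistency check on the signs.
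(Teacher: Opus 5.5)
This is essentially the paper's own argument: rewrite Hua's right-hand side as $\operatorname{Exp}$ of a series whose $\bz^\alpha$-coefficient is the rational function $\frac{1}{q^{-1}-1}A_\alpha(q^{-1})=\frac{q}{1-q}A_\alpha(q^{-1})$, and then re-expand it in $q$ instead of $q^{-1}$. Your observation that only finitely many $\alpha\le\bk$ (together with their Adams images) enter each $\bz^{\bk}$-coefficient supplies the justification that the paper leaves implicit. One correction: your $F$ has the wrong sign and should be $-\frac{1}{1-q^{-1}}\sum_\alpha A_\alpha(q^{-1})\bz^\alpha$, since with your sign $\operatorname{Exp}(F)$ is the reciprocal of Hua's product. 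No extra ``sign flip'' is then needed, because the single rational function $\frac{q}{1-q}$ expands as $-\sum_{r\ge 0}q^{-r}$ in $q^{-1}$ and as $\sum_{r\ge 1}q^{r}$ in $q$, exactly as your first-paragraph computation already shows.
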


\begin{proof}
    Rewrite  the right hand side of Hua's formula using the plethystic exponential $\operatorname{Exp}$ (see for example \cite[Appendix A]{Mozgovoy} for its definition and properties):
    $$
    \sum_{\bla \in \Partitions^\vertexset} \frac{q^{\frac{1}{2}(\bla, \bla)}}{\prod_{i\in \vertexset} \prod_{k\geq 1} (q)_{\bm_k} }\bz^{|\bla|} = \operatorname{Exp}\left( \frac{1}{q^{-1}-1}\sum_\alpha A_\alpha(q^{-1}) \bz^\alpha \right)
    $$
    Written in this way, the Laurent expansion in $\ZZ(\!(q)\!)[\![z_i : i \in \vertexset]\!]$  is seen to be:
    $$
    \operatorname{Exp}\left( \frac{q}{1-q}\sum_\alpha A_\alpha(q^{-1}) \bz^\alpha \right) = \prod_{\alpha \in \Delta^+_Q} \prod_{r \geq 1 } \prod_{s=0}^{1 - \frac{1}{2}(\alpha,\alpha)} \frac{1}{(1-q^{r-s}\bz^\alpha)^{t^\alpha_s}}
    $$
\end{proof}

\subsection{The Hilbert series of the quantized limit zastava}
\label{ssec:hilb-for-limit-zastava}
We now come to the main results of this section. 
\begin{Theorem}
The Hilbert series for the quantized limit zastava $\qzast$  and its positive part  $\qzastplus$ are well-defined, and given explicitly by:
\begin{align*}
\fJ(q,z) &=  \frac{1}{(1-q)^{|\arrowset|+1}}  \frac{1}{(q)_{\boldsymbol{\infty}}}\sum_{\bla \in \Partitions^\vertexset} \frac{q^{\frac{1}{2}(\bla, \bla)}}{\prod_{i\in \vertexset} \prod_{k\geq 1} (q)_{\bm_k} }z^{|\bla|}   \\
\text{and} \quad \fJ^+(q,z) &=\frac{1}{(1-q)^{|\arrowset|+1}} \sum_{\bla \in \Partitions^\vertexset} \frac{q^{\frac{1}{2}(\bla, \bla)}}{\prod_{i\in \vertexset} \prod_{k\geq 1} (q)_{\bm_k} }z^{|\bla|}
\end{align*}
\end{Theorem}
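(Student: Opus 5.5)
The plan is to compute the Hilbert series of $\qzast$ directly from the description of its associated graded ring in Theorem \ref{thm:limzastproperties}(3), rather than by passing to the limit of the series $\fJ_\bv$. By Lemma \ref{lem:componentsoflimit}, each homogeneous component $\qzast_{(\bk,d)}$ is the inverse limit of the components $\qzast(\bv)_{(\bk,d)}$. By Theorem \ref{thm:limzastproperties}(7), this limit agrees with $\qzast(\bv')_{(\bk,d)}$ for any $\bv'$ satisfying the conditions of Lemma \ref{lem:stabilization}, so it is a free $\kk$-module of finite rank. This gives well-definedness of the coefficients. To see that $\fJ$ is an honest element of $\ZZ(\!(q)\!)[\![z_i]\!]$, I still need that for fixed $\bk$ the degrees $d$ are bounded below. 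This holds because only finitely many $\bla\vdash\bk$ exist and each $\GT^\bla$ lives in nonnegative degree.

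Next I will use the filtration. For fixed $\bk$, the filtration of $\qzast_\bk$ by $\{\bla\vdash\bk\}$ is finite and splits, using the lifts $\M_\bla(f)$ of Remark \ref{rem: limit monopole operators}. By Theorem \ref{thm:limzastproperties}(3) together with the degree formula of Theorem \ref{thm: grading}, which passes to the limit, this yields a graded isomorphism
$$\qzast_\bk \cong \bigoplus_{\bla\vdash\bk}\GT^\bla\{\tfrac12(\bla,\bla)\},$$
and similarly for $\qzastplus_\bk$ with $\GT^\bla_\res$ in place of $\GT^\bla$. The theorem then reduces to computing the Hilbert series of $\GT^\bla$ and of $\GT^\bla_\res$.

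For $\GT^\bla_\res\cong\GT^\bla(|\bla|)=\tGT(|\bla|)^{S_{\bla,\res}}$, the ring is a polynomial ring over $\base$ in the variables $w_{i,r}$ with $r\le|\la_i|$, invariant under $\prod_{k\ge1}S_{\bm_k}$. Its Hilbert series is therefore
$$(1-q)^{-|\arrowset|-1}\prod_{k\ge1}(q)_{\bm_k}^{-1}.$$
For $\GT^\bla$ I will use Lemma \ref{lem: GT bla free}, $\GT^\bla\cong\GT\otimes_\base\GT^\bla_\res$. Combined with Remark \ref{eq: Hilb for base and GT}, this multiplies the previous series by $(q)_{\boldsymbol\infty}^{-1}$. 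Summing over $\bla$ gives both formulas. As a consistency check, this also matches the termwise limit of Theorem \ref{thm: monopole formula} as $\bm_0=\bv-|\bla|\to\infty$.

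The main point needing care is the compatibility of gradings in the limit. The isomorphism in Lemma \ref{lem: GT bla free} must be graded, and the splitting in \eqref{eq: split filtration} must be chosen compatibly with the maps $\Phi_{\bv,\bv'}$ so that it passes to $\qzast$. The proof of Lemma \ref{lem:stabilization} asserts that such a compatible choice exists, so I would invoke that. I would also verify that the shift $\tfrac12(\bla,\bla)$ is independent of $\bv$, which is clear because $(\bla,\bla)$ as defined in \eqref{eq: pairing on tuples of partitions} does not involve $\bm_0$.
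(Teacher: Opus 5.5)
Your proposal is correct and follows the paper's first argument: you count a $\kk$-basis of lifts $\M_\bla(f)$ via $\gr\qzast\cong\bigoplus_\bla\GT^\bla\sfr_\bla$ and $\GT^\bla\cong\GT\otimes_\base\GT^\bla_\res$, and the paper's alternative (the termwise limit of the finite monopole formula) is exactly your consistency check. One small notational point: in describing $\GT^\bla_\res$ you should take the number of variables at vertex $i$ to be the length $\ell(\la_i)=\sum_{k\geq 1}\sm_{i,k}$ rather than the size $|\la_i|$ (the paper's notation has the same slip); with $|\la_i|$ variables only partly permuted by $\prod_{k\geq 1}S_{\bm_k}$ the Hilbert series would pick up extra factors of $(1-q)^{-1}$, whereas the series you actually wrote down is the correct one.
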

The difference between $\fJ$ and $\fJ^+$ reflects the tensor product decomposition $\qzast = \GT \otimes_\base \qzastplus$, since the character of the ring $\GT$ is precisely $(1-q)^{-|\arrowset|-1} (q)_{\boldsymbol{\infty}}^{-1}$ while that of $\base $ is $(1-q)^{-|\arrowset|-1}$.

\begin{proof}
Both formulas follow by counting bases of monopole operators $\M_\bla(f)$, defined as in Remark \ref{rem: limit monopole operators}.  Alternatively, these formulas we  simply take  the limit as $\bv \rightarrow \infty$ of  Theorem \ref{thm: monopole formula}. The formulas there  are easily seen to stabilize, and the factors of $(q)_{\bm_0}$ which appear in the denominator of $\fJ_\bv(z,q)$ limit to give $(q)_{\boldsymbol{\infty}}$.

\end{proof}

Combining this theorem with Hua's identity as in Corollary \ref{cor: version of Hua}, we obtain the main result of this section:
\begin{Corollary}
    \label{cor:HilbandKac}
    The Hilbert series of $\qzast$ and $\qzastplus$ are given by:
    \begin{align*}
        \fJ(z,q) & = \frac{1}{(1-q)^{|\arrowset|+1}}  \frac{1}{(q)_{\boldsymbol{\infty}}} \prod_{\alpha \in \Delta_Q^+} \prod_{r\geq 1} \prod_{s=0}^{1 - \frac{1}{2} (\alpha,\alpha)} \frac{1}{( 1 -  q^{r-s} z^\alpha)^{t_s^\alpha}} \\
        \text{and} \quad  \fJ^+(z,q) & = \frac{1}{(1-q)^{|\arrowset|+1}}   \prod_{\alpha \in \Delta_Q^+} \prod_{r\geq 1} \prod_{s=0}^{1 - \frac{1}{2} (\alpha,\alpha)} \frac{1}{( 1 -  q^{r-s} z^\alpha)^{t_s^\alpha}}
    \end{align*}
\end{Corollary}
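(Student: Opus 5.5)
The plan is to combine the preceding Theorem, which expresses $\fJ$ and $\fJ^+$ as the prefactors $\frac{1}{(1-q)^{|\arrowset|+1}}\frac{1}{(q)_{\boldsymbol{\infty}}}$ (resp. $\frac{1}{(1-q)^{|\arrowset|+1}}$) times the series
$$
H(\bz,q) \ = \ \sum_{\bla \in \Partitions^\vertexset} \frac{q^{\frac{1}{2}(\bla, \bla)}}{\prod_{i\in \vertexset} \prod_{k\geq 1} (q)_{\bm_k} }\bz^{|\bla|},
$$
with Corollary \ref{cor: version of Hua}, which rewrites $H$ as the product over positive roots. Substituting gives both displayed formulas at once. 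The only real content is to make sure both sides are compared in the same ring, namely $\ZZ(\!(q)\!)[\![z_i : i \in \vertexset]\!]$, where the Hilbert series were defined.

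First I will check that $H$ is a well-defined element of $\ZZ(\!(q)\!)[\![z_i]\!]$. For each fixed $\bk \in \NN^\vertexset$ there are finitely many $\bla \vdash \bk$. Each summand $q^{\frac12(\bla,\bla)}/\prod_k (q)_{\bm_k}$ is a power series in $q$ times a monomial $q^{\frac12(\bla,\bla)}$, and the exponent may be negative when $\quiver$ has loops or multiple edges. So the coefficient of $\bz^\bk$ is a finite sum of Laurent series, and $H$ lies in $\ZZ(\!(q)\!)[\![z_i]\!]$. The prefactors are power series in $q$, so multiplying by them is legitimate.

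Second, I will check that the product side of Corollary \ref{cor: version of Hua} is an honest element of the same ring, i.e.\ its expansion is the $q$-adic one. For fixed $\bz^\bk$ only finitely many roots $\alpha \leq \bk$ contribute. Each factor $(1-q^{r-s}\bz^\alpha)^{-t^\alpha_s}$ expands geometrically in $\bz^\alpha$. For a fixed $\bz$-degree, the $q$-exponents $r-s$ with $r\geq 1$ and $0\leq s\leq 1-\frac12(\alpha,\alpha)$ are bounded below, and each power of $q$ is reached by only finitely many tuples. Hence the product converges in $\ZZ(\!(q)\!)[\![z_i]\!]$. This is exactly the content of the corollary, which I may assume.

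The main obstacle, such as it is, lies in this second step: making sure the expansion direction of Hua's identity matches the $q$-adic Laurent expansion used for Hilbert series. Corollary \ref{cor: version of Hua} was stated precisely for this purpose, so once the coefficientwise comparison in $\ZZ(\!(q)\!)$ for each fixed $\bz^\bk$ is noted, the substitution completes the proof. I will also remark that $\fJ = \frac{1}{(q)_{\boldsymbol{\infty}}}\fJ^+$, which is consistent with $\qzast \cong \GT\otimes_\base\qzastplus$ from Theorem \ref{thm:limzastproperties}.
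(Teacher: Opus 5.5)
Your proposal is correct and is exactly the paper's argument: substitute Corollary~\ref{cor: version of Hua} into the preceding theorem's formulas for $\fJ$ and $\fJ^+$, reading both sides in $\ZZ(\!(q)\!)[\![z_i : i \in \vertexset]\!]$. One harmless inaccuracy is that negative values of $\tfrac{1}{2}(\bla,\bla)$ occur for any wild quiver, not only for quivers with loops or multiple edges: for a star with five arms and $\bla = (1^{\bc})$ with $\bc = (2;1,1,1,1,1)$ one gets $-1$. Your argument only uses that finitely many $\bla \vdash \bk$ occur, so nothing changes.
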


\begin{Remark}
    \label{rmk:finiteADE}
    Let $\quiver$ be of finite ADE type, with associated finite-dimensional simple Lie algebra $\fg_\quiver$.  Then $\Delta^+_\quiver$ is the set of positive roots for $\fg_\quiver$, and the Kac polynomials $A_\alpha(q) = 1$ for all $\alpha \in \Delta_\quiver^+$. Thus, in this case the Hilbert series of $\qzast$ is given by:
    $$
    \fJ(z,q) = \frac{1}{(1-q)^{|\arrowset|+1}} \frac{1}{(q)_{\boldsymbol{\infty}}} \prod_{\alpha \in \Delta^+_\quiver} \prod_{r\geq 1} \frac{1}{1-q^r z^\alpha}
    $$
    Suppose that $\QQ \subseteq \kk$. Then we may define the (negative) Borel Yangian $Y_\hbar(\fb)$ in its Drinfeld-Gavarini integral form following \cite[\S 5.4]{FKPRW}, as the Rees algebra of an algebra with generators $H_i^{(r)}$ and $F_i^{(r)}$ and relations as in \cite[Definition 3.1]{FKPRW}, \cite[\S A.5]{FT2}. Similarly to \cite[Appendix B]{BFN2}, there are surjective maps 
    $$
    Y_\hbar(\fb)[\theta_e : e \in \arrowset] \twoheadlongrightarrow \qzast(\bv)
    $$
    for all $\bv \in \NN^\vertexset$. We obtain a natural map $Y_\hbar(\fb)[\theta_e : e \in \arrowset] \longrightarrow \qzast$ to the limit quantized zastava, which is an isomorphism by comparing Hilbert series.  This isomorphism identifies subalgebras $Y_\hbar(\fh)[\theta_e] \cong \GT$ and $Y_\hbar(\fu)[\theta_e] \cong \qzastplus$.
\end{Remark}

%%%%%%%%%%%%%%%%%%%%%%%%%%%%%%%%%%%%%%%%%%%%%%%%%%%%%%%%%%%%%%%%%
\section{Shuffle algebras}
\label{sec: shuffle}

In this section, we will assume that our base ring $\kk$ contains $\QQ$. This is because the formulas in Theorem \ref{thm:FFT-homomorphism} and Theorem \ref{thm:lowest-part-of-filtration-is-fmo}
require division by integers. See \S \ref{ssec:integralform} for a variation with $\kk = \ZZ$.

\subsection{The big shuffle algebra}

For every dimension vector $\bk \in \NN^\vertexset$, consider a tensor product of symmetric polynomial rings over the base ring $\base$ from \S \ref{sec: integrable system}:
\begin{equation}
    \label{eq:preshu1}
    \preshu_\bk = \base[x_{i,a} : i \in \vertexset, 1 \leq a \leq \bk_i]^{S_\bk},
\end{equation}
where the product of symmetric groups $S_\bk = \prod_{i \in I} S_{\bk_i}$ acts with $S_{\bk_i}$ permuting the variables $x_{i,a}$.  
Put differently, $\preshu_\bk$ is the equivariant cohomology ring $H_{ \prod_i \GL(\bk_i) \times \CC^\times \times \bF}^\bullet(pt)$.  There is a natural $\ZZ$-grading on $\preshu_\bk$  by $\frac{1}{2}$-homological degree, or in other words with the generators $\frac{1}{2}\hbar, \theta_e, x_{i,a}$ all in degree 1.

For any pair of vertices $i,j \in \vertexset$, define the following shuffle kernel:
\begin{align}
  \label{eq:3}
  \zeta_{i,j}(x) = \left( \frac{x - \hbar}{x} \right)^{\delta_{i,j}} \prod_{ e : i \rightarrow j} (-x + \theta_e - \tfrac{1}{2} \hbar) \prod_{e:j \rightarrow i} (x + \theta_e + \tfrac{1}{2}\hbar)
\end{align}
For  $f\in \preshu_{\bk'}$ and $g \in \preshu_{\bk''}$, their \textbf{shuffle product} $f\ast g \in \preshu_{\bk'+\bk''}$ is defined to be:
\begin{equation}
    \label{eq:shuffleprod}    
    \sum_{\sigma \in S_{\bk'+\bk''}/(S_{\bk'} \times S_{\bk''})} \sigma \Bigg( f(\{x_{i,a}\}^{i\in \vertexset}_{1 \leq a \leq \bk_i'}) g(\{x_{i,b}\}^{i\in \vertexset}_{\bk_i' < b \leq \bk_i'+\bk_i''} ) \prod_{i,j \in \vertexset} \prod_{\substack{ 1 \leq a \leq \bk_i', \\ \bk_j' < b \leq \bk_j' + \bk_j''}} \zeta_{i,j}(x_{i,a} - x_{j,b})\Bigg)
\end{equation}
Although a priori this expression is a rational function because of the presence of denominators in \eqref{eq:3}, it is well-known to be regular.  It is also well-known that the shuffle product defines an associative operation, with unit element $1 \in \preshu_{\mathbf 0}$.

\begin{Definition}
The {\bf big shuffle algebra} is the vector space
\begin{align}
  \label{eq:5}
  \preshu = \bigoplus_{\bk \in \NN^I} \preshu_{\bk}
\end{align}
endowed with the associative ring structure given by the shuffle product~\eqref{eq:shuffleprod}.  
\end{Definition}

The ring $\preshu$ carries a $\ZZ^\vertexset\times \ZZ$-grading defined as follows.  Let $\bk \in \NN^\vertexset$, and let  $f \in \preshu_\bk$ be a homogeneous element of degree $d$ with respect to the $\frac{1}{2}$-homological grading mentioned above.  Then we set:
\begin{equation}
    \label{eq:gradingonpreshu}
    \deg f = \Big(\bk, d - \sum_{e \in \arrowset} \bk_{\source(e)} \bk_{\target(e)} + \sum_{i \in \vertexset} \bk_i\Big) \ \in  \ZZ^\vertexset\times \ZZ
\end{equation}
A straightforward calculation using \eqref{eq:3} and \eqref{eq:shuffleprod} shows that this makes $\preshu$ into a $\ZZ^\vertexset\times \ZZ$-graded ring.

\subsection{Specializing variables}

\subsubsection{Specializing variables at a multiset}
Recall our conventions on multisets from \S \ref{subsubsec:multisets}. Consider also variables $\{ w_{i,r} \}_{i \in I, r \in [\bv_i]}$. Let us fix an $I$-colored multiset $A = (A_i)_{i \in \vertexset}$, and let $\bk = |A|$. For any $f \in \preshu_{\bk}$, we define its specialization $f\vert_A$ as follows: for each $i \in I$, we specialize  the variables $x_{i,a}$ for $1 \leq a \leq \bk_i$ to the union of arithmetic progressions
\begin{align}
  \label{eq:12}
  \bigcup_{r \in [\bv_i]} \{ w_{i,r} , \cdots , w_{i,r} + (A_i(r) - 1) \hbar \}
\end{align}
Similar specializations have been considered in \cite[Def.~1.6]{FHHSY}, \cite[Thm.~4.11]{FT2}, \cite[Thm.~B.17]{Frassek-Tsymbaliuk-2022},  \cite[Def.~3.1]{Negut2}, and \cite[Def.~2.5]{JN26}.

Now consider the case when $C$ is a multiset such that $A$ is a multisubset of $C$. That is, for all $i \in I$ and $r \in [\bv_i]$, we have $A_i(r) \leq C_i(r)$. Write $\bk' = |A|$ and $\bk = |C|$, and define $\bk''$ so that $\bk = \bk' + \bk''$. 
For $g \in \preshu_{\bk''}$, we define its specialization $g \vert_{C:A}$ as follows: for each $i \in I$, we specialize the variables $x_{i,a}$ for $1 \leq a \leq \bk_i$ to the values
\begin{align}
  \label{eq:13}
  \bigcup_{r \in [\bv_i]} \{ w_{i,r} + A_i(r) , \cdots , w_{i,r} + (C_i(r) - 1) \hbar \}
\end{align}
Observe that specifying $C$ is equivalent to specifying the unique multiset $B$ such that $C = A + B$. However, we warn the reader that $g\vert_{C:A}$ is not equal to $g\vert_B$ unless $A$ is the empty multiset.

The following Theorem explains how the above specializations of variables interacts with the shuffle product.

\begin{Theorem}
  \label{thm:specializing-variables-and-shuffle-product}
  Fix a dimension vector $\bv \in \NN^I$. Let $\bk',\bk'' \in \NN^I$. If $f \in \preshu_{\bk'}$ and $g \in \preshu_{\bk''}$, then for all $I$-colored multisets $C$ supported on $[\bv]$ with $|C| = \bk' + \bk''$, we have:
  \begin{align*}
    (f \star g )\vert_{C} = \sum_{A,B: A+B = C, |A| = \bk', |B| = \bk''} \frac{C!}{A! B!}  f\vert_A g\vert_{C:A}
    \prod_{i \in I} \prod_{r,s \in [\bv_i], r \neq s} \frac{(w_{i,r} - w_{i,s} - C_i(s) \hbar)^{\overline{A_i(r)}}}{(w_{i,r} - w_{i,s} - A_i(s)\hbar)^{\overline{A_i(r)}}} \cdot \\
    \prod_{e \in E} \prod_{r = 1}^{\bv_{s(e)}} \prod_{s = 1}^{\bv_{t(e)}}  \frac{(w_{t(e),s} - w_{s(e),r} + \theta_e - \tfrac{1}{2} \hbar)^{\underline{A_{s(e)}(r)} \cdot \overline{C_{t(e)}(s)}}}{ (w_{t(e),s} - w_{s(e),r} + \theta_e  -\tfrac{1}{2}\hbar )^{\underline{A_{s(e)}(r)} \cdot \overline{A_{t(e)}(s)}}} \cdot 
 \frac{ (w_{t(e),s} - w_{s(e),r} + \theta_e + \tfrac{1}{2}\hbar)^{\underline{C_{s(e)}(r)} \overline{A_{t(e)}(s)} }} { (w_{t(e),s} - w_{s(e),r} + \theta_e + \tfrac{1}{2}\hbar)^{\underline{A_{s(e)}(r)} \overline{A_{t(e)}(s)} } }
  \end{align*}
\end{Theorem}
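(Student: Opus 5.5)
The plan is to compute $(f\star g)\vert_C$ directly from the definition \eqref{eq:shuffleprod}. Specializing the variables of $\preshu_{\bk'+\bk''}$ at $C$ means substituting a particular ordering of the values in \eqref{eq:12} into each summand. Since $f\star g$ is symmetric, I fix one ordering once and for all. The sum over $\sigma \in S_{\bk'+\bk''}/(S_{\bk'}\times S_{\bk''})$ then becomes a sum over ways of distributing the specialized values between the first $\bk'$ slots (the arguments of $f$) and the last $\bk''$ slots (the arguments of $g$). Each such distribution is a pair of sub-multisets of the set of values $\{w_{i,r}+p\hbar : 0\le p < C_i(r)\}$.

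The first key step is a vanishing argument. Because of the factor $\zeta_{i,i}(x_{i,a}-x_{i,b}) = (x_{i,a}-x_{i,b}-\hbar)/(x_{i,a}-x_{i,b})$, any term in which some value $w_{i,r}+p\hbar$ is assigned to $g$ while $w_{i,r}+(p+1)\hbar$ is assigned to $f$ vanishes. This is the numerator $x_{i,a}-x_{i,b}-\hbar=0$ with $x_{i,a}=w_{i,r}+(p+1)\hbar$ and $x_{i,b}=w_{i,r}+p\hbar$. Hence only the distributions survive in which, for each string $\{w_{i,r},\dots,w_{i,r}+(C_i(r)-1)\hbar\}$, $f$ receives an initial segment of length $A_i(r)$ and $g$ receives the complementary terminal segment. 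These are indexed exactly by decompositions $C=A+B$ with $|A|=\bk'$ and $|B|=\bk''$. The surviving assignments yield precisely $f\vert_A$ and $g\vert_{C:A}$.

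One subtlety must be handled first: the summand is a priori rational, and it has poles at $x_{i,a}=x_{i,b}$ coming from the denominators of $\zeta_{i,i}$. I would avoid this by specializing generically. Replace the string for $(i,r)$ by $w_{i,r}+p\hbar+\epsilon_{i,r,p}$, evaluate, and let $\epsilon\to 0$. Alternatively, note that coincident values never occur within a single string, because the values within a string are distinct.

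The coefficient $C!/(A!B!)$ accounts for the multiplicity. Here I would reconcile the convention that the orbit sum runs over cosets with the fact that the specialized values within one string are labelled in $C_i(r)!$ ways. This is where I expect the main bookkeeping to lie, and I would first check it on a single vertex with $\bv=1$.

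The second step evaluates the product of $\zeta$ factors on each surviving term. The factors pair an $f$-variable with a $g$-variable. Grouping them by pairs of strings $(r,s)$ at the same vertex $i$, the products $\prod_{p<A_i(r)}\prod_{A_i(s)\le q<C_i(s)}\frac{w_{i,r}-w_{i,s}+(p-q-1)\hbar}{w_{i,r}-w_{i,s}+(p-q)\hbar}$ telescope in $q$. They yield the ratio of rising factorials $(w_{i,r}-w_{i,s}-C_i(s)\hbar)^{\overline{A_i(r)}}/(w_{i,r}-w_{i,s}-A_i(s)\hbar)^{\overline{A_i(r)}}$.

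The diagonal pairs $r=s$ telescope to a constant, which is absorbed into the combinatorial coefficient. This should be checked against the normalization above.

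For each edge $e:i\to j$, the factors $-x_{i,a}+x_{j,b}+\theta_e-\frac12\hbar$ with $x_{i,a}$ an $f$-variable from string $r$ and $x_{j,b}$ a $g$-variable from string $s$ give the double product over $p<A_i(r)$ and $A_j(s)\le q<C_j(s)$. Writing this product as a quotient of full products over $q<C_j(s)$ and $q<A_j(s)$ gives the first edge fraction, in the paper's double Pochhammer notation. The reversed factors $x+\theta_e+\frac12\hbar$, with an $f$-variable at $j$ and a $g$-variable at $i$, give the second fraction symmetrically. In the appendix notation this is the ratio $(\cdot)^{\underline{C}\,\overline{A}}/(\cdot)^{\underline{A}\,\overline{A}}$.

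Finally, I would collect the factors and compare the signs and $\hbar$-shifts with the stated formula.
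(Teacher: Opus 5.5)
Your route is the same as the paper's. The factors $(x-\hbar)/x$ in $\zeta_{i,i}$ kill every assignment except the one where $f$ takes an initial segment of each string. That leaves a sum over $A+B=C$ of terms $f|_A\, g|_{C:A}$. The remaining $\zeta$-factors are then evaluated by telescoping (same vertex) and by splitting double products (edges). Your $r\neq s$ telescoping and both edge fractions come out exactly as in the statement.

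The one thing you get wrong is where $\frac{C!}{A!B!}$ comes from: it is not a multiplicity.

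\begin{itemize}
\item After specializing at $C$, the values $w_{i,r}+p\hbar$ are pairwise distinct, across strings as well as within them. This is also why no denominator vanishes, so you may specialize term by term without any $\epsilon$-perturbation.
\item Consequently, distinct cosets in $S_{\bk'+\bk''}/(S_{\bk'}\times S_{\bk''})$ give distinct assignments of values. Each $A$ is realized by exactly one surviving coset, so there is no $C_i(r)!$ relabelling to reconcile.
\item The entire coefficient is the diagonal case $r=s$ of your own telescoping formula:
\[
\frac{(-C_i(r)\hbar)^{\overline{A_i(r)}}}{(-A_i(r)\hbar)^{\overline{A_i(r)}}}=\prod_{p=0}^{A_i(r)-1}\frac{C_i(r)-p}{A_i(r)-p}=\binom{C_i(r)}{A_i(r)} .
\]
Its product over all $(i,r)$ is $\frac{C!}{A!B!}$, and this is exactly how the paper obtains the coefficient.
\end{itemize}

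If you counted a multiplicity in addition to this diagonal constant, you would get $\binom{C}{A}^2$. Your proposed sanity check exposes this. Take a single vertex $i$ with no arrows, $\sv_i=1$, $f=g=1$ and $C_i(1)=2$. The identity coset contributes $\zeta_{i,i}(-\hbar)=2$ and the other coset contributes $\zeta_{i,i}(\hbar)=0$. The whole factor $2=\binom{2}{1}$ therefore sits in a single surviving term.
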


\begin{proof}
    The definition of the shuffle $f \star g$ is a sum over all the ways of redistributing the arguments of $f$ and $g$. However, because of the factors $(\frac{x-\hbar}{x})^{\delta_{i,j}}$ in \eqref{eq:3}, a term will vanish if it involves an argument of $f$ set equal to an argument of $g$ plus $\hbar$. Therefore, when we compute $(f \star g)\vert_C$, the only non-vanishing terms come exactly from decompositions $A + B = C$ as in the statement of the theorem and specializations $f\vert_A$ and $g\vert_{C:A}$ as above.  (That is, the variables corresponding to $f$ and $g$ are specialized as in  \eqref{eq:12} and \eqref{eq:13}, respectively.) 
    The statement of the theorem follows by computing the corresponding specializations of the remaining factors $\zeta_{i,j}(x_{i,a}-x_{j,b})$ from the shuffle product \eqref{eq:shuffleprod}    .

    For $i = j$, the leading factor $\frac{x-\hbar}{x}$ in \eqref{eq:3} contributes terms which telescope: for each $r, s \in [\bv_i]$ we get a factor of
    \begin{align*}
         \prod_{\substack{0 \leq k < A_i(r), \\ A_i(s) \leq \ell < C_i(s)}}\frac{(w_{i,r} + k \hbar) - (w_{i,s} + \ell \hbar) - \hbar}{(w_{i,r} + k \hbar) - (w_{i,s} + \ell)}  =   \frac{(w_{i,r} - w_{i,s} - C_i(s) \hbar)^{\overline{A_i(r)}}}{(w_{i,r} - w_{i,s} - A_i(s)\hbar)^{\overline{A_i(s)}}}
    \end{align*}
    When $r = s$, this further simplifies to $\frac{C_i(r)!}{A_i(r)! B_i(r)!}$, and thus $\frac{C!}{A!B!}$ when taken over $i\in \vertexset$ and $r \in [\bv_i]$.

    The remaining factors in \eqref{eq:3} correspond to arrows between $i$ and $j$.  For each $e:i\rightarrow j$ and all $r \in [\bv_i], s \in [\bv_j]$, we get:
    \begin{align*}
        \prod_{\substack{0 \leq k < A_i(r), \\ A_j(s) \leq \ell < C_j(s)}} \big(-(w_{i,r}+k\hbar)+(w_{j,s}+\ell\hbar) + \theta_e - \tfrac{1}{2}\hbar\big) & = \big(w_{j,s}-w_{i,r} + \theta_e +( A_j(s) -\tfrac{1}{2})\hbar\big)^{\underline{A_i(r)}\cdot \overline{B_j(s)}}\\
        & = \frac{ (w_{j,s} - w_{i,r} + \theta_e - \frac{1}{2} \hbar)^{\underline{A_i(r)}\cdot \overline{C_j(s)}}}{(w_{j,s} - w_{i,r} + \theta_e - \frac{1}{2} \hbar)^{\underline{A_i(r)}\cdot \overline{A_j(s)}}}
    \end{align*}
    Similarly, for each  $e:j \rightarrow i $ and all $r \in [\bv_i], s \in [\bv_j]$ we get :
    $$
        \prod_{\substack{0 \leq k < A_i(r), \\ A_j(s) \leq \ell < C_j(s)}} \big((w_{i,r}+k\hbar)-(w_{j,s}+\ell\hbar) + \theta_e + \tfrac{1}{2}\hbar\big) = \frac{ (w_{i,r} - w_{j,s} + \theta_e + \frac{1}{2} \hbar)^{\overline{A_i(r)}\cdot \underline{C_j(s)}}}{(w_{i,r} - w_{j,s} + \theta_e + \frac{1}{2} \hbar)^{\overline{A_i(r)}\cdot \underline{A_j(s)}}}
    $$
    Up to some mild reindexing, this gives the claimed formula for $(f\ast g)|_C$ and completes the proof.
\end{proof}

\subsection{The homomorphism to GKLO difference operators}
\label{ssec: GKLO hom}

We have the following theorem due to Frassek and Tsymbaliuk.

\begin{Theorem}[\mbox{\cite[Theorem B.17]{Frassek-Tsymbaliuk-2022}}]
  \label{thm:FFT-homomorphism}
  Let $\bv \in \NN^\vertexset$  There is a homomorphism  of $\ZZ^\vertexset\times \ZZ$-graded rings

  \begin{equation}
    \label{eq:54}
\Phi^+_\bv: \preshu \rightarrow \GKLO{\bv}^{S_\bv}  
  \end{equation}
   defined as follows: for any $\bk \in \NN^I$ and $f \in \preshu_\mathbf{k}$, we define
\begin{align}
  \label{eq:FFT-homomorphism}
  \begin{split}
\Phi^+_\bv(f) & = \sum_{A : | A | = \mathbf{k}} \frac{1}{A!} f|_A \frac{1}{\prod_{i\in I} \prod_{r,s \in [\bv_i], r \neq s}(w_{i,r}-w_{i,s} - A_i(s)\hbar)^{\overline{A_i(r)}}} \\
& \quad\ \ \  \ \ \  \times \frac{1}{\prod_{e \in E} \prod_{\substack{r \in [\bv_{s(e)}] \\ s \in [\bv_{t(e)}]}} (w_{t(e),s} - w_{s(e),r} + \theta_e + \frac{1}{2} \hbar)^{\underline{A_{s(e)}(r)}\cdot \overline{A_{t(e)}(s)-1}}} \su^A 
  \end{split}
\end{align}

\end{Theorem}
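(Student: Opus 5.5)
The plan is to check directly that the formula \eqref{eq:FFT-homomorphism} is multiplicative, comparing coefficients of each monomial $\su^C$ on both sides. The main input is Theorem \ref{thm:specializing-variables-and-shuffle-product}. Write $\Phi^+_\bv(f) = \sum_{|A| = \bk} \phi_A(f)\, \su^A$, where $\phi_A(f)$ is the displayed rational coefficient.

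Three points are routine and I would dispose of them first. The map is $\kk$-linear and $\base$-linear. Its image is $S_\bv$-invariant: $\sigma \in S_\bv$ permutes the multisets $A$, and since $f$ is symmetric it carries $f|_A$ to $f|_{\sigma A}$ and the denominators to the corresponding ones. The unit is preserved, because $A = \emptyset$ gives coefficient $1$. Gradedness is a degree count. The $\ZZ^\vertexset$-degree of $\su^A$ is $|A| = \bk$. For the $\ZZ$-degree, Remark \ref{rmk:gradingunderGKLO} gives
$$\deg \su^A = \sum_i \bk_i \big(\sv_i - \textstyle\sum_{e:i\to j}\sv_j - \delta\big).$$
One adds this to the polynomial degree of the Pochhammer denominators and checks that it equals the shift in \eqref{eq:gradingonpreshu}. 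This is bookkeeping with the quantities $A_i(r)$, using the identity $\sum_r A_i(r) = \bk_i$.

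The real content is multiplicativity. In $\GKLO{\bv}$, moving $\su^A$ past a function of $w$ shifts $w_{i,r} \mapsto w_{i,r} + A_i(r)\hbar$. Hence the $\su^C$-coefficient of $\Phi^+_\bv(f)\,\Phi^+_\bv(g)$ is
$$\sum_{A + B = C} \phi_A(f)\cdot \big(\text{shift by } A\big)\,\phi_B(g).$$
The key observation is that applying the shift to $g|_B$ gives exactly $g|_{C:A}$: the arithmetic progressions starting at $w_{i,r} + A_i(r)\hbar$ of length $B_i(r)$ are precisely the points \eqref{eq:13}.

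On the other side, Theorem \ref{thm:specializing-variables-and-shuffle-product} expresses $\phi_C(f \ast g)$ as
$$\frac{1}{C!}\,(\text{denominator}_C)^{-1} \sum_{A+B=C} \frac{C!}{A!\,B!}\, f|_A\, g|_{C:A}\,(\text{kernel factors}).$$
So it suffices, term by term in $A$, to prove one rational-function identity:
$$(\text{kernel factors}) = \frac{\text{denominator}_C}{\text{denominator}_A \cdot \text{shifted denominator}_B}.$$
I would verify this factor by factor.

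For the vertex factors, the product over $r \ne s$ in $[\bv_i]$ of rising factorials should split using $(x)^{\overline{a+b}} = (x)^{\overline{a}}\,(x + a\hbar)^{\overline{b}}$ together with the relation $C = A + B$. This step is a telescoping computation that matches the $\overline{A_i(r)}$ ratio appearing in the theorem.

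For the arrow factors, one needs the analogous splitting of the doubly indexed products $(\cdot)^{\underline{a}\cdot\overline{b}}$. The denominator of \eqref{eq:FFT-homomorphism} uses the exponent $\underline{A_{s(e)}(r)} \cdot \overline{A_{t(e)}(s)-1}$, and the theorem contributes the numerators $\underline{A}\cdot\overline{C}$ and $\underline{C}\cdot\overline{A}$. Writing each such expression as a product over a rectangle of lattice points, the identity becomes a statement that rectangles decompose: the rectangle for $C$ is the union of the rectangles for $A$, for $B$ (shifted), and the two off-diagonal blocks coming from the kernel. The half-step offsets $\pm\tfrac12\hbar$ are what make the $-1$ in the exponent line up.

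I expect the main obstacle to be this sign and shift bookkeeping, not any conceptual point. Everything else is formal, since the rectangle decomposition is simply the multiplicativity of products over disjoint index sets. If the indices do not match on a first attempt, the fallback is to treat the single-edge case $e : i \to j$ with $\bv_i = \bv_j = 1$ as a check, and then tensor up.
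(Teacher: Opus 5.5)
Your route is the paper's own. The paper gives no argument beyond noting that the formula follows easily from Theorem \ref{thm:specializing-variables-and-shuffle-product}, which is exactly your coefficient comparison. The reduction is right: $\sfu^A$ shifts $g|_B$ to $g|_{C:A}$, the $r=s$ vertex factors give $C!/(A!B!)$, the $r\neq s$ vertex factors match by $x^{\overline{a+b}}=x^{\overline{a}}(x+a\hbar)^{\overline{b}}$, and the arrow identity you need is true.

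However, one item missing from your list of routine checks actually fails: whether the coefficients lie in $\GKLO{\bv}$. That ring only inverts the elements $w_{i,r}-w_{i,s}+n\hbar$. The arrow denominators $(w_{\target(e),s}-w_{\source(e),r}+\theta_e+\tfrac12\hbar)^{\underline{A_{\source(e)}(r)}\cdot\overline{A_{\target(e)}(s)-1}}$ are genuine whenever $A_{\source(e)}(r)\geq 1$ and $A_{\target(e)}(s)\geq 2$, and for general $f\in\preshu$ nothing cancels them. For example, take the quiver with a single arrow $e:i\to j$, $\bv=\alpha_i+\alpha_j$ and $1\in\preshu_{\alpha_i+2\alpha_j}$. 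Then $\Phi^+_\bv(1)=\tfrac12\,(w_{j,1}-w_{i,1}+\theta_e+\tfrac12\hbar)^{-1}\sfu_{i,1}\sfu_{j,1}^2$, which is not in $\GKLO{\bv}$. So on $\preshu$ the map must be taken into the larger Ore localization that also inverts all $w_{\target(e),s}-w_{\source(e),r}+\theta_e+(n+\tfrac12)\hbar$. That set is shift-stable and $S_\bv$-stable, so the localization exists, and your multiplicativity computation goes through there verbatim. Only on $\shu$ do these factors cancel, which is exactly how the refined wheel conditions are used in the proof of Theorem \ref{thm:regularity-of-phi-bv}. The defect is in the statement as written, but as it stands your argument silently asserts a target that is wrong.

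Second, the arrow identity is not the naive block decomposition you describe. Put $a=A_{\source(e)}(r)$, $b=B_{\source(e)}(r)$, $a'=A_{\target(e)}(s)$, $b'=B_{\target(e)}(s)$ and $y=w_{\target(e),s}-w_{\source(e),r}+\theta_e+\tfrac12\hbar$. View $y^{\underline{m}\cdot\overline{n}}$ as the product of $y+(q-p)\hbar$ over the grid $[0,m)\times[0,n)$. The $C$-grid $[0,a+b)\times[0,a'+b'-1)$ must then be cut in a staggered way:
\begin{itemize}
\item rows $p<a$ split at column $a'-1$, giving $y^{\underline{a}\cdot\overline{a'-1}}$ and the $-\tfrac12\hbar$ kernel $(y+(a'-1)\hbar)^{\underline{a}\cdot\overline{b'}}$;
\item rows $p\geq a$ split at column $a'$, giving the $+\tfrac12\hbar$ kernel $(y-a\hbar)^{\underline{b}\cdot\overline{a'}}$ and the shifted $B$-factor $(y+(a'-a)\hbar)^{\underline{b}\cdot\overline{b'-1}}$.
\end{itemize}
Moreover, whenever $a'=0$ or $b'=0$, one piece has width $-1$ and means a reciprocal via $x^{\overline{-n}}=1/(x-n\hbar)^{\overline{n}}$. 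These cases are ubiquitous (they produce the FMO numerators), and ``disjoint unions of rectangles'' does not literally cover them. The clean fix is to write $y^{\underline{m}\cdot\overline{n}}=\prod_{p<m}(y-p\hbar)^{\overline{n}}$ and apply the cocycle identity for rising powers, valid for all integer exponents, row by row.

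Finally, in the degree count, use $\deg\sfu_{i,r}=\sv_i-\sum_{\source(e)=i}\sv_{\target(e)}$, as forced by $\deg\M_{\alpha_i}(1)=1-a_{ii}$, and drop the stray $\delta$. The total is then exactly $d+\sum_i\bk_i-\sum_e\bk_{\source(e)}\bk_{\target(e)}$.
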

Observe that for $\bk \leq \bv$, restricting $\Phi^+_\bv$ to the $\bk$-degree piece of $\preshu$ yields an injection:
\begin{equation}
  \label{eq:53}
  \Phi_\bv^+ : \preshu_{\bk} \hookrightarrow \left(\GKLO{\bv}^{S_{\bv}}\right)_{\bk}
\end{equation}

We mention that the format of our formula is different than that of \cite[Theorem B.17]{Frassek-Tsymbaliuk-2022}. The formula presented here is an easy consequence of Theorem \ref{thm:specializing-variables-and-shuffle-product}. We also note that Frassek and Tsymbaliuk work in a different generality than us. They consider arbitrary simple Lie algebras, that is, finite-dimensional but possibly not simply-laced. By contrast, we work with arbitrary quiver, which includes all symmetric Kac-Moody types, but not the non-symmetric ones. 

Finally, we mention that a $K$-theoretic counterpart to Theorem \Ref{thm:FFT-homomorphism} was obtained earlier by Finkelberg and Tsymbaliuk \cite[\S 4]{FT2}. For that reason, we call the homomorphism in Theorem \Ref{thm:FFT-homomorphism} the {\bf FFT homomorphism} where FFT is short for Finkelberg-Frassek-Tsymbaliuk.  See \cite[Theorem 4.1]{JN26} for a K-theoretic generalization to arbitrary quivers.

\subsection{Wheel conditions}
\label{ssec:wheel-conditions}

In general, the image of the FFT homomorphism does not land in $\qzast(\bv)$. However, there is a subalgebra $\cS \subseteq \preshu$ which does have this property. To specify $\cS$ we make the following definition.

\begin{Definition}
\label{def:refined-wheel-conditions}
Let $\bk \in \NN^I$. We say $f \in \preshu_\bk$ satisfies the {\bf refined wheel conditions} if for all $\bv \in \NN^I$ and all multisets $A$ supported on $\bv$ with $|A| = \bk$, the following conditions hold.

\begin{enumerate}[label=(\roman*)]
\item For every edge $e : i \rightarrow j$, all $r \in [\bv_i]$ and all $s\in [\bv_j]$, the specialization $f|_A$ is divisible by
$$
(w_{j,s} - w_{i,r} + \theta_e - \frac{1}{2}\hbar)^{\underline{A_i(r) - 1} \cdot \overline{A_j(s)}} \qquad \text{if } \ A_i(r) \geq A_j(s),
$$
and divisible by 
$$
(w_{j,s} - w_{i,r} + \theta_e + \frac{1}{2}\hbar)^{\underline{A_i(r)} \cdot \overline{A_j(s) - 1}} \qquad \text{if } \  A_i(r) \leq A_j(s).
$$

\item For every edge loop $e : i \rightarrow i$ and all $r \in [\bv_i]$, the specialization $f|_A$ is divisible by
$$
(\theta_e - \tfrac{1}{2}\hbar)^{\underline{A_i(r)-1} \cdot \overline{A_i(r)}} =
(\theta_e + \tfrac{1}{2}\hbar)^{\underline{A_i(r)} \cdot \overline{A_i(r)-1}}
$$
\end{enumerate}
\end{Definition}

\begin{Remark}
    In part (i) of the above definition, it is equivalent to say that $f|_A$ is divisible by ${(w_{j,s} - w_{i,r} + \theta_e - \frac{1}{2}\hbar)^{\underline{A_i(r) - 1} \cdot \overline{A_j(s)}}}$ and by $(w_{j,s} - w_{i,r} + \theta_e + \frac{1}{2}\hbar)^{\underline{A_i(r)} \cdot \overline{A_j(s) - 1}}$. Because of Lemma \ref{lem:doublePochhdiv}, one of these polynomials divides the other, depending on which of $A_i(r)$ and $A_j(s)$ is larger.  Part (i) thus simply expresses the divisibility by the greater of these two polynomials in each case.
\end{Remark}

We make a few comments about this definition. First, part (ii) of the above definition is the special case of part (i) when $i = j$ and $r = s$ and is therefore redundant. However, we will often treat this case differently in proofs. Second, in the above conditions, if the multiplicity of a multiset is $0$ at some index, then we may have an exponent in a Pochhammer symbol of $-1$ appearing. In this case, the divisibility condition is vacuous. Finally, because of the symmetry of $f\in \preshu_\bk$, it is sufficient to consider multipartitions $A$ instead of general multisets.

\begin{Remark}
    Part (i) of the above definition can be rephrased using the function $\chi_{m,n} : \ZZ \rightarrow \NN$ from \cite[Definition 2.5]{JN26}:  for every edge $e : i \rightarrow j$, all $r \in [\bv_i]$ and all $s\in [\bv_j]$, the specialization $f|_A$ is divisible by
    $$
        \prod_{c \in \ZZ} (w_{j,s} - w_{i,r} + \theta_e - \tfrac{1}{2}\hbar + c \hbar)^{\chi_{A_i(r), A_j(s)}(c)}
    $$
\end{Remark}

\begin{Definition}
\label{def:shu}
We define the {\bf shuffle algebra} $\cS \subseteq \preshu$ to be the subspace of all $f \in \preshu$ satisfying the refined wheel conditions. Note that by definition $\shu = \bigoplus_{\bk} \shu_\bk$.
\end{Definition}

The following follows directly from Theorem \Ref{thm:specializing-variables-and-shuffle-product}.

\begin{Theorem}
The shuffle algebra $\shu \subseteq \preshu$ is a $\ZZ^\vertexset\times \ZZ$-graded subring.
\end{Theorem}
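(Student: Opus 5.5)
The statement has two parts: the grading and the closure under the shuffle product. The grading part is immediate. The refined wheel conditions are imposed separately in each $\bk$-component, and within a fixed $\bk$ they are homogeneous in the $\tfrac{1}{2}$-homological degree, since every $w_{i,r}$, $\theta_e$ and $\tfrac{1}{2}\hbar$ has degree one and specialization at a multiset preserves degree. Hence $\shu = \bigoplus_{(\bk,d)} \shu \cap \preshu_{(\bk,d)}$. Also $1 \in \shu_{\mathbf 0}$, where the conditions are vacuous. It therefore remains to show that $f \in \shu_{\bk'}$ and $g\in\shu_{\bk''}$ imply $f\ast g \in \shu_{\bk'+\bk''}$.

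Fix $\bv$ and a multiset $C$ supported on $[\bv]$ with $|C| = \bk'+\bk''$. Expand $(f\ast g)|_C$ using Theorem \ref{thm:specializing-variables-and-shuffle-product} as a sum over decompositions $C = A+B$. It suffices to show that each summand is divisible by the required Pochhammer factor. That factor depends only on $C$ and on the pair $(e;r,s)$, and it should be read as a polynomial in the $w$'s: different pairs $(r,s)$ involve different linear forms $w_{j,s}-w_{i,r}$, so coprimality lets us treat one pair at a time.

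For a fixed arrow $e:i\to j$ and fixed $r,s$, the relevant contributions to a summand are the following:
\begin{itemize}
\item the wheel divisibility of $f|_A$, which supplies the factor for $(A_i(r),A_j(s))$;
\item the wheel divisibility of $g|_{C:A}$;
\item the explicit arrow factors of Theorem \ref{thm:specializing-variables-and-shuffle-product}, namely the ratios $(\cdot - \tfrac12\hbar)^{\underline{A_i(r)}\cdot\overline{C_j(s)}}/(\cdot)^{\underline{A_i(r)}\cdot\overline{A_j(s)}}$ and the analogous $+\tfrac12\hbar$ ratio.
\end{itemize}
The second item needs one preliminary remark. Specializing $g$ at $C{:}A$ equals $g|_B$ with each $w_{i,r}$ shifted to $w_{i,r}+A_i(r)\hbar$. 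So the wheel condition for $g$ gives divisibility by the $B$-factor, with argument shifted by $(A_j(s)-A_i(r))\hbar$.

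The key combinatorial step is then a lemma on double Pochhammer symbols, in the spirit of Lemma \ref{lem:doublePochhdiv}. Write $x=w_{j,s}-w_{i,r}+\theta_e$. For $a=A_i(r)$, $c=C_i(r)$, $b=A_j(s)$, $d=C_j(s)$, the product
\[
\text{(factor for $(a,b)$)}\cdot\text{(shifted factor for $(c-a,d-b)$)}\cdot\text{(explicit cross factors)}
\]
must be divisible by the factor for $(c,d)$. I would verify this by viewing each double Pochhammer symbol as a multiset of roots $x + m\hbar$, via the functions $\chi_{m,n}$ of \cite[Definition 2.5]{JN26}. The claim then becomes an inequality of root multiplicities $\chi_{c,d} \le \chi_{a,b} + \chi_{c-a,d-b}(\cdot - \text{shift}) + (\text{cross terms})$. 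I would check it by comparing the piecewise-linear "trapezoid" profiles of these functions, splitting into cases according to the relative order of $a,b$ and of $c,d$.

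Two points need separate care. The first is the possible denominators $(\cdot)^{\underline{A}\cdot\overline{A}}$ in the cross factors. I expect them to be absorbed exactly by the wheel divisibility of $f|_A$, and this is the reason the conditions are "refined". The second is the denominator $(w_{i,r}-w_{i,s}-A_i(s)\hbar)^{\overline{A_i(r)}}$. It involves only same-vertex differences, so it is coprime to the arrow factors when $i\neq j$; the shuffle product is already known to be polynomial, so it causes no problem. Edge loops, condition (ii), need a separate check with $r=s$, where $x=\theta_e$ and the same multiplicity comparison applies.

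The main obstacle will be this multiplicity inequality, especially in the mixed cases $a\ge b$ but $c-a<d-b$, and near zero multiplicities where exponents of $-1$ appear. I would first test small cases such as $c,d\le 2$ to pin down the shifts, and then prove the general statement by induction on $c+d$, peeling off one box at a time.
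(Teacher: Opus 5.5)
You follow the same route as the paper, whose proof is just the assertion that closure under $\ast$ follows from Theorem~\ref{thm:specializing-variables-and-shuffle-product}. Your treatment of the grading and the unit is fine, and so is the reduction to one triple $(e;r,s)$ at a time. In fact the same-vertex denominators $(w_{i,r}-w_{i,s}-A_i(s)\hbar)^{\overline{A_i(r)}}$ contain no $\theta_e$, so they are coprime to the required factors even for edge loops, not only when $i\neq j$.

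However, one step of your plan is wrong: the arrow ratios in that theorem have no genuine denominators. Write $x=w_{j,s}-w_{i,r}+\theta_e$, $a=A_i(r)$, $b=A_j(s)$, $c=C_i(r)$, $d=C_j(s)$. The proof of Theorem~\ref{thm:specializing-variables-and-shuffle-product} shows that the two ratios are the polynomials
\[
\prod_{0\le p<a,\ b\le q<d}\big(x+(q-p-\tfrac12)\hbar\big)
\qquad\text{and}\qquad
\prod_{a\le p<c,\ 0\le q<b}\big(x+(q-p+\tfrac12)\hbar\big).
\]
Your expectation that the ``denominators'' $(\cdot)^{\underline{a}\cdot\overline{b}}$ are absorbed by the wheel divisibility of $f|_A$ is false. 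For $a=b=1$ the refined conditions force no factor $x+(k-\tfrac12)\hbar$ on $f|_A$, while $(x-\tfrac12\hbar)^{\underline{1}\cdot\overline{1}}=x-\tfrac12\hbar$. Acting on that expectation would also spend the factors of $f|_A$ twice, so the count could not close.

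The multiplicity inequality you leave as the main obstacle is the real content. It needs no trapezoid comparison, case split or induction if you argue as follows:
\begin{enumerate}
\item Prove divisibility separately by $P_1=(x-\tfrac12\hbar)^{\underline{c-1}\cdot\overline{d}}$ and by $P_2=(x+\tfrac12\hbar)^{\underline{c}\cdot\overline{d-1}}$. This covers whichever of the two condition (i) demands.
\item Use that $f|_A$ and $g|_B$ are each divisible by both of their own two polynomials, by the remark after Definition~\ref{def:refined-wheel-conditions}. This removes the ``mixed cases'' you were worried about.
\item Label the factor $x+(q-p-\tfrac12)\hbar$ by the lattice point $(p,q)$.
\end{enumerate}

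With this labelling, $P_1$ is the box $[0,c-2]\times[0,d-1]$. It is the disjoint union of four blocks:
\begin{itemize}
\item $[0,a-2]\times[0,b-1]$, from $f|_A$;
\item $[a,c-2]\times[b,d-1]$, from $g|_{C:A}$, i.e.\ $g|_B$ shifted by $(b-a)\hbar$;
\item $[0,a-1]\times[b,d-1]$, from the first cross factor;
\item $[a-1,c-2]\times[0,b-1]$, from the second cross factor relabelled by $p\mapsto p-1$.
\end{itemize}

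Likewise $P_2$ is the box $[0,c-1]\times[1,d-1]$. Taking the sources in the same order, it is tiled by:
\begin{itemize}
\item $[0,a-1]\times[1,b-1]$;
\item $[a,c-1]\times[b+1,d-1]$;
\item $[0,a-1]\times[b,d-1]$;
\item $[a,c-1]\times[1,b]$, where the second cross factor is now relabelled by $q\mapsto q+1$.
\end{itemize}

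When $a\in\{0,c\}$ or $b\in\{0,d\}$, the vacuous wheel conditions correspond to empty blocks and the cross blocks merely overshoot the box. The edge-loop condition (ii) is the case $a=b$, $c=d$, $x=\theta_e$ of the same tiling. Hence every summand, and so $(f\ast g)|_C$, is divisible by the required factor.
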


\subsubsection{Comparison with 3-variable wheel conditions}
Motivated by the  work of Feigin-Odesskii on elliptic quantum groups \cite{FO}, many works on shuffle algebras utilize 3-variable wheel conditions, see for example \cite[Definition 2.9]{Negut1} or \cite[(B.11)]{Frassek-Tsymbaliuk-2022}.  

In the setting of Definition \ref{def:refined-wheel-conditions} above, these arise from those $A$ which are ``almost generic'', in the sense that there is a single pair $(i,r)$ for which $A_i(r) = 2$, while all other $A_j(s) = 1$.  
Explicitly, this leads to the following:

\begin{Definition}
    \label{def:3-variable-wheel-conditions}
Consider an element $ f\in \preshu_\bk$.  We say that $f$ satisfies the \textbf{3-variable wheel conditions} if for all arrows $e:i \rightarrow j$, the function  $f$ vanishes when we specialize at either
\begin{enumerate}
\item $x_{j,b}-  x_{i,a} + \theta_e - 1/2 \hbar = 0$ and $x_{i,c} = x_{i,a} + \hbar$, or 
\item $x_{j,a} - x_{i,b}+ \theta_e + 1/2 \hbar = 0$ and $x_{j,c} = x_{j,a} + \hbar$.
\end{enumerate}
In both cases we run over all possible indices $a,b,c$ such that $a \neq c$, with the additional stipulation that $a\neq b \neq c$ if $i = j$.
\end{Definition}

\begin{Definition}
We define  $\shu' \subseteq \preshu$ to be the subspace of all $f \in \preshu$ which  satisfy the $3$-variable  wheel conditions.
\end{Definition}

It is well-known that $\shu'$ is a subring of $\preshu$.  Our goal is to establish the following comparison result between the shuffle algebras $\shu$ and $\shu'$:

\begin{Theorem}
    \label{thm:shufflealgcomparison}
    For any quiver, there is an inclusion $\shu \subseteq \shu'$.   Moreover:
    \begin{enumerate}[(a)]
    \item If the quiver $\quiver$  contains no edge loops, then $\shu = \shu'$.
    \item For a general quiver, the above inclusion becomes an equality after localization at the  multiplicative subset of the base ring $\base$ generated by the elements:
      \begin{align}
        \label{eq:47}
        \left\{ \theta_e + (n+\tfrac{1}{2})\hbar : e \in \arrowset \text{ with } \source(e) = \target(e), \ n \in \ZZ \right\}
      \end{align}
    \end{enumerate}
\end{Theorem}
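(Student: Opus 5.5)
Both directions reduce to comparing divisibility of specializations $f|_A$ with vanishing of $f$ on explicit linear subspaces. The key tool is that, for $f \in \preshu_\bk$ symmetric, the polynomial $f|_A$ is a polynomial in the $w$'s and $\hbar,\theta$. Divisibility of $f|_A$ by a product of distinct linear forms $w_{j,s}-w_{i,r}+\theta_e+c\hbar$ is then equivalent to vanishing of $f$ on the corresponding hyperplane sections of the image of the specialization map. When $i\neq j$ or $r\neq s$, these forms are pairwise non-associate irreducibles, so divisibility by the Pochhammer products in Definition \ref{def:refined-wheel-conditions}(i) is equivalent to vanishing at each individual factor, given enough multiplicity-one control. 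Multiplicities greater than one occur exactly for the edge-loop factors of (ii), which are constants in $\base$.

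For the inclusion $\shu\subseteq\shu'$, I take $f\in\shu$ and a wheel of type (1) for $e:i\to j$ with $i\neq j$. Choose $\bv$ and $A$ with $A_i(r)=2$ (encoding $x_{i,a},x_{i,c}=x_{i,a}+\hbar$ as $w_{i,r},w_{i,r}+\hbar$), with $A_j(s)=1$ (encoding $x_{j,b}=w_{j,s}$), and with all other multiplicities $1$ on generic points. Since $A_i(r)=2\geq A_j(s)=1$, condition (i) gives divisibility by $(w_{j,s}-w_{i,r}+\theta_e-\tfrac12\hbar)^{\underline{1}\cdot\overline{1}}$. This is the single linear form $w_{j,s}-w_{i,r}+\theta_e-\tfrac12\hbar$, and setting it to zero is the wheel condition. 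Wheel (2) is the same argument using the second factor with $A_i(r)=1\leq A_j(s)=2$. For $i=j$ (a loop) with $a,b,c$ distinct, I use separate points $r\neq s$ and the same argument. This gives $\shu\subseteq\shu'$ for every quiver, including loops.

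For the reverse inclusion, I take $f\in\shu'$ and an arbitrary multiset $A$, and I must show that $f|_A$ is divisible by the relevant Pochhammer products. For $r\neq s$ or $i\neq j$, every factor has the form $w_{j,s}-w_{i,r}+\theta_e+(c-\tfrac12)\hbar$ with $c$ in a range determined by $(A_i(r),A_j(s))$. In the first case the factors are $(w_{j,s}+\ell\hbar)-(w_{i,r}+k\hbar)+\theta_e-\tfrac12\hbar$ with $0\leq k\leq A_i(r)-2$ and $0\leq \ell\leq A_j(s)-1$, after reindexing. After re-expressing the factors, each one is a linear form whose vanishing puts some $x_{j,b}=w_{j,s}+\ell\hbar$ and $x_{i,a}=w_{i,r}+k\hbar$ into wheel position, with $x_{i,c}=x_{i,a}+\hbar$ also present in the progression because $k+1\leq A_i(r)-1$. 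So $f|_A$ vanishes on each such hyperplane. The main point to check is that the factors in the refined product are pairwise distinct, hence coprime, in the polynomial ring. This holds when $(i,r)\neq(j,s)$, because the product runs over distinct shifts $c$ as a "staircase" (this is where Lemma \ref{lem:doublePochhdiv}-type counting enters), and each $c$ occurs with multiplicity one. When $A_i(r)\le A_j(s)$, the second wheel condition handles it symmetrically. This proves (a), since without loops condition (ii) never arises and every pair has $(i,r)\neq(j,s)$ or $i\neq j$.

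The main obstacle is (ii) and the multiplicity issue: for a loop at $(i,r)$ with $r=s$, the factors $\theta_e+c\hbar$ are constants, appear with multiplicity greater than one, and vanishing arguments in the $w$-variables cannot produce them. This is why equality can fail. After inverting the elements \eqref{eq:47}, these factors become units, so condition (ii) becomes vacuous. Condition (i) for a loop with $r\neq s$ is still covered by the distinct-factor argument above. Hence $\shu'_{\mathrm{loc}}=\shu_{\mathrm{loc}}$, which gives (b). One point remains to check carefully: localization commutes with these subspace definitions. This follows because both wheel conditions are $\base$-linear divisibility or vanishing conditions and the localized elements are non-zero-divisors.
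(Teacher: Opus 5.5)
\textbf{Verdict: there is a genuine gap in the reverse inclusion $\shu'\subseteq\shu$.} Your argument for $\shu\subseteq\shu'$ is fine and matches the paper: the 3-variable wheels are the nearly generic specializations with a single $A_i(r)=2$.

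The reverse inclusion rests on a false claim. For $(i,r)\neq(j,s)$, the linear factors of the double Pochhammer symbols in condition (i) are \emph{not} pairwise distinct. The factor $(w_{j,s}+\ell\hbar)-(w_{i,r}+k\hbar)+\theta_e-\tfrac12\hbar$ depends only on $\ell-k$, so each shift $c=\ell-k$ occurs once for every pair $(k,\ell)$ on that diagonal. For example, take $A_i(r)=3$, $A_j(s)=2$ and $y=w_{j,s}-w_{i,r}+\theta_e-\tfrac12\hbar$. Condition (i) then requires $y^{\underline{2}\cdot\overline{2}}=y^2(y-\hbar)(y+\hbar)$ to divide $f|_A$. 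Vanishing of $f|_A$ on the hyperplane $y=0$ only gives $y\mid f|_A$. So your hyperplane-vanishing argument only yields divisibility by the radical of the required polynomial. Your assertion that multiplicities larger than one occur only for the loop factors of (ii) is also incorrect. They occur in (i) whenever $\min\{A_i(r)-1,A_j(s)\}\geq 2$ (resp.\ $\min\{A_i(r),A_j(s)-1\}\geq 2$). As written, (a) and (b) are not proved.

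The missing idea is to establish divisibility \emph{before} the factors coincide. Specialize only the variables $x_{i,a}$ attached to the progression $w_{i,r},\ldots,w_{i,r}+(A_i(r)-1)\hbar$, and keep the $x_{j,b}$ free.
\begin{itemize}
\item The wheel condition shows that this partial specialization vanishes on each hyperplane $x_{j,b}=w_{i,r}+p\hbar-\theta_e+\tfrac12\hbar$ for $0\leq p<A_i(r)-1$.
\item These linear forms are pairwise coprime, since they involve different variables $x_{j,b}$ or different constants $p$. Hence their product $\prod_b (x_{j,b}-w_{i,r}+\theta_e-\tfrac12\hbar)^{\underline{A_i(r)-1}}$ divides it.
\item Specializing $x_{j,b}\mapsto w_{j,s}+q\hbar$ for $0\leq q<A_j(s)$ preserves divisibility. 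It turns this product into exactly $(w_{j,s}-w_{i,r}+\theta_e-\tfrac12\hbar)^{\underline{A_i(r)-1}\cdot\overline{A_j(s)}}$, with the correct multiplicities.
\end{itemize}
In the example, the two copies of $y$ come from two distinct wheels, $(x_{j,b_0};x_{i,a_0},x_{i,a_1})$ and $(x_{j,b_1};x_{i,a_1},x_{i,a_2})$. These only collapse to the same hyperplane after specialization. The second divisibility in (i) is handled symmetrically.

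With this lemma in place, the rest of your outline goes through. For (b), note also that $A_i(r)\leq \bk_i$. So the single element $\prod_{i\in\vertexset}\prod_{e:i\to i}(\theta_e-\tfrac12\hbar)^{\underline{\bk_i-1}\cdot\overline{\bk_i}}$ of the multiplicative set clears condition (ii) for all $A$ of size $\bk$ at once. This uniformity is what actually justifies passing to the localization.
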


We will need the following lemma, which is well-known to experts.

\begin{Lemma}
For all $f \in \shu'_\bk$ and all $A$ of size $\bk$, we have:
\begin{enumerate}[(i)]
\item For every arrow $e : i \rightarrow j$, the specialization $f|_A$ is divisible by
$$
(w_{j,s} - w_{i,r} + \theta_e - \frac{1}{2}\hbar)^{\underline{A_i(r) - 1} \cdot \overline{A_j(s)}}
$$
for all indices $1 \leq r \leq \bv_i$ and $1 \leq s\leq \bv_j$  such that $(i,r) \neq (j,s)$.

\item  For every arrow $e : i \rightarrow j$, the specialization $f|_A$ is divisible by
$$
(w_{j,s} - w_{i,r} + \theta_e + \frac{1}{2}\hbar)^{\underline{A_i(r)} \cdot \overline{A_j(s) - 1}}
$$
for all indices $1 \leq r \leq \bv_i$ and $1 \leq s\leq \bv_j$ such that $(i,r) \neq (j,s)$.

\end{enumerate}
\end{Lemma}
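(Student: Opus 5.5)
The plan is to reduce the divisibility of the specialization $f|_A$ by the Pochhammer product to vanishing of $f|_A$ at each individual linear factor. We then produce each such zero by exhibiting a $3$-variable wheel among the specialized variables. Fix an arrow $e:i\rightarrow j$ and indices $(i,r)\neq(j,s)$.

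For part (i), the relevant product is
$$\prod_{0\le k<A_i(r)-1}\ \prod_{0\le \ell<A_j(s)} \big(w_{j,s}-w_{i,r}+\theta_e-\tfrac12\hbar+(\ell-k)\hbar\big),$$
up to the precise ordering convention of the double Pochhammer symbol recorded in Appendix \ref{appendix-sec:rising-and-falling-notation-for-pochhammer-symbols}. Its factors may repeat, so we treat multiplicity at the end.

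The key step is the individual zeros. For each pair $(k,\ell)$ in the range, take among the specialized variables $x_{i,a}=w_{i,r}+k\hbar$, $x_{i,c}=w_{i,r}+(k+1)\hbar$ (this exists since $k+1\le A_i(r)-1$), and $x_{j,b}=w_{j,s}+\ell\hbar$. Then $x_{i,c}=x_{i,a}+\hbar$. Also $x_{j,b}-x_{i,a}+\theta_e-\tfrac12\hbar$ is exactly the $(k,\ell)$ factor. Since $(i,r)\neq(j,s)$, these are three distinct variable slots, so the distinctness requirement in the loop case $i=j$ is met. Wheel condition (1) of Definition \ref{def:3-variable-wheel-conditions} then shows $f|_A$ vanishes on the hyperplane where this factor is zero.

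Part (ii) is symmetric. Take $x_{i,b}=w_{i,r}+k\hbar$ and $x_{j,a}=w_{j,s}+\ell\hbar$, $x_{j,c}=w_{j,s}+(\ell+1)\hbar$ with $\ell+1\le A_j(s)-1$, and apply wheel condition (2).

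The main obstacle is upgrading these individual zeros to divisibility by the full product, because a given linear form $w_{j,s}-w_{i,r}+\theta_e+c\hbar$ can occur with multiplicity $m_c>1$. I would first try the following. View $f|_A$ as a polynomial in the single variable $y=w_{j,s}-w_{i,r}$, with all other variables as independent coefficients, working over the polynomial ring $\base[w]$, which is a UFD. Distinct shifts $c$ give coprime irreducible factors, so it suffices to show that $y+\theta_e+c\hbar$ divides $f|_A$ to order $m_c$.

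For that, I would pass to a more generic specialization. Perturb the arithmetic progression attached to $(i,r)$ by splitting it into $m_c$ shorter progressions with independent base points $w_{i,r}, w'_{i,r},\dots$. By symmetry of $f$, this is again a specialization of the form $f|_{A'}$ for a larger $\bv$. Each of the $m_c$ coincidences in the original picture then becomes a distinct hyperplane $w^{(t)}_{j,s}-w^{(t')}_{i,r}+\theta_e+c'\hbar$, and $f|_{A'}$ vanishes on each of them by the argument above. Since these hyperplanes are distinct and $\base[w]$ is a UFD, $f|_{A'}$ is divisible by their product. Re-specializing the base points back to $w^{(t)}_{i,r}=w_{i,r}+(\text{offset})\hbar$ recovers $f|_A$, and the product collapses to $(y+\theta_e+c\hbar)^{m_c}$ times a polynomial.

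The delicate part is choosing the splitting so that every wheel triple used stays inside a single sub-progression. The pairs $x_{i,a},x_{i,c}$ must remain adjacent in the same piece, while each piece still contributes one factor to the collapse. This requires a careful count of how the $(k,\ell)$ pairs with $\ell-k=c$ are distributed. That combinatorial bookkeeping is where I expect the real work to lie.
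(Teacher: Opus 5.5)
You have the individual zeros right, and this is exactly how the paper uses wheel condition (1): the triple $x_{i,a}=w_{i,r}+k\hbar$, $x_{i,c}=w_{i,r}+(k+1)\hbar$, $x_{j,b}=w_{j,s}+\ell\hbar$ kills the $(k,\ell)$ factor. The gap is the step you flag yourself, upgrading zeros to divisibility with multiplicity. It is not carried out, and the splitting strategy you propose cannot work. The factors with a fixed shift $c=\ell-k$ come from \emph{consecutive} values of $k$, so their wheel triples use the overlapping adjacent pairs $(k,k+1),(k+1,k+2),\dots$ of the $(i,r)$ string. If you cut that string into disjoint sub-progressions, one of two things happens. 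Either an adjacent pair you need is destroyed, or two needed pairs land in the same piece. In the second case they give the same hyperplane after perturbation, because within one piece the linear form depends only on $c$, not on $k$. Concretely, take $A_i(r)=3$ and $A_j(s)=2$. The factor $w_{j,s}-w_{i,r}+\theta_e-\tfrac12\hbar$ then appears squared, from $(k,\ell)=(0,0)$ and $(1,1)$. Three points cannot be split into two pieces that each contain an adjacent pair.

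The fix is to perturb the other side, which is what the paper does. Specialize only the variables attached to the $(i,r)$ string, and leave free the variables $x_{j,b}$ destined for $(j,s)$; equivalently, split the $(j,s)$ string into singletons with independent base points. The wheel condition, applied with the intact $(i,r)$ string, shows that this partial specialization vanishes on each hyperplane $x_{j,b}-w_{i,r}-p\hbar+\theta_e-\tfrac12\hbar=0$ for $0\le p<A_i(r)-1$. These linear forms are pairwise distinct, since they involve different variables $x_{j,b}$ or different constant shifts, so their product divides. Now specialize $x_{j,b}\mapsto w_{j,s}+q\hbar$ for $0\le q<A_j(s)$. The product becomes exactly $(w_{j,s}-w_{i,r}+\theta_e-\tfrac12\hbar)^{\underline{A_i(r)-1}\cdot\overline{A_j(s)}}$ with the correct multiplicities, and the remaining specializations preserve divisibility. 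No multiplicity bookkeeping is needed. For (ii), reverse the roles: specialize the $(j,s)$ string and keep the $x_{i,b}$ free.
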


\begin{proof}
We will only prove part (i), as the proof of (ii) is similar.  Let us fix an arrow  $e: i \rightarrow j$ and indices $1 \leq r \leq \bv_i$ and $1 \leq s\leq \bv_j$ such that $(i,r) \neq (j,s)$.   When we evaluate $f|_A$, recall that the variables $x_{i,a} $  for $1 \leq a \leq k_i$ are specialized at the following values:
$$
\bigcup_{r \in [\bv_i]} \left\{ w_{i,r}, w_{i,r}+ \hbar, \ldots, w_{i,r} + ( A_i(r) -1) \hbar \right\}
$$
Since $f$ is symmetric in these variables, we are free to choose the precise way in which this evaluation is performed, i.e.~how each variable is actually evaluated within this set.  Let us fix such a choice.  Then for any index $0 \leq p < A_i(r)$  there is a unique corresponding variable $x_{i,a}$ which will be evaluated at $w_{i,r}+p\hbar$.   
As an intermediate step towards computing $f|_A$, we will evaluate only those variables $x_{i,a}$ corresponding to $w_{i,r} + p \hbar$ for $ 0 \leq p < A_i(r)$, leaving all other variables $x_{k,c}$ unchanged. Denote the result of this partial evaluation by $f|_{A_i(r)}$.

For each $0\leq q < A_j(s)$, there is similarly a unique corresponding $x_{j,b}$ which will be evaluated at $w_{j,s} + q \hbar$.   For each such $x_{j,b}$, by the wheel condition (1) we see that $f|_{A_i(r)}$ vanishes when we specialize at 
\begin{equation}
\label{eq: divisible 1}
x_{j,b} - (w_{i,r}+ p \hbar) + \theta_e - \frac{1}{2} \hbar = 0,
\end{equation}
for any $0 \leq p < A_i(r) - 1$. Indeed, in $f|_{A_i(r)}$ we have evaluated $x_{i,a} = w_{i,r}+ p \hbar$ for some index $a$, and also $x_{i,c} = w_{i,r} + (p+1)\hbar$ for some index $c \neq a$, and in particular we have specialized $x_{i,c} = x_{i,a} + \hbar$.  (Note that if $i= j$ then $a\neq b \neq c$ by our assumption that $(i,r) \neq (j,s)$.) It follows that $f|_{A_i(r)}$ is divisible by
$$
\prod_{0 \leq p < A_i(r)-1} (x_{j,b} - (w_{i,r}+p\hbar) + \theta_e - \tfrac{1}{2}\hbar) = (x_{j,b} - w_{i,r} + \theta_e - \tfrac{1}{2} \hbar)^{\underline{A_i(r)-1}}
$$
Taking the product over all $b$, we conclude that $f|_{A_i(r)}$ is divisible by
\begin{equation}
\label{eq: divisible 2}
\prod_b (x_{j,b} - w_{i,r} + \nu_e - \frac{1}{2}\hbar)^{\underline{A_i(r)-1}}
\end{equation}
Now, when we further specialize the variables $x_{j,b}$ to $w_{j,s}+ q \hbar $ for $ 0 \leq q < A_j(s)$, we conclude that the result is divisible by $(w_{j,s} - w_{i,r} + \nu_e - \frac{1}{2}\hbar)^{\underline{A_i(r) - 1} \cdot \overline{A_j(s)}}$.  As $f|_A$ is the result of further specialization, it inherits this divisibility property.
\end{proof}

\begin{proof}[Proof of Theorem \ref{thm:shufflealgcomparison}]
    As noted before Definition \ref{def:3-variable-wheel-conditions}, the 3-variable wheel conditions are special cases of the refined wheel conditions. Thus $\shu \subseteq \shu'$.

    The previous lemma shows that the 3-variable wheel conditions imply almost all of the refined wheel conditions.  The only case which is not covered is where $(i,r) = (j,s)$, which  corresponds to the refined wheel condition from part (ii) of Definition \ref{def:refined-wheel-conditions}, and thus divisibility by $(\theta_e-\tfrac{1}{2}\hbar)^{\underline{A_i(r)}\cdot\overline{A_i(r)-1}}$.  This divisibility  becomes irrelevant  if we localize at the multiplicative set generated by all $\theta_e-\tfrac{1}{2}\hbar+n\hbar$, which proves part (b).     On the other hand, if $\quiver$ does not contain any loops then there are no such pairs $(i,r) = (j,s)$ to work with, since the refined wheel conditions require an edge $e:i\rightarrow j$.  This proves (a).    
\end{proof}

\subsection{Filtrations and FMOs}

We can define a filtration on graded pieces of the big shuffle algebra $\preshu$ indexed by $\Partitions^I$ as follows. 
\begin{Definition}
    For $\bla \in \Partitions^\vertexset$, we define:
    $$
        \preshu^{\leq \bla} = \left\{ f \in \preshu_{|\bla|} : f|_{\bmu} = 0 \text{ for all } \bmu \in \Partitions^\vertexset \text{ with } |\bmu| = |\bla|, \text{ unless } \bmu \leq \bla \right\}
    $$
\end{Definition}

\begin{Remark}
    \label{rem:filtrationmulticompositions}
    Since $f$ is symmetric, this definition entails that $f|_A = 0$ for all multicompositions $A$ of $|\bla|$, unless $\bmu \leq \bla$ where $\bmu$ is the multipartition corresponding to $A$.
\end{Remark}

Similar filtrations on shuffle algebras have appeared previously, from various viewpoints, for example \cite[Definition 1.7]{FHHSY} and \cite[\S 3.3]{Negut2}.  Our present motivation is precisely to pull-back the filtrations  $\qzast(\bv)^{\leq \bla}$ on quantized zastava spaces  from Section \ref{sec: filtration}, under FFT homomorphisms:

Note that $\preshu^{\leq \bla} \subseteq \preshu_{|\bla|}$ by construction, and also that $\preshu_{\bk} = \preshu^{\leq \bk}$ since $\bk$ interpreted as a one-row multipartition is maximal for the dominance order. 
For each $\bv \in \NN^I$, it is clear that this filtration is compatible under $\Phi^+_\bv$ with the filtration on $\GKLO{\bv}^{S_{\bv}}$.
Moreover, because the $\Phi_\bv^+$ is injective in degrees less than or equal to $\bv$, the filtration on partitions of size less than or equal to $\bv$ is pulled back from the filtration on $\GKLO{\bv}^{S_{\bv}}$. Allowing $\bv$ to grow, we obtain the following.

\begin{Proposition}
    Equipped with the above filtration, $\preshu$ and $\shu$ are filtered graded rings.
\end{Proposition}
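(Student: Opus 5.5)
The plan is to check the two axioms of a filtered graded ring, as in Proposition \ref{prop: filtration defined}. First, $1 \in \preshu^{\leq \mathbf 0}$ holds trivially. Second, for every $\bk$ we need $\preshu_\bk = \bigcup_{\bla \vdash \bk} \preshu^{\leq \bla}$, which is immediate from $\preshu_\bk = \preshu^{\leq \bk}$ with $\bk$ viewed as a one-row multipartition. The remaining axiom is multiplicativity:
\[ \preshu^{\leq \bla} \ast \preshu^{\leq \bmu} \subseteq \preshu^{\leq \bla + \bmu}. \]
For $\shu$ we intersect with the induced filtration $\shu^{\leq \bla} = \shu \cap \preshu^{\leq \bla}$. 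Since $\shu$ is already a graded subring, multiplicativity for $\shu$ then follows from that for $\preshu$.

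To prove multiplicativity, take $f \in \preshu^{\leq \bla}$ and $g \in \preshu^{\leq \bmu}$. Let $\bnu$ be any multipartition with $|\bnu| = |\bla| + |\bmu|$ and $\bnu \not\leq \bla + \bmu$. Choose $\bv$ large enough that $\bnu$ is supported on $[\bv]$. By Theorem \ref{thm:specializing-variables-and-shuffle-product},
\[ (f \ast g)|_{\bnu} = \sum_{A + B = \bnu} (\text{coefficient}) \cdot f|_A \, g|_{\bnu : A}. \]
By Remark \ref{rem:filtrationmulticompositions}, $f|_A = 0$ unless $\Partition(A) \leq \bla$. It then suffices to show that $g|_{\bnu : A}$ vanishes whenever $\Partition(B) \not\leq \bmu$. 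Granting this, every surviving term has $\Partition(A) \leq \bla$ and $\Partition(B) \leq \bmu$. The combinatorial fact that sorting a sum of compositions is dominated by the sum of the sortings gives $\Partition(A+B) \leq \Partition(A) + \Partition(B)$, hence $\bnu \leq \bla + \bmu$, a contradiction. So every term vanishes.

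The main obstacle is the shifted specialization $g|_{\bnu : A}$. It evaluates $g$ at the arithmetic progressions $w_{i,r} + A_i(r)\hbar, \ldots, w_{i,r} + (C_i(r)-1)\hbar$, which is exactly $g|_B$ after the substitution $w_{i,r} \mapsto w_{i,r} + A_i(r)\hbar$. Since $g|_B$ is a polynomial identity in the $w$'s and this substitution is a ring endomorphism, vanishing of $g|_B$ implies vanishing of $g|_{\bnu:A}$. Care is needed because the statement warns that $g|_{C:A} \neq g|_B$. The point is that they differ only by this translation of variables, so vanishing transfers.

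Alternatively, one could argue via $\Phi^+_\bv$. By the compatibility noted just before the statement, $\Phi^+_\bv$ is multiplicative. It carries $\preshu^{\leq \bla}$ into $(\GKLO{\bv}^{S_\bv})^{\leq \bla}$, which is multiplicatively filtered because the $\sfu$-monomials multiply by adding multisets. Injectivity of $\Phi^+_\bv$ on degrees $\leq \bv$, together with the pullback statement, gives the result after taking $\bv$ large. I would present the direct argument and mention this one as a check.
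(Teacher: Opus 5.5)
Your proof is correct, but your main argument takes a different route from the paper. The paper argues as in your closing paragraph. $\Phi^+_\bv$ is a ring homomorphism carrying $\preshu^{\leq\bla}$ into $(\GKLO{\bv}^{S_\bv})^{\leq \bla}$. It is injective in degrees $\leq \bv$, and the $\sfu^A$-coefficient of $\Phi^+_\bv(f)$ is $f|_A$ times a nonzero rational factor. So for $|\bla|\leq\bv$ the filtration on $\preshu$ is pulled back from the GKLO one, and letting $\bv$ grow gives the result.

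Your direct argument instead stays inside $\preshu$. It uses three ingredients:
\begin{itemize}
\item Theorem \ref{thm:specializing-variables-and-shuffle-product};
\item the observation that $g|_{C:A}$ is the image of $g|_B$ under the substitution $w_{i,r}\mapsto w_{i,r}+A_i(r)\hbar$, so vanishing transfers;
\item subadditivity $\Partition(A+B)\leq \Partition(A)+\Partition(B)$, which holds because the sum of the $k$ largest entries of $a+b$ is at most the sum of the $k$ largest entries of $a$ plus that of $b$.
\end{itemize}

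Your version is self-contained. It never needs the FFT homomorphism, its injectivity in degrees $\leq\bv$, or the passage $\bv\to\infty$. It also makes explicit a combinatorial fact the paper's route uses implicitly: multiplicativity of the GKLO filtration rests on the same subadditivity. The paper's route is shorter given the FFT machinery, and it ties the filtration on $\preshu$ directly to the one on $\qzast(\bv)$, which is how the filtration is used later.

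Underneath, the two are the same computation seen on either side of $\Phi^+_\bv$. The $\sfu^C$-coefficient of a product in $\GKLO{\bv}$ is $\sum_{A+B=C} f_A\,\tau_A(g_B)$, where $\tau_A$ shifts $w_{i,r}$ by $A_i(r)\hbar$. That $\tau_A$ is exactly your translation relating $g|_{C:A}$ to $g|_B$.
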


The following theorem shows that the lowest steps of this filtration exactly correspond to FMOs. 
\begin{Theorem}
\label{thm:lowest-part-of-filtration-is-fmo}
Let $\bm \in \NN^\vertexset$. Then the filtered piece $\preshu^{\leq (1^{\bm})}$ consists precisely of elements $f \in \preshu_\bm$ of the form 
\begin{equation}
  \label{eq:15}
  f = \tilde{f} \cdot \prod_{i \in \vertexset} \prod_{a,b \in [\sm_i], a \neq b} (x_{i,a} - x_{i,b} - \hbar)  
\end{equation}
for some $\tilde{f} \in \preshu_{\bm}$. All elements of the form \eqref{eq:15} lie in $\cS_{\bm}$, i.e. $\preshu^{\leq (1^{\bm})} = \cS^{\leq (1^{\bm})}$.

Finally, let $\bv \in \NN^\vertexset$ and $f \in \shu^{\leq (1^\bm)}$ be as above.  If $\bv \not\geq \bm$, then $\Phi^+_\bv(f) = 0$.  Else if $\bv \geq \bm$, observe that we may identify $\preshu_{\bm}$ with $\GT^{(1^\bm)}(\bv)_{\res}$.  Using this identification, we have:
\begin{equation}
  \label{eq:16}
  \Phi^+_{\bv}\left( f\right)  = \frac{1}{\bm!} \cdot \M_{\bm}(\tilde{f}) 
  =  \frac{1}{\bm!} \cdot \M_{\bm}\left(\frac{f}{\prod_{i \in \vertexset} \prod_{a,b \in [\sm_i], a \neq b} (w_{i,a} - w_{i,b} - \hbar)  }\right) \in \qzast(\bv)
\end{equation}
\end{Theorem}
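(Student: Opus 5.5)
The plan has three steps. First, characterize $\preshu^{\leq (1^{\bm})}$ through vanishing at specializations. Second, check the wheel conditions for elements of the form \eqref{eq:15}. Third, compute $\Phi^+_\bv$ directly from \eqref{eq:FFT-homomorphism} and compare with \eqref{eq: FMOs under GKLO}.

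\emph{Step 1: the filtered piece.} Since $(1^\bm)$ is the minimal multipartition of size $\bm$, the condition $f\in \preshu^{\leq (1^\bm)}$ says that $f|_\bmu=0$ for every $\bmu\vdash \bm$ with some part $\geq 2$. By Remark \ref{rem:filtrationmulticompositions} this is equivalent to $f|_A=0$ whenever some $A_i(r)\geq 2$. Such specializations, taken over all choices of generic $w$'s, sweep out exactly the locus where $x_{i,a}=x_{i,b}+\hbar$ for some $a\neq b$; indeed, any such coincidence extends to a specialization with $A_i(r)=2$ by choosing the remaining variables generically. So $f$ vanishes on each hyperplane $x_{i,a}-x_{i,b}-\hbar=0$ with $a\neq b$. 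These hyperplanes are pairwise distinct and irreducible in the polynomial ring over $\base$. Hence $f$ is divisible by their product $D=\prod_{i}\prod_{a\neq b}(x_{i,a}-x_{i,b}-\hbar)$. Since $D$ is itself $S_\bm$-symmetric, the quotient $\tilde f=f/D$ is symmetric. Conversely, $D|_A=0$ whenever some $A_i(r)\geq 2$, because the arithmetic progression then contains two consecutive values. This gives the first claim.

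\emph{Step 2: membership in $\shu$.} Let $f=\tilde f D$ and let $A$ be any multiset with $|A|=\bm$. If some $A_i(r)\geq 2$, then $f|_A=0$ and every divisibility condition holds trivially. If all $A_i(r)\leq 1$, then each required Pochhammer factor in Definition \ref{def:refined-wheel-conditions} has exponent involving $\underline{0}$ or $\overline{0}$, or it is the vacuous case with exponent $-1$. So every factor equals $1$ and the conditions are vacuous. I will check the bookkeeping in the cases where $A_i(r)=1$ and $A_j(s)\in\{0,1\}$.

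\emph{Step 3: the image under $\Phi^+_\bv$.} In \eqref{eq:FFT-homomorphism}, only multisets $A$ with all multiplicities $\leq 1$ survive, that is, $A=\Gamma$ is a tuple of subsets $\Gamma_i\subseteq[\sv_i]$ with $\#\Gamma_i=\sm_i$. If $\bv\not\geq\bm$, no such $\Gamma$ exists, so $\Phi^+_\bv(f)=0$.

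Otherwise $A!=1$, and the factors of \eqref{eq:FFT-homomorphism} simplify as follows.
\begin{itemize}
\item The vertex factor reduces to $\prod_{r\in\Gamma_i,\,s\neq r}(w_{i,r}-w_{i,s}-A_i(s)\hbar)$. For $s\notin\Gamma_i$ this gives $(w_{i,r}-w_{i,s})$. For $s\in\Gamma_i$ it gives $(w_{i,r}-w_{i,s}-\hbar)$, and the product of these exactly cancels $D|_\Gamma$.
\item The edge factor has exponent $\overline{A_{t}(s)-1}$, so it is nontrivial only when $A_{s(e)}(r)=1$ and $A_{t(e)}(s)=0$. It equals $(w_{t,s}-w_{s,r}+\theta_e-\tfrac12\hbar)$ after unwinding the conventions of Appendix \ref{appendix-sec:rising-and-falling-notation-for-pochhammer-symbols}, where an exponent of $-1$ produces a reciprocal.
\end{itemize}

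The result is a sum over $\Gamma$ of $\tilde f|_\Gamma$ times the FMO numerator over the FMO denominator, times $\sfu_\Gamma$. This is \eqref{eq: FMOs under GKLO} for $\M_\bm(\tilde f)$, with the identification $\preshu_\bm\cong\GT^{(1^\bm)}_\res(\bv)$ given by $x_{i,a}\mapsto w_{i,a}$.

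The factor $\frac1{\bm!}$ reflects a difference in normalization: $f|_\Gamma$ is an evaluation of a symmetric function, whereas $\tilde f|_\Gamma=\sigma(\tilde f)$ sums over orderings. I will reconcile it by tracking exactly how \eqref{eq:12} assigns variables, and adjust the constant if needed.

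The main obstacle is this last matching. It requires getting the sign conventions of the negative-exponent Pochhammer symbols right, and making sure that the edge numerator $(w_t-w_s+\theta_e-\tfrac12\hbar)$ arises, rather than the $+\tfrac12\hbar$ version or its inverse. The divisibility argument in Step 1 is routine by comparison.
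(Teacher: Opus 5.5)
Your three steps are the paper's proof:
\begin{itemize}
\item divisibility by $D=\prod_{i}\prod_{a\neq b}(x_{i,a}-x_{i,b}-\hbar)$, from the vanishing of $f$ at specializations with a single part equal to $2$;
\item vacuity of the refined wheel conditions at $(1^{\bm})$;
\item the observation that only honest subsets $\Gamma$ survive in \eqref{eq:FFT-homomorphism}.
\end{itemize}
The one thing to correct is your treatment of the constant. Your rationale for $\frac{1}{\bm!}$ is wrong. In \eqref{eq: FMOs under GKLO}, $\tilde f|_\Gamma=\sigma(\tilde f)$ is a single substitution, independent of $\sigma$ because the dressing is $S_{\bm}$-invariant. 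Under $x_{i,a}\mapsto w_{i,a}$ it equals the shuffle specialization at $\Gamma$, so nothing is summed over orderings. Your Step 3 bookkeeping is correct as it stands: $A!=1$ for subsets, $D|_\Gamma$ cancels exactly the vertex factors with $r\neq s\in\Gamma_i$, and $x^{\underline{1}\cdot\overline{-1}}=1/(x-\hbar)$ turns the edge factors into the FMO numerator. Carried through, it gives $\Phi^+_\bv(f)=\M_{\bm}(\tilde f)$ with coefficient $1$.

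This is not an artifact of your bookkeeping. Take one vertex, no arrows, and $\bm=\bv=2$.
\begin{itemize}
\item In $\preshu$ one has $D=\tfrac{\hbar^2}{2}(1\ast 1)+x\ast x-\tfrac12(1\ast x^2+x^2\ast 1)$.
\item The degree-one images are $\Phi^+_\bv(x^k)=\M_1(w_1^k)$; this is the case $\bm!=1$ of \eqref{eq:16}.
\item Multiplying these in $\GKLO{\bv}$ gives $\Phi^+_\bv(D)=\sfu_1\sfu_2$.
\item By \eqref{eq: FMOs under GKLO}, $\sfu_1\sfu_2$ is $\M_2(1)$, not $\tfrac12\M_2(1)$.
\end{itemize}
So with the conventions as written, the factor $\frac{1}{\bm!}$ in \eqref{eq:16} does not hold. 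The paper's proof asserts it at this step without computing it, and the same factor carries over into the generators \eqref{eq:intshugens}. Instead of ``adjusting the constant if needed'', state what your computation actually proves, namely $\Phi^+_\bv(\tilde f D)=\M_{\bm}(\tilde f)$.
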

The elements \eqref{eq:15} and the key formula \eqref{eq:16} have appeared previously in \cite{FT2,Tsymbaliuk2,JN26}.

\begin{proof}
    First, consider an element $f \in \preshu_\bm$  of the form \eqref{eq:15}.  For any multipartition $\bla$ of $\bm$ with $\bla > (1^\bm)$ or equivalently $\bla \neq (1^\bm)$, it is clear that $f|_\bla = 0$. Thus $f \in \preshu^{\leq (1^\bm)}$, and trivially satisfies the refined wheel conditions for $\bla\neq (1^\bm)$.  Since the refined wheel conditions for $\bla =(1^\bm)$ are vacuous, it also follows  that $f \in \shu_\bm$.  In particular $f \in \shu \cap \preshu^{\leq (1^\bm)} = \shu^{\leq (1^\bm)}$.

    Conversely, let $f \in \preshu_\bm$ and fix $i \in \vertexset$. Consider the multipartition $\bla = (\bla_j)_{j \in \vertexset}$ defined by $\bla_j = (1^{\sm_j})$ for $j \neq i$ and $\bla_i = (1^{\sm_i-1} 2)$.  Then $f|_\bla$ is defined by evaluating the variables $\{x_{j,a}\}_{a \in [\sm_j]}$ to $\{w_{j,r}\}_{r \in [\sm_j]}$ for $j \neq i$, while the variables $\{x_{i,a}\}_{a \in [\sm_i]}$ are evaluated to 
    $$
    \{ w_{i,1}, w_{i,1}+\hbar\} \cup \{w_{i,2},\ldots, w_{i,\sm_i-1}\}
    $$
    If $f \in \preshu^{\leq (1^\bm)}$, then $f|_\bla = 0$.  It follows from the symmetry of $f$ that it is divisible by 
    $$
    \prod_{a, b \in [\sm_i], a\neq b} (x_{i,a} - x_{i,b} - \hbar)
    $$
    This holds for all $i\in\vertexset$, and thus $f$ has the form \eqref{eq:15} as claimed. This implies by the previous argument that  $f \in \shu^{\leq (1^\bm)}$, and thus that $\preshu^{\leq (1^\bm} = \shu^{\leq (1^\bm)}$.

    Finally, take $f \in \shu^{\leq (1^\bm)}$. We have shown that $f$ has the form \eqref{eq:15}.  Since $f|_{\bla} = 0$ for all $\bla \neq (1^\bm)$, the only multicompositions $A$ which can contribute to $\Phi^+_\bv(f)$ correspond to the multipartition $\bla = (1^\bm)$, i.e.~where the multiplicities $A_i(r) \leq 1$ for all $(i,r)$. That is, in the language of multisets, $A$ is an honest \emph{subset} of $[\bv]$. If $\bv \not\geq \bm$ there are no such subsets $A$ of size $\bm$, so $\Phi^+_\bv(f) = 0$.  So suppose that $\bv \geq \bm$. Then, $\Phi^+_{\bv}(f)$ is a sum over subsets of $[\bv]$ of size $\bm$ which exactly recovers $ \frac{1}{\bm!} \cdot \M_{\bm}(\tilde{f})$ via formula \eqref{eq: FMOs under GKLO}.

\end{proof}

\subsection{Regularity}

\begin{Theorem}
  \label{thm:regularity-of-phi-bv}
  Fix $\bv \in \NN^I$. The FFT homomorphism $\Phi^+_\bv: \preshu \longrightarrow \GKLO{\bv}^{S_{\bv}}$ restricts to a surjective map of filtered graded rings
  \begin{equation}
    \label{eq:11}
   \Phi^+_\bv: \cS \twoheadlongrightarrow \qzastplus(\bv).
  \end{equation}
  Moreover, for all $\bv', \bv \in \NN^I$ with $\bv' \leq \bv$, the following diagram commutes: 
\begin{center}
\begin{tikzcd}%[row sep=tiny]
& \qzastplus(\bv) \arrow[dd,"{\Phi^+_{\bv,\bv'}}"] \\
\cS \arrow[ur,"\Phi^+_{\bv}"] \arrow[dr,"\Phi^+_{\bv'}"'] & \\
& \qzastplus(\bv')
\end{tikzcd}
\end{center}
In particular, we obtain a natural map to the limit:
\begin{equation}
  \label{eq:30}
  \Phi^+ : \cS \longrightarrow \qzastplus 
\end{equation}
\end{Theorem}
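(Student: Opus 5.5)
The proof has three parts: show that $\Phi^+_\bv(\shu) \subseteq \qzastplus(\bv)$, show that the map onto $\qzastplus(\bv)$ is surjective, and show that the triangle commutes. The first part is the main one. For $f \in \shu_\bk$, write $\Phi^+_\bv(f) = \sum_A c_A \sfu^A$ using formula \eqref{eq:FFT-homomorphism}. We need to show that this element of $\GKLO{\bv}^{S_\bv}$ actually lies in $\qzast(\bv)$, and in fact in $\qzastplus(\bv)$.

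My approach is a downward induction on the filtration $\preshu^{\leq \bla}$. Fix $\bla \vdash \bk$ maximal (in dominance order) among the multipartitions with $f|_\bla \neq 0$. The leading coefficient $c_\bla$ has denominator
\[
\prod_{i}\prod_{r\neq s}(w_{i,r}-w_{i,s}-\bla_i(s)\hbar)^{\overline{\bla_i(r)}} \cdot \prod_e (\cdots + \tfrac12\hbar)^{\underline{\bla_{s(e)}(r)}\cdot\overline{\bla_{t(e)}(s)-1}}.
\]
I will compare this with the leading coefficient \eqref{eq:20} of a product of FMOs $\M_\bla(g)$ from Corollary \ref{cor: filtration generated by FMOs}. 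The key computation is that the refined wheel conditions of Definition \ref{def:refined-wheel-conditions} are exactly what is needed to cancel the edge factors:
\begin{itemize}
\item the $(+\tfrac12\hbar)$ denominators are cancelled by condition (i) when $A_i(r)\leq A_j(s)$;
\item the $(-\tfrac12\hbar)$ falling-factorial numerators of \eqref{eq:20} are produced by condition (i) when $A_i(r)\geq A_j(s)$;
\item edge loops are handled by condition (ii).
\end{itemize}
The vertex denominators $(w_{i,r}-w_{i,s}+n\hbar)$ are not cancelled. Instead they are matched with those in \eqref{eq:20}, using $S_\bv$-symmetry and the fact that $f|_\bla$ is symmetric under $S_{\bla,\res}$.

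The upshot is that $c_\bla$ equals the $\sfu^\bla$-coefficient of $\M_\bla(g)$ for some polynomial $g \in \GT^\bla_\res(\bv)$, up to the factor $\frac{1}{\bla!}$ (which is why we need $\QQ\subseteq\kk$). The element $\M_\bla(g)$ lies in $\qzastplus(\bv)$ because $g$ is restricted dressing, by Theorem \ref{thm:grpositivepart}'s surjectivity argument. It then remains to show that $\M_\bla(g)$ is itself $\Phi^+_\bv$ of an element of $\shu^{\leq\bla}$. I will do this by writing it as a shuffle product of elements of $\shu^{\leq(1^{\bm})}$ using Theorem \ref{thm:lowest-part-of-filtration-is-fmo}; that theorem shows each FMO with restricted dressing is $\bm!\,\Phi^+_\bv$ of such an element. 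Subtracting the corresponding element of $\shu$ from $f$ lowers the leading multipartition, and induction on the finite poset $\{\bla\vdash\bk\}$ concludes. This argument also proves surjectivity, since by Theorem \ref{thm:lowest-part-of-filtration-is-fmo} all generators $\M_\bm(\tilde f)$ of $\qzastplus(\bv)$ lie in the image, and $\Phi^+_\bv$ is a ring homomorphism.

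The delicate point is the matching of leading coefficients. The subtraction must be done inside $\shu$ rather than merely in $\GKLO{\bv}$, and the injectivity of \eqref{eq:53} requires $\bk\le\bv$. I would therefore first prove the inclusion for $\bv\ge\bk$, then deduce it for general $\bv$ from the compatibility below, using that $\Phi_{\bv,\bv'}$ is surjective and maps $\qzastplus$ to $\qzastplus$. Alternatively, for general $\bv$ the divisibility argument above directly shows that the coefficients lie in the lattice spanned by the leading terms.

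For commutativity of the triangle, both $\Phi^+_{\bv,\bv'}\circ\Phi^+_\bv$ and $\Phi^+_{\bv'}$ are ring homomorphisms out of $\shu$. On the elements of $\shu^{\leq(1^\bm)}$ they agree: by \eqref{eq:16} together with Theorem \ref{thm: defect and embeddings}(b), both send $f$ to $\frac1{\bm!}\M_\bm(\tilde f)$ or to $0$. To extend this to all of $\shu$ without knowing that $\shu$ is generated by these elements, I would argue directly in GKLO: the map $\varphi$ of Lemma \ref{lem:rationalembedding}, followed by setting $w_{i,r}=0$ for $r>\sv'_i$, kills every $\sfu^A$ with $A$ supported outside $[\bv']$. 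For $A$ supported on $[\bv']$, the extra factors from $\varphi$ combine with the $s>\sv'$ parts of the denominators in \eqref{eq:FFT-homomorphism} to give exactly the $\bv'$ formula. This is a direct comparison of products. Finally, compatibility with the inverse system gives the map to the limit $\Phi^+$.
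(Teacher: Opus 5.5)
Your overall architecture matches the paper's: induct on the support, show that for a maximal $\bla$ the $\sfu^\bla$-coefficient of $\Phi^+_\bv(f)$ is $\frac{1}{\bla!}$ times \eqref{eq:20} with a restricted dressing, then subtract a product of elements of $\shu^{\leq (1^{\bc_j})}$ coming from Theorem \ref{thm:lowest-part-of-filtration-is-fmo}. However, your treatment of the vertex factors is wrong, and that step is the core of the argument.

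Count the linear factors. Take a vertex $i$ and $r<s$ with $m=\la_{i,r}\geq n=\la_{i,s}$. Formula \eqref{eq:FFT-homomorphism} has the $m+n$ denominators $(w_{i,r}-w_{i,s}-n\hbar)^{\overline{m}}(w_{i,s}-w_{i,r}-m\hbar)^{\overline{n}}$. Formula \eqref{eq:20} has only the $m-n$ factors $(w_{i,r}-w_{i,s})^{\overline{m-n}}$. So the $2n$ extra factors $w_{i,r}-w_{i,s}+p\hbar$ with $p\in[m-n+1,m]\cup[-n,-1]$ (Lemma \ref{lem:pochammer-n-m-p-identity}) must \emph{divide} $f|_\bla$. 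They cannot be matched away: symmetry under $S_\bv$ or $S_{\bla,\res}$ says nothing about them, and the refined wheel conditions only concern arrows. Already for one vertex, no arrows, $\bk=\bv=2$ and $\bla=(1,1)$ maximal, you need $(w_1-w_2-\hbar)(w_2-w_1-\hbar)\mid f(w_1,w_2)$. This holds only because $f|_{(2)}=f(w,w+\hbar)=0$.

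The missing ingredient is that maximality of $\bla$ forces exactly this divisibility (Lemma \ref{lem:sl-2-divisibility}). Let $\alpha=\eps_{i,r}-\eps_{i,s}$. The weights $\bla+q\alpha$ for $1\leq q\leq n$, and $\bla-q\alpha$ for $m-n+1\leq q\leq m$, have multipartitions strictly dominating $\bla$, so $f$ vanishes there. In $f|_\bla$, replace the top $q$ arguments $w_{i,s}+(n-q)\hbar,\dots,w_{i,s}+(n-1)\hbar$ by $u,\dots,u+(q-1)\hbar$. This gives a polynomial in $u$ that vanishes at $u=w_{i,r}+m\hbar$. Specializing $u=w_{i,s}+(n-q)\hbar$ then gives $w_{i,r}-w_{i,s}+(m-n+q)\hbar\mid f|_\bla$. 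The other range is analogous. With this lemma inserted, your argument goes through.

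Your edge bookkeeping is essentially right, with one omission. When $\la_{i,r}>\la_{j,s}\geq 1$, condition (i) together with Lemma \ref{lem:doublePochhdiv}(b) both cancels the $+\tfrac12\hbar$ denominator \emph{and} produces the numerator. When $\la_{j,s}=0$, nothing is needed.

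Your direct GKLO proof of the triangle is a valid alternative to the paper's, which instead carries compatibility through the same induction; it needs $\Phi^+_\bv(f)\in\qzast(\bv)$ to be known first. For $A$ supported on $[\bv']$, the factors introduced by $\varphi$ cancel exactly the $s>\sv'_j$ factors of \eqref{eq:FFT-homomorphism}. Hence $\varphi(\Phi^+_\bv(f))=\Phi^+_{\bv'}(f)\otimes 1$.
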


First, let us introduce the following notation.

\begin{Definition}
Fix $\bv \in \NN^I$ and $\bk \in \NN^I$, and let $f \in \preshu_{\bk}$. We define 
  \begin{equation}
    \label{eq:48}
    \supp{\Phi^+_{\bv}\left(f\right)} = \left\{ \bla \in \cP^\vertexset_{\leq \bv} \suchthat |\bla| = \bk \textand f\vert_{\bla} \neq 0   \right\}
  \end{equation}
  and:
  \begin{equation}
    \label{eq:49}
   \overline{\supp{\Phi^+_{\bv}\left(f\right)}} = \left\{ \bmu \in \cP^\vertexset_{\leq \bv} \suchthat \bmu \leq \bla \text{ for some } \bla \in \supp{\Phi^+_{\bv}\left(f\right)} \right\}
  \end{equation}
\end{Definition}

We begin with the following technical result:
\begin{Lemma}
  \label{lem:sl-2-divisibility}
  Fix $\bv \in \NN^I$ and $\bk \in \NN^I$, and let $f \in \preshu_{\bk}$. If $\bla$ is maximal in $\overline{\supp(\Phi^+_\bv(f)})$, then $f\vert_{\bla}$ is divisible by the linear polynomials
  \begin{equation}
    \label{eq:18}
    w_{i,r} - w_{i,s} + p \hbar
  \end{equation}
  for all $i \in I$, $r,s \in [\bv_i]$ with $r < s$ and all
  \begin{equation}
    \label{eq:19}
    p \in [\bla_i(r)-\bla_i(s) + 1,\bla_i(r)] \cup [-\bla_i(s), -1]
  \end{equation}
\end{Lemma}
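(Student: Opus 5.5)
The idea is to show that on each hyperplane $w_{i,s} = w_{i,r} - p\hbar$ (that is, $w_{i,s} = w_{i,r} + p'\hbar$ with $p' = -p$), the polynomial $f|_\bla$ coincides with the restriction of $f|_{A}$ for some multicomposition $A$ whose associated multipartition strictly dominates $\bla$. Maximality of $\bla$ then forces $f|_A = 0$. Hence $f|_\bla$ vanishes on the hyperplane, and since $f|_\bla$ is a polynomial, it is divisible by the corresponding linear form.

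Fix $i$ and $r<s$, and write $a = \bla_i(r) \geq b = \bla_i(s)$; this inequality holds because $\bla$ is a partition. Under $f|_\bla$, the variables attached to $(i,r)$ are set to $w_{i,r}, \ldots, w_{i,r}+(a-1)\hbar$ and those attached to $(i,s)$ to $w_{i,s}, \ldots, w_{i,s}+(b-1)\hbar$. If $b = 0$ both ranges in \eqref{eq:19} are empty, so assume $b \geq 1$.

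Case $p \in [a-b+1, a]$: impose $w_{i,s} = w_{i,r} - p\hbar$; the relevant linear form is $w_{i,r}-w_{i,s}+p\hbar$ up to sign convention, and I will match signs when writing it out. Equivalently, consider the configuration in which one progression of length $b$ starts at offset $p$ relative to the other of length $a$. Since $p \leq a$, the two progressions overlap or are adjacent. Since $p+b > a$, the second extends past the end of the first. As a multiset, the union of values is therefore one progression of length $p+b$ together with one progression of length $a-p$ starting at the same base point. So the restricted specialization equals $f|_A$ (restricted to the hyperplane), where $A$ agrees with $\bla$ except that $A_i(r) = p+b$ and $A_i(s) = a-p$, with the second progression re-based at $w_{i,r}+p\hbar$. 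The new parts satisfy $p+b > a \geq b > a-p \geq 0$ and have the same sum $a+b$. Thus the multipartition $\bmu = \Partition(A)$ is obtained from $\bla$ by moving boxes from a smaller part to a larger one, so $\bmu > \bla$. Also $\ell(\mu_i) \leq \ell(\la_i)$, so $\bmu \in \Partitions^\vertexset_{\leq \bv}$.

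Case $p \in [-b, -1]$: here the $s$-progression starts $|p|$ steps before the $r$-progression and ends at or after $w_{i,r}-\hbar$. The union is then one progression of length $a-p = a+|p|$ together with one of length $b+p \geq 0$, and $a+|p| > a \geq b > b+p$, so again $\bmu > \bla$.

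In both cases $\bmu$ strictly dominates $\bla$. If $f|_\bmu$ were nonzero, then $\bmu$ would lie in $\supp \Phi^+_\bv(f)$, contradicting the maximality of $\bla$ in $\overline{\supp \Phi^+_\bv(f)}$. By symmetry of $f$ (Remark \ref{rem:filtrationmulticompositions}), $f|_A = 0$ identically. Therefore $f|_\bla$ vanishes on the hyperplane, and it is divisible by the linear form \eqref{eq:18}.

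The main obstacle is bookkeeping. One must fix a consistent sign convention for $p$ relative to the hyperplane, and check carefully that the merged multiset of values really is $A$ evaluated at suitably renamed $w$'s. Only after that renaming can the identity $f|_A = 0$, which is an identity of polynomials, be restricted to the hyperplane.
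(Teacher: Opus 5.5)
Your argument is correct and is essentially the paper's proof. The multicompositions $A$ you use are exactly the weights of the $\fsl_2$-string for $\alpha = \eps_{i,r}-\eps_{i,s}$: $\bla + q\alpha$ with $q = p+\bla_i(s)-\bla_i(r)$ in the first range, and $\bla - q\alpha$ with $q = \bla_i(r)-\bla_i(s)-p$ in the second. Your observation that $f|_\bla$ and $f|_A$ agree after substituting $w_{i,s} = w_{i,r}+p\hbar$ is what the paper encodes by interpolating between the two specializations with an auxiliary variable $u$.

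Two small points. In the second range the longer progression must be based at $w_{i,s}$, not at $w_{i,r}$. Also, in your opening sentence $w_{i,r}-w_{i,s}+p\hbar$ vanishes on $w_{i,s} = w_{i,r}+p\hbar$, not on $w_{i,s}=w_{i,r}-p\hbar$; your case analysis already uses the correct sign.
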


\begin{proof}
We may identify multicompositions $A$ supported on $[\bv]$ with weights  of the representation $\bigotimes_{i \in \vertexset} S^{\bk_i}(V_i)$ of $\prod_{i \in \vertexset} \GL(V_i)$. In particular $\bla$ is a weight of this representation. 

Fix $i \in \vertexset$ and $1 \leq r < s \leq \sv_i$, and consider the subalgebra $\fsl_2 \subset \mathfrak{gl}(V_i)$ corresponding to the root $\alpha = \eps_{i,r} - \eps_{i,s}$. Consider  the $\alpha$--string of weights through $\lambda$, and in particular the elements of this string which are weights of $\bigotimes_{i \in \vertexset} S^{\bk_i}(V_i)$  but are not in $\overline{\supp(\Phi^+_\bv(f)})$.  By the representation theory of $\fsl_2$ these are precisely:
\begin{equation}
\label{eq:outside Vla}
\left\{ \lambda + q \alpha : 1 \leq q \leq \lambda_{i,s} \right\} \cup \left\{ \lambda - q \alpha : \lambda_{i,r} -\lambda_{i,s} + 1 \leq q \leq \lambda_{i,s} \right\}
\end{equation}
By the definition of $\overline{\supp(\Phi^+_\bv(f)})$ we  have $f|_{\lambda\pm q \alpha} = 0$ for all  elements $\lambda \pm q \alpha$ of this set. 

Fix $1 \leq q \leq \lambda_{i,s}$.  Introduce an auxiliary variable $u$, and consider the polynomial $g(u)$ defined by evaluating $f$ as follows: we evaluate the variables $\{x_{j,b}\}_{j \neq i, b \in [\bk_j]}$ of $f$ exactly as in the definition of $f|_\bla$, and we evaluate the variables $\{x_{i,a}\}_{a \in [\bk_i]}$ at the set
$$
\left\{w_{i,t},\ldots,w_{i,t} + (\lambda_{i,t}-1)\hbar\right\}_{t \neq s}\cup \left\{ w_{i,s}, \ldots, w_{i,s} + (\lambda_{i,s}-q-1)\hbar\right\} \cup \{u, u+\hbar,\ldots, u+(q-1)\hbar\}
$$
Now observe that  $f|_{\lambda+q\alpha} $ is obtained by evaluating $g(u)$ at $u = w_{i,r}+\lambda_{i,r} \hbar$. Since $f|_{\lambda+q\alpha} =0$, it follows that that $g(u)$ is divisible by $w_{i,r} + \lambda_{i,r} \hbar - u$.  Similarly $f|_\lambda$ comes by evaluating $g(u)$ at $u = w_{i,s} + (\lambda_{i,s}-q)\hbar$. It follows that $f|_\lambda$ is divisible by $w_{i,r} - w_{i,s} + (\lambda_{i,r} - \lambda_{i,s} +q)\hbar$.  
Taken over all $1 \leq q \leq \lambda_{i,s}$, we see that  $f|_\lambda$  is divisible by $w_{i,r}-w_{i,s} + p\hbar$ for all $\lambda_{i,r}-\lambda_{i,s}+1 \leq p \leq \lambda_{i,r}$. 

A similar proof using $f|_{\lambda - q\alpha} = 0$ for $\lambda_{i,r} -\lambda_{i,s}+1 \leq q \leq \lambda_{i,r}$ covers the case  $-\lambda_{i,s} \leq p \leq -1$.
\end{proof}

We will also need the following elementary result:
\begin{Lemma}
  \label{lem:pochammer-n-m-p-identity}
  Let $n,m \in \NN$ with $m \geq n$, and let $x$ and $y$ be indeterminates. Then
  \begin{equation}
    \label{eq:22}
    \frac{(x-y- n\hbar)^{\overline{m}} (y - x - m \hbar)^{\overline{n}}}{(x-y)^{\overline{m-n}}} = (-1)^n \prod_{p \in [m-n+1,m] \cup [-n,-1]} (x-y+p\hbar)
  \end{equation}
  
\end{Lemma}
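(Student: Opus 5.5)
The plan is a direct computation. I substitute $z = x - y$, expand each $\hbar$-Pochhammer symbol as an explicit product of linear factors $z + p\hbar$, and then cancel. I use the convention from Appendix \ref{appendix-sec:rising-and-falling-notation-for-pochhammer-symbols} that $a^{\overline{k}} = \prod_{j=0}^{k-1}(a + j\hbar)$. Since both sides depend on $x,y$ only through $z$, it suffices to prove the identity in terms of $z$.

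First I would rewrite each factor on the left as a product over an interval of integers $p$ of terms $z+p\hbar$.
\begin{itemize}
\item For $(x-y-n\hbar)^{\overline{m}}$, reindexing $p = j - n$ gives
\[
\prod_{j=0}^{m-1}(z + (j-n)\hbar) = \prod_{p=-n}^{m-n-1}(z+p\hbar).
\]
\item For $(y-x-m\hbar)^{\overline{n}}$, I pull a sign out of each of the $n$ factors. Reindexing $p = m-j$ then gives
\[
\prod_{j=0}^{n-1}\big(-z+(j-m)\hbar\big) = (-1)^n\prod_{j=0}^{n-1}\big(z+(m-j)\hbar\big) = (-1)^n \prod_{p=m-n+1}^{m}(z+p\hbar).
\]
\item The denominator is
\[
(x-y)^{\overline{m-n}} = \prod_{p=0}^{m-n-1}(z+p\hbar),
\]
which makes sense because $m \geq n$.
\end{itemize}

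Next I cancel. Since $m-n-1 \geq -1$, the interval $[0,m-n-1]$ is contained in $[-n, m-n-1]$. Dividing the first product by the denominator therefore leaves exactly $\prod_{p=-n}^{-1}(z+p\hbar)$. Multiplying by the second product gives
\[
(-1)^n\prod_{p\in[m-n+1,m]\cup[-n,-1]}(z+p\hbar),
\]
as claimed. The two intervals are disjoint because $m-n+1 \geq 1 > -1$, so no factor is counted twice.

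There is no real obstacle. The only points needing care are the sign $(-1)^n$ from the second factor and the edge cases. When $n=0$ the second product is empty and the identity reads $(z)^{\overline m}/(z)^{\overline m}=1$. When $m=n$ the denominator is empty and the first product already equals $\prod_{p=-n}^{-1}(z+p\hbar)$. Both are consistent with the general computation.
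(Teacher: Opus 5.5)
Your computation is correct. With $z=x-y$ the three Pochhammer symbols expand to $\prod_{p=-n}^{m-n-1}(z+p\hbar)$, $(-1)^n\prod_{p=m-n+1}^{m}(z+p\hbar)$ and $\prod_{p=0}^{m-n-1}(z+p\hbar)$, and cancelling gives exactly the stated right-hand side, with the edge cases $n=0$ and $m=n$ consistent. The paper calls this lemma elementary and gives no proof, so your expand-and-cancel argument is simply the routine verification it leaves to the reader.
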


\begin{proof}[Proof of Theorem \ref{thm:regularity-of-phi-bv}]

  Let $h \in \cS_{\bk}$. We argue by induction on $\overline{\supp}(\Phi^+_\bv(h))$ that $\Phi^+_\bv(h) \in \qzastplus(\bv)$ and that $\Phi^+_{\bv'}(h) = \Phi^+_{\bv,\bv'}(\Phi^+_\bv(h))$. Surjectivity then follows by Theorem \ref{thm:lowest-part-of-filtration-is-fmo}. The base case of the induction is when $\Phi^+_\bv(h)$ is supported on a single one-column partition, which is the content of Theorem \ref{thm:lowest-part-of-filtration-is-fmo}.

Let $\bla$ be maximal in $\overline{\supp(\Phi^+_\bv(h))}$.
By Lemma \ref{lem:sl-2-divisibility}, we can write
\begin{equation}
  \label{eq:23}
  h\vert_\bla = \tilde{h} \cdot \prod_{i \in I} \prod_{r,s \in [\bv_i], r < s}  \prod_{p \in [\bla_i(r)-\bla_i(s) + 1,\bla_i(r)] \cup [-\bla_i(s), -1]}(w_{i,r} - w_{i,s} + p)
\end{equation}
for some $\tilde{h} \in \GT^{\bla}(\bv)_{\res}$.
Applying Lemma \ref{lem:pochammer-n-m-p-identity}, we have:
\begin{equation}
  \label{eq:24}
  h\vert_\bla \cdot \frac{1}{\prod_{i\in I} \prod_{r,s \in [\bv_i], r \neq s}(w_{i,r}-w_{i,s} - \bla_i(s)\hbar)^{\overline{\bla_i(r)}}} = (-1)^{\text{sign}} \tilde{h} \cdot
  \frac{1}{ \prod_{i \in I} \prod_{r,s \in [\bv_i], r < s} (w_{i,r} - w_{i,s} )^{\overline{\bla_i(r) - \bla_i(s)} } } 
\end{equation}

Now we apply the refined wheel conditions. Fix an $e : i \rightarrow j$, and $r \in [\bv_i]$ and $s \in [\bv_j]$. Our goal is to divide $\tilde{h}$ by all the denominators indexed by $e$,$r$, and $s$ appearing in \eqref{eq:FFT-homomorphism} and obtain exactly the numerators indexed by $e$,$r$, and $s$ in \eqref{eq:20}.

Consider the case where $\bla_i(r) \geq 1$ and $\bla_i(s) \geq 1$. The first subcase is when $\bla_i(r) \leq \bla_j(s)$. In this case, there is no numerator factor in \eqref{eq:20}, and the refined wheel condition (ii) tells us that $\tilde{h}$ is divisible by $
(w_{j,s} - w_{i,r} + \theta_e + \frac{1}{2}\hbar)^{\underline{A_i(r)} \cdot \overline{A_j(s) - 1}}$, which is exactly the denominator factor appearing in \eqref{eq:FFT-homomorphism}.

The second subcase is when $\bla_i(r) > \bla_j(s)$. Then the refined wheel condition (i), together with part (b) of Lemma \ref{lem:doublePochhdiv}, tell us that $\tilde{h}$ is divisible by $(w_{j,s} - w_{i,r} + \theta_e + \frac{1}{2}\hbar)^{\underline{A_i(r)} \cdot \overline{A_j(s) - 1}}$ times
\begin{equation}
  \label{eq:25}
  (w_{j,s} - w_{i,r} + \theta_e - \tfrac{1}{2} \hbar)^{\underline{\bla_{i}(r) - \bla_{j}(s)}}
\end{equation}
which is exactly the numerator factor indexed by $e$,$r$, and $s$ in \eqref{eq:20}.

The final case is when $\bla_j(s) = 0$ (the case $\bla_i(r) = 0$ is vacuous). In this case, the refined wheel conditions are vacuous, and the factor indexed by $e$, $r$, and $s$ in \eqref{eq:FFT-homomorphism} appears in the numerator as
\begin{equation}
  \label{eq:26}
(w_{j,s} - w_{i,r} + \theta_e - \tfrac{1}{2} \hbar)^{\underline{\bla_{i}(r) }}  
\end{equation}
which is exactly corresponding numerator factor in \eqref{eq:20}.

Summarizing, we see that by dividing $\tilde{h}$ by all of these factors (and multiplying by $(-1)^{\mathrm{sign}}$), we can find $f \in \GT^{\bla}(\bv)_{\res}$ such that:
\begin{align*}
  \tilde{h} \frac{1}{\prod_{e \in E} \prod_{\substack{r \in [\bv_{s(e)}] \\ s \in [\bv_{t(e)}]}} (w_{t(e),s} - w_{s(e),r} + \theta_e + \frac{1}{2} \hbar)^{\underline{\bla_{s(e)}(r)}\cdot \overline{\bla_{t(e)}(s)-1}}}  = \\ f \cdot  \prod_{e \in E} \prod_{r \in [\bv_{s(e)}],s \in [\bv_{t(e)}], \bla_{s(e)}(r) > \bla_{t(e)}(s)} (w_{t(e),s} - w_{s(e),r} + \theta_e - \tfrac{1}{2} \hbar)^{\underline{\bla_{s(e)}(r) - \bla_{t(e)}(s)}} 
\end{align*}

Finally, we can find $f_1^{(\eta)} \in \GT^{(1^{\bc_1})}(\bv)_{\res}, \ldots, f_\ell^{(\eta)} \in \GT^{(1^{\bc_\ell})}(\bv)_{\res}$, where $\bc_1, \ldots, \bc_\ell$ are the columns of $\bla$ and with $\eta$ lying in a finite set, so that
\begin{equation}
  \label{eq:42}
f = \sum_{\eta} f_1^{(\eta)}(w) f^{(\eta)}_2(w + \hbar \bmu_1) \cdots f_\ell^{(\eta)}(w + \hbar (\bmu_1 + \cdots + \bmu_{\ell-1}) )
\end{equation}
We therefore conclude that $\Phi^+_\bv(h)$ has the same $u^\bla$-coefficent as:
\begin{equation}
  \label{eq:43}
\sum_{\eta} \frac{1}{\bla!} M_{\bm_1}(f^{(\eta)}_1) \cdots M_{\bm_\ell}(f^{(\eta)}_\ell)
\end{equation}
By Theorem \ref{thm:lowest-part-of-filtration-is-fmo}, we can find
$g_1^{(\eta)} \in \cS^{\leq 1^{\bc_1}}, \ldots, g_\ell^{(\eta)} \in \cS^{\leq 1^{\bc_\ell}}$
with $\eta$ varying over the same finite set as before so that setting
\begin{equation}
  \label{eq:44}
  g = \sum_{\eta} g_1^{(\eta)} \cdots g_{\ell}^{(\eta)}
\end{equation}
we have
\begin{equation}
  \label{eq:45}
  \Phi^+_\bv(g) = \sum_{\eta} \frac{1}{\bla!} M_{\bm_1}(f^{(\eta)}_1) \cdots M_{\bm_\ell}(f^{(\eta)}_\ell)
\end{equation}
In particular, by Theorem \ref{thm:lowest-part-of-filtration-is-fmo}, the Theorem holds for $g$. Finally, since $\Phi^+_\bv(h-g)$ has strictly smaller support than $\Phi^+_\bv(h)$, we have our result by induction. 

\end{proof}

\subsection{lsomorphism with $\qzastplus$}

Our goal in this section is to prove that the map $\Phi^+: \shu \rightarrow \qzastplus$ is an isomorphism.
\begin{Proposition}
  \label{prop:shuf-bk-to-qzastplus-bk-isom}
For all $\bk \in \NN^I$, the map:
\begin{equation}
  \label{eq:9}
  \Phi^+_{\bk} : \cS_{\bk} \rightarrow \qzastplus(\bk)_{\bk} 
\end{equation}
 is an isomorphism. 
\end{Proposition}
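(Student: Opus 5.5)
Surjectivity is already known: by Theorem~\ref{thm:regularity-of-phi-bv}, $\Phi^+_\bk : \shu \twoheadrightarrow \qzastplus(\bk)$ is surjective, and it preserves the $\ZZ^\vertexset$-grading. So its restriction $\shu_\bk \to \qzastplus(\bk)_\bk$ is onto. What remains is injectivity.

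For injectivity I would use \eqref{eq:53}. Since $\bk \leq \bk$, the FFT map $\Phi^+_\bk : \preshu_\bk \to \big(\GKLO{\bk}^{S_\bk}\big)_\bk$ is already injective on the whole degree-$\bk$ piece of the big shuffle algebra. Its restriction to the subspace $\shu_\bk \subseteq \preshu_\bk$ is therefore injective as well. This map lands inside $\qzastplus(\bk)_\bk \subseteq \qzast(\bk) \hookrightarrow \GKLO{\bk}^{S_\bk}$, and the GKLO map is an embedding. Hence the corestricted map $\shu_\bk \to \qzastplus(\bk)_\bk$ is injective, and so it is an isomorphism.

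The one step I would actually check is the injectivity claim \eqref{eq:53}, which is stated in the paper but only lightly justified. Its proof goes as follows. In \eqref{eq:FFT-homomorphism}, for any multiset $A$ with $|A|=\bk$ supported on $[\bk]$, the coefficient of $\su^A$ is a nonzero rational factor times $f|_A$. Take $A$ to be the one-column multipartition $(1^\bk)$, i.e.\ $A_i(r)=1$ for all $r \in [\bk_i]$; this is possible exactly because $\bv = \bk$. Then $f|_A$ is obtained from $f$ by the substitution $x_{i,a} \mapsto w_{i,a}$, which is a bijective renaming of variables. So if $\Phi^+_\bk(f) = 0$, this coefficient vanishes, hence $f|_A = 0$, hence $f = 0$. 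The coefficients of distinct monomials $\su^A$ are independent in the localized difference-operator ring, so extracting this single coefficient is legitimate.

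I expect no real obstacle. The only subtlety is to make sure the corestriction is well-defined, namely that the image of $\shu_\bk$ lies in $\qzastplus(\bk)$ rather than merely in $\qzast(\bk)$. This is exactly the content of Theorem~\ref{thm:regularity-of-phi-bv}, which I am taking as given.
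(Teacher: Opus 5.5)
Your proposal is correct and follows the paper's proof. Surjectivity is Theorem~\ref{thm:regularity-of-phi-bv} restricted to the degree-$\bk$ piece. Injectivity is the injectivity \eqref{eq:53} of the composite $\shu_\bk \to \qzastplus(\bk)_\bk \hookrightarrow \big(\GKLO{\bk}^{S_\bk}\big)_\bk$.

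You also verify \eqref{eq:53} by reading off the $\su^{(1^\bk)}$-coefficient, which the paper only asserts. That check is right: $A! = 1$, the edge factors are trivial, and $f|_A$ is the renaming $x_{i,a}\mapsto w_{i,a}$ divided by a unit.
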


\begin{proof}
  By Theorem \ref{thm:regularity-of-phi-bv}, this map is surjective. It is injective because the composed map
  \begin{equation}
    \label{eq:46}
  \Phi^+_{\bk} : \cS_{\bk} \rightarrow \qzastplus(\bk)_{\bk} \hookrightarrow \left(\GKLO{\bk}^{S_\bk}\right)_{\bk}
  \end{equation}
  is injective. 
\end{proof}

Combining Theorem \ref{thm:regularity-of-phi-bv}, part (8) of Theorem \ref{thm:limzastproperties}, and Proposition \ref{prop:shuf-bk-to-qzastplus-bk-isom}, we obtain the following, which is the main theorem of this section.

\begin{Theorem}
\label{thm:isoshuaplus}
The map
$$
  \Phi^+ : \cS \rightarrow \qzastplus 
$$
is an isomorphism of $\ZZ^\vertexset\times \ZZ$-graded $\base$-algebras, and respects filtrations by $\Partitions^\vertexset$.
\end{Theorem}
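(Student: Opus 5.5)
We reduce everything to individual $\ZZ^\vertexset$-homogeneous components. Both $\shu = \bigoplus_\bk \shu_\bk$ and $\qzastplus$ are $\ZZ^\vertexset$-graded. By the remark following Theorem \ref{thm:limzastproperties}, the limit $\qzastplus$ may be computed componentwise in $\bk$, since the $\ZZ$-gradings stabilize. So it suffices to show that for each $\bk \in \NN^\vertexset$ the component map $\Phi^+ : \shu_\bk \to \qzastplus_\bk$ is a bijection. Compatibility with the $\ZZ$-grading and the $\base$-algebra structure is automatic: each $\Phi^+_\bv$ is a graded homomorphism of $\base$-algebras by Theorem \ref{thm:FFT-homomorphism}, and the limit map $\Phi^+$ of Theorem \ref{thm:regularity-of-phi-bv} is induced from these.

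Fix $\bk$. By the construction of $\Phi^+$ as the map to the limit, composing $\Phi^+$ with the projection $\qzastplus \twoheadrightarrow \qzastplus(\bk)$ gives $\Phi^+_\bk$. Restricted to degree $\bk$, this yields a commutative triangle
$$
\shu_\bk \xrightarrow{\ \Phi^+\ } \qzastplus_\bk \xrightarrow{\ \sim\ } \qzastplus(\bk)_\bk .
$$
The second arrow is an isomorphism by part (8) of Theorem \ref{thm:limzastproperties}. The composite is $\Phi^+_\bk$, which is an isomorphism by Proposition \ref{prop:shuf-bk-to-qzastplus-bk-isom}. Hence $\Phi^+$ is an isomorphism on $\shu_\bk$. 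Summing over $\bk$ gives the isomorphism of $\ZZ^\vertexset\times\ZZ$-graded $\base$-algebras.

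For the filtrations, note that $\Phi^+_\bv$ sends $\shu^{\leq \bla}$ into $\qzastplus(\bv)^{\leq\bla}$ for every $\bv$. This holds because the shuffle filtration is defined by vanishing of the specializations $f|_\bmu$, and these specializations are exactly the coefficients of $\sfu^\bmu$ in \eqref{eq:FFT-homomorphism}, up to nonzero factors. The filtration on $\qzastplus(\bv)$ is the restriction of the GKLO filtration, by the proposition characterizing $\qzast(\bv)^{\leq\bla}$ via GKLO. Hence $\Phi^+(\shu^{\leq\bla}) \subseteq \qzastplus^{\leq\bla}$.

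The main point to check is the reverse inclusion, i.e.\ that the filtration is respected strictly. Take $\bv = \bk = |\bla|$. Then $\Phi^+_\bk$ is injective on $\preshu_\bk$ into $\GKLO{\bk}$. Moreover every $\bmu \vdash \bk$ lies in $\Partitions^\vertexset_{\leq\bk}$, so every such $\bmu$ occurs as a coefficient there. Consequently, if $\Phi^+_\bk(f)$ has vanishing $\sfu^\bmu$-coefficients for all $\bmu\not\leq\bla$, then $f|_\bmu=0$ for all such $\bmu$, i.e.\ $f\in\preshu^{\leq\bla}$. Combined with the componentwise isomorphism above and the fact that $\qzastplus_\bk \to \qzastplus(\bk)_\bk$ is strictly filtered (Lemma \ref{lem:surjectionsarefilt}), this yields $\Phi^+(\shu^{\leq\bla}) = \qzastplus^{\leq\bla}$.
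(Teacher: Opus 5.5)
Your proof is correct and follows the paper's own argument. The paper likewise obtains the isomorphism by combining the existence of the limit map (Theorem \ref{thm:regularity-of-phi-bv}), the stabilization $\qzastplus_\bk \xrightarrow{\sim} \qzastplus(\bk)_\bk$ from part (8) of Theorem \ref{thm:limzastproperties}, and Proposition \ref{prop:shuf-bk-to-qzastplus-bk-isom}. Your explicit check that the filtrations correspond, by reading off the $\sfu^\bmu$-coefficients of $\Phi^+_\bk$ with $\bv = \bk$, validly fills in a point the paper leaves implicit.
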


Recall from part (2) of Theorem \ref{thm:limzastproperties}, that the algebra $\qzastplus$ is generated by FMOs $\M_\bm(f)$ with restricted dressings $f\in \GT^\bmu_{\res}$, where $\bmu = (1^\bm)$ and $\bm \in \NN^\vertexset$.  Via Theorem \ref{thm:lowest-part-of-filtration-is-fmo}, we have seen that these FMOs are identified with certain explicit  shuffle algebra elements. We deduce the following:

\begin{Corollary}
    The shuffle algebra $\shu$ is generated by its elements of the form 
    $$
        f = \tilde{f} \cdot \prod_{i \in \vertexset} \prod_{a,b \in [\sm_i], a \neq b} (x_{i,a} - x_{i,b} - \hbar) \ \in \ \shu_\bm
    $$
where $\tilde{f} \in \preshu_{\bm}$ and $\bm \in \NN^\vertexset$.
\end{Corollary}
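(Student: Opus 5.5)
The corollary follows by transporting the generation statement for $\qzastplus$ along the isomorphism of Theorem \ref{thm:isoshuaplus}. By part (2) of Theorem \ref{thm:limzastproperties}, $\qzastplus$ is generated as an algebra (over $\base$, and hence over $\kk$, since $\base$ itself sits in degree $\mathbf 0$) by the limit FMOs $\M_\bm(g)$ with $\bm \in \NN^\vertexset$ and $g \in \GT^\bmu_\res$, where $\bmu = (1^\bm)$. Since $\Phi^+ : \shu \to \qzastplus$ is a ring isomorphism, the preimages $(\Phi^+)^{-1}(\M_\bm(g))$ generate $\shu$. So it suffices to identify each such preimage, up to a unit of $\kk$, with an element of the displayed form.

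Fix $\bm$ and $g \in \GT^\bmu_\res \cong \GT^\bmu_\res(\bm) = \tGT(\bm)^{S_\bm}$. Identify this ring with $\preshu_\bm$ by renaming $w_{i,a} \leftrightarrow x_{i,a}$, and set
\[
f = g \cdot \prod_{i \in \vertexset} \prod_{a \neq b \in [\sm_i]} (x_{i,a} - x_{i,b} - \hbar).
\]
The product is $S_\bm$-symmetric, so $f \in \preshu_\bm$. By Theorem \ref{thm:lowest-part-of-filtration-is-fmo}, $f$ lies in $\shu^{\leq (1^\bm)}$, and for each $\bv$ we have:
\begin{itemize}
\item[$\circ$] if $\bv \not\geq \bm$, then $\Phi^+_\bv(f) = 0$;
\item[$\circ$] if $\bv \geq \bm$, then $\Phi^+_\bv(f) = \frac{1}{\bm!}\M_\bm(\widetilde g)$, where $\widetilde g$ is the image of $g$ under $\GT^\bmu_\res \xrightarrow{\sim} \GT^\bmu_\res(\bv)$.
\end{itemize}
These are exactly the components of $\frac{1}{\bm!}\M_\bm(g)$ under $\qzast \twoheadrightarrow \qzast(\bv)$, as described in part (1) of Theorem \ref{thm:limzastproperties}. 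Since $\Phi^+$ is the map to the limit induced by the compatible family $\Phi^+_\bv$ (Theorem \ref{thm:regularity-of-phi-bv}), we conclude $\Phi^+(f) = \frac{1}{\bm!}\M_\bm(g)$.

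As $\QQ \subseteq \kk$, the factor $\bm!$ is invertible. Hence $\bm! \cdot f$, which again has the displayed form with $\tilde f = \bm!\, g$, maps to $\M_\bm(g)$. Every generator of $\qzastplus$ therefore has a preimage of the required form, and these elements generate $\shu$.

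The only point needing care is the identification of $\GT^\bmu_\res$ with $\preshu_\bm$ compatibly across all $\bv$. This holds because the restricted dressings involve only the variables $w_{i,a}$ with $a \le \sm_i$, which the maps $f \mapsto \widetilde f$ leave untouched, by \eqref{eq: directed system of restricted symmetric polynomials}.
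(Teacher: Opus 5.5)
Your proposal is correct and is essentially the paper's argument: the paper also deduces the corollary by combining the generation of $\qzastplus$ by restricted-dressing FMOs (part (2) of Theorem \ref{thm:limzastproperties}) with the identification of those FMOs with the explicit elements of Theorem \ref{thm:lowest-part-of-filtration-is-fmo}, transported along the isomorphism $\Phi^+$ of Theorem \ref{thm:isoshuaplus}. Your extra steps only fill in details the paper leaves implicit: checking componentwise in the limit that $\Phi^+(f)$ is the limit FMO, and absorbing the scalar $\bm!$, which is invertible since $\QQ \subseteq \kk$.
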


\subsubsection{Integral forms} 
\label{ssec:integralform}
Throughout \S \ref{sec: shuffle} we have required that the base ring $\kk$ has $\QQ \subseteq\kk$, because reciprocals of factorials appear throughout. One way to avoid this is to define an integral form of the shuffle algebra, via explicit generators.

First, let us take $\kk = \QQ$ and define the corresponding shuffle algebra $\shu_\QQ$ as in Definition \ref{def:shu}.  By Theorem \ref{thm:isoshuaplus} we know that $\shu_\QQ \cong \qzastplus_\QQ$ is isomorphic to the quantized limit zastava over $\QQ$.  
Next, consider the subalgebra $\shu_\ZZ \subset \shu_\QQ$ generated by the elements of the form
\begin{equation}
    \label{eq:intshugens}
    f = \bm! \cdot \tilde{f} \cdot\prod_{i \in \vertexset} \prod_{a,b \in [\sm_i], a \neq b} (x_{i,a} - x_{i,b} - \hbar),
\end{equation}
over all  $\bm \in \NN^\vertexset$ and \emph{integral} elements $\tilde{f} \in \preshu_{\bm}$, i.e.~having integral coefficients. By Theorem \ref{thm:lowest-part-of-filtration-is-fmo}, we see that for $f$ as in \eqref{eq:intshugens} we have $\Phi^+_\bv(f) = \M_\bm(\tilde{f})$.  These FMOs are generators for $\qzastplus_\ZZ(\bv)$. By passing to the limit we deduce that $\shu_\ZZ \cong \qzastplus_\ZZ $, and the following diagram:
\begin{equation}
    \begin{tikzcd}
        \shu_\QQ \ar[r,"\Phi^+"',"\sim"] & \qzastplus_\QQ \\
        \shu_\ZZ \ar[u,hook] \ar[r,"\sim"]  &  \qzastplus_\ZZ \ar[u,hook]  
    \end{tikzcd}
\end{equation}
\begin{Remark}
    Although this provides a definition of $\shu_\ZZ$ (and thus $\shu_\kk$ for any $\kk$, by base change), we do not have an explicit criterion to detect whether an element of $\shu_\QQ$ lies in $\shu_\ZZ$.
\end{Remark}

\subsubsection{Proof  of Theorem \ref{thm:grpositivepart}}
\label{ssec:proofgrAplus}
We are now ready to complete the proof of Theorem \ref{thm:grpositivepart} by showing that $\gr \qzastplus(\bv) \subseteq \bigoplus_{\bla \in \Partitions_{\leq \bv}^\vertexset} \GT_{\res}^\bla(\bv) \cdot \sfr_\bla$. In other words, we must compute the leading term of elements of $\qzastplus(\bv)^{\leq \bla}$.  By linearity, it suffices to do so when $\kk = \ZZ$.  Since $\qzastplus_\ZZ(\bv) \subseteq \qzastplus_\QQ(\bv)$, it further suffices to consider the case $\kk = \QQ$. 

So, take $\kk = \QQ$. As we have seen above, the FFT map defines a surjection $\Phi_\bv^+:\shu \twoheadrightarrow \qzastplus(\bv)$. Given $h \in \shu_\bk$ with $\Phi_\bv^+(h) \in \qzastplus(\bv)^{\leq\bla}$, the element   $\bla \in\overline{\supp(\Phi^+_\bv(h))}$ is maximal (or else the coefficient of $\sfu^\bla$ is zero). The proof of Theorem \ref{thm:regularity-of-phi-bv} constructed a certain element $f \in \GT^\bla_\res(\bv)$,  and the calculations there show that
$$
\Phi_\bv^+(h) + \qzastplus(\bv)^{<\bla} = f \cdot r_\bla 
$$
in $\gr \qzastplus(\bv)$.  This proves that $\gr \qzastplus(\bv) \subseteq \bigoplus_{\bla \in \Partitions_{\leq \bv}^\vertexset} \GT_{\res}^\bla(\bv) \cdot \sfr_\bla$ for $\kk = \QQ$, as required.

\subsection{The extended shuffle algebra and $\qzast$}
\label{ssec:extshu}

Consider a polynomial ring over the base $\base$, with countably many generators:
\begin{equation}
    \shuzero = \base[\Qvar_i^{(r)} : i \in \vertexset, r \geq 1]
\end{equation}
To match shuffle algebra notations, we will sometimes denote the multiplication in this ring by $\ast$.  It is also  convenient to denote $Q_i^{(0)} = 1$, and to define the formal series 
\begin{equation}
    \label{eq:Qdef}
    Q_i(u) = \sum_{r \geq 0}{Q_i^{(r)} u^{-r}} \in \shuzero[\![u^{-1}]\!]
\end{equation}

Define the \textbf{extended shuffle algebra} to be the following enlargement of $\shu$:
\begin{equation}
    \extshu = \shuzero \otimes_\base \shu
\end{equation}
We will identify  $\shuzero \subset \extshu$ via $x \mapsto  x\otimes 1$, and similarly $\shu \subset \extshu $ via $f \mapsto 1 \otimes f$. Define a  multiplication $\star$ on $\extshu$, such that $\shuzero$ and $ \shu$ are  subrings with $ x\star f = x \otimes f$ for $x \in \shuzero$ and $f \in \shu$, and via the following relation: for each degree $\bk \in \NN^\vertexset$ and all $f \in \shu_\bk$ we have
\begin{equation}
    \label{eq:actionofQ}
    f \star  \Qvar_i(u) =   \Qvar_i(u)  \star \Big(  f \prod_{a = 1}^{\bk_i} \frac{u-x_{i,a}-\hbar}{u-x_{i,a}} \Big)
\end{equation}
More precisely, here both sides are expanded as  power series in $u^{-1}$, and we demand that their coefficients are equal.   Importantly, note that the coefficients of $f \prod_a \frac{u-x_{i,a}-\hbar}{u-x_{i,a}}$ are indeed elements of $\shu_\bk$, since  $\shu_\bk \subseteq \preshu_\bk$ is defined by the refined wheel conditions and thus an \emph{ideal}.

There is a  $\ZZ^\vertexset\times \ZZ$-grading on $\shuzero$ defined by setting $\deg \Qvar_i^{(r)} = (\mathbf{0}, r)$.  Then $\extshu$ is also naturally $\ZZ^\vertexset\times \ZZ$-graded, via the  tensor product of the gradings on $\shuzero$ and $\shu$.
\begin{Lemma}
    The multiplication $\star $ makes $\extshu$ into a $\ZZ^\vertexset\times\ZZ$-graded ring.
\end{Lemma}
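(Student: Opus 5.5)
The plan is to realize $\extshu$ as a twisted (Ore-type, or semidirect/smash) product of the commutative ring $\shuzero$ with the graded ring $\shu$. The commutation relation \eqref{eq:actionofQ} comes from a family of commuting ring endomorphisms of $\shu$. Concretely, for each $i \in \vertexset$ consider the operator on $\shu_\bk$
$$
\tau_i(u): f \longmapsto f \prod_{a=1}^{\bk_i} \frac{u-x_{i,a}-\hbar}{u-x_{i,a}} \in \shu_\bk[\![u^{-1}]\!].
$$
It is well-defined because $\shu_\bk$ is an ideal in $\preshu_\bk$, as noted after \eqref{eq:actionofQ}. The coefficients of the product are symmetric polynomials in the $x_{i,a}$, since the product is symmetric and has leading term $1$.

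The first step is to check that $\tau_i(u)$ is multiplicative for the shuffle product: $\tau_i(u)(f\ast g) = \tau_i(u)(f)\ast\tau_i(u)(g)$. This is immediate from \eqref{eq:shuffleprod}. The factor $\prod_a \frac{u-x_{i,a}-\hbar}{u-x_{i,a}}$ over all variables of color $i$ is fully symmetric and factors as the product over the $f$-variables times the product over the $g$-variables, so it passes through each $\sigma$. Clearly also $\tau_i(u)\tau_j(v)=\tau_j(v)\tau_i(u)$ and $\tau_i(u)(1)=1$.

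The second step is to extract an action of $\shuzero$. Write $\tau_i(u) = \sum_{r\geq 0} \tau_i^{(r)} u^{-r}$ and set $P_i(u)=\prod_a\frac{u-x_{i,a}-\hbar}{u-x_{i,a}}$. Relation \eqref{eq:actionofQ} says $f\star Q_i(u) = Q_i(u)\star P_i(u) f$. To define the product $(x\otimes f)\star(y\otimes g)$, we must move $f$ past an arbitrary monomial $y$ in the $Q_i^{(r)}$. Solving the relation coefficientwise gives
$$
f\star Q_i^{(r)} = \sum_{s+t=r} Q_i^{(s)} \otimes \big[P_i(u)\big]_{u^{-t}} f.
$$
The plan is to define the rule $f\star y = \sum y_{(1)} \otimes \psi_{y_{(2)}}(f)$ via a coalgebra-type structure on $\shuzero$. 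Here $\shuzero$ carries the coproduct with $Q_i(u)$ grouplike, i.e.\ $\Delta Q_i^{(r)}=\sum_{s+t=r}Q_i^{(s)}\otimes Q_i^{(t)}$, and $\psi$ is the action of $\shuzero$ on $\shu$ in which $Q_i(u)$ acts by $P_i(u)$. This gives an algebra action because the $P_i(u)$ commute and are multiplicative; that is, $\shu$ is an $\shuzero$-module algebra. Then $\extshu$ is the smash product $\shuzero \ltimes \shu$, written with $\shuzero$ on the left. This is a legitimate construction, because the grouplike series generate a commutative bialgebra. Associativity and unitality of a smash product are standard once $\shu$ is a module algebra, and that follows from the first step.

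The third step is the grading. The factor $\frac{u-x-\hbar}{u-x}=1-\hbar\sum_{t\geq1}x^{t-1}u^{-t}$ shows that the $u^{-t}$ coefficient of $P_i(u)$ has $\frac12$-homological degree $t$ (taking $\deg u=1$). Hence $[P_i]_{u^{-t}}f$ has degree $\deg f + (\mathbf 0,t)$ by \eqref{eq:gradingonpreshu}, and both sides of the coefficient relation have degree $\deg f + (\mathbf 0,r)$. Since $\shuzero\otimes\shu$ carries the tensor grading and the structure maps are homogeneous, $\extshu$ is graded.

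I expect the main obstacle to be the bookkeeping in the second step. We must verify that the rule determined by \eqref{eq:actionofQ} on the generators $Q_i^{(r)}$ extends consistently to all products, i.e.\ that the definition is independent of how a monomial in $\shuzero$ is ordered. This holds precisely because the operators $P_i(u)$ for different $i$ (and different $u$) commute. I would handle it by working with generating series and the grouplike coproduct rather than with individual coefficients.
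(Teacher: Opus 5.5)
Your proposal is correct and is essentially the paper's own argument. The remark following the lemma identifies $\extshu$ with the smash product $\shuzero \# \shu$, in which $Q_i(u)$ is grouplike and acts on $\shu_\bk$ by multiplication by $\prod_{a=1}^{\bk_i} \frac{u-x_{i,a}-\hbar}{u-x_{i,a}}$, making $\shu$ a module algebra, exactly as in your first two steps; your further checks (that placing $\shuzero$ on the left is harmless because it is commutative, and that all structure maps are homogeneous for the $\ZZ^\vertexset\times\ZZ$-gradings) supply details the paper leaves implicit.
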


\begin{Remark}
    Said differently, $\extshu = \shuzero \# \shu$ is a smash product.  Namely, $\shuzero$ is a Hopf algebra over $\base$ with comultiplication defined by $\Delta\big( \Qvar_i(u) \big) = \Qvar_i(u) \otimes \Qvar_i(u)$, and there is an action of $\shuzero$ on $\shu$ given by $ \Qvar_i(u) \cdot f = f \prod_{a=1}^{\bk_i} \frac{u-x_{i,a} - \hbar}{u-x_{i,a}}$ for $f \in \shu_\bk$. This action makes $\shu$ into an $\shuzero$-module algebra, and thus we may define the smash product $\shuzero \# \shu$, corresponding to the following extension of \eqref{eq:actionofQ}: for any $x \in \shuzero$ and $f\in \shu_\bk$ we have
    $$
    f \star x = \sum x_{(1)} \star (x_{(2)} \cdot f),
    $$
    where we have used Sweedler notation $\Delta(x) = \sum x_{(1)} \otimes x_{(2)}$.
\end{Remark}

The FFT homomorphisms naturally extend to the algebra $\extshu$, as was similarly observed  in the trigonometric setting by Jindal-Negut \cite[\S 4.2]{JN26}.  The following is the main result of this subsection:
\begin{Theorem}
    \label{thm:isoextshu}
    Let $\bv \in \NN^\vertexset$.  Then the FFT homomorphism $\Phi^+_\bv: \shu \rightarrow \qzastplus(\bv)$ extends to a surjective map $\Phi_\bv: \extshu \rightarrow \qzast(\bv)$, such that 
    $$
    \Qvar_i(u) \longmapsto \prod_{r=1}^{\sv_i} ( 1 -w_{i,r} u^{-1}) 
    $$
    These maps are compatible with the directed system of $\ZZ^\vertexset\times\ZZ$-graded rings $\qzast(\bv)$, and the induced map in the limit is an isomorphism:
    $$\Phi : \extshu \xrightarrow{\ \ \sim \ \ } \qzast$$
\end{Theorem}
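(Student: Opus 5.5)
We first define $\Phi_\bv$ on generators and then check the relations. On $\shu$ we take $\Phi_\bv = \Phi^+_\bv$. On $\shuzero$, which is a polynomial ring over $\base$, we let $\Qvar_i^{(r)}$ map to $(-1)^r e_r(w_{i,1},\ldots,w_{i,\sv_i})$, so that $\Qvar_i(u)\mapsto\prod_{r=1}^{\sv_i}(1-w_{i,r}u^{-1})$; this is a ring map into $\GT(\bv)\subset\qzast(\bv)$. Since $\extshu=\shuzero\otimes_\base\shu$ as a module, we define $\Phi_\bv(x\otimes f)=\Phi_\bv(x)\Phi^+_\bv(f)$. Multiplicativity then reduces to the cross relation \eqref{eq:actionofQ}, and it suffices to check it inside $\GKLO{\bv}$ using the explicit formula \eqref{eq:FFT-homomorphism}.

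In each term $f|_A\cdots\su^A$ we have $\su^A w_{i,r}=(w_{i,r}+A_i(r)\hbar)\su^A$. Hence
$$\su^A\prod_r(1-w_{i,r}u^{-1})=\prod_r\frac{u-w_{i,r}-A_i(r)\hbar}{u}\,\su^A.$$
The right side equals $\prod_r(1-w_{i,r}u^{-1})$ times $\prod_r\frac{u-w_{i,r}-A_i(r)\hbar}{u-w_{i,r}}$. Under the specialization of the $x_{i,a}$ to the strings $w_{i,r},\ldots,w_{i,r}+(A_i(r)-1)\hbar$, this last factor is exactly the telescoping product $\prod_a\frac{u-x_{i,a}-\hbar}{u-x_{i,a}}$ evaluated at $A$. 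Since the coefficient in front of $\su^A$ is a function of $w$, which commutes with $\GT(\bv)$, this proves
$$\Phi^+_\bv(f)\,\Phi_\bv(\Qvar_i(u))=\Phi_\bv(\Qvar_i(u))\,\Phi^+_\bv\Bigl(f\prod_a\tfrac{u-x_{i,a}-\hbar}{u-x_{i,a}}\Bigr)$$
coefficientwise in $u^{-1}$. Gradings are preserved because $\deg w_{i,r}=(\mathbf 0,1)$ matches $\deg\Qvar_i^{(r)}=(\mathbf 0,r)$.

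For surjectivity, the image contains $\qzastplus(\bv)$ by Theorem \ref{thm:regularity-of-phi-bv}. It also contains $\GT(\bv)$, since the elementary symmetric polynomials generate $\GT(\bv)$ over $\base$. Lemma \ref{lem: truncated triangular decomp} then gives all of $\qzast(\bv)$. Compatibility with $\Phi_{\bv,\bv'}$ holds on $\shu$ by Theorem \ref{thm:regularity-of-phi-bv}. On $\shuzero$ it holds because setting $w_{i,r}=0$ for $r>\sv'_i$ sends $\prod_{r\le\sv_i}(1-w_{i,r}u^{-1})$ to $\prod_{r\le\sv'_i}(1-w_{i,r}u^{-1})$. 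This yields a graded map $\Phi:\extshu\to\qzast$.

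For bijectivity, note that $\Phi$ restricted to $\shuzero$ maps isomorphically onto $\GT$, because $\GT$ is the tensor product over $i$ of rings of symmetric functions over $\base$, freely generated by the $e_r$. By Theorem \ref{thm:isoshuaplus}, $\Phi$ restricted to $\shu$ is an isomorphism onto $\qzastplus$. The map $\Phi$ sends $x\otimes f\mapsto\Phi(x)\Phi(f)$, so it is the composite of
$$\shuzero\otimes_\base\shu\xrightarrow{\sim}\GT\otimes_\base\qzastplus$$
with the multiplication map $\GT\otimes_\base\qzastplus\to\qzast$, which is an isomorphism by Theorem \ref{thm:limzastproperties}(5). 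Hence $\Phi$ is an isomorphism.

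The main obstacle is the bookkeeping in the cross-relation computation: getting the direction of the $\hbar$-shift right, and confirming that no extra factors from the denominators in \eqref{eq:FFT-homomorphism} interfere. They do not, because those factors only involve $w$. A secondary point is making sure that the limit-level identification $\shuzero\cong\GT$ is compatible with the componentwise limits of Lemma \ref{lem:componentsoflimit}. This should be routine, since each graded component stabilizes.
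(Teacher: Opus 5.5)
Your proposal is correct and follows essentially the same route as the paper. You check the relation \eqref{eq:actionofQ} inside the GKLO algebra using $\sfu^A w_{i,r} = (w_{i,r}+A_i(r)\hbar)\sfu^A$ together with the telescoping specialization, you get surjectivity from the truncated triangular decomposition, and you identify the limit map with $\Phi^0\otimes\Phi^+$ followed by the multiplication isomorphism $\GT\otimes_\base\qzastplus\cong\qzast$. You only spell out details that the paper treats as routine: the coefficientwise bookkeeping in $u^{-1}$ and the symmetric-function argument for $\shuzero\cong\GT$.
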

\begin{proof}
    To prove that the extension $\Phi_\bv: \extshu \rightarrow \qzast(\bv)$ is well-defined, it suffices to verify compatibility with the relation \eqref{eq:actionofQ}. For any multicomposition $A$ of $\bk$, we have:
    $$
    \Big(f\prod_{a=1}^{\bk_i} \frac{u-x_{i,a} - \hbar}{u-x_{i,a}} \Big)\Bigg|_A = f|_A \prod_{r = 1}^{\sv_i} \prod_{p=0}^{A_i(r)-1} \frac{u-w_{i,a}-(p+1)\hbar}{u-w_{i,a}-p\hbar} = f|_A \prod_{r=1}^{\sv_i}\frac{u-w_{i,r} - A_i(r) \hbar}{u-w_{i,r}}
    $$
    Noting  that $\sfu^A \cdot \prod_{r=1}^{\sv_i}(u-w_{i,r})  = \prod_{r=1}^{\sv_i}(u-w_{i,r} - A_i(r)\hbar)$, it is straightforward to verify \eqref{eq:actionofQ} and thus the map $\Phi_\bv$ is well-defined.

    Next, the map $\Phi_\bv : \extshu \rightarrow \qzast(\bv)$ is surjective because of Lemma \ref{lem: truncated triangular decomp}, since $\shuzero \twoheadrightarrow \GT(\bv)$ and $\shu \twoheadrightarrow \qzastplus(\bv)$ are both surjective.    Moreover, the maps $\Phi_\bv: \extshu \rightarrow \qzast(\bv)$ are compatible with the directed system of graded rings $\qzast(\bv)$: this is true when restricted to $\shu$ by Theorem \ref{thm:regularity-of-phi-bv}, and clearly holds for the new generators $\Qvar_i^{(r)} \in \extshu$.  

    In the limit, we thus obtain a map $\Phi: \extshu \rightarrow \qzast$.  It restricts to the previous isomorphism $\Phi^+:\shu \rightarrow \qzastplus$, and also is seen to restrict to an isomorphism $\Phi^0:\shuzero \rightarrow \GT$.  Overall, the map $\Phi$ is nothing but the tensor product of these two isomorphisms, proving the final claim:
    $$
    \begin{tikzcd}[column sep = 1.5cm]
        \extshu = \shuzero \otimes_\base \shu \ar[r,"\Phi^0 \otimes \Phi^+","\sim"'] & \GT \otimes_\base \qzastplus = \qzast
    \end{tikzcd}
    $$

\end{proof}

%%%%%%%%%%%%%%%%%%%%%%%%%%%%%%%%%%%%%%%%%%%%%%%%%%%%%%%%%%%%%%%%%
\section{Spherical generation and Negu{\c t}'s conjecture}
\label{sec:spherical}
Throughout this section we will continue to assume that $\QQ \subseteq \kk$. 
We will say that an $\NN^\vertexset$-graded algebra $A$ is \emph{spherically generated} if it is generated by its homogeneous components $A_{\mathbf{0}}$ and $A_{\alpha_i}$ for $i \in \vertexset$, where $\{\alpha_i\}_{i \in \vertexset}$ denotes the standard basis for $\NN^\vertexset$.  
If $A$ is an algebra over the base ring $\base $ from \eqref{eq: base ring}, we will denote  $A_\loc = A\otimes_\base \operatorname{Frac}\base$, where $\operatorname{Frac} \base$ is the fraction ring of $\base$.  Since $\base$ is trivially $\NN^\vertexset$-graded, the algebra $A_\loc$ is again $\NN^\vertexset$-graded.

The goal of this section is to show how the results we have obtained so far prove the following Theorem, which was conjectured by Negu{\c t} in \cite{Negut-2025}. 

\begin{Theorem}[\mbox{\cite[Conjecture 2.12]{Negut-2025}}]
  \label{thm:negut-conjecture}
The algebra $\cS'_\loc$ is spherically generated.
\end{Theorem}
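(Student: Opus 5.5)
Our strategy is to reduce spherical generation of $\cS'_\loc$ to spherical generation of $\qzast(\bv)_\loc$ for finite $\bv$. That finite-level statement is the analogue of \cite[\S 3]{Weekes}, and the localized shuffle algebra will inherit it through the isomorphisms already established. The first step is to identify $\cS'_\loc$ with $\shu_\loc$. Since $\operatorname{Frac}\base$ inverts every element $\theta_e + (n+\tfrac12)\hbar$, Theorem \ref{thm:shufflealgcomparison}(b) gives $\cS'_\loc = \shu_\loc$. Theorem \ref{thm:isoshuaplus} then gives $\shu_\loc \cong \qzastplus_\loc$ as $\NN^\vertexset$-graded algebras, and Theorem \ref{thm:isoextshu} gives $\extshu_\loc \cong \qzast_\loc$. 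It therefore suffices to show that $\qzastplus_\loc$ is generated by $\qzastplus_{\loc,\mathbf 0} = \operatorname{Frac}\base$ and the components $\qzastplus_{\loc,\alpha_i}$.

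The second step is finite-level spherical generation. For each $\bv$ we prove that $\qzast(\bv)_\loc$ is generated by $\qzast(\bv)_{\loc,\mathbf 0} = \GT(\bv)_\loc$ and $\qzast(\bv)_{\loc,\alpha_i}$. Following \cite{Weekes}, by Proposition \ref{generation by FMOs} it is enough to express every FMO $\M_\bm(f)$ in terms of degree-$\mathbf 0$ and degree-$\alpha_i$ elements. We do this by induction on $|\bm|$. Take a product of FMOs $\M_{\bm'}(g)\M_{\alpha_i}(h)$ with $\bm = \bm' + \alpha_i$. Its leading term in the filtration of \S\ref{sec: filtration} is supported on $(1^{\bm})$. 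Its only other contribution is in the filtered piece $\leq (1^{\bm'-\alpha_i} 2^{\alpha_i})$, and this lower term is, up to structure constants, an FMO of the same shape. A commutator computation with $\GT(\bv)$ (multiplying by $w_{i,r}$) then separates the $(1^\bm)$-term from the rest. This separation requires inverting factors such as $\theta_e + c\hbar$ coming from edge weights, and for edge loops also $\theta_e \pm \tfrac12\hbar$. That is exactly why we localize at $\operatorname{Frac}\base$ and not merely at $\hbar$.

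The third step passes to the limit and to the positive part. Fix a degree $\bk$. By Theorem \ref{thm:limzastproperties}(7)--(8), $\qzastplus_\bk \cong \qzastplus(\bv)_\bk$ for $\bv \geq \bk$, and products of spherical elements of total degree $\bk$ likewise stabilize. Hence $\qzast_\loc$ is generated by $\GT_\loc$ and the components $\qzast_{\loc,\alpha_i}$. By Theorem \ref{thm:limzastproperties}(5), $\qzast_{\alpha_i} = \GT \cdot \qzastplus_{\alpha_i}$. Now write any element of $\qzastplus_\bk$ as a sum of words $x_0 y_1 x_1 \cdots y_n x_n$ with $x_j \in \GT_\loc$ and $y_j \in \qzastplus_{\loc,\alpha_{i_j}}$. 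Using relation \eqref{eq:actionofQ} we commute all $\GT$ factors to the left. A degree-$\alpha_i$ element conjugated by $Q_i(u)$ stays in $\qzastplus_{\alpha_i}$: by \eqref{eq:actionofQ} it becomes a series whose coefficients lie in $\shu_{\alpha_i}$. After this rewriting every word has the form $\sum x \cdot y'$ with $y'$ a product of positive spherical elements. Finally we apply the freeness $\qzast_\loc = \GT_\loc \otimes \qzastplus_\loc$ and project to the component $1 \otimes \qzastplus_\loc$. This gives the claim for $\qzastplus_\loc$.

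The main obstacle is the second step. There the argument of \cite{Weekes} must be adapted to include the flavour parameters $\theta_e$ and edge loops. In particular, one must check that the coefficients needed to invert the leading-term extraction are nonzero in $\operatorname{Frac}\base$ uniformly in $\bv$, so that the generation can be carried to the limit.
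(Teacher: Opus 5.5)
Your outer reductions match the paper. You pass from $\cS'_\loc=\shu_\loc\cong\qzastplus_\loc$ to finite-level spherical generation of $\qzast(\bv)$ in the style of \cite{Weekes}, then to the limit. You then move $\GT$ to the left using \eqref{eq:actionofQ} (equivalently Lemma \ref{lem:QFrels1}) and project onto $1\otimes\qzastplus$ using $\qzast=\GT\otimes_\base\qzastplus$; your third step is essentially the paper's final argument.

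\textbf{The gap is in your second step, which carries all the content.}

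First, the filtration picture is reversed. $(1^\bm)$ is the \emph{minimal} element for the dominance order. When $\sm'_i\geq 1$, the product $\M_{\bm'}(g)\M_{\alpha_i}(h)$ has leading term $a_{\bmu',\bmu''}\,g(w)h(w+\hbar\bmu')\,\sfr_{\bla}$ at $\bla=(1^{\bm'-\alpha_i}2^{\alpha_i})$. The FMO-type part at $(1^\bm)$ is the lower-order piece.

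More seriously, no ``commutator computation with $\GT(\bv)$'' can isolate that part with coefficients in $\base$. Left and right multiplication by $\GT(\bv)$ rescale each $\sfu^A$-component by a polynomial in $w$. Killing every component with some $A_i(r)=2$ forces the multiplier on the surviving $(1^\bm)$-component to be divisible by $\prod_{r\neq s}(w_{i,r}-w_{i,s}-\hbar)$. The reason is that the two specializations coincide along $w_{i,s}=w_{i,r}+\hbar$. That component also carries structure constants such as $(w_{j,s}-w_{i,r}+\theta_e\mp\tfrac12\hbar)$. Even when $\sm'_i=0$, the product is already an FMO, but with dressing multiplied by $a_{\bmu',\bmu''}$. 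None of these factors is invertible in $\qzast(\bv)\otimes_\base\operatorname{Frac}\base$, since they involve the $w$'s. So your procedure only yields $\M_\bm(f)$ for $f$ in some ideal of $\GT^{\bmu}(\bv)$. The missing idea is a proof that this ideal becomes the unit ideal after inverting elements of $\base$.

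That is exactly what the paper supplies in Theorem \ref{thm:sphericalgenv}. It uses the abelian relation $\sfr_{\eps_{i,a}}\sfr_{\lambda-\eps_{i,a}}=(\cdots)\sfr_\lambda$ and the ideal $\mathcal I$ generated by the polynomials \eqref{eq:keyspheq2}, taken over the set $S$ of maximal entries of $\lambda$. It shows $\mathcal I$ has empty zero locus once the cycle elements $y_p$ are inverted. A common zero produces a decorated quiver on $S$ in which every vertex has an outgoing arrow, hence an oriented cycle, hence $y_p=0$. The Nullstellensatz then gives the unit ideal.

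Your diagnosis that one must invert ``factors such as $\theta_e+c\hbar$ coming from edge weights'' is therefore not the real mechanism. The edge factors contain $w$'s. What actually gets inverted are cycle sums: $-\hbar$ from a path $e\bar e$, $\theta_{e_1}-\theta_{e_2}-\hbar$ for parallel arrows, or $\theta_e\pm(n+\tfrac12)\hbar$ for loops. For a tree, only $\hbar$ is needed.

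Finally, the uniformity you flag is not about nonvanishing in $\operatorname{Frac}\base$, which is automatic. It is about using one denominator, or at least denominators of bounded degree, for all $\bv$. Clearing a denominator raises the $\ZZ$-degree, and the stabilization range in Theorem \ref{thm:limzastproperties}(7) depends on that degree. Without a bound, the required $\bv$ can keep growing. The paper obtains $h_\bm\in M$ independent of $\bv$ because $S$ and the ring \eqref{eq:keyspheq3} depend only on $\bm$. Without this, your claim in step three that products of spherical elements ``stabilize'' is not justified.
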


By Theorem \ref{thm:shufflealgcomparison}, it suffices to show that $\cS_\loc$ is spherically generated. By Theorem, \ref{thm:isoshuaplus} it further suffices to show that $\qzastplus_\loc$ is spherically generated. 

\begin{Remark}
    In fact, we will prove a finer statement: rather than localizing at the full fraction field $\operatorname{Frac} \base$, it suffices to localize at the multiplicative set $M$ from Definition \ref{def:multiplicativeset} below. 
    
    In particular,  if the underlying graph of our quiver $\quiver$ is a tree then $\shu[M^{-1]}] = \shu[\hbar^{-1}]$, see part (3) of Remark \ref{rmk:multiplicativeset}. Thus $\shu[\hbar^{-1}] = \shu'[\hbar^{-1}]$ is spherically generated in this case. 
\end{Remark}

\subsection{Spherical generation for $\qzast(\bv)$}

Fix $\bv \in \NN^\vertexset$. We will define elements $Q_i^{(r)} \in \GT(\bv)$ and $ F_i^{(r)} \in \qzastplus(\bv)$ for $i\in \vertexset$, $r\geq0$ as coefficients of the following formal series in $\qzast(\bv)[\![z^{-1}]\!]$:
\begin{equation}
    Q_i(z)= \sum_{r\geq 0} Q_i^{(r)} z^{-r} =  \prod_{s=1}^{\sv_i} (1-z^{-1} w_{i,s}), \qquad   \   F_i(z) = \sum_{r \geq 1} F_i^{(r)} z^{-r} = \M_{e_i}\Big(\frac{1}{z-w_{i,1}}\Big)
\end{equation}
Note that $Q_i^{(0)} = 1$ and $F_i^{(0)} = 0$, and that $Q_i^{(r)}$ is precisely the image of the same-named element of $\extshu$ under the map $\Phi_\bv:\extshu\twoheadrightarrow\qzast(\bv)$ from \S \ref{ssec:extshu}.  Using Theorem \ref{thm: grading}, we see that  these elements are homogeneous for the $\ZZ^\vertexset\times\ZZ$-grading on $\qzast(\bv)$, with  degrees 
    \begin{equation}
        \label{eq:QandFdegrees}
        \deg Q_i^{(r)} = (\mathbf{0}, r) \quad \text{ and } \quad \deg F_i^{(r)} = (\alpha_i, r-a_{ii})
    \end{equation}
    where $a_{ii}$ is the number of arrows $e:i \rightarrow i$ in our quiver.  

We note the following useful relation, written in series form, which follows from similar  calculations to those in \cite[\S B(iii)]{BFN1} and \cite[\S C(iv)]{FT}:
\begin{Lemma}
    \label{lem:QFrels1}
    For any quiver and any $i,j \in \vertexset$, the following relation holds in $\qzast(\bv)$:
    $$
    (u-v)[Q_i(u), F_j(v)] = \delta_{ij} \hbar \big( F_j(v) - F_j(u) \big) Q_i(u)
    $$
\end{Lemma}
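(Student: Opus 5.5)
We will verify the relation inside the GKLO embedding $\qzast(\bv)\hookrightarrow \GKLO{\bv}^{S_\bv}$, which is injective, so it suffices to compute both sides there. By \eqref{eq: FMOs under GKLO} with $\bm = e_j$ and $f = (z-w_{j,1})^{-1}$, we have
$$
F_j(v) \ \mapsto \ \sum_{s=1}^{\sv_j} \frac{1}{v-w_{j,s}}\, P_{j,s}(w)\, \sfu_{j,s},
$$
where $P_{j,s}(w)$ is the rational function in the $w$'s given by the arrow numerator over $\prod_{t\neq s}(w_{j,s}-w_{j,t})$. The key features are that $P_{j,s}$ does not involve $u$ or $v$, and that $\sfu_{j,s}$ shifts only $w_{j,s}$ by $\hbar$. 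Meanwhile $Q_i(u) = \prod_{r}(1-u^{-1}w_{i,r})$ is a polynomial in the commuting variables $w$.

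If $i\neq j$, then $\sfu_{j,s}$ commutes with every $w_{i,r}$, so $Q_i(u)$ commutes with each summand and both sides vanish. If $i=j$, we use the relation $\sfu_{i,s}\, g(w) = g(w+\hbar\delta_s)\,\sfu_{i,s}$. Each summand contributes
$$
[Q_i(u), c_s \sfu_{i,s}] = c_s\big(Q_i(u) - Q_i(u)|_{w_{i,s}\mapsto w_{i,s}+\hbar}\big)\sfu_{i,s},
$$
with $c_s = P_{i,s}/(v-w_{i,s})$. Since $Q_i(u)|_{w_{i,s}\mapsto w_{i,s}+\hbar} = Q_i(u)\,\frac{u-w_{i,s}-\hbar}{u-w_{i,s}}$, the difference equals $Q_i(u)\,\frac{\hbar}{u-w_{i,s}}$. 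Multiplying by $(u-v)$ and using the partial fraction identity
$$
\frac{u-v}{(v-w)(u-w)} = \frac{1}{v-w}-\frac{1}{u-w},
$$
the left side becomes $\hbar\sum_s P_{i,s}\big(\tfrac{1}{v-w_{i,s}}-\tfrac{1}{u-w_{i,s}}\big)Q_i(u)\,\sfu_{i,s}$.

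Next we compare this with the right side, $\hbar(F_i(v)-F_i(u))Q_i(u)$. Here $Q_i(u)$ sits on the right of the $\sfu$'s, so we must move it to the left: $\sfu_{i,s}Q_i(u) = Q_i(u)\frac{u-w_{i,s}-\hbar}{u-w_{i,s}}\sfu_{i,s}$. This is the one delicate point, and the place I expect the main bookkeeping obstacle, because the two sides initially differ by exactly this factor. The resolution is the ordering convention in the statement. Writing the left side with $Q_i(u)$ placed to the right of $\sfu_{i,s}$ inverts the shift factor: $Q_i(u)\frac{\hbar}{u-w_{i,s}}\sfu_{i,s} = \frac{\hbar}{u-w_{i,s}-\hbar}\,\sfu_{i,s}Q_i(u)\cdot$ (correction). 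So the cleanest route is to recompute the commutator directly in the form $c_s\sfu_{i,s}$ times $Q_i$.

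Concretely, one has
$$
[Q_i(u),c_s\sfu_{i,s}] = c_s\big(Q_i(u)\,Q_i(u)_{+}^{-1} - 1\big)\sfu_{i,s}\,Q_i(u),
$$
where $Q_i(u)_+$ denotes the shifted $Q_i$. Since $Q_i(u)\,Q_i(u)_+^{-1} - 1 = \frac{\hbar}{u-w_{i,s}-\hbar}$, and the coefficients $c_s$ may absorb the shift, the partial fraction step then matches term by term. The identity therefore reduces to checking that the factor attached to $\sfu_{i,s}$ agrees on both sides, with all $w$-dependence of $P_{i,s}$ cancelling. If the shift conventions come out mismatched by $\hbar$, I would instead multiply both sides by $Q_i(u)^{-1}$ on the appropriate side and compare coefficients of each $\sfu_{i,s}$ as rational functions in $u$ and $v$; this is a finite check.
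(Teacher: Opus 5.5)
\textbf{There is a genuine gap, in the last step.} Your setup and your computation through the partial-fraction step are correct. This is the same kind of GKLO verification the paper has in mind; it gives no computation of its own and only points to analogous ones in the literature.

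Look at what your computation actually gives. Since $Q_i(u)$ commutes with $P_{i,s}$ and with $(v-w_{i,s})^{-1}-(u-w_{i,s})^{-1}$, your expression $\hbar\sum_s P_{i,s}\big(\tfrac{1}{v-w_{i,s}}-\tfrac{1}{u-w_{i,s}}\big)Q_i(u)\sfu_{i,s}$ is exactly
\[
(u-v)[Q_i(u),F_i(v)] \;=\; \hbar\, Q_i(u)\big(F_i(v)-F_i(u)\big),
\]
with $Q_i(u)$ on the \emph{left}. The argument fails where you try to move $Q_i(u)$ to the right. By your own formula, $[Q_i(u),c_s\sfu_{i,s}] = \frac{\hbar}{u-w_{i,s}-\hbar}\,c_s\,\sfu_{i,s}Q_i(u)$. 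After multiplying by $u-v$, the coefficient of $\sfu_{i,s}Q_i(u)$ is $\frac{\hbar(u-v)P_{i,s}}{(v-w_{i,s})(u-w_{i,s}-\hbar)}$. In $\hbar(F_i(v)-F_i(u))Q_i(u)$ it is $\frac{\hbar(u-v)P_{i,s}}{(v-w_{i,s})(u-w_{i,s})}$. The $c_s=P_{i,s}/(v-w_{i,s})$ are fixed by the definition $F_i(z)=\M_{e_i}\big((z-w_{i,1})^{-1}\big)$, so nothing can ``absorb the shift''. The partial fractions do not match term by term.

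More careful bookkeeping cannot repair this, because with the conventions as set up the two orderings are genuinely different elements. Those conventions are: dressing to the left of $\sfu_\Gamma$ in \eqref{eq: FMOs under GKLO}, $\sfu_{i,r}w_{i,r}=(w_{i,r}+\hbar)\sfu_{i,r}$, and $F_i$ as above. Take one vertex, no arrows and $\bv=(1)$, so $Q(u)=1-u^{-1}w$ and $F(v)=\tfrac{1}{v-w}\sfu$. The coefficient of $u^{-1}v^{-1}$ in $(u-v)[Q(u),F(v)]$ is $-\hbar w\sfu$. In $\hbar(F(v)-F(u))Q(u)$ it is $-\hbar(w+\hbar)\sfu$. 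Your fallback ``finite check'' would expose this mismatch rather than resolve it.

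The right-ordered identity does hold if one normalizes instead $F_i(z)=\M_{e_i}\big((z-w_{i,1}-\hbar)^{-1}\big)$. For the normalization used here, the left-ordered version is the correct one. It is also what the paper's own relation \eqref{eq:actionofQ} gives when applied to $f=(v-x_{i,1})^{-1}$.

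So you should state and prove
\[
(u-v)[Q_i(u),F_j(v)]=\delta_{ij}\hbar\,Q_i(u)\big(F_j(v)-F_j(u)\big).
\]
Your first computation, together with your (correct) case $i\neq j$, already establishes this completely. This form is also all that is needed later, where the relation is only used to move the $Q$'s to the left of the $F$'s.
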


Before proceeding, we introduce some notation. Consider the doubled quiver $\overline{\quiver} = (I, \arrowset\sqcup \arrowset^{op})$;  for each arrow $e \in \arrowset$ we add an opposite arrow $\overline{e} \in \arrowset^{op}$ with source and target swapped.  Given $h \in \arrowset\sqcup \arrowset^{op}$, let 
\begin{equation}
\theta_h =  \left\{ \begin{array}{cl} \theta_e, & \text{if } h = e \in \arrowset, \\ - \theta_e, & \text{if } h = \overline{e} \in \arrowset^{op} \end{array} \right. 
\end{equation}
Consider a closed path $p = (h_1,\ldots,h_\ell)$ in $\overline{Q}$, that is: a sequence of arrows $h_1,h_2,\ldots,h_\ell \in \arrowset\sqcup \arrowset^{op}$ such that $\target(h_n) = \source(h_{n+1})$ for $1 \leq n < \ell$ and $\target(h_\ell) = \source(h_1)$.  We define a corresponding (linear) element of  $\base$ as follows:
\begin{equation}
    \label{eq:elementsfrompaths}
    y_p = \theta_{h_1} + \ldots + \theta_{h_\ell} - \frac{\ell}{2} \hbar
\end{equation}
By considering all possible closed paths, we obtain a subset of $\base$:
\begin{equation}
    \label{eq:multiplicativeset}
    \left\{ y_p \ : \ p \text{ is a closed path in } \overline{\quiver} \right\}
\end{equation}
\begin{Definition}
    \label{def:multiplicativeset}
    $M \subset \base$ is defined as the multiplicative set generated by the subset \eqref{eq:multiplicativeset}.
\end{Definition}

\begin{Remark}
    \label{rmk:multiplicativeset}
    \begin{enumerate}
        \item If the quiver contains no arrows (i.e.~$\arrowset = \emptyset$), then the set $M = \{1\}$ is trivial.

        \item Suppose that $\arrowset\neq \emptyset$, and let $e \in \arrowset$ be any element. Then $p = (e, \overline{e})$ is a closed path with corresponding element $y_p= - \hbar$.  Thus, ignoring signs, we have $\{\hbar^n : n \geq 0\} \subseteq M$.
        
        \item Suppose that the underlying graph of $\quiver$ is a tree, which is not of type $A_1$. Then for any closed path $p$ in $\overline{\quiver}$, arrows must appear in pairs consisting of an arrow $e\in \arrowset$ and its opposite $\overline{e}$: every step along $p$ must be reversed in order to have a closed path.  It follows that $y_p = - \frac{\ell}{2} \hbar$ is a non-zero multiple of $\hbar$.  Thus, ignoring units in $\QQ\subseteq \base$, we have $$\{ \hbar^n : n \geq 0\} = M $$

        \item For any quiver $\quiver$, the multiplicative set $M$ contains the multiplicative set \eqref{eq:57}, up to signs. Indeed, for any $e \in \arrowset$ with $\source(e) = \target(e)$, we may consider the closed path $p = (\overline{e}, e,\ldots,  \overline{e}, e,\overline{e})$ of length $2n+1$.  Then $y_p = -\theta_e -(n+\tfrac{1}{2})\hbar$ is in $M$.
    \end{enumerate}
\end{Remark}

The next theorem is a variation  on a result of the second author.

\begin{Theorem}
    \label{thm:sphericalgenv}
    For each $\bv \in \NN^\vertexset$, the algebra $\qzast(\bv)[M^{-1}]$ is spherically generated with respect to its $\NN^\vertexset$-grading.  More precisely, it is generated over $\base[M^{-1}]$ by the elements $Q_i^{(r)}, F_i^{(r)}$ for $i\in \vertexset$ and $r\geq 1$.
    \smallskip
    
    Moreover, for each $\bm \in \NN^\vertexset$ there exists an element $h_\bm \in M$, independent of $\bv$, such that 
    $$
    h_\bm \cdot \M_\bm(f)  = \M_\bm(h_\bm f)
    $$
    is inside the spherical subalgebra of $\qzast(\bv)$ for all $\bv \geq \bm$ and all $f \in \GT^\bmu(\bv)$, where $\bmu = (1^\bm)$. That is, $\M_\bm(h_\bm f)$ is expressible over $\base$ in terms of the elements $Q_i^{(r)}, F_i^{(r)}$.
\end{Theorem}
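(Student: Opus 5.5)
The second statement implies the first. By Proposition \ref{generation by FMOs}(b), $\qzast(\bv)$ is generated by the FMOs $\M_\bm(f)$. Once each $\M_\bm(f)$ becomes spherical after multiplying by a fixed $h_\bm\in M$, generation of $\qzast(\bv)[M^{-1}]$ by the $Q_i^{(r)},F_i^{(r)}$ follows. We also need that $\GT(\bv)$ is generated by the $Q_i^{(r)}$, which holds because these are the elementary symmetric polynomials in the $w_{i,s}$, up to sign. So the plan is to prove the second statement by induction on $|\bm|=\sum_i \sm_i$, working inside the GKLO embedding \eqref{eq: GKLO} and using the explicit formula \eqref{eq: FMOs under GKLO}.

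\textbf{Step 1: reduce to undressed FMOs.} The commutator relation of Lemma \ref{lem:QFrels1}, iterated, shows that commuting $Q_i(u)$ past a spherical monomial multiplies each GKLO summand by rational factors in the $w_{i,r}$. Taking coefficients, one can produce arbitrary partially symmetric dressings. Concretely, by $\GT(\bv)$-linearity ($\M_\bm(fg)=f\M_\bm(g)$ for $f\in\GT(\bv)$), together with the fact that $\GT^\bmu(\bv)$ is generated as a $\GT(\bv)$-module by products of elementary symmetric polynomials in the variables $\{w_{i,r}\}_{r\le \sm_i}$, it suffices to show that each $h_\bm \M_\bm(e_{\boldsymbol\nu})$ is spherical. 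Here $e_{\boldsymbol\nu}$ ranges over a finite set of such products, and all of them can be obtained from nested commutators with $Q_i^{(r)}$, as in \cite[\S 3]{Weekes}.

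\textbf{Step 2: induction step.} Split $\bm=\bm'+\alpha_i$ and compute the product $\M_{\bm'}(f')\cdot F_i^{(r)}$ in GKLO. Its $\sfu_\Gamma$-coefficients with $\Gamma$ a genuine subset reproduce $\M_\bm(\cdot)$ with dressing $f'\cdot w_{i,\cdot}^{\,r-1}$ times correction factors. These corrections come from the edge factors $(w_{\target(e),s}-w_{\source(e),r}+\theta_e-\tfrac12\hbar)$ evaluated when the new index collides with an index already in $\Gamma$. The leftover terms lie in filtration level $\bmu'+(1^{\alpha_i})$ but not at $(1^\bm)$, and by Corollary \ref{cor: filtration generated by FMOs} they are expressible via products of smaller FMOs; these are spherical by induction. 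Symmetrizing over the order in which the indices are added, using the $Q$-action to separate dressings, reduces the claim to a linear-algebra statement: a certain determinant must be invertible. By the localization argument of \cite[\S 3]{Weekes}, this determinant is a product of linear forms obtained by restricting the GKLO coefficients to torus fixed points.

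\textbf{Main obstacle.} The hard step is identifying these determinant factors and checking that they lie in $M$, independently of $\bv$. Without flavour parameters they are powers of $\hbar$. With the $\theta_e$ included, each factor arises from a chain of edge factors linking vertices $i_0\to i_1\to\cdots\to i_\ell=i_0$. Along such a chain the $w$-variables telescope away, leaving $\sum\pm\theta_{h}-\tfrac{\ell}{2}\hbar$, which is exactly $y_p$ for a closed path $p$ in $\overline{\quiver}$. The first thing to try is to carry out the fixed-point computation on the minimal non-trivial case, a single cycle with all multiplicities equal to one. From it I would read off $h_\bm$ as the product of the $y_p$ over closed paths of length at most $|\bm|$ supported on $\supp\bm$. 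This product is manifestly independent of $\bv$, which gives the claimed uniformity.
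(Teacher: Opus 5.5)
Your reduction to FMOs is fine, and your guess that the obstructions are the closed-path elements $y_p$, obtained by telescoping edge factors, is on target. But the proposal does not prove the theorem. The decisive step is that the obstruction becomes invertible after inverting $M$, uniformly in $\bv$. You leave this as a ``main obstacle'' to be explored on a single-cycle example, and the framework you set up for it does not work as stated.

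In Step 2, the terms of $\M_{\bm'}(f')F_i^{(r)}$ involving $\sfu_{i,t}^2$ are not leftovers but the \emph{top} part. The product's leading symbol sits at $\bmu'+(1^{\alpha_i})$, which is strictly above $(1^{\bm})$ in dominance order whenever $\sm'_i\geq 1$. Corollary~\ref{cor: filtration generated by FMOs} at that level includes $\M_\bm(\cdot)$ itself among the spanning products. Moreover, lifting that leading symbol produces products $\M_{\bm'}(\cdot)\M_{\alpha_i}(\cdot)$ again, so ``the leftover terms are spherical by induction'' is circular.

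What you actually have to do is take combinations of such products whose top symbols cancel, and then determine which $(1^\bm)$-dressings survive. That is exactly the invertibility question. Nothing in the proposal shows it is governed by a single determinant, let alone one that factors into linear forms equal to the $y_p$. Computing the single-cycle case would not establish this for general configurations.

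The missing idea is the paper's ideal-theoretic argument, which needs no determinant. Work with the abelianized form of the associated-graded relations, for possibly non-dominant $\lambda\in\NN^{\bv}$:
$$\sfr_{\eps_{i,a}}\sfr_{\lambda-\eps_{i,a}}=P_{i,a}\,\sfr_\lambda.$$
Let $S$ be the set of $(i,a)$ where $\lambda_{i,a}$ is maximal. For $(i,a)\in S$, $P_{i,a}$ is a product of factors $w_{j,b}-w_{i,a}+\theta_e-\tfrac12\hbar$ and $w_{i,a}-w_{j,b}+\theta_e+\tfrac12\hbar$ over $(j,b)\in S$.

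It suffices that the ideal generated by $\{P_{i,a}\}_{(i,a)\in S}$ becomes the unit ideal after inverting $M$. Then $h\,\sfr_\lambda=\sum_{(i,a)\in S}c_{i,a}\,\sfr_{\eps_{i,a}}\sfr_{\lambda-\eps_{i,a}}$ with $h\in M$, and you induct on $|\lambda|$. You then pass back to $\qzast(\bv)$ as in Weekes' Theorem 3.7.

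By the Nullstellensatz, the unit-ideal claim reduces to the absence of common zeros, and this is where your telescoping intuition becomes rigorous. At a common zero, every vertex of $S$ has an outgoing arrow, labelled by $\arrowset\sqcup\arrowset^{op}$, recording a vanishing factor. A finite graph with all out-degrees positive contains a cycle, and substituting along it gives $y_p=0$. This handles every configuration at once, not just single cycles.

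Independence of $h_\bm$ from $\bv$ comes for free. For $\lambda=(1^\bm)$, the set $S$ and the relevant polynomial ring involve only the variables $w_{i,a}$ with $a\le \sm_i$.
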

\begin{proof}
    The first part is a generalization of \cite[Theorem 3.7]{Weekes} to incorporate arbitrary quivers, as well as  additional flavour symmetry (i.e.~the parameters $\theta_e$ for $e \in \arrowset$). The only ingredient in the proof that must be modified is \cite[Proposition 3.5]{Weekes}. We now explain the modifications,  following the notation of \emph{loc.cit.}:
    
    First note that, by linearity, it suffices to take $\kk = \QQ$.  In our setting (cf.~equation \eqref{eq: rels in gr} above), the key relation \cite[equation (21)]{Weekes} becomes:
    \begin{equation}
        \label{eq:keyspheq}
        \sfr_{\eps_{i,a}} \sfr_{\lambda-\eps_{i,a}} = \Bigg( \prod_{e:i\rightarrow j}\prod_{\substack{1 \leq b \leq \sv_j \\ \lambda_{i,a} \leq \lambda_{j,b}}} (w_{j,b} - w_{i,a} + \theta_e -\tfrac{1}{2}\hbar) \prod_{e: j \rightarrow i} \prod_{\substack{1 \leq b \leq \sv_j \\ \lambda_{i,a} \leq \lambda_{j,b}}}(w_{i,a} -w_{j,b} + \theta_e +\tfrac{1}{2}\hbar\Bigg) \sfr_\lambda
    \end{equation}
    Here $\lambda = (\lambda_{i,a})_{i \in \vertexset, 1 \leq a \leq \bv_i}$ is a tuple of non-negative integers:  a non-negative coweight of $\bG$, not necessarily dominant. Consider the set $S$ of pairs $(i,a)$ such that $\lambda_{i,a}$ is maximal. For $(i,a) \in S$, the polynomial on the right side of \eqref{eq:keyspheq} simplifies to
    \begin{equation}
        \label{eq:keyspheq2}
        \prod_{e : i \rightarrow j} \prod_{(j,b) \in S} (w_{j,b} - w_{i,a} + \theta_e - \tfrac{1}{2}\hbar) \prod_{e : j \rightarrow i} \prod_{(j,b) \in S} (w_{i,a} - w_{j,b} + \theta_e + \tfrac{1}{2}\hbar)
    \end{equation}
    Taken over all $(i,a) \in S$,  these polynomials define an ideal $\mathcal{I}$ inside the ring 
    \begin{equation}
        \label{eq:keyspheq3}
        \QQ[\hbar, \theta_e, w_{i,a} : e \in \arrowset \text{ and } (i,a) \in S]        
    \end{equation}
    We claim that, after localizing at the multiplicative set\footnote{More precisely, localization at those elements  of $M$ which lie in the subring \eqref{eq:keyspheq3}.} $M$, the vanishing locus $\mathbb{V}(\mathcal{I})$ over $\CC$ is  empty. By the Nullstellensatz it follows that $\mathcal{I}$ is the unit ideal. By induction, any $\sfr_\lambda$ can be expressed in terms of the $\sfr_{i,a}$ with coefficients in $\QQ[\hbar, \theta_e, w_{i,a}][M^{-1}]$. This is a direct generalization of \cite[Proposition 3.5]{Weekes}, and the proof of \cite[Theorem 3.7]{Weekes} now goes through to prove that $\qzast(\bv)[M^{-1}]$ is spherically generated.

    To prove the above claim, consider a point $(\hbar, \theta_e, w_{i,a}) \in \mathbb{V}(\mathcal{I})$. From this point we construct a decorated quiver with vertex set $S$.   Arrows in this quiver are decorated by $\arrowset\sqcup \arrowset^{op}$, as follows: there is an arrow $(i,a) \xrightarrow{e} (j,b)$ if  $w_{i,a} = w_{j,b} + \theta_e -\tfrac{1}{2}\hbar $ where $e \in \arrowset$ with $e: i\rightarrow j$, and there is an arrow $(i,a) \xrightarrow{\overline{e}} (j,b)$ if $w_{i,a} = w_{j,b} -\theta_e - \tfrac{1}{2}$ where $e\in \arrowset$ with $e:j\rightarrow i$. By the definition of $\mathcal{I}$, there is \emph{at least one}  outgoing arrow from every vertex $(i,a) \in S$ of this quiver. It follows that there is at least one oriented cycle in the quiver. Starting from any vertex $(i,a)$ on this cycle, we obtain a closed path $p$ in our doubled quiver $\overline{\quiver}$.  Substituting the above equations into one another as we move along the cycle, we can eliminate each subsequent variable $w_{j,b}$ with $(j,b) \neq (i,a)$. After returning to our starting vertex, we obtain:
    $$
        w_{i,a} = w_{i,a} + y_p
    $$
    where $y_p$ is defined as in \eqref{eq:elementsfrompaths}. This equation contradicts the fact that we have localized at $M$, and thus $y_p \neq 0$ on $\mathbb{V}(\mathcal I)$. Therefore $\mathbb{V}(\mathcal I)$ is empty, as claimed.

    Finally, we must explain the existence of the elements $h_\bm$ from the second part of the statement of the theorem.  Given $\bm \in \NN^\vertexset$, the corresponding coweight is $\la = (1^\bm)$, and thus the set ${S = \{(i,a) : i \in \vertexset, 1 \leq a \leq \sm_i\}}$.  Crucially, this set is independent of $\bv$, as is the ring from \eqref{eq:keyspheq3}.  The same is true at all steps in the inductive process above which expresses $\sfr_\lambda$ in terms of the elements $\sfr_{i,a}$. Therefore there exists some $h_\bm \in M$, independent of $\bv$, such that $h_\bm \sfr_\lambda$ can be expressed integrally in terms of $\sfr_{i,a}$ and the ring \eqref{eq:keyspheq3}. Finally, following the strategy of \cite[Theorem 3.7]{Weekes}, one sees that $\M_\bm(h_\bm f)$ lies in the spherical subalgebra of $\qzast(\bv)$, completing the proof.
\end{proof}

\begin{Remark}
    For a given $\qzast(\bv)$ it is sufficient to localize at some finitely-generated multiplicative subset of $M$, dependent on $\bv$. Indeed, the set $S$ in the proof has size $\leq\sum_i \sv_i$. Thus we need only consider closed paths $p$ in $\overline{Q}$ of length $\ell \leq \sum_i \sv_i$, and to invert the corresponding elements $y_p \in \base$.
    
\end{Remark}

\subsection{Passing to the limit}
The elements $Q_i^{(r)}, F_i^{(r)} \in \qzast(\bv)$ from the previous section naturally pass to the limit, yielding elements $Q_i^{(r)} \in \GT$ and $F_i^{(r)} \in \qzastplus$.  Recall also that $\base \subset \qzast$ is a central subalgebra, and thus we may localize $\qzast$ at the multiplicative set $M\subset \base$ from Definition \ref{def:multiplicativeset}.
\begin{Theorem}
    \label{thm:sphericalgen}
    The algebra $\qzast[M^{-1}]$  is spherically generated with respect to its $\NN^\vertexset$-grading. More precisely, it is generated over $\base[M^{-1}]$ by the elements $Q_i^{(r)}$ and $F_i^{(r)}$ for $i \in \vertexset$ and $r\geq 1$.
\end{Theorem}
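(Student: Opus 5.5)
The plan is to lift Theorem \ref{thm:sphericalgenv} to the limit using the $\bv$-independence of the elements $h_\bm$ together with the generation result for $\qzast$. By part (2) of Theorem \ref{thm:limzastproperties}, $\qzast$ is generated by the FMOs $\M_\bm(f)$ for $\bm \in \NN^\vertexset$ and $f \in \GT^\bmu$, where $\bmu = (1^\bm)$. Since $\base$ is central, $\qzast[M^{-1}]$ is generated over $\base[M^{-1}]$ by these same elements. It therefore suffices to show the following: for each $\bm$ and $f$, the element $h_\bm \M_\bm(f) = \M_\bm(h_\bm f)$ lies in the subalgebra $\mathcal{B} \subseteq \qzast$ generated over $\base$ by the limit elements $Q_i^{(r)}, F_i^{(r)}$. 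Dividing by $h_\bm \in M$ then gives the result.

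To prove this membership, I fix $\bm$ and a homogeneous $f \in \GT^\bmu$, say of degree $(\bm,d)$ in the $\ZZ^\vertexset\times\ZZ$-grading. I then choose $\bv'$ large enough for the stabilization of part (7) of Theorem \ref{thm:limzastproperties} to apply to this degree, and also with $\bv' \geq \bm$. Theorem \ref{thm:sphericalgenv}, applied at $\bv'$, expresses $\M_\bm(h_\bm \tilde f)$ as a noncommutative polynomial $P$ over $\base$ in the $Q_i^{(r)}, F_i^{(r)}$ of $\qzast(\bv')$. Grading lets me take $P$ homogeneous of degree $(\bm,d)$. The limit elements $Q_i^{(r)}, F_i^{(r)} \in \qzast$ map to their namesakes under $\qzast \twoheadrightarrow \qzast(\bv')$, since these maps are compatible with FMOs by part (1) of Theorem \ref{thm:limzastproperties}. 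So evaluating $P$ on the limit elements gives some $y \in \mathcal{B}_{(\bm,d)}$ with the same image as $\M_\bm(h_\bm f)$ in $\qzast(\bv')_{(\bm,d)}$. Because $\qzast_{(\bm,d)} \to \qzast(\bv')_{(\bm,d)}$ is an isomorphism, $y = \M_\bm(h_\bm f)$, which is what we want. For non-homogeneous $f$ I decompose into homogeneous pieces; this is fine because $h_\bm$ is homogeneous, being a product of linear elements $y_p$.

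The main obstacle I expect is making sure the polynomial $P$ really lives in a single bidegree and that the stabilization bound of Lemma \ref{lem:stabilization} can be met. The first point holds since every generator is homogeneous by \eqref{eq:QandFdegrees} and $\base$ is graded, so I can simply project $P$ to the right bidegree. The second point holds since condition (2) of that lemma only requires $\bv'$ to be large relative to $(\bm,d)$. The $\bv$-independence of $h_\bm$ is essential here: without it, the localizing factor could grow with $\bv'$ and the argument would not close.

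Finally, the claim that $\qzast[M^{-1}]$ is spherically generated with respect to the $\NN^\vertexset$-grading follows by observing degrees. Each $Q_i^{(r)}$ lies in $\qzast_{\mathbf{0}}$ and each $F_i^{(r)}$ lies in $\qzast_{\alpha_i}$. Hence the subalgebra generated by the degree $\mathbf 0$ and $\alpha_i$ components contains the generating set found above, so it is all of $\qzast[M^{-1}]$.
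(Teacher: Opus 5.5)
Your proposal is correct and follows essentially the same route as the paper. You reduce to FMOs via part (2) of Theorem~\ref{thm:limzastproperties}, use the $\bv$-independent factor $h_\bm$ from Theorem~\ref{thm:sphericalgenv}, and transfer the spherical expression from $\qzast(\bv')$ to $\qzast$ through the stabilization isomorphism in the fixed bidegree, which is exactly the bidegree of $\M_\bm(h_\bm f)$ (your phrase ``$f$ of degree $(\bm,d)$'' should really refer to this element, as your argument in fact uses).
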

\begin{proof}
    Since $\qzast$ is generated by FMOs, it suffices  to show that every $\M_\bm(f) \in \qzast$ is in the spherical subalgebra of $\qzast[M^{-1}]$.  So let us fix $\bm \in \NN^\vertexset$ and a homogeneous element $f \in \GT^\bmu$ where $\bmu = (1^\bm)$.  
    
    By the previous theorem there exists an element $h_\bm \in M$ such that the image of $h_\bm\M_\bm(f) =\M_\bm(h_\bm f)$ is in the spherical subalgebra of  $\qzast(\bv)$, for all $\bv \geq \bm$. Since $h_\bm$ is a product of linear polynomials and $f$ is homogeneous,  the element $\M_\bm(h_\bm f)$ is homogeneous of some degree $(\bm, d) \in \ZZ^\vertexset\times \ZZ$. Any expression for $\M_\bm(h_\bm f)$ over $\base[M^{-1}]$ in terms of the elements $Q_i^{(r)}$ and $F_i^{(r)}$ can therefore be interpreted as equality in $\qzast_{(\bk, d)}$.
    By the stabilization from part (7) of Theorem \ref{thm:limzastproperties}, we may choose $\bv$ sufficiently large that $\qzast_{(\bk, d)} \xrightarrow{\sim} \qzast(\bv)_{(\bk, d)}$ is an isomorphism.  This reduces us to the situation of Theorem \ref{thm:sphericalgenv}, which concludes the proof.

\end{proof}

Finally, to prove Theorem \ref{thm:negut-conjecture}, it suffices to prove the following.

\begin{Theorem}
The algebra $\qzastplus[M^{-1}]$ is spherically generated with respect to its $\NN^\vertexset$-grading.  More precisely, it is generated over $\base[M^{-1}]$ by the  elements  $F_i^{(r)}$ for $i \in \vertexset$ and $r \geq 1$.
\end{Theorem}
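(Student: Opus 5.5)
We deduce spherical generation of $\qzastplus[M^{-1}]$ from Theorem \ref{thm:sphericalgen}, using the triangular decomposition $\qzast = \GT \otimes_\base \qzastplus$ of part (5) of Theorem \ref{thm:limzastproperties} together with the commutation relation of Lemma \ref{lem:QFrels1}. Write $\mathcal{B} \subseteq \qzastplus[M^{-1}]$ for the $\base[M^{-1}]$-subalgebra generated by the $F_i^{(r)}$. The goal is to show $\mathcal{B} = \qzastplus[M^{-1}]$.

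\emph{Step 1: $\GT[M^{-1}]\cdot \mathcal{B}$ is a subalgebra of $\qzast[M^{-1}]$.} Since $\GT$ is commutative, it suffices to check $\mathcal{B}\cdot\GT \subseteq \GT\cdot\mathcal{B}$, and for this it is enough to show that $F_j(v)\,Q_i(u)$ can be rewritten with the $Q$'s on the left and the $F$'s on the right. Rearranging Lemma \ref{lem:QFrels1} gives
\[
(u-v+\delta_{ij}\hbar)\,F_j(v)Q_i(u) = (u-v)\,Q_i(u)F_j(v) + \delta_{ij}\hbar\, F_j(u)Q_i(u),
\]
and I expect the same kind of identity to give $F_j(u)Q_i(u) = Q_i(u)\cdot(\text{series in the } F_j^{(s)})$. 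More concretely, $F_j(v)Q_i(u) = Q_i(u)\,F_j(v)\,\frac{u-v-\delta_{ij}\hbar}{u-v}$ modulo correction terms involving only $F_j(u)$. This should follow from \eqref{eq:actionofQ} transported through Theorem \ref{thm:isoextshu}, since $F_j^{(r)}$ corresponds to the degree-$\alpha_j$ shuffle elements $x_{j,1}^{r-1}$. I will derive the rewriting cleanly from the smash product structure in $\extshu$: $Q_i(u)^{-1}\star f\star Q_i(u)$ equals $f$ times a rational function in $u$ with coefficients in $\base[x]$. Degree $\alpha_j$ elements $x^{r-1}$ are then carried to $\base$-combinations of the $x^{s}$, so each $F_j^{(r)}\,Q_i^{(s)}$ lies in $\GT\cdot \mathcal{B}$. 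Multiplicativity then gives $\mathcal{B}\cdot\GT\subseteq \GT\cdot\mathcal{B}$.

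\emph{Step 2: identify $\qzast[M^{-1}]$.} By Theorem \ref{thm:sphericalgen}, $\qzast[M^{-1}]$ is generated by the $Q_i^{(r)}$ and $F_i^{(r)}$. By Step 1 it therefore equals $\GT[M^{-1}]\cdot\mathcal{B}$. The $Q_i^{(r)}$ are the elementary symmetric functions up to sign, so they generate $\GT$ over $\base$.

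\emph{Step 3: project.} Part (5) of Theorem \ref{thm:limzastproperties} identifies $\qzast[M^{-1}] \cong \GT[M^{-1}]\otimes_{\base[M^{-1}]}\qzastplus[M^{-1}]$, and $\mathcal{B}\subseteq \qzastplus[M^{-1}]$. Let $\epsilon:\GT\to\base$ be the augmentation $Q_i^{(r)}\mapsto 0$ for $r\ge1$; it is compatible with gradings, since $\GT$ is polynomial over $\base$ in the $Q_i^{(r)}$. Apply $\epsilon\otimes\mathrm{id}$ to $\qzast[M^{-1}] = \GT[M^{-1}]\otimes \qzastplus[M^{-1}]$. Since $\qzast[M^{-1}] = \GT[M^{-1}]\cdot\mathcal{B}$, the image of every element of the form $g\cdot b$ lies in $\epsilon(g)\,b\in\mathcal{B}$. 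On the other hand, the map is surjective onto $\qzastplus[M^{-1}]$. Hence $\qzastplus[M^{-1}]=\mathcal{B}$.

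The main obstacle is Step 1: we need to confirm that the reordering of $F\cdot Q$ into $Q\cdot F$ only produces $F$-series, with no other elements of $\qzastplus$ appearing. Doing this in $\extshu$ via \eqref{eq:actionofQ} restricted to $\shu_{\alpha_j} = \base[x]$ seems the most transparent route. The twist factor $\frac{u-x-\hbar}{u-x}$, expanded in $u^{-1}$, multiplies $x^{r-1}$ by polynomials in $x$ with $\base$-coefficients, so the result stays in $\shu_{\alpha_j}$.
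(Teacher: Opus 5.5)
Your proposal is correct and is essentially the paper's proof. The paper also rewrites every word in the $Q_i^{(r)}, F_j^{(s)}$ with the $Q$'s on the left, then uses $\qzast=\GT\otimes_\base\qzastplus$ to extract the $Q$-free component (its auxiliary lemma writes $y=\sum_m q^m y_m$ and keeps $y_0$), which is exactly what your projection $\epsilon\otimes\mathrm{id}$ does. The only difference is how the reordering is justified: you derive it from \eqref{eq:actionofQ} via Theorem \ref{thm:isoextshu} and $\Phi(x_{j,1}^{r-1})=F_j^{(r)}$, whereas the paper cites Lemma \ref{lem:QFrels1}. Your source is the safer one, because a direct GKLO check with $\sv_i=1$ shows the right side of that lemma should read $\delta_{ij}\hbar\,Q_i(u)\big(F_j(v)-F_j(u)\big)$. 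That corrected form is what \eqref{eq:actionofQ} gives and what your ``more concretely'' formula reflects, so the identity you display by rearranging the lemma as printed is not the right one, although either form would permit the reordering.
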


\begin{proof}
  Recall that $\qzast = \GT\otimes_\base \qzastplus$, and thus $\qzast[M^{-1}] = \GT[M^{-1}] \otimes_{\base[M^{-1}]} \qzastplus[M^{-1}]$. We have seen that $\qzast[M^{-1}]$ is generated over $\base[M^{-1}]$ by elements $Q_i^{(r)}$ and $F_i^{(r)}$.
  
  We also have the relation from Lemma \eqref{lem:QFrels1}. From this relation, we see that any word in the $Q_i^{(r)}$ and $F_j^{(s)}$ can be rewritten as a linear combination of words where all $Q$'s appear to the left of all $F$'s. We therefore can apply the following lemma to complete the proof.
\end{proof}

\begin{Lemma}
  Let $A$ be an associative algebra with subalgebras $A^0$ and $A^>$ such that
  \begin{enumerate}[(i)]
  \item $A = A^0 \otimes A^>$
  \item $A^0$ is a polynomial ring in generators $\{q_i\}_{i \in S}$ 
  \item there are elements $\{f_j\}_{j \in T}$ for $f_j \in A^0$ such that $\{q_i\}_{i \in S} \cup \{f_j\}_{j \in T}$ generate $A$
  \item every word in $\{f_j\}_{j \in T}$ and $\{q_i\}_{i \in S}$ can be rewritten as a linear combination of words where the elements $q_i$ appear to the left of the $f_j$ for $i \in S, j\in T$.
  \end{enumerate}
 Then $\{f_j\}_{j \in T}$ generate $A^>$.
\end{Lemma}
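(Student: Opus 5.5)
Let $B \subseteq A$ denote the subalgebra generated by the $\{f_j\}_{j\in T}$; the statement presumably intends $f_j \in A^>$, so that $B \subseteq A^>$. The plan is to show that $A^0 \otimes B$ already accounts for all of $A$, and then to use the tensor decomposition (i) to conclude that $B$ exhausts $A^>$.

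First, by (iii) every element of $A$ is a linear combination of words in the $q_i$ and $f_j$. By (iv) each such word can be rewritten as a combination of words of the form $q_{i_1}\cdots q_{i_a} f_{j_1}\cdots f_{j_b}$. Each such word is the product $p \cdot g$ with $p \in A^0$ and $g \in B$. Hence the multiplication map $\mu : A^0 \otimes B \to A$ is surjective.

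Next, compare $\mu$ with the multiplication isomorphism $m : A^0 \otimes A^> \xrightarrow{\sim} A$ from (i). Since $\mu$ is the restriction of $m$ along $A^0 \otimes B \to A^0 \otimes A^>$, we get
\begin{equation*}
m\big(\operatorname{im}(A^0\otimes B \to A^0\otimes A^>)\big) = A.
\end{equation*}
Because $m$ is bijective, the image of $A^0 \otimes B$ in $A^0 \otimes A^>$ is everything. Equivalently, $A^0 \otimes (A^>/B)$ vanishes, by right exactness of $A^0 \otimes -$. Here $A^0$ is a polynomial ring over the base, hence free, and it contains the base as a direct summand spanned by $1$. Applying the augmentation $A^0 \to \base$ (sending all $q_i$ to $0$) gives $A^>/B = \base \otimes_{A^0}$-reduction of zero, so $A^>/B = 0$ and $B = A^>$. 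Alternatively, without using the augmentation, write any $x\in A^>$ as $1\cdot x = \sum_k p_k g_k$ and expand the $p_k$ in the monomial basis of $A^0$; uniqueness in $A^0\otimes A^>$ forces $x$ to equal the sum of $g_k$ weighted by the constant terms of the $p_k$, so $x\in B$.

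The main subtlety is this last step. One must make sure that the tensor product in (i) is over the base ring (here $\base[M^{-1}]$), and that $A^0$ being a polynomial ring gives a splitting off of the constants. Only that structure, and not merely surjectivity of $\mu$, allows extracting the $A^>$ component. I expect no further obstacles, since the rewriting in (iv) is supplied in the application by Lemma \ref{lem:QFrels1}.
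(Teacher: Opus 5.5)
Your proof is correct and is essentially the paper's argument. The paper expands each $x \in A^>$, after applying (iii) and (iv), as $\sum_m q^m x_m$ in the monomial basis of $A^0$, then uses uniqueness of this expansion to conclude $x = x_0$. That is exactly your alternative in the last paragraph, and your right-exactness/augmentation version is a repackaging of the same step of extracting the constant term. You are also right that the hypothesis should read $f_j \in A^>$; the paper's proof tacitly needs this to know that $x_m \in A^>$.
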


\begin{proof}
  For each finitely supported function $m : S \rightarrow \NN$, let $q^m$ be the corresponding monomial in the elements $\{q_i\}_{i \in S}$. Let us write $0$ for the function that takes value zero on every element of $S$. So $q^0 =1$. The $q^m$ form a basis of $A^0$. Therefore, every element $y \in A$ can uniquely be written as a finite sum
  \begin{equation}
    \label{eq:27}
    y = \sum_{m} q^m \cdot y_m
  \end{equation}
  where $y_m \in A^{>}$. Moreover, $y \in A^>$ if and only if $y_m = 0$ for all $m \neq 0$.

  Let $x \in A^>$. We can write $x$ as a sum of words in the $q_i$ and $f_j$ for $i \in S, j\in T$ because these elements generate $A$. By our assumption (iv), we can rewrite all the words appearing so that the $q_i$ appear to the left of all the $f_j$. Therefore, we can write
  \begin{equation}
    \label{eq:28}
    x = \sum_{m} q^m \cdot x_m
  \end{equation}
  where $x_m$ are linear combinations of words in the $f_j$. Because $x \in A^>$, we conclude that $x = x_0$.
\end{proof}

% %%%%%%%%%%%%%%%%%%%%%%%%%%%%%%%%%%%%%%%%%%%%%%%%%%%%%%%%%%%%%%%%%

\appendix

\section{Rising and falling notation for Pochhammer symbols}
\label{appendix-sec:rising-and-falling-notation-for-pochhammer-symbols}
We will use an $\hbar$-version Knuth's rising and falling notation for Pochhammer symbols \cite{ConcreteMathematics}. That is, let $n \in \ZZ_{>0}$. We define
\begin{align}
  \label{eq:2}
  x^{\overline{n}} = x (x + \hbar) \cdots (x + (n-1)\hbar)
\end{align}
and:
\begin{align}
  \label{eq:6}
  x^{\underline{n}} = x (x - \hbar) \cdots (x - (n-1)\hbar)
\end{align}
For $0$, we have:
\begin{align}
  \label{eq:7}
  x^{\overline{0}} = x^{\underline{0}} = 1
\end{align}
Meanwhile, for negative numbers, we define
\begin{align}
  \label{eq:8}
  x^{\overline{-n}} = \frac{1}{ (x-n\hbar)^{\overline{n}}}  \quad \text{and} \quad   x^{\underline{-n}} = \frac{1}{ (x+n\hbar)^{\underline{n}}}
\end{align}
Finally, we will often consider ``double'' Pochhammer symbols:
\begin{equation}
    \label{eq:doublePochh}
    x^{\underline{m}\cdot \overline{n}}  = \prod_{\substack{0 \leq r <m, \\ 0 \leq s < n}} \big(x+(s-r)\hbar\big)
\end{equation}
Note that $x^{\underline{m}\cdot \overline{n}} = \prod_{0\leq r < m}(x-r\hbar)^{\overline{n}} = \prod_{0 \leq s < n} (x+s\hbar)^{\underline{m}}$.  We will make use of the following:
\begin{Lemma}
    \label{lem:doublePochhdiv} Consider integers $m,n \geq 0$.
    \begin{enumerate}[(a)]
    \itemsep0.5em 
        \item If $m \leq n$, then $x^{\underline{m-1}\cdot \overline{n}}$ divides $(x+\hbar)^{\underline{m}\cdot \overline{n-1}}$ with quotient $(x+\hbar)^{\overline{n-m}}$.
        \item If $m\geq n$, then  $(x+\hbar)^{\underline{m}\cdot \overline{n-1}}$ divides $x^{\underline{m-1}\cdot \overline{n}}$ with quotient $x^{\underline{m-n}}$.
    \end{enumerate}
    In particular, we have $x^{\underline{n-1} \cdot \overline{n}} = (x+\hbar)^{ \underline{n}\cdot \overline{n-1}} $.
\end{Lemma}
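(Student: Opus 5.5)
The statement is purely about polynomials in one variable $x$ over $\kk[\hbar]$. The plan is to write both double Pochhammer symbols as explicit multisets of linear factors $x + c\hbar$ and compare multiplicities of each $c \in \ZZ$. By \eqref{eq:doublePochh},
$$
x^{\underline{m-1}\cdot\overline{n}} = \prod_{\substack{0\le r<m-1,\\ 0\le s<n}} \big(x+(s-r)\hbar\big), \qquad (x+\hbar)^{\underline{m}\cdot\overline{n-1}} = \prod_{\substack{0\le r<m,\\ 0\le s<n-1}} \big(x+(s-r+1)\hbar\big).
$$
In the second product, reindex by $r' = r-1$ so that $r'$ ranges over $[-1, m-2]$; the factor becomes $x+(s-r')\hbar$. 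The two products then have the same shape, over the index rectangles
$$
R_1=[0,m-2]\times[0,n-1] \qquad\text{and}\qquad R_2=[-1,m-2]\times[0,n-2],
$$
with $(r,s)$ contributing the factor $x+(s-r)\hbar$.

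Now compare the two rectangles. They share the subrectangle $[0,m-2]\times[0,n-2]$.
\begin{itemize}
\item $R_1$ additionally contains the column $s=n-1$, with $0\le r\le m-2$. This contributes $\prod_{r=0}^{m-2}\big(x+(n-1-r)\hbar\big)$, i.e.\ the values $c\in[n-m+1,\,n-1]$.
\item $R_2$ additionally contains the row $r=-1$, with $0\le s\le n-2$. This contributes $\prod_{s=0}^{n-2}\big(x+(s+1)\hbar\big)$, i.e.\ the values $c\in[1,\,n-1]$.
\end{itemize}
So, after cancelling the common part, the question is just containment of the two integer intervals $[n-m+1,n-1]$ and $[1,n-1]$.

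For (a), where $m\le n$, we have $n-m+1\ge1$, so $[n-m+1,n-1]\subseteq[1,n-1]$. Hence $x^{\underline{m-1}\cdot\overline{n}}$ divides $(x+\hbar)^{\underline{m}\cdot\overline{n-1}}$, with quotient
$$
\prod_{c=1}^{n-m}(x+c\hbar)=(x+\hbar)^{\overline{n-m}}.
$$

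For (b), where $m\ge n$, the containment reverses: $[1,n-1]\subseteq[n-m+1,n-1]$. The quotient is
$$
\prod_{c=n-m+1}^{0}(x+c\hbar)=x^{\underline{m-n}}.
$$
The final claim is the case $m=n$, where both quotients equal $1$.

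The one step needing care is the degenerate ranges, namely $m=0$ or $n=0$, where exponents like $m-1=-1$ appear. For $m\ge1$ and $n\ge1$ the argument above applies verbatim. If $m=0$ or $n=0$, I would use the convention that a product over an empty index set equals $1$, check directly that both sides are $1$ or that the intervals are empty, and note that these cases match the sense in which the lemma is applied in the wheel conditions.
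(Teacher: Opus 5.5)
For $m,n\ge 1$ your argument is correct and is essentially the paper's. The paper places all factors in one $m\times n$ matrix with $(r,s)$-entry $x+(s-r)\hbar$. It identifies the two products as this matrix minus its bottom row and minus its left column, and slides the leftover column along diagonals onto the leftover row. Your reindexing $r'=r-1$ gives the mirror image of that picture. Comparing the intervals $[n-m+1,n-1]$ and $[1,n-1]$ is the same diagonal comparison made explicit.

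The step that fails is your treatment of $m=0$ or $n=0$: under the empty-product convention the statement is actually false there.
\begin{itemize}
\item For $m=0<n$, both $x^{\underline{-1}\cdot\overline{n}}$ and $(x+\hbar)^{\underline{0}\cdot\overline{n-1}}$ are empty products equal to $1$. Yet (a) asserts the quotient is $(x+\hbar)^{\overline{n}}\neq 1$.
\item Symmetrically, for $n=0<m$ both products in (b) equal $1$, while the asserted quotient is $x^{\underline{m}}$.
\item Your decomposition also breaks in these cases: for $m=0$ the row $r'=-1$ does not lie in $R_2$, and for $n=0$ the column $s=-1$ does not lie in $R_1$.
\end{itemize}
So a direct check refutes these cases rather than confirming them. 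Your appeal to ``the sense in which the lemma is applied'' does not fix this.

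You have two ways to repair it.
\begin{itemize}
\item Restrict to $m,n\ge 1$. This is the only case the paper uses: zero multiplicities make the wheel conditions vacuous, and the case $\bla_j(s)=0$ is handled separately in the regularity proof. The paper's matrix argument tacitly assumes $m,n\ge1$ as well.
\item Read the exponent $-1$ through the appendix's negative Pochhammer convention. Take $x^{\underline{-1}\cdot\overline{n}}=\prod_{0\le s<n}(x+s\hbar)^{\underline{-1}}=1/(x+\hbar)^{\overline{n}}$ and $(x+\hbar)^{\underline{m}\cdot\overline{-1}}=\prod_{0\le r<m}\big(x+(1-r)\hbar\big)^{\overline{-1}}=1/x^{\underline{m}}$. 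Under this reading, (a) and (b) hold as identities of rational functions.
\end{itemize}
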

\begin{proof}
    
    We include a short diagrammatic proof.  Consider the $m\times n$ matrix $X$ with $(r,s)$-entry given by $x+(s-r)\hbar$.  Then $x^{\underline{m-1}\cdot \overline{n}}$ (resp.~$(x+\hbar)^{\underline{m}\cdot \overline{n-1}}$) is equal to the  product of the entries of $X$ with its bottom row (resp.~left column) removed, highlighted in red (resp.~in blue):
    $$
    \begin{pNiceMatrix}[margin]
        \Block[borders={bottom,left,right,top,tikz=red}]{4-4}{} x & \Block[borders={bottom,left,right,top,tikz=blue}]{5-3}{} x+\hbar & \cdots & x+ (n-1)\hbar \\
        x-\hbar & x & \cdots & x+(n-2) \hbar \\
        \vdots & \vdots & \ddots & \vdots \\
        x+(2-m)\hbar & x+(3-m)\hbar & \cdots & x+(n-m+1) \hbar \\
        x+(1-m)\hbar & x+(2-m)\hbar & \cdots & x + (n-m)\hbar
    \end{pNiceMatrix}
    $$
    Note that $X$ is constant on diagonals. If $m \leq n$ then $X$ is wider than it is tall, and we may ``slide'' all entries from the first column of the red matrix downwards along their diagonals, and see that they are equal to a subset of the entries of the bottom row of the blue matrix.  Thus $x^{\underline{m-1}\cdot \overline{n}}$ divides $(x+\hbar)^{\underline{m}\cdot \overline{n-1}}$, with quotient given by the product of the remaining entries of the bottom row of the blue matrix.  This proves (a), and the proof of (b) is similar.
\end{proof}

\bibliographystyle{amsalpha}
\bibliography{references}

\end{document}